\documentclass[13pt, a4paper]{article}
 \usepackage{amsmath}
 \usepackage{amssymb}
\usepackage[margin=21mm]{geometry}
\usepackage[utf8]{inputenc}
\usepackage{graphics}
\usepackage{xspace}
\usepackage{color}
 \usepackage{amsthm}
 \usepackage[old]{old-arrows}
\usepackage[dvipsnames]{xcolor}
\usepackage{mathtools}
\usepackage{latexsym}
\frenchspacing
%%%%%%%%%%%%%%%%%%%%%%%%%%%%%%%%%%%%%%%%%%%%%%%%
\newcommand{\R}{{\mathbb R}}

\newcommand{\N}{{\mathbb N}}

\newcommand{\inn}{\text{  in   }}
\newcommand{\dyle}{\displaystyle}
\newcommand{\dint}{\dyle\int}

\newtheorem{Theorem}{Theorem}[section]
\numberwithin{Theorem}{section}
\newtheorem{Corollary}[Theorem]{Corollary}
\newtheorem{Lemma}[Theorem]{Lemma}
\newtheorem{Proposition}[Theorem]{Proposition}
\newtheorem{Definition}[Theorem]{Definition}

\newtheorem{remark}[Theorem]{Remark}
\usepackage[mathscr]{euscript}
\usepackage{mathrsfs}
 \usepackage{enumerate}
 \usepackage{cite}
 \usepackage{tikz}
 \usepackage[pagebackref]{hyperref}
 \usepackage{hyperref}
%\numberwithin{equation}{subsection}
%\usepackage{graphicx,multirow,booktabs,caption}
%\captionsetup{skip=0.333\baselineskip}
\newtheorem{theorem}{Theorem}[section]
\numberwithin{equation}{subsection}

\newtheorem{definition}[theorem]{Definition}

\catcode`@=12

\makeatother
\usepackage[english]{babel}

 \usepackage[hyperpageref]{backref}
\title{On an eigenvalue problem associated with mixed operators under mixed  boundary conditions}
\date{}
\author{Jacques Giacomoni$^{2}\thanks{Corresponding author}$, Tuhina Mukherjee$^{1}$, Lovelesh Sharma$^{1}$  \\
       \small $^{1}$ Department of Mathematics, Indian Institute of Technology Jodhpur, Rajasthan 342030, India \\
       \small $^{2}$ LMAP (UMR E2S UPPA CNRS 5142) Bat. IPRA, Avenue de l’Université, F-64013 Pau, France\\
}
\newcommand{\Addresses}{{% additional braces for segregating \footnotesize
  \bigskip
  \footnotesize
  J. ~Giacomoni, \textit{E-mail address:}
  \texttt{jacques.giacomoni@univ-pau.fr}\\
  \medskip
  T.~Mukherjee, \textit{E-mail address:} \texttt{tuhina@iitj.ac.in}\\
  L. Sharma, \textit{E-mail address:} \texttt{sharma.94@iitj.ac.in}
}}
\providecommand{\keywords}[1]
{
  \small	
  \textbf{\textit{Keywords---}} #1
}
\begin{document}
\maketitle \vspace{-1.8\baselineskip}
\begin{abstract}
In this paper, we study a class of eigenvalue problems involving both local as well as nonlocal operators, precisely the classical Laplace operator and the fractional Laplace operator in the presence of mixed  boundary conditions, that is 
 \begin{equation}  \label{1}
    \left\{\begin{split} \mathcal{L}u\: &= \lambda u,~~u>0~ \text{in} ~\Omega, \\
      u&=0~~\text{in} ~~{U^c},\\
 \mathcal{N}_s(u)&=0 ~~\text{in} ~~{\mathcal{N}}, \\
 \frac{\partial u}{\partial \nu}&=0 ~~\text{in}~~ \partial \Omega \cap \overline{\mathcal{N}},
    \end{split} \right.\tag{$P_\lambda$}
\end{equation}

 where  $U= (\Omega \cup {\mathcal{N}} \cup (\partial\Omega\cap\overline{\mathcal{N}}))$, $\Omega \subseteq \mathbb{R}^n$ is a non empty open set, $\mathcal{D}$, $\mathcal{N}$ are open subsets of $\mathbb{R}^n\setminus{\bar{\Omega }}$ such that $\overline{{\mathcal{D}} \cup {\mathcal{N}}}= \mathbb{R}^n\setminus{\Omega}$, $\mathcal{D} \cap {\mathcal{N}}=  \emptyset $ and $\Omega\cup \mathcal{N}$ is a bounded set with smooth boundary, $\lambda >0$ is a  real parameter and
 $$\mathcal{L}=  -\Delta+(-\Delta)^{s},~ \text{for}~s \in  (0, 1).$$ We establish the existence and some characteristics of the first eigenvalue and associated eigenfunctions to the above problem, based on the topology of the sets $\mathcal{D}$ and $\mathcal{N}$. Next, we apply these results to establish  bifurcation type results,  both
 from zero and infinity for the problem \eqref{ql} which is an asymptotically linear problem inclined with $(P_\lambda)$. 
\end{abstract}

\keywords {Mixed local-nonlocal operators, mixed boundary conditions, principal eigenvalue and eigenfunction, regularity, maximum principle, bifurcation theory.}\\

\textbf{Mathematics Subject Classification:} 47A75, 35J25, 35J20.
\section{Introduction}
We investigate the existence and main properties of the eigenvalues and eigenfunctions to the following problem
 \begin{equation*} 
\left\{\begin{split} \mathcal{L}u\: &= \lambda u,~~u>0~ \text{in} ~\Omega, \\
      u&=0~~\text{in} ~~{U^c},\\
 \mathcal{N}_s(u)&=0 ~~\text{in} ~~{\mathcal{N}}, \\
 \frac{\partial u}{\partial \nu}&=0 ~~\text{in}~~ \partial \Omega \cap \overline{\mathcal{N}},
    \end{split} \right.\tag{$P_\lambda$}
         \end{equation*}
  where  $U= (\Omega \cup {\mathcal{N}} \cup (\partial\Omega\cap\overline{\mathcal{N}}))$, $\Omega \subseteq \mathbb{R}^n$, $\mathcal{D}$, $\mathcal{N}$, respectively denoted open Dirichlet and Neumann set, are disjoint open subsets such that $\overline{{\mathcal{D}} \cup {\mathcal{N}}}= \mathbb{R}^n\setminus{\Omega}$
  	and $\Omega\cup \mathcal{N}$ is a bounded set with smooth boundary, $\lambda >0$ is a  real parameter, $\nu$ denotes the outward normal on $\partial\Omega\cap \overline{\mathcal{N}}$ and
 \begin{equation}\label{A}
\mathcal{L}=  -\Delta+(-\Delta)^{s},~ \text{for}~s \in  (0, 1).
 \end{equation}
The term ``mixed" describes an operator that combines local and nonlocal differential operators. In our case, the operator $\mathcal{L}$ in \eqref{1} is generated by the superposition of the classical Laplace operator $-\Delta$ and the fractional Laplace operator $(-\Delta)^{s}$ which is for a fixed parameter $s \in (0,1)$  defined by
$${(- \Delta)^{s}u(x)} = C_{n,s}~ P.V. \int_{\mathbb{R}^n} {\dfrac{u(x)-u(y)}{|x-y|^{n+2s}}} ~ dy. $$ 
The term ``P.V." stands for Cauchy's principal value, while $C_{n,s}$ is a normalizing constant whose explicit expression is given by
 $$C_{n,s}= \bigg( \int_{\mathbb{R}^{n}} {\frac{1-cos({\zeta}_{1})}{|\zeta|^{n+2s}}}~d \zeta \bigg)^{-1}.$$ 
In the literature, there are numerous definitions of nonlocal normal derivatives. We consider the one given in \cite{dipierro2017nonlocal} and defined for smooth functions $u$ as
\begin{equation}\label{normal}
\mathcal{N}_{s}u(x)= C_{n,s}\dint_{\Omega} \dfrac{u(x)-u(y)}{|x-y|^{n+2s}}\,dy, \qquad  x\in \mathbb{R}^n\setminus\bar{\Omega}.
\end{equation}

The study of mixed operators of the type $\mathcal{L}$ as in the problem \eqref{1} is motivated by several applications where such kind of operators are naturally generated, including the theory of optimal searching, biomathematics, and animal forging for which we refer to \cite{dipierro2022non, dipierro2021description, MR3771424}. In applied sciences, they are used for investigating the changes in physical phenomena that have both local and nonlocal effects. For instance, they are present in bi-modal power law distribution systems, see \cite{MR4225516}. Furthermore, they are present in models that are derived from the combination of two distinct scaled stochastic processes. We refer to  \cite{MR1640881} for a comprehensive explanation of this phenomenon. In recent years, there has been a significant amount of work investigating elliptic problems with mixed-type operator $\mathcal{L}$, which contain both local and nonlocal features. The analytical properties of elliptic and parabolic partial differential equations, as well as intergro-differential equations, relies significantly on  the spectrum of associated linearized problems.
%principle eigenvalue of a mixed operator $\mathcal{L}$. 
In particular, the study of principal eigenvalues is essential in the investigation of non changing sign solutions to semi-linear problems as in \cite{biagi2022breziss} and in local bifurcation  phenomena as well as in stability analysis  (see \cite{berestycki2016definition}).
% The principle eigenvalue of an operator influences the application of  maximum principle is applicable, for instance refer \cite{garroni2002second, du2006order}.
In this regard, the case of mixed operators, the Dirichlet (nonlinear) eigenvalue problem was studied by Rossi et al. in   \cite{pezzo2019eigenvalues}
 \begin{equation*} 
      \begin{cases}    
     -\Delta_p u-\Delta_{J,p} u = \lambda|u|^{p-2}u   \quad 
 \text{in }\Omega,\\ 
 u=0 ~~\text{in} ~~\mathbb{R}^n\setminus \Omega, \\
 \end{cases} 
 \end{equation*}
where $\Delta_{J,p}$ is a nonlocal operator. 
Authors showed that the first eigenvalues satisfy ${\lambda_1}^{\frac{1}{p}} \to \Lambda$ as $p \to \infty$, where $\Lambda$ is defined in terms of the geometry of $\Omega$. 
We also refer \cite{Faber, cowan2024principal, garain2023mixed}, where the eigenvalue problem with mixed operators of the type $\mathcal L$ was investigated. Very recently many problems involving the mixed operator $\mathcal{L}$ with the Neumann boundary condition have also been studied, for more details, we refer to \cite{MR4275496,MR2911421,MR3445279}. The nonlinear generalisation of $\mathcal L$ given by $-\Delta_p +(-\Delta_p)^s$ has also started gaining attention, relating to them, we quote \cite{garain2023mixed}. 
Dipierro et. al in \cite{MR4438596} was one the first among the others who consider mixed operator problems in the presence of classical as well as non-local  Neumann boundary conditions. Their recent article  discusses the spectral properties and the $L^\infty$
bounds associated with a mixed local and nonlocal problem,  in relation to some physical motivations arising from
population dynamics and mathematical biology. 
Recently, Biagi et al. \cite{biagi2022mixed} showed regularity results and maximum principle for the mixed local and nonlocal operators, we also refer to \cite{antonini2023global} and \cite{su2022regularity} for further regularity type results.

Next, we recall some eigenvalue problems in the presence of mixed boundary conditions.
Going back to \cite{denzler1998bounds, denzler1999windows}, Denzler et al. considered the following eigenvalue problem with mixed Dirichlet and Neumann boundary conditions
  
 \begin{equation}\label{local}
      \left\{\begin{split}     
     -\Delta u &=  \lambda_1(\mathcal{D}) u, \quad u>0 \quad 
 \text{in }\Omega,\\ 
 u &=0~~\text{in} ~~D,\\
 \frac{\partial u}{\partial \nu}&=0 ~~\text{in}~~ N,
  \end{split} \right.
 \end{equation}
 where they investigated the question of how the eigenvalue $\lambda_1(\mathcal{D})$ behaves when the sets are configured with Dirichlet (or Neumann) conditions.  Leonori et al. in \cite{MR3784437} analyze the nonlocal counterpart of \eqref{local} under the mixed boundary conditions. Due to the nonlocal behaviour of their problem, the sets $D$ and $N$ can be of infinite measures which is a striking difference from the local case.
 
 Taking into account the above literature, we are concerned in the present work with the eigenvalue problems involving mixed local and nonlocal operator $\mathcal L$ under the mixed Dirichlet and Neumann boundary conditions, which up to our knowledge has not been investigated previously. 
 %It is worth commenting that none of the articles study the eigenvalue problem involving mixed operators under the mixed Dirichlet- Neumann boundary conditions so far. 
 In this regard, one of the main assumptions we impose is that $\mathcal{N}$ is bounded, whereas $\mathcal{D}$ can possess infinite Lebesgue measures, that are consistent with \cite{biagi2022mixed}. Firstly, we provide a functional setup that embeds a variational structure to our problem. Under this framework, our first main result is the existence of the first eigenvalue and corresponding positive eigenfunctions with their expected characteristics viz. principal eigenvalue, simple and strictly positive. We also establish boundedness and H\"older regularity of eigenfunctions.
 % stating they are essentially bounded as well as $C^{0,\beta}(\mathbb R^n)$.  
 In view of the local bifurcation setting, our paper establishes also a strong maximum principle (Lemma \ref{strmx}) and a few other regularity results in the Appendix which are independent of interest. We also study some asymptotic behaviour of first eigenvalues with respect to the Dirichlet set, when Neumann sets dissipate and vice versa. According to to our boundary conditions, we show Theorem \ref{introd}  which adapts main results in \cite{MR3784437}. We underline that the corresponding sufficient counterpart of Theorem \ref{introd} is an open question.
%Last but not least, we try to provide a glimpse of problems available in literature, involving Dirichlet-Neumann's mixed boundary datum to the readers.

In another segment of our article, we investigate bifurcation type results, build upon the strong maximum principle and H\"older regularity results, for an asymptotically linear problem inclined with $(P_\lambda)$.  In particular, we study the following problem
 \begin{equation*}\label{2} 
\left\{\begin{split} \mathcal{L}u\: &= \lambda h(u),~~u>0~ \text{in} ~\Omega, \\
      u&=0~~\text{in} ~~{U^c},\\
 \mathcal{N}_s(u)&=0 ~~\text{in} ~~{\mathcal{N}}, \\
 \frac{\partial u}{\partial \nu}&=0 ~~\text{in}~~ \partial \Omega \cap \overline{\mathcal{N}},
    \end{split} \right.\tag{$Q_\lambda$}
         \end{equation*}
 where $\lambda>0$ and $h$ is an asymptotically linear function that satisfies the following conditions: 
\begin{enumerate}\label{hcond}
{\item[(f1)] $h\in C^1(\mathbb{R}^+,\mathbb{R}^+)$,
\item[(f2)] $h(t)= \theta t+ f(t)$, where $\theta>0$ and $f: \mathbb{R}\to \mathbb{R^+}$ such that $|f(t)|\leq C$, for some $C>0$. 
\item[(f3)] There exist $a>0,$ $\lim\limits_{t \to 0^+}\frac{h(t)}{t}= a>0$.}
     \end{enumerate}
     \textbf{$(f1)_0$} {We extend the continuous function $h$ to whole $\mathbb{R}$ in such a way that 
 $ h(t)= 0$ for $t\leq 0.$ The symbol used to represent this extension will remain unchanged.}

     A pair $(\lambda,u)\in \mathbb{R}^+\times \mathcal{X}^{1,2}_{\mathcal{D}}(U)$ that satisfies \eqref{2} in the weak sense is referred to be a (weak) solution of \eqref{2} if 
\begin{equation}
    \int_{\Omega} \nabla u\cdot\nabla \varphi \,dx +\int_{Q} \frac{(u(x)-u(y))(\varphi(x)-\varphi(y))}{|x-y|^{n+2s}} dxdy  = \lambda\int_{\Omega} h(u)\varphi\,dx,\\
\end{equation}
for all $\varphi \in \mathcal{X}^{1,2}_{\mathcal{D}}(U).$
The pair $(\lambda,0)$ is then a trivial solutions for \eqref{2} and forms the trivial line of solutions as $\lambda$ varies in $\mathbb{R}$. 
{For problem \eqref{2}, we prove the existence of positive solutions branches (i.e. connected sets), both bifurcating from $0$ and from infinity, following the ideas developed in  \cite{ambrosetti1996multiplicity, arcoya2001bifurcation}. For more details about the classical bifurcation theory, we refer to \cite{crandall1971bifurcation} and \cite{rabinowitz1974variational}. We recall that $(\lambda_0,0)$ is a bifurcation point if a branch $\Gamma$ of nontrivial positive solutions $(\lambda,u)$ emanates from the trivial line of solutions at $\lambda_0$. We will study the behaviour of $\Gamma$, where $\Gamma=\Gamma_0$ is the closure of non-trivial solutions (maximal) connected set of \eqref{2} emanating from $(\lambda_0,0)$. We prove that $\lambda_0$ is the unique possible bifurcation point for $\eqref{2}$ and similarly $\lambda_{\infty}$ is the unique possible bifurcation from infinity generating the branch $\Gamma=\gamma_\infty$ , see Section 5 for details. Consequently, using the global bifurcation theorem given in \cite{Rabinowitz}, we show that continua $\Gamma_0$, $\Gamma_{\infty}$ are unbounded. Lastly, we remark that in particular when $h(s)= s+ s^p$, where $1<p<2^*-1$ and $h(s)= s-s^p,~ 1<p<\infty$, the same bifurcation theory is applicable (which we have established for \eqref{2}). 
}

The bifurcation approach helps us to address several kinds of classical problems, including the anti-maximum principle, multiplicity near resonance and Landesman–Lazer type existence results for resonant problems. For instance
Arcoya et al. in \cite{arcoya2001bifurcation} studied bifurcation theory for asymptotically linear problems involving the Laplace operator. Several other applications are provided in frame of integral equations and ordinary and partial differential equations, such as in \cite{MR0159197}, \cite{MR0241213}, \cite{Rabinowitz} and references therein.  Additionally, Chhetri et al. developed some bifurcation results for fractional Laplacian problems in \cite{MR4017463} and Ambrosetti et al. in \cite{ambrosetti1996multiplicity} studied the bifurcation of positive solutions for specific quasilinear eigenvalue problems. Colorado et al. in  \cite{colorado2004eigenvalues}   performed the analysis of eigenvalues, bifurcation and H\"older continuity of solutions with mixed boundary conditions. To the best of
our knowledge, no article has examined bifurcation phenomena associated with the asymptotically mixed linear problems of the form \eqref{2} under the mixed Dirichlet-Neumann boundary conditions, so far.

The rest of this article is arranged as follows: Section 2 provides the functional framework required to address the problem \eqref{1} and \eqref{2}. It presents the specific notion of solutions that are employed and introduces auxiliary main results.
Section 3 is devoted to establishing the existence of the principle eigenvalue, strong maximum principle and the boundedness ($L^{\infty}$ estimate)  of the eigenfunctions corresponding to  \eqref{1}. In section 4, we present the complementary behaviour of first eigenvalues and proofs of main results when  Dirichlet and Neumann sets dissipate. Finally, in Section 5, we prove bifurcation results, both from zero and from infinity to the problem \eqref{2}. 
Last but not least, Appendix contains regularity results, specifically global $C^{1,\beta}$ regularity (Theorem \ref{thm5.2}) for eigenfunctions that can be employed for a more general class of mixed operators type problems involving mixed boundary conditions.

 \section{Functional framework and main results }
In this section, we set our notations and formulated the functional framework for \eqref{1}, which are used throughout the paper.
For every $s\in (0,1)$, we recall the fractional Sobolev spaces
$${H^{s}(\mathbb{R}^n)} =  \Bigg\{ u \in L^{2}(\mathbb{R}^n):~~\frac{|u(x) - u(y)|}{|x - y|^{\frac{n}{2} + s}} \in L^{2}({\mathbb{R}^n}\times {\mathbb{R}^n)} \Bigg\} $$ which contain $H^1(\mathbb R^n)$. We assume that $\Omega \cup \mathcal N$ is bounded with a smooth boundary. 
The symbol $U$ and $\Omega_k$ are used throughout the article instead of $(\Omega \cup {\mathcal{N}} \cup (\partial\Omega\cap\overline{\mathcal{N}}))$ and $(\Omega \cup {\mathcal{N}_k} \cup (\partial\Omega\cap\overline{\mathcal{N}_k}))$ (respectively) for sake of clarity.
We define the function space $\mathcal{X}^{1,2}_{\mathcal{D}}(U)$ as 
\begin{align*}
    \mathcal{X}^{1,2}_{\mathcal{D}}(U)  = \{u\in H^1(\mathbb{R}^n) : ~u|_{U} \in H^1_0(U) ~\text{and}~ u \equiv 0~ a.e. ~\text{in}~ {U^c}\}.
\end{align*}
Let us define 
 $$ \eta(u)^2 =  ||\nabla{u}||^2_{L^{2}(\Omega)}+ [u]^2_{s},$$
for $u\in\mathcal{X}^{1,2}_{\mathcal{D}}(U)$, where  $[u]_s$ is the Gagliardo seminorm of $u$ defined by 
 $$[u]^2_{s} = ~ \bigg(\int_{Q} \frac{|u(x)-u(y)|^{2}}{|x-y|^{n+2s}} \, dx dy \bigg)$$ and $Q= \mathbb R^{2n}\setminus (\Omega^c\times \Omega^c)$.
The following Poincar\'e type inequality can be established following the arguments of Proposition 2.4 in \cite{MR4065090} and taking advantage of partial Dirichlet boundary conditions in $U^c$.
\begin{Proposition}\label{Poin} (Poincar\'e type inequality) There exists a constant $C=C(\Omega, n,s)>0$ such that
$$
\dint_{\Omega}| u|^2\,dx\leq C\bigg(\int_{\Omega} |\nabla u|^2\,dx+  \int_{Q} \dfrac{|u(x)-u(y)|^2}{|x-y|^{n+2s}}\,dxdy\bigg),
$$
for every  $u\in\mathcal{X}^{1,2}_{\mathcal{D}}(U)$, i.e. $\|u\|^2_{L^2(\Omega)}\leq C \eta(u)^2$. 
\end{Proposition}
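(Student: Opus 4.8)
The plan is to bypass any compactness argument and prove the inequality quantitatively, using only that $u\equiv 0$ a.e.\ in $U^c$ (the ``partial Dirichlet'' condition) together with the boundedness of $U=\Omega\cup\mathcal N\cup(\partial\Omega\cap\overline{\mathcal N})$; in fact I would bound $\|u\|_{L^2(\Omega)}^2$ by the Gagliardo term $[u]_s^2$ alone, so the gradient term on the right-hand side is not even needed.

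First I would note that $Q=\mathbb{R}^{2n}\setminus(\Omega^c\times\Omega^c)\supseteq\Omega\times\mathbb{R}^n$, hence $\Omega\times U^c\subseteq Q$; and for $(x,y)\in\Omega\times U^c$ one has $u(y)=0$, so $|u(x)-u(y)|^2=|u(x)|^2$. Restricting the double integral defining $[u]_s^2$ to this set gives
$$
[u]_s^2\ \ge\ \int_{\Omega}|u(x)|^2\Big(\int_{U^c}\frac{dy}{|x-y|^{n+2s}}\Big)\,dx\ =:\ \int_{\Omega}|u(x)|^2\,\kappa(x)\,dx .
$$
The next step is to bound $\kappa$ from below uniformly on $\Omega$. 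I would fix $R>0$ with $U\subseteq B_R(0)$, which is possible since $\Omega\cup\mathcal N$ is bounded; then $U^c\supseteq\mathbb{R}^n\setminus B_R(0)\supseteq\mathbb{R}^n\setminus B_{2R}(x)$ for every $x\in\Omega\subseteq B_R(0)$, and therefore
$$
\kappa(x)\ \ge\ \int_{\mathbb{R}^n\setminus B_{2R}(x)}\frac{dy}{|x-y|^{n+2s}}\ =\ \frac{|\mathbb{S}^{n-1}|}{2s}\,(2R)^{-2s}\ =:\ c_0\ >\ 0 ,
$$
a constant depending only on $n$, $s$ and the diameter of $\Omega\cup\mathcal N$. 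Combining the two displays yields $\int_\Omega|u|^2\,dx\le c_0^{-1}[u]_s^2\le c_0^{-1}\eta(u)^2$, i.e.\ the assertion with $C=c_0^{-1}$.

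I do not expect a genuine obstacle here; the only points requiring a little care are the set-theoretic inclusion $\Omega\times U^c\subseteq Q$ and the uniform lower bound for the tail integral $\int_{U^c}|x-y|^{-n-2s}\,dy$, which is exactly where the boundedness of $U$ is used (in the pure Neumann problem such a bound is unavailable and one must instead resort to a Poincar\'e--Wirtinger/compactness argument in the spirit of \cite{MR4065090}). As a by-product the same computation shows that $\eta(\cdot)$ is a genuine norm on $\mathcal{X}^{1,2}_{\mathcal D}(U)$: if $\eta(u)=0$ then $[u]_s=0$, so $u(x)=u(y)$ for a.e.\ $(x,y)\in\Omega\times\mathbb{R}^n$, which together with $u\equiv0$ a.e.\ in $U^c$ forces $u\equiv0$ a.e.\ in $\mathbb{R}^n$.
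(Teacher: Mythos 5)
Your argument is correct and complete. The paper itself does not write out a proof of Proposition \ref{Poin}; it only points to Proposition 2.4 of \cite{MR4065090} ``taking advantage of partial Dirichlet boundary conditions in $U^c$'', which is precisely the mechanism you exploit: restrict the Gagliardo double integral to $\Omega\times U^c\subseteq Q$ (valid since $Q=\mathbb{R}^{2n}\setminus(\Omega^c\times\Omega^c)\supseteq\Omega\times\mathbb{R}^n$), use $u=0$ a.e.\ on $U^c$, and bound the kernel mass $\int_{U^c}|x-y|^{-n-2s}\,dy$ below uniformly via the tail integral over $\mathbb{R}^n\setminus B_{2R}(x)$. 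All the individual steps check out, including the radial computation giving $c_0=|\mathbb{S}^{n-1}|(2R)^{-2s}/(2s)$, and your observation that only the nonlocal term is needed is a genuine (if minor) strengthening. One small point worth flagging: your constant depends on $R$, i.e.\ on the diameter of $U=\Omega\cup\mathcal N\cup(\partial\Omega\cap\overline{\mathcal N})$, not only on $\Omega$ as the statement's ``$C=C(\Omega,n,s)$'' suggests; this dependence is in fact unavoidable (taking $\mathcal N$ to be a very large annulus around $\Omega$ and $u\equiv 1$ on $\Omega$ decaying far out in $\mathcal N$ makes the right-hand side arbitrarily small while $\|u\|_{L^2(\Omega)}$ stays fixed), so the discrepancy is with the paper's loose notation, not with your proof. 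The closing remark that $\eta$ is a norm follows correctly from the same computation.
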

As a consequence of Proposition \ref{Poin}, $\eta(\cdot)$ forms a norm on $\mathcal{X}^{1,2}_{\mathcal{D}}(U)$ and $\mathcal{X}^{1,2}_{\mathcal{D}}(U)$ is a Hilbert space with the  inner product associated with $\eta(\cdot)$, defined for any $u,v\in \mathcal{X}^{1,2}_{\mathcal{D}}(U)$  by
$$\langle{ u},{ v}\rangle = \int_{\Omega} \nabla u. \nabla{v} \,dx + \int_{Q} {\dfrac{(u(x)-u(y)) (v(x)-v(y))}{|x-y|^{n+2s}}} ~ dx dy. $$
%We now describe few essential properties of this space. 
%\begin{Proposition}
%The space $\big(\mathcal{X}^{1,2}_{\mathcal{D}}(U), \langle. , .\rangle\big)$ is a Hilbert space with scalar product given by
%$$\langle{ u},{ v}\rangle := \int_{\mathbb{R}^n} \nabla u. \nabla{v} \,dx + \int_{\mathbb{R}^{2n}} {\dfrac{(u(x)-u(y)) (v(x)-v(y))}{|x-y|^{n+2s}}} ~ dx dy. $$
%\end{Proposition}
%\begin{proof}
 %  To prove this, we refer to the paper \cite{mukherjee2023nonlocal} 
%\end{proof}
%The variational formulation given in the above definition is a consequence of the integration by-parts formula given in the following proposition.
Consequently, we have the integration by-parts formula given in the following proposition.
 \begin{Proposition}\label{P}
 For every $ u,v\in  C^\infty_0(U)$, it holds
\begin{align*}
    \int_{\Omega}v \mathcal{L} u \,dx  
    &= \int_{\Omega} \nabla u \cdot\nabla{v} \,dx +  \int_{Q} {\dfrac{(u(x)-u(y)) (v(x)-v(y))}{|x-y|^{n+2s}}} ~ dx dy\\
    &-  \int_{\partial \Omega\cap\overline{{\mathcal{N}}}} v {\frac{\partial u}{\partial \nu}}~ d{\sigma}-  \int_{{\mathcal{N}}} v {\mathcal{N}}_s u~ dx.
    \end{align*}
    where $\nu$ denotes the outward normal on $\partial\Omega$.
    \end{Proposition}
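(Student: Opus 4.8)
The plan is to split $\mathcal L=-\Delta+(-\Delta)^s$ and prove the local and nonlocal Green--type identities separately, using throughout that $\operatorname{supp}v$ is a compact subset of $U$, hence disjoint from $U^c\supseteq\mathcal D$ and from the non-Neumann part $\partial\Omega\setminus\overline{\mathcal N}$ of $\partial\Omega$.

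For the local summand I would invoke the classical Gauss--Green formula on $\Omega$ — legitimate by the regularity hypotheses on the sets and because $u,v$ vanish near $\partial\Omega\setminus\overline{\mathcal N}$ — to get $\int_\Omega v(-\Delta u)\,dx=\int_\Omega\nabla u\cdot\nabla v\,dx-\int_{\partial\Omega}v\,\partial_\nu u\,d\sigma$, and then discard the contribution over $\partial\Omega\setminus\overline{\mathcal N}$ since $v\equiv0$ there, leaving $-\int_{\partial\Omega\cap\overline{\mathcal N}}v\,\partial_\nu u\,d\sigma$.

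For the nonlocal summand the goal is $\int_\Omega v(-\Delta)^s u\,dx=\frac{C_{n,s}}{2}\iint_Q\frac{(u(x)-u(y))(v(x)-v(y))}{|x-y|^{n+2s}}\,dx\,dy-\int_{\mathcal N}v\,\mathcal N_s u\,dx$, i.e.\ the mixed-boundary analogue of the nonlocal integration-by-parts formula of \cite{dipierro2017nonlocal}. I would (i) rewrite the principal value in symmetric second-difference form, using $|2u(x)-u(x+z)-u(x-z)|\le\|D^2u\|_\infty|z|^2$, so that $(-\Delta)^s u$ is a bounded continuous function; (ii) remove the diagonal by restricting to $\{|x-y|>\epsilon\}$, where, since $v$ has compact support and $\Omega$ is bounded, every integral appearing below is absolutely convergent and Fubini applies; (iii) split the $y$-integral over $\Omega$ and over $\Omega^c$: on $\Omega\times\Omega$ replace $v(x)$ by $\tfrac12(v(x)-v(y))$ via the antisymmetry of $u(x)-u(y)$, and on $\Omega\times\Omega^c$ write $v(x)=(v(x)-v(y))+v(y)$, noting that after relabeling $x\leftrightarrow y$ the $v(y)$-term equals $-\tfrac1{C_{n,s}}\int_{\Omega^c}v(x)\,\mathcal N_s u(x)\,dx$ by the definition \eqref{normal}; (iv) recombine the two $(u(x)-u(y))(v(x)-v(y))$ integrals into $\tfrac12\iint_Q$ using $Q=(\Omega\times\mathbb R^n)\cup(\Omega^c\times\Omega)$ and the $x\leftrightarrow y$ symmetry of the kernel; (v) pass to the limit $\epsilon\to0$ by dominated convergence; and (vi) observe $\int_{\Omega^c}v\,\mathcal N_s u=\int_{\mathcal N}v\,\mathcal N_s u$, since $v$ vanishes a.e.\ on $\Omega^c\setminus\mathcal N$ (it is $0$ on $\mathcal D$ and $\Omega^c\setminus(\mathcal D\cup\mathcal N)$ is Lebesgue-null). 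Adding the two identities gives the claim.

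The classical divergence theorem is routine; the main obstacle is the nonlocal part — rigorously removing the principal value, justifying the Fubini interchanges on $\{|x-y|>\epsilon\}$, and controlling the $\epsilon\to0$ passage, where one must verify that $\mathcal N_s u\in L^1_{\mathrm{loc}}(\Omega^c)$ near $\partial\Omega$ (for $s\ge\tfrac12$ it may grow like $\operatorname{dist}(\cdot,\Omega)^{1-2s}$, which is still integrable, so the last term is well defined). In keeping with the normalization of $\langle\cdot,\cdot\rangle$ and $\eta$ fixed in this section, the constant $C_{n,s}/2$ above is absorbed, which is why it is absent from the stated identity.
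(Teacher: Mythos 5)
Your proposal is correct and follows essentially the same route as the paper, which simply combines the classical Gauss--Green formula with the nonlocal integration-by-parts identity of Lemma 3.3 in \cite{dipierro2017nonlocal} and uses that $u,v$ vanish on $U^c$; you have merely written out in full the symmetrization argument that the cited lemma encapsulates. Your closing observation about the factor $C_{n,s}/2$ is also apt: the stated identity (like the paper's seminorm $[\cdot]_s$ and inner product) suppresses this normalization, so the bookkeeping you flag is a feature of the paper's conventions rather than a defect of your argument.
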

    \begin{proof}
        By directly using the integration by parts formula and the fact that $u,v \equiv 0$ a.e. in $\mathcal{D}\cup(\partial\Omega\cap\overline{\mathcal{D}})=U^c$, we can follow Lemma 3.3 of \cite{dipierro2017nonlocal}, to obtain the conclusion.
    \end{proof}
  \begin{Corollary}
Since $ C^\infty_0(U)$ is dense in $\mathcal{X}^{1,2}_{\mathcal{D}}(U)$, so Proposition \ref{P} still holds for functions in $\mathcal{X}^{1,2}_{\mathcal{D}}(U)$.   
\end{Corollary}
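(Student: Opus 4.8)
The plan is a routine density-and-continuity argument: extend the identity of Proposition~\ref{P} from $C^\infty_0(U)$ to $\mathcal{X}^{1,2}_{\mathcal{D}}(U)$ by approximation. Fix $u,v\in\mathcal{X}^{1,2}_{\mathcal{D}}(U)$. By the stated density of $C^\infty_0(U)$ in $\mathcal{X}^{1,2}_{\mathcal{D}}(U)$ there are sequences $(u_k),(v_k)\subset C^\infty_0(U)$ with $\eta(u_k-u)\to 0$ and $\eta(v_k-v)\to 0$. Proposition~\ref{P} applies to each pair $(u_k,v_k)$, so it suffices to let $k\to\infty$ in both sides of the identity, term by term, and check that the resulting limits match.

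The two bulk integrals are the easy part: their sum equals the inner product $\langle u_k,v_k\rangle$, and since $\langle\cdot,\cdot\rangle$ is continuous on $\mathcal{X}^{1,2}_{\mathcal{D}}(U)\times\mathcal{X}^{1,2}_{\mathcal{D}}(U)$ (Cauchy--Schwarz together with Proposition~\ref{Poin}), one has $\langle u_k,v_k\rangle\to\langle u,v\rangle$. On the left-hand side, $\int_{\Omega}v_k\,\mathcal{L}u_k\,dx$ should be read, for general $u$, as the duality pairing of $\mathcal{L}u\in(\mathcal{X}^{1,2}_{\mathcal{D}}(U))'$ against $v$, so that its convergence is forced once the remaining terms converge; when $u$ is regular enough that $\mathcal{L}u\in L^2(\Omega)$, it instead follows from $v_k\to v$ in $L^2(\Omega)$, which in turn is a consequence of $\eta$-convergence via Proposition~\ref{Poin}.

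The real work, and the step I expect to be the main obstacle, is the two boundary terms, which must first be identified as continuous linear functionals of $v$. For the local term one uses boundedness of the trace operator $H^1(\Omega)\to L^2(\partial\Omega)$, so $v_k|_{\partial\Omega}\to v|_{\partial\Omega}$ in $L^2(\partial\Omega)$; paired against $\partial u/\partial\nu\in L^2(\partial\Omega\cap\overline{\mathcal N})$ (here the smoothness of $\partial(\Omega\cup\mathcal N)$, together with whatever regularity of $u$ makes $\mathcal{L}u$ classically meaningful, is used), the integral passes to the limit. For the nonlocal term one must establish, from the explicit kernel in \eqref{normal}, the boundedness of $\mathcal N$ and $\mathcal N\cap\Omega=\emptyset$, a bound of the form $\big|\int_{\mathcal N}v\,\mathcal{N}_su\,dx\big|\le C\,\eta(v)$; the difficulty is that near the contact set $\partial\Omega\cap\overline{\mathcal N}$ the singularity $|x-y|^{-n-2s}$ in $\mathcal{N}_su(x)=C_{n,s}\int_\Omega(u(x)-u(y))|x-y|^{-n-2s}\,dy$ is not cancelled by a vanishing numerator, so one has to exploit the smoothness of $\partial(\Omega\cup\mathcal N)$ to keep $\mathcal{N}_su$ under control there --- precisely the type of estimate carried out in \cite{dipierro2017nonlocal}. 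Granting this, $v_k\to v$ in $L^2(\mathcal N)$ permits passage to the limit, and assembling the limits of all the terms yields the identity of Proposition~\ref{P} for every $u,v\in\mathcal{X}^{1,2}_{\mathcal{D}}(U)$.
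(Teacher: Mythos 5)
Your approach is the same one the paper intends: the paper offers no actual proof beyond the one-line density assertion, so your term-by-term limit argument is a legitimate (and more honest) expansion of it, and you correctly locate the real difficulty in the two boundary terms rather than in the bulk integrals. One point needs fixing, though. As written, you approximate \emph{both} arguments, replacing $(u,v)$ by $(u_k,v_k)\subset C^\infty_0(U)$ and then passing to the limit. But the boundary terms in Proposition \ref{P} involve $\partial u_k/\partial\nu$ on $\partial\Omega\cap\overline{\mathcal N}$ and $\mathcal N_s u_k$ on $\mathcal N$, and neither of these is controlled by $\eta(u_k-u)\to 0$: convergence in $H^1$ does not give convergence of normal derivatives on the boundary, nor of $\mathcal N_s u_k$ near the contact set where the kernel is singular. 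Your own discussion of the boundary terms quietly switches to a fixed $u$ with enough regularity that $\mathcal L u$, $\partial u/\partial\nu$ and $\mathcal N_s u$ are classically meaningful, pairing them against the approximations $v_k$ of the test function only --- that is the consistent version of the argument, and it matches how the corollary is actually used later in the paper (e.g.\ in the proof of Theorem \ref{theoNeu}, where $u=\varphi_1\in C^{1,\alpha}(\bar\Omega)$ and only the test function ranges over $\mathcal{X}^{1,2}_{\mathcal{D}}(U)$). You should therefore state the extension asymmetrically: keep $u$ regular, approximate $v$, and justify continuity of the two boundary functionals $v\mapsto\int_{\partial\Omega\cap\overline{\mathcal N}}v\,\partial_\nu u\,d\sigma$ and $v\mapsto\int_{\mathcal N}v\,\mathcal N_s u\,dx$ in the $\eta$-norm, exactly as you sketch. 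For fully general $u$ the identity can only be read with all four non-bulk terms interpreted by duality, at which point it says nothing beyond the definition of weak solution.
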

We now define the notion of weak solution to \eqref{1}.
\begin{Definition}\label{d1}
We say that $u\in \mathcal{X}^{1,2}_{\mathcal{D}}(U)$ is a weak solution to Problem \eqref{1} if 
\begin{equation}\label{dd2.2}
    \int_{\Omega} \nabla u\cdot\nabla \varphi \,dx +\int_{Q} \frac{(u(x)-u(y))(\varphi(x)-\varphi(y))}{|x-y|^{n+2s}} dxdy  = \lambda\int_{\Omega} u\varphi\,dx,
\end{equation}
for all $\varphi \in \mathcal{X}^{1,2}_{\mathcal{D}}(U).$
\end{Definition}

%For functions in $\mathcal{X}^{1,2}_{\mathcal{D}}(U)$,
Consequently to $\mathcal{X}^{1,2}_{\mathcal{D}}(U)\hookrightarrow H^1(\mathbb R^n)$ and Sobolev embeddings, we infer the following embedding result:
%\begin{Lemma}\label{embedd}
%There exists a constant $S>0$ such that 
% $$S\|u\|^2_{L^q(\mathbb{R}^n)}\leq  \eta(u)^2,~~ q\in \{2^*_s, 2^*\}, with~ 2^*_s=\frac{2n}{n-2s}\mbox{ and}\; 2^*=\frac{2n}{n-2},$$
% for $u \in \mathcal{X}^{1,2}_{\mathcal{D}}(U)$. In other words,
% $\mathcal{X}^{1,2}_{\mathcal{D}}(U) \hookrightarrow {{L^{2^*_{s}}}(\mathbb{R}^{n})} \cap {{L^{2^*}(\mathbb{R}^{n})}}.$
% \end{Lemma}
%\begin{proof}
 %Since, $u\in \mathcal{X}^{1,2}_{\mathcal{D}}(U) $ implies that $u\in H^{s}({\mathbb{R}^n})$ then  we  have
 %\begin{align*}
%    S_1\|u\|^2_{L^{2^*}(\mathbb{R}^n)}
%     &\leq  \|\nabla u\|^2_{L^{2}(\mathbb{R}^n)} \leq (\|\nabla u\|^2_{L^{2}(\mathbb{R}^n)} +[u]^2_s)= \eta(u)^2.
% \end{align*}
% By using [Theorem 1.4 in \cite{MR3445279}], we have 
 %$$
%S_0 \|u\|^2_{{L^{2^*_{s}}}(\mathbb{R}^{n})} \leq   \eta(u)^2,~\forall~ u\in \mathcal{X}^{1,2}_{\mathcal{D}}(U).$$ 
%Combining the above inequalities, we conclude the following.
%\end{proof}

\begin{remark}\label{r2.5}
   For $U$ is bounded (since $\Omega\cup\mathcal{N}$ is bounded) with smooth boundary, then we have compact embedding
    $$\mathcal{X}^{1,2}_{\mathcal{D}}(U)\hookrightarrow \hookrightarrow L^q_{loc}(\R^n)$$
   for $q\in [1,2^*)$ and continuous embedding for $q\in[1, 2^*].$
\end{remark}
Recalling $U$ is bounded and Proposition \ref{Poin}, we define $\lambda_1(\mathcal{D})$ as 
\begin{equation}\label{lambda}
\lambda_1(\mathcal{D})=\inf_{u\in \mathcal{X}^{1,2}_{\mathcal{D}}(U)\setminus \{0\}}  \frac{\int_{\Omega}|\nabla{u}|^2\,dx+ \int_{Q} \frac{|u(x)-u(y)|^{2}}{|x-y|^{n+2s}} \, dxdy} {\dint_{\Omega} | u|^2\,dx}\,.    
\end{equation}
\
Equivalently, we can write  $\lambda_1 (\mathcal{D}) $ as
$$\lambda_1(\mathcal{D})=\inf_{ u\in \mathcal{X}^{1,2}_{\mathcal{D}}(U)\setminus\{0\},\, \|u\|^2_{L^2(\Omega)}=1}  \bigg(\int_{\Omega}|\nabla{u}|^2\,dx+ \int_{Q} \frac{|u(x)-u(y)|^{2}}{|x-y|^{n+2s}} \, dx dy\bigg).$$
% where
% $$ \mathcal{X}^{1,2}_{\mathcal{D}}(U)= \{u\in H^1(\mathbb{R}^n) : ~u|_{\Omega\cup{\mathcal{N}}} \in H^1_0(\Omega\cup{\mathcal{N}}) ~\text{and}~ u \equiv 0~ a.e. ~\text{in}~ {\mathcal{D}}\subseteq\mathbb{R}^n\setminus\Omega\}.$$ 
\
It is worth noting that the study of distinct arrangements of  $\mathcal{D}$ and $\mathcal{N}$ in \eqref{1} is involved in the behaviour of the associated eigenvalues in the context of mixed operators as opposed to the purely local and nonlocal contexts. In our conditions,  the boundary of $\Omega$ is replaced by  $\R^n\setminus \Omega$ and various situations of both sets $\mathcal{D}$ and $\mathcal{N}$ can occur. In particular, one should take into consideration that sets $\mathcal{D}$, $\mathcal{N}$  can have different sizes and shapes in the geometric sense, precisely, how far they are located with respect to $\Omega$. We remark that $\mathcal{N}$ is necessarily bounded whereas $\mathcal{D}$ has infinite Lebesgue measure. 
%The outcome we first establish pertains to the account of how to strategically arrange a sequence of domains $\{\mathcal{N}_k\}_{k\in \mathbb{N}}$, in which the Neumann conditions are prescribed,  to prove  the convergence of the corresponding  first eigenvalue of $(P_\lambda)$ in presence of  Dirichlet boundary condition in $\R^n\setminus \Omega$. %\textcolor{red}{[J.G. This last sentence is not clear and should be simplified

The outcome we first establish pertains to the account of how to arrange a sequence of domains $\{\mathcal{N}_k\}_{k\in \mathbb{N}}$, where the Neumann condition is specified, in order to demonstrate that the corresponding first eigenvalue approaches the one with the Dirichlet condition entirely on $\R^N\setminus \bar{\Omega}$.

Giving the following definition,
\begin{Definition}
     We say that $\Omega$ is an admissible domain if it is a $C^{1,1}$ domain.
\end{Definition} 
Our following results i.e. Theorem \ref{theoNeu} and Theorem \ref{introd} are related to the behaviour of the eigenvalues to problem \eqref{1}.

 \begin{Theorem}\label{theoNeu}
Suppose that $\Omega$ is an admissible domain  and consider for any $k$,  Dirichlet and Neumann open sets, $\mathcal{D}_k,\, \mathcal{N}_k\subset \R^n\setminus \bar\Omega$ such  that
\begin{equation}\label{nkdk}
\mathcal{D}_k\cap \mathcal{N}_k=\emptyset, \ \ \ \ \big | \mathbb R^n\setminus (\Omega\cup \mathcal{D}_k \cup \mathcal{N}_k)\big| =0.
\end{equation} 
%Then,  the following  are \textcolor{red}{equivalent} :
If, additionally, the sequence  $\{\mathcal{N}_{k}\}_{k\geq 1}$ of sets associated with Neumann conditions satisfy the following- ~$\forall~ R>0$,
$ \lim\limits_{k\rightarrow\infty} |\mathcal{N}_{k}\cap B_{R}|=0$  and $\lim\limits_{k\to \infty} |\partial\Omega \cap\overline{\mathcal{N}_k}|=0$,
where  $B_R=\{x\in \mathbb{R}^n, |x|< R\}$  then $\displaystyle\lim\limits_{k\rightarrow \infty}\lambda_{1} (\mathcal{D}_k) =\lambda_{1} (\R^n\setminus \Omega)$.

%\begin{enumerate}
%\item [(1)]  $\displaystyle\lim\limits_{k\rightarrow \infty}\lambda_{1} (\mathcal{D}_k) =\lambda_{1} (\R^n\setminus \Omega) $;\\[1mm]
%\item  [(2)] The sequence  $\{\mathcal{N}_{k}\}_{k\geq 1}$ of sets associated with Neumann conditions \textcolor{Green}{satisfy the following}-\\ ~$\forall~ R>0$,
%$  \lim\limits_{k\rightarrow\infty} |\mathcal{N}_{k}\cap B_{R}|=0$ {\color{Green} and $\lim\limits_{k\to \infty} |\partial\Omega \cap\overline{\mathcal{N}_k}|=0$},
%where  \textcolor{red}{$B_R=\{x\in \mathbb{R}^n, |x|< R\}$ }.
%\end{enumerate}
\end{Theorem}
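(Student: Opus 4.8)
The plan is to prove the two one-sided bounds $\limsup_k\lambda_1(\mathcal{D}_k)\le\lambda_1(\R^n\setminus\Omega)$ and $\liminf_k\lambda_1(\mathcal{D}_k)\ge\lambda_1(\R^n\setminus\Omega)$ separately, in the spirit of the fractional result of \cite{MR3784437} but carrying along the extra local gradient term. The upper bound is the soft half: since every $\mathcal{D}_k\subseteq\R^n\setminus\bar\Omega$, enlarging the Dirichlet set only shrinks the admissible class, so any $u$ belonging to $\mathcal{X}^{1,2}_{\R^n\setminus\Omega}(\Omega)=\{u\in H^1(\R^n):u|_\Omega\in H^1_0(\Omega),\ u\equiv 0\text{ a.e. in }\Omega^c\}$ extends by zero to an admissible competitor in $\mathcal{X}^{1,2}_{\mathcal{D}_k}(U_k)$ on which the Rayleigh quotient \eqref{lambda} takes the same value; hence $\lambda_1(\mathcal{D}_k)\le\lambda_1(\R^n\setminus\Omega)$ for all $k$. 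In particular $\{\lambda_1(\mathcal{D}_k)\}$ is bounded, and it remains only to bound from below the limit of any convergent subsequence.

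For the lower bound I would run the direct method. Let $u_k\in\mathcal{X}^{1,2}_{\mathcal{D}_k}(U_k)$ realise the infimum in \eqref{lambda} (a minimiser exists by the compact embedding of Remark \ref{r2.5} together with weak lower semicontinuity of $\eta(\cdot)^2$), normalised so that $\|u_k\|_{L^2(\Omega)}=1$; then $\eta(u_k)^2=\lambda_1(\mathcal{D}_k)\le\lambda_1(\R^n\setminus\Omega)$. Thus $u_k|_\Omega$ is bounded in $H^1(\Omega)$ and $[u_k]_s$ is bounded, so after passing to a subsequence $u_k|_\Omega\weakly w$ in $H^1(\Omega)$ and, $\Omega$ being a bounded $C^{1,1}$ domain, $u_k\to w$ strongly in $L^2(\Omega)$ and a.e.\ in $\Omega$, whence $\|w\|_{L^2(\Omega)}=1$. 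Writing $\tilde w$ for the extension of $w$ by $0$ to $\R^n$, the crux is to verify that (a) $w\in H^1_0(\Omega)$, so that $\tilde w\in\mathcal{X}^{1,2}_{\R^n\setminus\Omega}(\Omega)$ is admissible for $\lambda_1(\R^n\setminus\Omega)$, and (b) $\eta(\tilde w)^2\le\liminf_k\eta(u_k)^2$.

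For (a): since $u_k\in H^1_0(U_k)$, the trace of $u_k|_\Omega$ on $\partial\Omega$ is supported in $\partial\Omega\cap\overline{\mathcal{N}_k}$, a set of surface measure going to $0$; as $\Omega$ is $C^{1,1}$ the trace map $H^1(\Omega)\to L^2(\partial\Omega)$ is compact, so these traces converge strongly to the trace of $w$, and using $H^{1/2}(\partial\Omega)\hookrightarrow L^q(\partial\Omega)$ for some $q>2$ together with H\"older's inequality and $|\partial\Omega\cap\overline{\mathcal{N}_k}|\to 0$ one gets that the trace of $w$ vanishes. For (b): from $\mathcal{D}_k\cap\overline{\mathcal{N}_k}=\emptyset$ and $|\R^n\setminus(\Omega\cup\mathcal{D}_k\cup\mathcal{N}_k)|=0$ one has $u_k\equiv 0$ a.e.\ on $\Omega^c\setminus\mathcal{N}_k$; splitting $Q=(\Omega\times\Omega)\cup(\Omega\times\Omega^c)\cup(\Omega^c\times\Omega)$ and discarding the nonnegative $\Omega\times\mathcal{N}_k$ interactions gives
\[
[u_k]_s^2\ \ge\ \int_{\Omega\times\Omega}\frac{|u_k(x)-u_k(y)|^2}{|x-y|^{n+2s}}\,dx\,dy\ +\ 2\int_\Omega|u_k(x)|^2\Big(\int_{\Omega^c\setminus\mathcal{N}_k}\frac{dy}{|x-y|^{n+2s}}\Big)dx .
\]
For fixed $x\in\Omega$ the kernel $y\mapsto|x-y|^{-n-2s}$ is bounded on $\Omega^c$, so $|\mathcal{N}_k\cap B_R|\to 0$ for every $R$ forces $\int_{\Omega^c\setminus\mathcal{N}_k}|x-y|^{-n-2s}\,dy\to\int_{\Omega^c}|x-y|^{-n-2s}\,dy$ pointwise on $\Omega$; combining this with $u_k\to w$ a.e.\ in $\Omega$ through Fatou's lemma, with weak lower semicontinuity of the $\Omega\times\Omega$ Gagliardo form and of $\|\nabla\cdot\|_{L^2(\Omega)}$, and with $\liminf(a_k+b_k)\ge\liminf a_k+\liminf b_k$, yields $\eta(\tilde w)^2\le\liminf_k\eta(u_k)^2=\liminf_k\lambda_1(\mathcal{D}_k)$. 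Since $\tilde w$ is admissible and $\|w\|_{L^2(\Omega)}=1$, the left-hand side is $\ge\lambda_1(\R^n\setminus\Omega)$, and together with the upper bound this gives $\lambda_1(\mathcal{D}_k)\to\lambda_1(\R^n\setminus\Omega)$.

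I expect the real work to lie in (a) and (b): one has to rule out that $L^2$-mass, local Dirichlet energy, or nonlocal interaction energy leaks into the shrinking Neumann regions as $k\to\infty$, and this is exactly where the two quantitative hypotheses $|\mathcal{N}_k\cap B_R|\to 0$ for all $R$ and $|\partial\Omega\cap\overline{\mathcal{N}_k}|\to 0$, together with the $C^{1,1}$ regularity of $\Omega$, enter. A point worth emphasising is that — because the competitor is built as the zero-extension of the $\Omega$-limit $w$ rather than as a limit of the $u_k$ themselves — one never needs a uniform bound on $u_k$ in $H^1(\R^n)$; boundedness of $u_k|_\Omega$ in $H^1(\Omega)$ and of $[u_k]_s$ suffices, which is fortunate since $\eta(\cdot)$ does not control the gradient on $\mathcal{N}_k$.
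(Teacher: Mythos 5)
Your proposal is correct, but it takes a genuinely different route from the paper's proof. The paper argues by duality with the limit problem: it uses the Dirichlet eigenfunction $\varphi_1$ of \eqref{ev-diribdry} as a test function in \eqref{sequence} and, via the integration-by-parts formula of Proposition \ref{P}, derives the exact identity \eqref{sub},
$(\lambda_{1}-\lambda_{1,k})\int_\Omega\varphi_1 u_{1,k}\,dx=-\int_{\overline{\mathcal N_k}\cap\partial\Omega}u_{1,k}\,\partial_\nu\varphi_1\,d\sigma+\int_{\mathcal N_k}\int_\Omega\varphi_1(y)u_{1,k}(x)|x-y|^{-n-2s}\,dy\,dx$;
the right-hand side is then driven to zero using the $C^{1,\alpha}(\bar\Omega)$ regularity of $\varphi_1$ (Lemma \ref{lam}), the uniform $L^\infty$ bound on $u_{1,k}$ (Proposition \ref{u1k}(3)) and Lemma \ref{integrable1}, while $\int_\Omega\varphi_1 u_{1,k}$ is kept away from zero through the weak limit $u^*$ of Proposition \ref{weakly}. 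You instead prove the two one-sided inequalities variationally: the upper bound by monotonicity of the admissible classes (this is Proposition \ref{u1k}(1) in the paper), and the lower bound by a direct-method argument on the normalised minimisers, where the hypothesis $|\partial\Omega\cap\overline{\mathcal N_k}|\to0$ enters through the compact trace $H^1(\Omega)\to L^2(\partial\Omega)$ and H\"older to force the weak limit into $H^1_0(\Omega)$, and $|\mathcal N_k\cap B_R|\to0$ enters through Fatou applied to the $\Omega\times(\Omega^c\setminus\mathcal N_k)$ interaction after discarding the nonnegative $\Omega\times\mathcal N_k$ terms. Your route avoids the $L^\infty$ estimate, the $C^{1,\alpha}$ regularity of $\varphi_1$ and the integrability lemma entirely, needing only the energy bound $\eta(u_k)^2\le\lambda_1(\R^n\setminus\Omega)$ together with standard trace and lower-semicontinuity facts for $C^{1,1}$ domains; what the paper's identity buys in exchange is a quantitative expression for the deficit $\lambda_1-\lambda_{1,k}$ in terms of the Neumann data, a template that is reused for the dissipating Dirichlet sets in Section \ref{secDir}. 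Both arguments consume the two hypotheses on $\{\mathcal N_k\}$ in the corresponding places (surface term versus trace; bulk term versus kernel integral), so neither assumption is redundant in your version either.
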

In fact, within a specific range of $ s$, we observe a similar outcome that guarantees the convergence of the sequence$\{\lambda_1 (\mathcal{D}_k)\}_k$ to zero.

\begin{Theorem}\label{introd}
If $0<s<1/2$, $\Omega\subseteq\R^n$  is an admissible domain. Suppose $\mathcal{D}_k,\, \mathcal{N}_k\subset \R^n\setminus \bar\Omega$ such that \begin{equation*}
\mathcal{D}_k\cap \mathcal{N}_k=\emptyset, \ \ \ \ \big | \mathbb R^n\setminus (\Omega\cup \mathcal{D}_k \cup \mathcal{N}_k)\big| =0.
\end{equation*} 
If, additionally, the sequence  $\{\mathcal{D}_{k}\}_{k\geq 1}$ of sets associated with Dirichlet conditions satisfy the following- ~$\forall~ R>0$,
$\lim _{k \rightarrow \infty}\left|\mathcal{D}_k \cap B_R\right|=0$ and
$\lim_{k\to \infty} |\overline {{\mathcal{D}_k}}\cap \partial\Omega|=0$   
then $\lim _{k \rightarrow \infty} \lambda_{1,k}=0$.
\end{Theorem}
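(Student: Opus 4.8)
\medskip
\noindent\textbf{Strategy.}
The plan is to bound $\lambda_{1,k}=\lambda_1(\mathcal{D}_k)$ from above directly through the Rayleigh quotient in \eqref{lambda}: since $\lambda_{1,k}\ge 0$, it suffices to exhibit, for each $k$, a competitor $u_k\in\mathcal{X}^{1,2}_{\mathcal{D}_k}(U_k)\setminus\{0\}$ with $\eta(u_k)^2/\|u_k\|^2_{L^2(\Omega)}\to 0$. The guiding principle is that, because the local part of $\eta(\cdot)^2$ integrates $|\nabla u|^2$ only over $\Omega$, a function that equals $1$ on the bulk of $U_k$ and vanishes on a thin hole around the dissipating set $\mathcal{D}_k$ is almost admissible and carries little energy: the passage of $u_k$ from $1$ to $0$ performed \emph{outside} $\Omega$ costs nothing for the local term and is only felt by the nonlocal Gagliardo term, while the only local cost comes from a tubular neighbourhood of $\partial\Omega\cap\overline{\mathcal{D}_k}$ \emph{along the smooth hypersurface} $\partial\Omega$, which is small because $|\overline{\mathcal{D}_k}\cap\partial\Omega|\to 0$.

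\noindent\textbf{Construction.}
Fix $R_0$ with $\overline\Omega\subset B_{R_0}$ and set $K_k:=\partial\Omega\cap\overline{\mathcal{D}_k}$, so $\beta_k:=|K_k|\to0$ by hypothesis. Using $|\mathcal{D}_k\cap B_R|\to0$ for every $R$, a diagonal choice gives $R_k\uparrow\infty$ with $|\mathcal{D}_k\cap B_{2R_k}|\to0$. Put $\rho_k:=\sqrt{\beta_k}$ and take an open set $\omega_k$ containing both $\overline{\mathcal{D}_k}\cap\overline{B_{2R_k}}$ and the collar $\{x:\operatorname{dist}(x,K_k)<\rho_k\}$, with $|\omega_k|\to0$; here one invokes the outer regularity of Lebesgue measure together with the $C^{1,1}$-regularity of $\partial\Omega$, which bounds the $n$-volume of a $\rho$-collar of a hypersurface piece of area $\beta$ by $C\rho\beta$. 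Pick cut-offs $\phi_k\in C^\infty(\R^n)$ with $0\le\phi_k\le1$, $\phi_k\equiv0$ near $\overline{\mathcal{D}_k}\cap\overline{B_{2R_k}}$, $\phi_k\equiv1$ off $\omega_k$ and $|\nabla\phi_k|\le C/\rho_k$, and $\psi_k\in C_c^\infty(B_{2R_k})$ with $\psi_k\equiv1$ on $B_{R_k}$, and set $u_k:=\phi_k\psi_k$. A routine verification gives $u_k\in\mathcal{X}^{1,2}_{\mathcal{D}_k}(U_k)$ (it is smooth, vanishes on $U_k^c\subseteq\overline{\mathcal{D}_k}$, and its restriction to $U_k$ vanishes near $\partial U_k$), and since $u_k\equiv1$ on $\Omega\setminus\omega_k$ one gets $\|u_k\|^2_{L^2(\Omega)}\ge|\Omega|-|\omega_k|\to|\Omega|>0$.

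\noindent\textbf{Energy estimate.}
It remains to show $\eta(u_k)^2=\int_\Omega|\nabla u_k|^2+[u_k]_s^2\to0$. For the local term, for large $k$ we have $\psi_k\equiv1$ near $\Omega$, so $\nabla u_k=\nabla\phi_k$ on $\Omega$, nonzero only on $\omega_k\cap\Omega$; hence $\int_\Omega|\nabla u_k|^2\le C\rho_k^{-2}|\omega_k\cap\Omega|\le C'\beta_k/\rho_k=C'\sqrt{\beta_k}\to0$. For the Gagliardo term, split $Q$ into $\Omega\times\Omega$ and the two symmetric pieces touching $\Omega^c$. On $\Omega\times\Omega$, write $u_k=1+(u_k-1)$ and use $H^1(\Omega)\hookrightarrow H^s(\Omega)$ ($s<1$, $\Omega$ bounded Lipschitz) to bound this part by $C(\|u_k-1\|^2_{L^2(\Omega)}+\|\nabla u_k\|^2_{L^2(\Omega)})\le C(|\omega_k|+\sqrt{\beta_k})\to0$. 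On the pieces touching $\Omega^c$, use $|u_k(x)-u_k(y)|^2\le2|u_k(x)-1|^2+2|1-u_k(y)|^2$ and the bound $\int_{\Omega^c}|x-y|^{-n-2s}\,dy\le C\operatorname{dist}(x,\partial\Omega)^{-2s}$ for $x\in\Omega$ (and its analogue for $y\in\Omega^c$) to reduce matters to $C\int_{\omega_k\cap\Omega}\operatorname{dist}(x,\partial\Omega)^{-2s}\,dx$, $C\int_{\omega_k\cap\Omega^c}\operatorname{dist}(y,\partial\Omega)^{-2s}\,dy$, and the tail $\int_\Omega\int_{\R^n\setminus B_{R_k}}|x-y|^{-n-2s}\,dy\,dx\le C|\Omega|R_k^{-2s}$. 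The tail tends to $0$; and \emph{here is where $0<s<1/2$ enters}: since $\partial\Omega$ is $C^{1,1}$, $\operatorname{dist}(\cdot,\partial\Omega)^{-2s}$ is integrable near $\partial\Omega$ precisely when $2s<1$, hence lies in $L^1(B_{R_0+1})$, so the two collar integrals — whose domains shrink to measure $0$, the part of $\omega_k$ outside $B_{R_0+1}$ being harmless since $\operatorname{dist}(\cdot,\partial\Omega)$ is bounded below there — tend to $0$ by absolute continuity of the integral. Thus $\eta(u_k)^2\to0$ while $\|u_k\|^2_{L^2(\Omega)}$ stays bounded below, so $\lambda_{1,k}\to0$.

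\noindent\textbf{Main obstacle.}
I expect the bookkeeping near $\partial\Omega\cap\overline{\mathcal{D}_k}$ to be the main difficulty: one must build $u_k$ so that its Dirichlet energy \emph{inside} $\Omega$ vanishes in the limit — a relative-capacity estimate that relies on the $C^{1,1}$ (``admissible'') regularity of $\partial\Omega$ and on reading $|\overline{\mathcal{D}_k}\cap\partial\Omega|$ as $(n-1)$-dimensional measure — and simultaneously keep the neighbourhood $\omega_k$ of $\overline{\mathcal{D}_k}$ of vanishing Lebesgue measure, which requires some mild regularity of $\mathcal{D}_k$ (for instance $|\partial\mathcal{D}_k|=0$, or a covering of the relevant compact piece by finitely many balls). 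The genuinely $s$-sensitive ingredient — and the reason the statement restricts to $s<1/2$ — is the local integrability of $\operatorname{dist}(\cdot,\partial\Omega)^{-2s}$ near $\partial\Omega$, which controls the interaction between the region $\{u_k=1\}\supseteq\Omega\setminus\omega_k$ and the hole punched around $\mathcal{D}_k$ when it comes close to $\partial\Omega$; all the remaining steps (the tail bound $R_k^{-2s}\to0$, the $\Omega\times\Omega$ estimate via $H^1\hookrightarrow H^s$, the diagonal choice of $R_k$) are valid for every $s\in(0,1)$.
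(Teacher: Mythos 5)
Your strategy is genuinely different from the paper's. The paper never constructs test functions: it pairs the actual eigenfunction $u_{1,k}$ with the constant Neumann eigenfunction $\psi_1=|\Omega|^{-1/2}$ via the Green-type identity of Proposition \ref{P}, reducing everything to the interaction integral $\int_{\mathcal{D}_k}\int_\Omega |x-y|^{-n-2s}\,dy\,dx$ plus a surface term on $\overline{\mathcal{D}_k}\cap\partial\Omega$, and then invokes the uniform $L^\infty$ bound of Proposition \ref{u1k}(3) and the integrability lemmas of \cite{MR3784437}. You instead produce a direct Rayleigh-quotient upper bound with an explicit quasi-constant competitor. Both arguments use $s<1/2$ for the same reason (local integrability of $\operatorname{dist}(\cdot,\partial\Omega)^{-2s}$, equivalently of $x\mapsto\int_\Omega|x-y|^{-n-2s}dy$ near $\partial\Omega$), and your identification of this as the $s$-sensitive step is exactly right. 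Your route has the advantage of being self-contained (no regularity theory for $u_{1,k}$, no duality identity whose boundary terms need justification); the paper's route has the advantage that the quantity it must estimate lives on the \emph{open} set $\mathcal{D}_k$, which is precisely what the hypothesis $|\mathcal{D}_k\cap B_R|\to 0$ controls.

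That last point is where your proof has a genuine gap, which you flag as the ``main obstacle'' but do not close. Two of your steps require information that the stated hypotheses do not supply. First, you need an open $\omega_k\supseteq\overline{\mathcal{D}_k}\cap\overline{B_{2R_k}}$ with $|\omega_k|\to 0$; but the hypothesis bounds $|\mathcal{D}_k\cap B_R|$, not the measure of the \emph{closure} or of a neighbourhood of $\mathcal{D}_k$ (take $\mathcal{D}_k$ a union of tiny balls centred at a dense set: $|\mathcal{D}_k|$ is small while every neighbourhood of $\overline{\mathcal{D}_k}$ has full measure), and one cannot drop the requirement that $u_k$ vanish near $\overline{\mathcal{D}_k}$, since membership in $\mathcal{X}^{1,2}_{\mathcal{D}_k}(U_k)$ forces vanishing quasi-everywhere on $U_k^c$, not merely a.e.\ on the open set $\mathcal{D}_k$. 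Second, and more seriously, the estimate $\int_\Omega|\nabla\phi_k|^2\le C\rho_k^{-2}|\omega_k\cap\Omega|\le C'\sqrt{\beta_k}$ is a \emph{relative $H^1$-capacity} bound for $K_k=\partial\Omega\cap\overline{\mathcal{D}_k}$ in terms of $\beta_k=\mathcal H^{n-1}(K_k)$, with constants uniform in $k$. The covering/collar argument only yields $|\{\operatorname{dist}(\cdot,K_k)<\rho\}|\le C\rho\,\mathcal H^{n-1}(\{w\in\partial\Omega:\operatorname{dist}(w,K_k)<2\rho\})$, and the fattened surface measure converges to $\beta_k$ only as $\rho\to 0$ for \emph{fixed} $k$, with no uniform rate; worse, $\mathcal H^{n-1}(K_k)\to 0$ simply does not imply $\mathrm{cap}_{1,2}(K_k)\to 0$ (a compact subset of $\partial\Omega$ of Hausdorff dimension strictly between $n-2$ and $n-1$ has zero surface measure but positive $H^1$-capacity, and any admissible competitor must vanish q.e.\ on it). So under the literal hypotheses your competitor may fail to exist, and the argument as written is incomplete; it becomes correct under the mild extra regularity you yourself mention ($|\partial\mathcal{D}_k|$ negligible, $K_k$ covered by controllably many surface balls), which is also, implicitly, the regime in which the paper's own treatment of the surface term in \eqref{4.2.2} is justified.
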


Next, we consider problem \eqref{2} in the frame of the bifurcation setting.  The following result pertains to the existence of a bifurcation point that has an unbounded connected component $\Gamma_0\subset\Gamma$ of positive solutions. 

 We fix \begin{equation}\label{lem0}
    \lambda_0=\frac{\lambda_1(\mathcal{D})}{a} ~~\text{with}~~a>0. 
    \end{equation}

\begin{Theorem}\label{thm210}
If $(f1)$, $(f2)$, $(f3)$ holds and $h(0)=0$, $\lambda_0$ is the unique bifurcation point from zero for positive solutions of \ref{2}. More precisely, there exists an unbounded connected component $\Gamma_0\subset\Gamma$ of positive solutions to \ref{2} emanating from $(\lambda_0, 0)$ and $\lambda_0$ is the only value satisfying this property.    
\end{Theorem}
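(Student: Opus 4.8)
\textbf{Proof proposal for Theorem \ref{thm210}.}

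The plan is to recast \eqref{2} as a fixed-point equation in the Hilbert space $\mathcal{X}^{1,2}_{\mathcal{D}}(U)$ and apply the Rabinowitz global bifurcation theorem \cite{Rabinowitz} together with a Crandall--Rabinowitz type uniqueness argument. First I would define the solution operator: by Proposition \ref{Poin} and the compact embedding of Remark \ref{r2.5}, for $w\in\mathcal{X}^{1,2}_{\mathcal{D}}(U)$ the linear problem $\mathcal{L}u=\lambda h(w)$ has a unique weak solution $u=:K_\lambda(w)$, and $K_\lambda=\lambda\,(\mathcal{L})^{-1}\circ h$ is compact because $h$ is continuous with at most linear growth (by $(f1)$--$(f2)$) and $(\mathcal{L})^{-1}:L^2(\Omega)\to\mathcal{X}^{1,2}_{\mathcal{D}}(U)$ is continuous while the embedding $\mathcal{X}^{1,2}_{\mathcal{D}}(U)\hookrightarrow\hookrightarrow L^2(\Omega)$ is compact. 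Writing $h(t)=at+g(t)$ with $g(t)=h(t)-at$, condition $(f3)$ gives $g(t)=o(|t|)$ as $t\to 0^+$, so $K_\lambda(u)=\lambda a\,(\mathcal{L})^{-1}u+o(\|u\|)$ near $u=0$; the equation $u-K_\lambda(u)=0$ is thus a compact perturbation of the identity that is asymptotically linear at the origin with linearization $I-\lambda a(\mathcal{L})^{-1}$.

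Next I would identify the candidate bifurcation values as the reciprocals of the eigenvalues of the compact self-adjoint operator $(\mathcal{L})^{-1}$ on $L^2(\Omega)$, i.e. $\lambda a=\lambda_1(\mathcal{D})$, which is exactly $\lambda_0=\lambda_1(\mathcal{D})/a$ as in \eqref{lem0}. Since we restrict to \emph{positive} solutions and, by $(f1)_0$, $h$ is extended by zero on $(-\infty,0]$, the only relevant eigenvalue is the principal one $\lambda_1(\mathcal{D})$: by the results of Section 3 it is simple, isolated, and its eigenfunction $\varphi_1$ is the unique one (up to sign) that does not change sign. Hence $\lambda_0$ is the only point on the trivial line at which the Leray--Schauder index of $I-K_\lambda$ can change, because the crossing number at $\lambda_0$ is $\pm1$ (odd algebraic multiplicity of $\lambda_1(\mathcal{D})$), and at every other $\lambda$ the index is locally constant. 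Rabinowitz's theorem then yields a global continuum $\Gamma_0$ emanating from $(\lambda_0,0)$ which is either unbounded or returns to the trivial line at another eigenvalue-reciprocal; a standard positivity/cone argument — using the strong maximum principle (Lemma \ref{strmx}) to show that solutions on $\Gamma_0$ near $(\lambda_0,0)$ inherit the sign of $\varphi_1$ and stay positive, together with the fact that $\lambda_1(\mathcal{D})$ is the only eigenvalue with a positive eigenfunction — rules out the second alternative, forcing $\Gamma_0$ to be unbounded. Uniqueness of the bifurcation point among \emph{positive} solutions follows the same way: if $(\lambda_n,u_n)\to(\mu,0)$ with $u_n>0$, then $v_n=u_n/\|u_n\|$ converges (along a subsequence, by compactness) to a nonnegative solution of $v=\mu a(\mathcal{L})^{-1}v$ with $\|v\|=1$, so $\mu a$ is an eigenvalue of $(\mathcal{L})^{-1}$ with nonnegative eigenfunction, whence $\mu a=\lambda_1(\mathcal{D})$, i.e. $\mu=\lambda_0$.

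The main obstacle I anticipate is the positivity bookkeeping needed to turn the abstract continuum into a \emph{positive} continuum and to exclude the "loop back to the trivial line" alternative: one must verify that near $(\lambda_0,0)$ every nontrivial solution on $\Gamma_0$ is of the form $t\varphi_1+o(t)$ with $t>0$ — which requires the strong maximum principle and $L^\infty$/H\"older regularity of solutions so that the nonlinear term $h(u)$ behaves well and the solution lies in the interior of the positive cone — and then to propagate positivity globally along $\Gamma_0$ using a connectedness argument (the set of $(\lambda,u)\in\Gamma_0$ with $u>0$ in $\Omega$ is open and closed in $\Gamma_0\setminus\{(\lambda_0,0)\}$, again by Lemma \ref{strmx}). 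A secondary technical point is checking that $h$'s linear growth genuinely gives compactness of $K_\lambda$ uniformly on bounded sets and continuity in $\lambda$, and that the extension $(f1)_0$ does not introduce spurious solutions — but this is routine given the embedding in Remark \ref{r2.5}.
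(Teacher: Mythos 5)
Your overall strategy --- recast \eqref{ql} as a compact fixed-point problem, detect an index jump at $\lambda_0$, invoke global bifurcation, and prove uniqueness of the bifurcation point by the blow-up argument $v_n=u_n/\|u_n\|$ --- is the same as the paper's, and your final uniqueness argument is essentially Lemma \ref{lem5.3}(a). There are, however, two concrete gaps. First, you set everything in the Hilbert space $\mathcal{X}^{1,2}_{\mathcal{D}}(U)$, but the positive cone of an $H^1$- or $L^2$-type space has empty interior, so your key steps ``the solution lies in the interior of the positive cone'' and the open-and-closedness of $\{u>0\}$ along $\Gamma_0$ cannot be run there. This is precisely why the paper works in the Banach space $X\subset C(\mathbb{R}^n)$ and spends Lemma \ref{cpto} proving that $K=L^{-1}$ is compact on $X$ via the $W^{2,p}\hookrightarrow\hookrightarrow C^{1,\beta}$ regularity of Theorem \ref{thm5.2}; compactness of $\mathcal{L}^{-1}$ on $L^2$ (Remark \ref{r2.5}) alone does not produce a setting in which the strong maximum principle yields cone-interior points. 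You acknowledge needing $L^\infty$/H\"older regularity but treat it as bookkeeping; it actually forces a change of ambient space and a nontrivial compactness lemma.

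Second, your claim that the index can only change at $\lambda_0$ ``because the crossing number is $\pm1$ (odd multiplicity)'' presupposes that $u\mapsto \lambda K[h(u)]$ is Fr\'echet differentiable at $0$ with derivative $\lambda a K$. Because of the extension $(f1)_0$ (i.e.\ $h\equiv 0$ on $(-\infty,0]$), $h$ has different one-sided derivatives at $0$, so this linearization does not exist and the classical odd-multiplicity crossing theorem does not apply as stated; in fact the index jumps from $1$ to $0$ (not to $-1$), which the paper obtains by direct homotopy computations: Lemma \ref{lem5.3}(c) gives $\operatorname{ind}(I_\lambda,0)=1$ for $\lambda<\lambda_0$, and Lemma \ref{lem5.4}(b) gives $\operatorname{ind}(I_\lambda,0)=0$ for $\lambda>\lambda_0$ via the ``$I_\lambda(u)\neq b\varphi_1$'' device, both resting on the weak and strong maximum principles (Lemmas \ref{weak-max}, \ref{strmx}) and on $\lambda_1(\mathcal{D})$ being the unique eigenvalue with a signed eigenfunction (Proposition \ref{prop-1}). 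Note also that the paper does not exclude the second Rabinowitz alternative by the cone argument you sketch; it passes instead to the analytic global bifurcation framework of Buffoni--Toland, showing $\partial_u I$ is Fredholm of index $0$ and ruling out the closed-loop and the $(\lambda_n,u_n)\to(0,u_0)$ scenarios, the latter by testing against $\varphi_1$. Your route can be made to work, but only after the two repairs above.
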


The second result pertains to the existence of a bifurcation point from infinity, along with an unbounded connected component $\Gamma_{\infty}\subset\Gamma$ of nontrivial solutions.
We fix $\lambda_{\infty}=\frac{\lambda_1(\mathcal{D})}{\theta}$, for $\theta>0$ defined in $(f2)$.
\begin{Theorem}\label{thm52} 
Under $(f1)$ and $(f2)$,  $\lambda_{\infty}$ is a unique bifurcation point from infinity for positive solutions of \ref{2}. More precisely, there exists an unbounded component $\Gamma_{\infty}\subset\Gamma$ of positive solutions of \ref{2} emanating from  $(\lambda_{\infty}, \infty)$ and $\lambda_\infty$ is the only value satisfying this property.
    
\end{Theorem}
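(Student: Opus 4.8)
# Proof Proposal for Theorem \ref{thm52}

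The plan is to follow the Leray--Schauder global bifurcation framework adapted to bifurcation from infinity, in the spirit of Rabinowitz \cite{Rabinowitz} and the asymptotically linear treatment of Arcoya--Gámez \cite{arcoya2001bifurcation}. The starting point is to reformulate \eqref{2} as a fixed point problem in $\mathcal{X}^{1,2}_{\mathcal{D}}(U)$. Let $K : \mathcal{X}^{1,2}_{\mathcal{D}}(U) \to \mathcal{X}^{1,2}_{\mathcal{D}}(U)$ be the solution operator for $\mathcal{L}$ under the given mixed boundary conditions, i.e. $w = K(g)$ is the unique weak solution of $\mathcal{L}w = g$ in $\Omega$ with the Dirichlet and nonlocal/classical Neumann conditions; by Proposition \ref{Poin}, the variational structure, and the compact embedding of Remark \ref{r2.5}, $K$ is well-defined, linear, and compact. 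Then $(\lambda, u)$ solves \eqref{2} iff $u = \lambda K(h(u)) =: \lambda K(\theta u + f(u))$, using $(f2)$. Writing this as $u = \lambda\theta K(u) + \lambda K(f(u))$, the key observation is that since $|f(t)| \le C$ by $(f2)$, the map $u \mapsto K(f(u))$ sends bounded sets to bounded sets and, more importantly, $\|K(f(u))\|_{\mathcal{X}} = o(\|u\|_{\mathcal{X}})$ is \emph{not} quite what we want — rather $K(f(u))$ stays bounded while $\|u\| \to \infty$, which is exactly the structure giving bifurcation from infinity. The inverted change of variables $v = u/\|u\|^2$ transforms bifurcation from infinity at $\lambda_\infty$ into bifurcation from zero at $\lambda_\infty$ for the transformed equation $v = \lambda\theta K(v) + \|v\|^2 \lambda K\!\left(f(v/\|v\|^2)\right)$, whose nonlinear term is $o(\|v\|)$ near $v=0$.

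The second step is to identify $\lambda_\infty = \lambda_1(\mathcal{D})/\theta$ as the only candidate. The linearization at infinity is $u = \lambda\theta K(u)$, i.e. $\mathcal{L}u = \lambda\theta u$, so bifurcation from infinity can only occur at $\lambda$ with $\lambda\theta$ an eigenvalue of $\mathcal{L}$ (under these boundary conditions); restricting to \emph{positive} solutions and invoking the results of Section 3 — that $\lambda_1(\mathcal{D})$ is the principal eigenvalue, simple, with eigenfunction of constant sign, and that no eigenfunction associated with a higher eigenvalue is of constant sign — forces $\lambda\theta = \lambda_1(\mathcal{D})$, hence $\lambda = \lambda_\infty$. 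The same argument rules out other bifurcation-from-infinity points. To show that $\lambda_\infty$ genuinely \emph{is} a bifurcation point, I would verify that the Leray--Schauder degree $\deg(I - \lambda\theta K, B_r, 0)$ jumps as $\lambda$ crosses $\lambda_\infty$; this follows from the simplicity of $\lambda_1(\mathcal{D})$ via the standard Krasnoselskii/Rabinowitz odd-multiplicity criterion (algebraic multiplicity one is odd). Rabinowitz's global bifurcation theorem for bifurcation from infinity \cite{Rabinowitz} then yields an unbounded connected component $\Gamma_\infty$ of the closure of nontrivial solutions meeting $(\lambda_\infty, \infty)$.

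The third step is to confine $\Gamma_\infty$ to the positive cone and establish that it is globally unbounded in the required sense. Positivity of solutions on $\Gamma_\infty$ near the bifurcation point follows from the strong maximum principle (Lemma \ref{strmx}) together with the H\"older/$C^{1,\beta}$ regularity (Theorem \ref{thm5.2}): a solution close to $+\infty\cdot\varphi_1$, where $\varphi_1>0$ is the principal eigenfunction, inherits strict positivity; and by a connectedness argument (the boundary of the positive cone consists of sign-changing or vanishing functions, which by the maximum principle cannot be nontrivial solutions for $\lambda$ near $\lambda_\infty$) the whole local branch stays positive. Because the only possible bifurcation points — from zero by Theorem \ref{thm210}, and from infinity — are $\lambda_0$ and $\lambda_\infty$ respectively, $\Gamma_\infty$ cannot return to the trivial line except possibly at $\lambda_0$, and in any case Rabinowitz's alternative forces $\Gamma_\infty$ to be unbounded in $\mathbb{R}^+\times\mathcal{X}^{1,2}_{\mathcal{D}}(U)$.

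The main obstacle I anticipate is the second step: rigorously justifying the degree jump and the positivity persistence simultaneously. The degree computation requires knowing precisely the algebraic multiplicity of $1/\lambda_1(\mathcal{D})$ as an eigenvalue of the compact operator $\theta K$, which rests on the simplicity result from Section 3; one must be careful that "simple" there means algebraically and not merely geometrically simple (for self-adjoint $K$ in the Hilbert space with inner product $\eta(\cdot,\cdot)$ these coincide, so the self-adjointness of $K$ — which follows from the symmetric bilinear form defining $\langle\cdot,\cdot\rangle$ — is the clean route). The other delicate point is checking that the nonlinear remainder in the inverted variables is genuinely compact and $o(\|v\|)$ uniformly; this uses $(f1)$ (so that $h$, hence $f$, is at least continuous, giving compactness of $u\mapsto K(f(u))$ via Remark \ref{r2.5}) and the uniform bound in $(f2)$. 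Note $(f3)$ is not needed here, consistent with the hypotheses of the statement.
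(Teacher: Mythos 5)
Your overall architecture -- reformulation as $u=\lambda K(h(u))$ with $K$ compact, the Kelvin-type inversion $v=u/\|u\|^2$ turning bifurcation from infinity into bifurcation from zero with an $o(\|v\|)$ remainder (indeed $O(\|v\|^2)$ thanks to $|f|\le C$), identification of $\lambda_\infty=\lambda_1(\mathcal D)/\theta$ via the principality and simplicity of $\lambda_1(\mathcal D)$, a Leray--Schauder degree jump across $\lambda_\infty$, and Rabinowitz's global alternative -- is exactly the paper's route (Lemmas \ref{l57}, \ref{l58} and the proof of Theorem \ref{thm52}). Two of your choices, however, diverge in ways worth flagging, one of which is a genuine gap as written.

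First, the paper does not compute the degree jump through the odd-multiplicity/crossing-number criterion you invoke. It computes the index of $\tilde\Psi_\lambda$ at $0$ directly: for $\lambda<\lambda_\infty$ an a priori bound ($I_{t\lambda}(u)\neq 0$ for $\|u\|\ge R$, all $t\in[0,1]$) allows a homotopy to the identity, giving index $1$; for $\lambda>\lambda_\infty$ the classical trick of showing $I_\lambda(u)\neq b\varphi_1$ for all $b\ge 0$ and $\|u\|\ge R$ (testing against $\varphi_1$ and using $f>0$) gives index $0$. Your route via algebraic simplicity of $1/\lambda_1(\mathcal D)$ for the self-adjoint $K$ would also work, but the direct index computation avoids any discussion of algebraic versus geometric multiplicity and simultaneously delivers the nonexistence statement (Lemma \ref{5.1lem}) that bounds the $\lambda$-range of $\Gamma$.

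Second, and more seriously: your third step confines $\Gamma_\infty$ to the positive cone by a connectedness argument in $\mathcal X^{1,2}_{\mathcal D}(U)$, asserting that a solution close to $+\infty\cdot\varphi_1$ ``inherits strict positivity.'' In the Hilbert space $\mathcal X^{1,2}_{\mathcal D}(U)$ the positive cone has empty interior, so norm-closeness to (a multiple of) $\varphi_1$ gives no pointwise information and this step does not go through as stated. The paper sidesteps the issue in two ways you do not mention: it works in the sup-norm space $X\subset C(\mathbb R^n)$ (with compactness of $K$ coming from $W^{2,p}$ regularity and the embedding into $C^{1,\beta}$, Lemma \ref{cpto}), and -- crucially -- it uses the extension $(f1)_0$, i.e.\ $h\equiv 0$ on $(-\infty,0]$, so that the weak maximum principle (Lemma \ref{weak-max}) forces \emph{every} solution of $u=\lambda K(h(u))$ to be nonnegative, after which the strong maximum principle (Lemmas \ref{Ns} and \ref{strmx}) gives strict positivity. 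Positivity is thus automatic along the whole continuum, with no cone-invariance argument needed. Without this (or an equivalent device), your identification of the blow-up limit $w=\lim u_k/\|u_k\|$ as a \emph{nonnegative} eigenfunction -- the step that pins down $\lambda\theta=\lambda_1(\mathcal D)$ and excludes other bifurcation points -- is also unjustified. I recommend you adopt the truncation-plus-weak-maximum-principle mechanism and the $C^0$ functional setting to close this gap.
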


% We denote
% $$\langle\varphi,\phi\rangle_{\mathcal{X}^{1,2}_{\mathcal{D}}(U)}:= \int_{\mathbb{R}^n} \nabla \varphi.\nabla \phi\,dx+ \int_{\mathbb{R}^{2n}}\frac{(\varphi(x)-\varphi(y))(\phi(x)-\phi(y))}{|x-y|^{n+2s}}\,dx dy,$$
% and norm $\eta(\varphi)^2:=\langle \varphi,\varphi\rangle_{X^{1,2}_{\mathcal{D}}(U)},$ where $\varphi,\phi\in {\mathcal{X}^{1,2}_{\mathcal{D}}(U)}$.

\section{First eigenvalue and its features}
This section contains the proof of the existence of the first eigenvalue and its properties. Moreover, the behaviour of the first eigenvalue according to the shape and the size of the Dirichlet and Neumann boundary sets is discussed.

The following result is a version of the strong maximum principle for classical solutions. We shall prove this result by combining Bony's maximum principle, see \cite{garroni2002second} and a version of Hopf lemma, see  \cite{antonini2023global}. Together with regularity results, it is used to prove the existence of continua of solutions to $Q_\lambda$ in the positive cone of  $\mathcal{X}^{1,2}_{\mathcal{D}}(U)$ and to show that the first eigenvalue is principle and simple.

\begin{Lemma}\label{strmx}
    Let $0\leq u\in \mathcal{X}^{1,2}_{\mathcal{D}}(U) \cap C^{0,\beta}(\mathbb R^n)$, for some $\beta \in (0,1)$ satisfies
 \begin{equation*} 
\left\{\begin{split} \mathcal{L}u\: &\geq 0~~ \text{in} ~\Omega, \\
      u&=0~~\text{in} ~~{U^c},\\
 \mathcal{N}_s(u)&=0 ~~\text{in} ~~{\mathcal{N}}, \\
 \frac{\partial u}{\partial \nu}&=0 ~~\text{in}~~ \partial \Omega \cap \overline{\mathcal{N}},
    \end{split} \right.
         \end{equation*}
     then either $u\equiv0$ in U or $u>0$ in $U$.
\end{Lemma}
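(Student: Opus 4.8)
The plan is to argue by a contradiction/dichotomy, combining an interior strong maximum principle with a Hopf-type lemma and a propagation argument that exploits the nonlocal term, all pieced together through the mixed boundary conditions. First I would observe that $u \geq 0$ satisfies $\mathcal{L}u = -\Delta u + (-\Delta)^s u \geq 0$ in $\Omega$ in the appropriate (viscosity/distributional) sense, and that the continuity $u \in C^{0,\beta}(\mathbb R^n)$ together with $u \in \mathcal X^{1,2}_{\mathcal D}(U)$ gives enough regularity to apply a pointwise maximum principle. Suppose $u \not\equiv 0$ in $U$; I want to conclude $u > 0$ throughout $U$. Set $Z = \{x \in \Omega : u(x) = 0\}$ and suppose for contradiction that $Z \neq \emptyset$. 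At an interior zero $x_0 \in \Omega$, $u$ attains its minimum, so $(-\Delta)^s u(x_0) = C_{n,s}\,\mathrm{P.V.}\int_{\mathbb R^n} \frac{u(x_0)-u(y)}{|x_0-y|^{n+2s}}\,dy = -C_{n,s}\int_{\mathbb R^n} \frac{u(y)}{|x_0-y|^{n+2s}}\,dy \leq 0$. Feeding this into $\mathcal{L}u(x_0) \geq 0$ yields $-\Delta u(x_0) \geq -(-\Delta)^s u(x_0) \geq 0$, so in particular $u$ is a nonnegative supersolution of the \emph{local} operator $-\Delta u \geq (-\Delta)^s u$ near such points; more robustly, one shows $\mathcal L u \ge 0$ lets Bony's strong maximum principle (\cite{garroni2002second}) apply to the degenerate-elliptic operator $\mathcal L$, forcing $u \equiv 0$ on the connected component of $\Omega$ containing $x_0$.

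Next I would handle propagation: once $u \equiv 0$ on some open subset $\omega \subset \Omega$ of positive measure, the nonlocal term does the rest. For any $x_0 \in \Omega$ with $u(x_0) = 0$ we get from the above that $0 \le \mathcal L u(x_0) = -\Delta u(x_0) + (-\Delta)^s u(x_0)$ and, since $x_0$ is a global minimum of $u$ with value $0$, $-\Delta u(x_0) \le 0$ as well (second-order condition), hence $-C_{n,s}\int_{\mathbb R^n}\frac{u(y)}{|x_0-y|^{n+2s}}\,dy = (-\Delta)^s u(x_0) \ge \Delta u(x_0) \ge 0$, which forces $u \equiv 0$ a.e. on all of $\mathbb R^n$ (the integrand is nonnegative and the integral is $\le 0$). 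This already upgrades "zero somewhere inside $\Omega$" to "$u \equiv 0$ in $\mathbb R^n$", hence in $U$, contradicting $u \not\equiv 0$. It then remains to rule out zeros of $u$ on the Neumann part of the boundary, i.e. on $\mathcal N \cup (\partial\Omega \cap \overline{\mathcal N})$ (on $U^c$ we have $u = 0$ by hypothesis, so there is nothing to prove there, and the statement only claims $u > 0$ in $U$). If $x_0 \in \partial\Omega \cap \overline{\mathcal N}$ is a boundary zero, then since $u > 0$ in $\Omega$ (just established) and $u$ is $C^{0,\beta}$, I apply the Hopf-type lemma of \cite{antonini2023global} for $\mathcal L$ at $x_0$: it gives $\frac{\partial u}{\partial \nu}(x_0) < 0$ in the interior-normal sense, contradicting the Neumann condition $\frac{\partial u}{\partial \nu} = 0$ on $\partial\Omega \cap \overline{\mathcal N}$. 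Finally, if $x_0 \in \mathcal N$ is a zero, the nonlocal Neumann condition $\mathcal N_s u(x_0) = C_{n,s}\int_\Omega \frac{u(x_0) - u(y)}{|x_0-y|^{n+2s}}\,dy = 0$ combined with $u(x_0) = 0$ forces $\int_\Omega \frac{u(y)}{|x_0-y|^{n+2s}}\,dy = 0$, hence $u \equiv 0$ in $\Omega$, again a contradiction. Assembling these cases proves the dichotomy.

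I expect the main obstacle to be making the pointwise arguments rigorous at the regularity level available: $u$ is only $C^{0,\beta}$ globally and $H^1_0(U)$ inside, not $C^2$, so "$\mathcal L u \geq 0$" must be interpreted in the viscosity or distributional sense, and both Bony's maximum principle and the Hopf lemma must be quoted in forms valid for $\mathcal L$ under exactly these hypotheses — this is precisely why the statement pins down $u \in C^{0,\beta}(\mathbb R^n)$ and cites \cite{garroni2002second} and \cite{antonini2023global}. A secondary technical point is the Hopf lemma at points of $\partial\Omega \cap \overline{\mathcal N}$: one needs an interior ball condition there (guaranteed since $\Omega \cup \mathcal N$, equivalently the relevant portion of $\partial\Omega$, is smooth / $\Omega$ is admissible) and a version of Hopf's lemma that tolerates the nonlocal zeroth-order-type contribution $(-\Delta)^s u$, which is $\le 0$ at a boundary minimum and therefore harmless. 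Modulo correctly invoking these two cited tools, the remaining steps — the interior minimum computation, the nonlocal propagation, and the boundary case analysis — are short and essentially as sketched above.
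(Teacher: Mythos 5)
Your proposal is correct and follows essentially the same route as the paper's proof: the interior second-order condition at a zero minimum (via Bony's maximum principle from \cite{garroni2002second}) combined with the sign of $(-\Delta)^s u$ at that minimum to force $u\equiv 0$, the explicit representation of $u$ on $\mathcal N$ from $\mathcal N_s u=0$, and the Hopf lemma of \cite{antonini2023global} on $\partial\Omega\cap\overline{\mathcal N}$. The only cosmetic difference is that your first paragraph briefly invokes Bony's strong maximum principle as a standalone propagation step before the nonlocal computation makes it unnecessary; the paper goes directly to the pointwise computation, as your second paragraph does.
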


\begin{proof}
If $u=0$ in $U$ then we are done. Otherwise if $u\geq 0$ in $U$ and nontrivial then for any $x_0\in \Omega$, $u(x_0)\neq 0$. Indeed, assuming that $u(x_0)=0$ for some $x_0\in \Omega$ implies that there exists a point in $\Omega$ where the minimum is achieved, with value $0$ that is $u(x_0)\leq u(x), ~\forall~x\in \mathbb{R}^n$. Now, using the Bony maximum principle, see [Proposition 1.2.12 in \cite{garroni2002second}], we have $-\Delta u(x_0)\leq 0$. Thus, we find
\begin{align*}
    0\leq \mathcal{L}u(x_0) &= (-\Delta)u(x_0) + (-\Delta)^su(x_0)\\
    & \leq  C_{n,s} \int_{\mathbb{R}^n}\frac{u(x_0)-u(y)}{|x_0-y|^{n+2s}}\,dy = -C_{n,s} \int_{U}\frac{u(y)}{|x_0-y|^{n+2s}}\,dy \leq 0
\end{align*}
which implies $$ \int_{U}\frac{u(y)}{|x_0-y|^{n+2s}}\,dy=0.$$
Thus $u\equiv 0$ in $U$, which is a contradiction. Hence $u>0$ in $\Omega$. Now,  if $x\in \mathcal{N}$, then using the definition of $\mathcal{N}_s$ (see  \eqref{normal}) and $\mathcal{N}_s u(x)=0$, we have
$$ u(x)\int_{\Omega} \frac{dy}{|x-y|^{n+2s}}= \int_{\Omega} \frac{u(y) dy}{|x-y|^{n+2s}},$$
which implies
$$
u(x)=\frac{\int_{\Omega} \frac{u(y) dy}{|x-y|^{n+2s}}}{\int_{\Omega} \frac{dy}{|x-y|^{n+2s}}}>0.$$
Lastly let $x\in \partial\Omega\cap\partial\mathcal{N}$ then we have $\frac{\partial u}{\partial \nu}(x)\geq 0$. But using the version of Hopf Lemma in \cite{antonini2023global}, it is not hard to see that $\frac{\partial u}{\partial \nu}(x)<0$ which is a contradiction. 
%So, using the intermediate value property,
Therefore we conclude that $u(x)>0$ in $\partial \Omega \cap \partial\mathcal{N}$ which completes the proof.
\end{proof}

We start by connecting  $\lambda_1(\mathcal{D})$ as defined in \eqref{lambda} with the first eigenvalue w.r.t. $(P_\lambda)$ in the following elementary result.
\begin{Proposition}\label{prop3.1}
$\lambda_1(\mathcal{D})$ is the first eigenvalue of \eqref{1}.
\end{Proposition}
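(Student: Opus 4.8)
The plan is to verify the two defining properties of ``first eigenvalue'': namely that $\lambda_1(\mathcal{D})$ as given by the Rayleigh quotient \eqref{lambda} is actually attained by some nonzero $u\in\mathcal{X}^{1,2}_{\mathcal{D}}(U)$, that this minimizer solves \eqref{1} in the weak sense of Definition \ref{d1} with $\lambda=\lambda_1(\mathcal{D})$, and that no eigenvalue below $\lambda_1(\mathcal{D})$ exists. First I would establish existence of a minimizer by the direct method: take a minimizing sequence $\{u_k\}$ normalized by $\|u_k\|_{L^2(\Omega)}=1$; then $\eta(u_k)^2$ is bounded, so by reflexivity of the Hilbert space $\mathcal{X}^{1,2}_{\mathcal{D}}(U)$ (Proposition \ref{Poin}) we extract $u_k\weakly u$ weakly. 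By the compact embedding $\mathcal{X}^{1,2}_{\mathcal{D}}(U)\hookrightarrow\hookrightarrow L^2_{\loc}(\R^n)$ of Remark \ref{r2.5} (applicable since $U$ is bounded, and the relevant $L^2$ norms live on $\Omega$), $u_k\to u$ strongly in $L^2(\Omega)$, hence $\|u\|_{L^2(\Omega)}=1$ and in particular $u\neq 0$. Weak lower semicontinuity of the norm $\eta(\cdot)$ then gives $\eta(u)^2\leq\liminf\eta(u_k)^2=\lambda_1(\mathcal{D})$, and since $u$ is admissible in \eqref{lambda} the reverse inequality also holds; thus $u$ is a minimizer and $\eta(u)^2=\lambda_1(\mathcal{D})$.

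Next I would derive the Euler--Lagrange equation. For fixed $\varphi\in\mathcal{X}^{1,2}_{\mathcal{D}}(U)$, consider $g(t)=\eta(u+t\varphi)^2 - \lambda_1(\mathcal{D})\|u+t\varphi\|_{L^2(\Omega)}^2$; this is a smooth (quadratic) function of $t$ with $g(0)=0$ and $g(t)\geq 0$ for all $t$ by the definition of $\lambda_1(\mathcal{D})$, so $g'(0)=0$. Computing $g'(0)$ and using bilinearity gives exactly \eqref{dd2.2} with $\lambda=\lambda_1(\mathcal{D})$, i.e.\ $u$ is a weak eigenfunction. Finally, $\lambda_1(\mathcal{D})$ is the smallest eigenvalue: if $(\mu,v)$ is any eigenpair with $v\neq 0$, testing \eqref{dd2.2} against $\varphi=v$ yields $\eta(v)^2=\mu\|v\|_{L^2(\Omega)}^2$, so $\mu=\eta(v)^2/\|v\|_{L^2(\Omega)}^2\geq\lambda_1(\mathcal{D})$ by \eqref{lambda}; hence $\lambda_1(\mathcal{D})$ is indeed the first eigenvalue.

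I do not expect a serious obstacle here; the statement is the standard variational characterization of a principal eigenvalue, and every ingredient (Poincar\'e inequality making $\eta$ a norm, Hilbert-space structure, compact embedding on the bounded set $U$) has already been assembled in the excerpt. The one point requiring mild care is the compactness step: the embedding in Remark \ref{r2.5} is into $L^2_{\loc}$, but this suffices because $\Omega\cup\mathcal{N}$ is bounded, so $\Omega$ lies in a fixed ball and $L^2(\Omega)$-convergence follows; one should also note that $u$ automatically lies in $\mathcal{X}^{1,2}_{\mathcal{D}}(U)$ since this space is weakly closed. Positivity, simplicity, and regularity of the eigenfunction are deliberately left to the subsequent results (they rely on Lemma \ref{strmx} and the appendix), so the proof of Proposition \ref{prop3.1} itself stops at identifying $\lambda_1(\mathcal{D})$ as the minimal eigenvalue.
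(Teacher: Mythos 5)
Your proposal is correct and follows essentially the same route as the paper: a normalized minimizing sequence, boundedness via the Poincar\'e inequality, weak convergence plus the compact embedding of Remark \ref{r2.5} to preserve the $L^2(\Omega)$-constraint, and weak lower semicontinuity to conclude that the infimum is attained. The only cosmetic differences are that you derive the Euler--Lagrange equation by expanding the nonnegative quadratic $g(t)$ rather than invoking the Lagrange multiplier rule as the paper does, and you explicitly add the (immediate) verification that no eigenvalue lies below $\lambda_1(\mathcal{D})$, which the paper leaves implicit.
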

\begin{proof}
Let $\{u_k\}_{k\geq 1}\in \mathcal{X}^{1,2}_{\mathcal{D}}(U)$ such that $\|u_k\|_{L^2{(\Omega)}}=1$,  be  a minimizing sequence associated to $\lambda_1(\mathcal{D})$ as defined in \eqref{lambda}.  Then we can infer 
\begin{equation}
\lim_{ k\to\infty}   \bigg(\int_{\Omega}|\nabla{u_k}|^2\,dx+ \int_{Q} \frac{|u_k(x)-u_k(y)|^{2}}{|x-y|^{n+2s}} \, dx dy\bigg)=\lambda_1(\mathcal{D}).
    \end{equation}
%We can easily see that
%$$
%I(u_k)=\int_{\mathbb{R}^n}|\nabla{u_k}|^2\,dx+ \int_{\mathbb{R}^{2n}} \frac{|u_k(x)-u_k(y)|^{2}}{|x-y|^{n+2s}} \, dx dy, ~~\forall ~k~\in  \mathbb N,$$
%is coercive, i.e. $I(u_k)\to \infty, ~\text{as} ~\eta(u_k)\to \infty$, 
Then $\{u_k\}_{k\in\mathbb N}$ is bounded in $\mathcal{X}^{1,2}_{\mathcal{D}}(U)$. So there exists a $M>0$ such that $\eta(u_k)\leq M,~~ \forall~ k\in\mathbb{N}$. Since $\mathcal{X}^{1,2}_{\mathcal{D}}(U)$  is reflexive and from Remark \ref{r2.5}, 
%so every bounded sequence in $\mathcal{X}^{1,2}_{\mathcal{D}}(U)$ has a weakly convergent subsequence in $\mathcal{X}^{1,2}_{\mathcal{D}}(U).$ As a consequence, using  \eqref{r2.5} 
we get up to an extraction of a subsequence that, for some $u\in \mathcal{X}^{1,2}_{\mathcal{D}}(U)$, 
$$u_k\rightharpoonup u \mbox{ in }\mathcal{X}^{1,2}_{\mathcal{D}}(U),\qquad u_k\rightarrow u \mbox{ in }L^2_{loc}(\mathbb{R}^n),\quad \text{and }~ u_k \to u ~\text{pointwise a.e. in}~ \mathbb\R^n, \text{as} ~k\to\infty.$$
Now, by using the weakly lower Semi-continuity, we have
$$
  \bigg(\int_{\Omega}|\nabla{u}|^2\,dx+ \int_{Q} \frac{|u(x)-u(y)|^{2}}{|x-y|^{n+2s}} \, dx dy\bigg)\leq \liminf_{k\to \infty}   \bigg(\int_{\Omega}|\nabla{u_k}|^2\,dx+ \int_{Q} \frac{|u_k(x)-u_k(y)|^{2}}{|x-y|^{n+2s}} \, dx dy\bigg)\leq \lambda_1(\mathcal{D}).
$$
We define  the functional $J\,: \mathcal{X}^{1,2}_{\mathcal{D}}(U)\mapsto \mathbb R$ defined  for any $v\in \mathcal{X}^{1,2}_{\mathcal{D}}(U)$ by
$$ J(v)= \int_{\Omega}|v|^2\,dx.$$ 
Again from Remark \ref{r2.5},  we have that $J(u)=1$ and $\lambda_1(\mathcal{D})=\eta(u)$.
Setting the constraint set
$ A=\{u\in\mathcal{X}^{1,2}_{\mathcal{D}}(U) : J(u)=1\}$, from the  definition of $\lambda_1(\mathcal{D})$ and using Lagrange multiplier rule, we infer that
$ \eta'( u)u=2\eta(u)=\lambda J'(u) u=2\lambda$, for some $ \lambda\in\R$. Thus,
%$$\int_{\mathbb{R}^n}|\nabla{u_k}|^2\,dx+ \int_{\mathbb{R}^{2n}} \frac{|u_k(x)-u_k(y)|^{2}}{|x-y|^{n+2s}} \, dx dy=\lambda \int_{\R^n}|u_k|^2\,dx=\lambda.$$
we get $\lambda_1(\mathcal{D})=\lambda.$
\end{proof}

\begin{Lemma}\label{ll2.9}
  The first eigenvalue of $\mathcal L$ with mixed boundary conditions, as in \eqref{1}, is positive i.e. $\lambda_1(\mathcal{D})>0$.
     \end{Lemma}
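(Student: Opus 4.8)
The plan is to show $\lambda_1(\mathcal{D}) > 0$ by a standard contradiction argument combined with the Poincaré type inequality of Proposition \ref{Poin}. Since $\lambda_1(\mathcal{D})$ is defined as an infimum of a nonnegative Rayleigh quotient, we certainly have $\lambda_1(\mathcal{D}) \geq 0$; the only issue is to rule out $\lambda_1(\mathcal{D}) = 0$.

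First I would recall that, by Proposition \ref{Poin}, there is a constant $C = C(\Omega, n, s) > 0$ such that
$$
\|u\|_{L^2(\Omega)}^2 \leq C\,\eta(u)^2 = C\bigg(\int_\Omega |\nabla u|^2\,dx + \int_Q \frac{|u(x)-u(y)|^2}{|x-y|^{n+2s}}\,dx\,dy\bigg)
$$
for every $u \in \mathcal{X}^{1,2}_{\mathcal{D}}(U)$. Dividing both sides by $\|u\|_{L^2(\Omega)}^2$ for $u \neq 0$ (note $\|u\|_{L^2(\Omega)} \neq 0$ whenever $u \not\equiv 0$ in $\mathcal{X}^{1,2}_{\mathcal{D}}(U)$, since if $u$ vanishes on $\Omega$ then by the Poincaré inequality $\eta(u) = 0$, hence $u \equiv 0$), we get that the Rayleigh quotient is bounded below by $1/C$. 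Taking the infimum over all such $u$ yields $\lambda_1(\mathcal{D}) \geq 1/C > 0$.

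Alternatively — and this is essentially the same argument phrased through a minimizing sequence — one can argue by contradiction: if $\lambda_1(\mathcal{D}) = 0$, then by Proposition \ref{prop3.1} there would exist an eigenfunction $u \in \mathcal{X}^{1,2}_{\mathcal{D}}(U) \setminus \{0\}$ with $\eta(u)^2 = 0$, so $\nabla u = 0$ a.e. in $\Omega$ and $[u]_s = 0$; but then the Poincaré inequality forces $\|u\|_{L^2(\Omega)}^2 = 0$, and combined with $u \equiv 0$ in $U^c$ and $u|_U \in H^1_0(U)$ this gives $u \equiv 0$ in $\mathbb{R}^n$, contradicting $u \neq 0$.

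I do not anticipate a serious obstacle here: the entire content is already packaged in Proposition \ref{Poin}, and the statement is a direct corollary. The one point that needs a word of care is the claim that the $L^2(\Omega)$ norm does not degenerate on the space $\mathcal{X}^{1,2}_{\mathcal{D}}(U)$ — i.e. that a nonzero element of this space cannot vanish identically on $\Omega$ — but this too follows immediately from the Poincaré inequality, so I would simply remark on it rather than belabor it.
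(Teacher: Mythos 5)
Your main argument is correct and is in fact more direct than the paper's. The paper proves positivity as a corollary of attainability: Proposition \ref{prop3.1} produces a minimizer $u$ with $\|u\|_{L^2(\Omega)}=1$ and $\eta(u)^2=\lambda_1(\mathcal{D})$, and since $\eta$ is a norm on $\mathcal{X}^{1,2}_{\mathcal{D}}(U)$ (again by Proposition \ref{Poin}), $\lambda_1(\mathcal{D})=0$ would force $u\equiv0$, contradicting the normalization --- which is precisely your ``alternative'' argument. Your primary route, reading the Poincar\'e inequality directly as a uniform lower bound on the Rayleigh quotient, bypasses the existence of a minimizer altogether and yields the quantitative estimate $\lambda_1(\mathcal{D})\geq 1/C(\Omega,n,s)$; both are legitimate, and yours is the more economical.

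One parenthetical claim is wrong, although it does not sink the proof. You assert that a nonzero $u\in\mathcal{X}^{1,2}_{\mathcal{D}}(U)$ cannot vanish identically on $\Omega$, ``since if $u$ vanishes on $\Omega$ then by the Poincar\'e inequality $\eta(u)=0$.'' This inverts the inequality: Proposition \ref{Poin} bounds $\|u\|_{L^2(\Omega)}$ by $\eta(u)$, not the other way around, so $u|_{\Omega}=0$ gives no information about $\eta(u)$. Moreover the claim itself is false: a smooth bump supported in a ball compactly contained in $\mathcal{N}$ belongs to $\mathcal{X}^{1,2}_{\mathcal{D}}(U)$, is nonzero, and satisfies $\|u\|_{L^2(\Omega)}=0$ while $\eta(u)>0$. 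The correct observation is simply that such functions are irrelevant to the infimum defining $\lambda_1(\mathcal{D})$: their Rayleigh quotient is $+\infty$ (equivalently, they are excluded by the normalization $\|u\|_{L^2(\Omega)}=1$ in the second formulation of \eqref{lambda}), so the bound $\eta(u)^2/\|u\|^2_{L^2(\Omega)}\geq 1/C$ over the remaining competitors still gives $\lambda_1(\mathcal{D})\geq 1/C>0$. With that remark repaired, the proof is complete.
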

\begin{proof}
This is a straightforward consequence of that $\lambda_1(\mathcal{D})$ is achieved as established in the proof of Proposition \ref{prop3.1}.
% We aim to show that
%$$0<\lambda_1(\mathcal{D}):=\inf_{0\not\equiv u\in \mathcal{X}^{1,2}_{\mathcal{D}}(U), \, \|u\|^2_{L^2(\mathbb{R}^n)}=1}  \bigg(\int_{\mathbb{R}^n}|\nabla{u}|^2\,dx+ \int_{\mathbb{R}^{2n}} \frac{|u(x)-u(y)|^{2}}{|x-y|^{n+2s}} \, dx dy\bigg).$$
%For that, we argue by contradiction. Since $\lambda_1(\mathcal D)\geq 0$, we assume that $\lambda_1(\mathcal{D})=0$.  Then there is a non zero sequence $\{u_k\}_{k\geq 1}$ in $\mathcal{X}^{1,2}_{\mathcal{D}}(U)$ such that
%\begin{equation}\label{contradP}
%\|u_k\|_{L^2(\mathbb{R}^n)}=1 ~~\text{and}~~ \eta( u_k)\rightarrow 0\mbox{ as }k\rightarrow +\infty.
%\end{equation}
%In particular, there exists a constant $C_1>0$ such that $\eta(u_k)\leq C_1,~\forall~ k\in \mathbb{N}$. So using compact embedding \eqref{r2.5}, we get that, for some $u\in \mathcal{X}^{1,2}_{\mathcal{D}}(U)$
%$$u_k\rightharpoonup u \mbox{ in }\mathcal{X}^{1,2}_{\mathcal{D}}(U),\qquad u_k\rightarrow u \mbox{ in }L^2(\mathbb{R}^n),\qquad \text{as} ~k\to\infty,$$
%But due to strong convergence to $0$ in $\mathcal{X}^{1,2}_{\mathcal{D}}(U)$, $u$ must be $0$, which is contradiction with $\|u\|_{L^2(\R^n)}=1.$
%This finishes the proof. 
\end{proof}
% \begin{remark}
% {We can observe that the seminorm given by the following integral
% $$[u]^2_s=\int_{\mathbb{R}^{2n}}\frac{|u(x)-u(y)|^2}{|x-y|^{n+2s}}
% $$ is a norm in the mixed boundary case. If the above integral vanishes then  we can conclude that $u=0$ a.e. in $\R^n$, since $u=0$ a.e. in ${\mathcal{D}}.$
% Due to this, our mixed boundary conditions behave like Dirichlet boundary conditions, and thanks to Proposition \ref{Poin}
% ,  we have analogous between the norms $\eta(.)$ and $\|.\|_{H^s_0(\Omega)}$, defined as above integral $[u]_s$, when $u=0$ in $\R^n\setminus\Omega$ of the functions in $H^s(\Omega).$}
% \end{remark}

%\begin{remark}
 %If $\mathcal{L}u=f$, $f\in L^2(\Omega)$, with boundary conditions the same as in problem \eqref{1} then it is to be noted that the Lax-Miligram Theorem implies the existence and uniqueness of energy solutions to it, see [Theorem 1.1 in \cite{MR4387204}]. 
%\end{remark}\textcolor{green}{Where do you need this remark?}
%({\color{blue} we add just for properties.})
We now recall a Picone-type inequality, whose proof can be seen in ({{\cite{mukherjee2023nonlocal}}}).
\begin{theorem}\label{T2.12}
Let $u,v \in \mathcal{X}^{1,2}_{\mathcal{D}}(U)$ and suppose that $\mathcal{L}u\geq 0$ is a bounded radon measure in $\Omega$,  $u>0$ in  $U$ and $\frac{\partial u}{\partial \nu}\geq 0$ on $\overline{\mathcal N}\cap \partial\Omega$, then 
\begin{equation}\label{eeA}{\int_{\overline{\mathcal{N}}\cap\partial\Omega}\frac{|v|^2}{u}\frac{\partial u}{\partial \nu} \,d{\sigma}+\int_{\mathcal{N}}\frac{|v|^2}{u} \mathcal{N}_s u ~dx + \int_{\Omega}\frac{|v|^2}{u} \mathcal{L} u ~dx \leq \eta(v)^2 }.
   \end{equation}
 \end{theorem}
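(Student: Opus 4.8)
The plan is to exploit the pointwise Picone inequality for the fractional and local parts separately, then combine them through the integration-by-parts formula of Proposition \ref{P}. First I would recall the two elementary pointwise inequalities: for $a,b\ge 0$ and $u(x),u(y)>0$, the classical Picone inequality
\[
\nabla\Big(\frac{v^2}{u}\Big)\cdot\nabla u \le |\nabla v|^2 \quad\text{a.e. in }\Omega,
\]
and its nonlocal analogue (see \cite{mukherjee2023nonlocal}),
\[
\Big(u(x)-u(y)\Big)\left(\frac{v^2(x)}{u(x)}-\frac{v^2(y)}{u(y)}\right)\le |v(x)-v(y)|^2 \quad\text{for }x,y\in\R^n,
\]
valid wherever $u>0$. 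Since $u>0$ in $U$ and $u\equiv 0$ in $U^c$ (so $v^2/u$ makes sense as an element of $\mathcal{X}^{1,2}_{\mathcal{D}}(U)$, after an approximation argument replacing $u$ by $u+\e$ and letting $\e\downarrow 0$), integrating the local inequality over $\Omega$ and the nonlocal one over $Q$ and adding gives
\[
\int_\Omega \nabla\Big(\tfrac{v^2}{u}\Big)\cdot\nabla u\,dx + \int_Q \frac{(u(x)-u(y))\big(\tfrac{v^2(x)}{u(x)}-\tfrac{v^2(y)}{u(y)}\big)}{|x-y|^{n+2s}}\,dxdy \le \eta(v)^2.
\]

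Next I would identify the left-hand side with the bilinear form $\langle u, v^2/u\rangle$ and apply the integration-by-parts formula of Proposition \ref{P} (extended to $\mathcal{X}^{1,2}_{\mathcal{D}}(U)$ by the Corollary following it, and interpreted in the distributional/measure sense since $\mathcal{L}u$ is only assumed to be a bounded Radon measure). Taking the test function $\varphi = v^2/u$, Proposition \ref{P} yields
\[
\langle u, \tfrac{v^2}{u}\rangle = \int_\Omega \frac{v^2}{u}\,\mathcal{L}u\,dx + \int_{\overline{\mathcal N}\cap\partial\Omega}\frac{v^2}{u}\frac{\partial u}{\partial\nu}\,d\sigma + \int_{\mathcal N}\frac{v^2}{u}\,\mathcal{N}_s u\,dx,
\]
and combining this identity with the displayed inequality above gives exactly \eqref{eeA}. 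The hypotheses $\mathcal{L}u\ge 0$ in $\Omega$, $\mathcal{N}_s u$ sign (which follows from $\mathcal{N}_s u = C_{n,s}\int_\Omega \frac{u(x)-u(y)}{|x-y|^{n+2s}}dy$ being well-defined and finite), and $\partial u/\partial\nu\ge 0$ on $\overline{\mathcal N}\cap\partial\Omega$ guarantee that all three boundary/bulk terms on the right are nonnegative, so each is individually dominated by $\eta(v)^2$; this is what makes the inequality useful in applications.

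The main obstacle I anticipate is the regularization and passage to the limit: $v^2/u$ need not lie in $\mathcal{X}^{1,2}_{\mathcal{D}}(U)$ when $u$ degenerates near $\partial U$, and $\mathcal{L}u$ being only a measure means the integration-by-parts formula must be justified carefully (approximating $u$ by $u_\e = u+\e$ or by truncations $\min(v,nu)/ \dots$, establishing uniform bounds, using Fatou on the nonnegative terms and dominated convergence where $u>0$ is bounded below). One must also check that the nonlocal boundary term over $\mathcal N$ converges, using that $\mathcal N$ is bounded and $u$ is bounded below on compact subsets of $U$. Apart from this technical care, the argument is a direct combination of the pointwise Picone inequalities with Proposition \ref{P}, so I would spend the bulk of the write-up on the approximation step and treat the algebraic inequalities as standard.
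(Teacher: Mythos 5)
Your plan is correct and is essentially the argument the paper relies on: the paper itself does not prove Theorem \ref{T2.12} but defers to \cite{mukherjee2023nonlocal}, and the same combination of the local Picone identity, the discrete (nonlocal) Picone inequality, the regularization $u+\e$ with truncation of $v$, and the integration-by-parts formula of Proposition \ref{P} is exactly what the paper itself deploys in the proof of Assertion 4 of Proposition \ref{prop-1}. The only slip is your parenthetical suggestion that a sign for $\mathcal{N}_s u$ on $\mathcal{N}$ ``follows from'' its being well defined --- no such sign is assumed or available, so in the passage to the limit $\e\downarrow 0$ the $\mathcal{N}$-term must be treated by dominated convergence (using that $u$ is locally bounded below on $\mathcal{N}$) rather than by monotonicity, as you in fact indicate elsewhere in your outline.
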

% \textcolor{red}{[J.G. as it is , this theorem is not correct: you need to add $u>0$ in $\Omega\cup \mathcal N$ and $\frac{\partial u}{\partial nu}\geq 0$ on $\overline{\mathcal N}\cap \partial\Omega$] for using Fatou's Lemma}
% The following result is a version of the strong maximum principle for weak solutions that can be proved using arguments given in %[Theorem 3.1 in \cite{biagi2021global}]. 
%\begin{Proposition}\label{strb}
%Let $u\in \mathcal{X}^{1,2}_{\mathcal{D}}(U)$,  nonnegative in $\mathbb R^n$ and satisfy Definition \ref{d1} 
%then either $u\equiv 0$ or $u>0$ a.e. in $U.$
%\end{Proposition}
%\textcolor{red}{[J.G. better to add the proof without refering Biagi paper since the method is old and known from Vadinoci's papers]}
Let us now state and prove some well-known expected properties of first eigenvalues. 
\begin{Proposition}\label{prop-1}
The first smallest eigenvalue $\lambda_1({\mathcal{D}})$(obtained in Lemma \ref{ll2.9}) satisfies the following :
    \begin{enumerate}
    \item  First eigenfunctions are bounded, i.e. lies in $L^{\infty}(U).$ %\textcolor{red}{[J.G. More regularity should be added there   as $C^s(\mathbb R^n)$ and interior regularity and up to boundary of $\Omega$.]}
     \item Any eigenfunction, $\varphi$, associated $\lambda_1(\mathcal{D})$  do not change sign. Precisely, either $\varphi>0$ in $U$ or $\varphi<0$ in $U$, i.e. $\lambda_1(\mathcal{D})$ is a principal eigenvalue.
        \item  $\lambda_1(\mathcal{D})$ is simple.
        \item for any eigenvalue $\lambda>\lambda_1({\mathcal{D}})$, the associated eigenfunctions are  sign changing in $U.$
        \end{enumerate}
\end{Proposition}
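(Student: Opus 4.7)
The plan is to treat the four claims in order, combining a Moser-type iteration, the Rayleigh characterization \eqref{lambda}, the strong maximum principle (Lemma~\ref{strmx}), and the Picone inequality (Theorem~\ref{T2.12}). The main technical obstacle is part~(1): the Moser scheme must be set up so that truncated test functions remain in $\mathcal{X}^{1,2}_{\mathcal{D}}(U)$ and the nonlocal double integral over $Q=\mathbb{R}^{2n}\setminus(\Omega^c\times\Omega^c)$ is dominated correctly, in particular on the cross region $\Omega\times\mathcal{N}$ where no Dirichlet datum is available and only the nonlocal Neumann condition helps.

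\textbf{(1) Boundedness.} Let $\varphi$ be a first eigenfunction. Truncate $\varphi_M=\min\{\varphi^+,M\}$ and for $\beta\geq 1$ test the weak equation with $\varphi_M^{2\beta-1}\in \mathcal{X}^{1,2}_{\mathcal{D}}(U)$. The local term controls $\|\nabla \varphi_M^\beta\|_{L^2(\Omega)}^2$ up to $\beta$-dependent constants; for the nonlocal part the elementary inequality $(a-b)(a^{2\beta-1}-b^{2\beta-1})\geq \beta^{-1}(a^\beta-b^\beta)^2$ controls the Gagliardo seminorm of $\varphi_M^\beta$. Combining with Poincar\'e (Proposition~\ref{Poin}) and the Sobolev embedding (Remark~\ref{r2.5}), and letting $M\to\infty$ by monotone convergence, yields a Moser recursion of the form $\|\varphi^\beta\|_{L^{2^*}(\Omega)}^2\leq C\beta\lambda_1(\mathcal{D})\|\varphi\|_{L^{2\beta}(\Omega)}^{2\beta}$. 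Iterating along $\beta_k=(2^*/2)^k$ delivers $\|\varphi^+\|_{L^\infty(U)}\leq C\|\varphi^+\|_{L^2(\Omega)}$, and the same scheme applied to $\varphi^-$ closes part~(1).

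\textbf{(2) Principality.} For any first eigenfunction $\varphi$, the pointwise identities $|\nabla|\varphi||=|\nabla\varphi|$ a.e.\ and $\bigl||\varphi(x)|-|\varphi(y)|\bigr|\leq |\varphi(x)-\varphi(y)|$ give $\eta(|\varphi|)\leq \eta(\varphi)$ while $\||\varphi|\|_{L^2(\Omega)}=\|\varphi\|_{L^2(\Omega)}$, so $|\varphi|$ is itself a minimizer in \eqref{lambda} and therefore a first eigenfunction. By part~(1) and the H\"older regularity of the appendix (Theorem~\ref{thm5.2}), $|\varphi|\geq 0$ lies within the hypotheses of Lemma~\ref{strmx}, which forces $|\varphi|>0$ in $U$. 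Hence $\varphi$ never vanishes in $U$ and by continuity keeps a fixed sign there.

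\textbf{(3) Simplicity and (4) sign-changing.} For two positive first eigenfunctions $u,v$, the boundary contributions $\mathcal{N}_s u$ and $\partial u/\partial\nu$ vanish on their respective sets, so Picone's inequality (Theorem~\ref{T2.12}) reduces to
$$\lambda_1(\mathcal{D})\int_\Omega v^2\,dx=\int_\Omega \frac{v^2}{u}\mathcal{L}u\,dx\leq \eta(v)^2=\lambda_1(\mathcal{D})\int_\Omega v^2\,dx.$$
Equality throughout forces $v/u$ to be constant on the connected set $U=\{u>0\}$, yielding $v=cu$ and hence simplicity. For an eigenpair $(\lambda,\psi)$ with $\lambda>\lambda_1(\mathcal{D})$, symmetry of the bilinear form tested against a positive first eigenfunction $\varphi_1$ produces $(\lambda-\lambda_1(\mathcal{D}))\int_\Omega \varphi_1\psi\,dx=0$, so $\int_\Omega \varphi_1\psi\,dx=0$. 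Since $\varphi_1>0$ in $U$, this prevents $\psi$ from having a fixed sign and therefore $\psi$ must change sign in $U$.
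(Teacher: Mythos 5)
Your overall architecture is sound and parts (2) and (4) are fine, but you diverge from the paper in three places and leave two genuine gaps.

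For part (1) the paper runs a De Giorgi--Stampacchia level-set iteration ($w_k=(\sqrt{\rho}\,u-d_k)_+$ with $d_k=1-2^{-k}$, discarding the nonnegative nonlocal term entirely and closing the recursion $\mathcal{U}_k\leq c' r^{k-1}\mathcal{U}_{k-1}^{1+2/n}$ via Giusti's lemma), whereas you propose a Moser power iteration. Either scheme gives $u\in L^{\infty}(\Omega)$. The gap is that your recursion only controls norms over $\Omega$ (Poincar\'e and Sobolev in Proposition~\ref{Poin} and Remark~\ref{r2.5} produce $L^{2^*}(\Omega)$ on the left), yet you claim $\|\varphi^+\|_{L^{\infty}(U)}\leq C\|\varphi^+\|_{L^2(\Omega)}$. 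The set $U$ contains $\mathcal{N}$, where no Sobolev control is generated by the iteration. The paper closes this by using the nonlocal Neumann condition pointwise: for $x\in\mathcal{N}$, $\mathcal{N}_s u(x)=0$ gives
\begin{equation*}
u(x)=\frac{\int_{\Omega} u(y)\,|x-y|^{-n-2s}\,dy}{\int_{\Omega} |x-y|^{-n-2s}\,dy},
\end{equation*}
so $|u(x)|\leq\|u\|_{L^{\infty}(\Omega)}$. You must add this step (or an equivalent) to legitimately pass from $L^{\infty}(\Omega)$ to $L^{\infty}(U)$.

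For part (3) you invoke Theorem~\ref{T2.12} and then assert that \emph{equality} in Picone forces $v/u$ to be constant. Theorem~\ref{T2.12} as stated is only an inequality; the rigidity of the equality case (for the local part, $|\nabla v-\tfrac{v}{u}\nabla u|^2=0$ a.e., and for the nonlocal part, equality in the discrete Picone inequality forcing $v(x)/u(x)=v(y)/u(y)$ for a.e.\ $(x,y)\in Q$) is exactly the content you would need to prove, and it is not supplied by the cited result. This is the key step of your simplicity argument and cannot be left as an assertion. The paper avoids this entirely: it normalizes both eigenfunctions, observes that $h_1=\varphi_1-\tilde{\varphi}_2$ is again a first eigenfunction and hence has constant sign by part (2), deduces $\varphi_1^2\geq\tilde{\varphi}_2^2$ or $\varphi_1^2\leq\tilde{\varphi}_2^2$ a.e., and concludes from $\int_U(\varphi_1^2-\tilde{\varphi}_2^2)=0$ that $\varphi_1=\tilde{\varphi}_2$. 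That route uses only part (2) and is strictly easier to justify in this mixed setting; I recommend you either adopt it or prove the Picone equality case explicitly.

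Part (2) matches the paper. Part (4) is a genuinely different and cleaner route than the paper's (the paper redoes a Picone-type computation with the truncated quotient $\varphi_{2,\varepsilon}^2/(\varphi_1+\varepsilon)$ to reach $\lambda=\lambda_1(\mathcal{D})$; your orthogonality identity $(\lambda-\lambda_1(\mathcal{D}))\int_{\Omega}\varphi_1\psi\,dx=0$ is the same mechanism as the paper's own Lemma on orthogonality of eigenfunctions). One small point to add: from $\int_{\Omega}\varphi_1\psi\,dx=0$ and $\varphi_1>0$, a one-signed $\psi$ would vanish a.e.\ in $\Omega$, and you should then note that the Neumann averaging formula and the Dirichlet condition force $\psi\equiv 0$ in all of $\mathbb{R}^n$, contradicting that $\psi$ is an eigenfunction; only then is the sign change in $U$ established.
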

\begin{proof}

Assertion 1.
%\textcolor{red}{[J.G. better to add the proof since the reference is not easy to get!]}
Let us fix $u$ as the first eigenfunction associated with $\lambda_1(\mathcal{D})$ such that $\|u\|_{L^2{({  \Omega}})}=1$.
%[for the simplicity, notation $\Omega_k$ used instead of $(\Omega\cup\mathcal{N}_k\cup(\partial\Omega\cap\overline{\mathcal{N}_k}))]$.
For $\rho>0$, we set
$
\hat{u}=\sqrt{\rho} u.
$
Now defining $d_k=1-\frac{1}{2^k}, ~\forall~k \in \mathbb{N}$ and
$$
v_k=\hat{u}-d_k, \quad w_k=\left(v_k\right)_{+}=\max \left\{v_k, 0\right\}, \quad \mathcal{U}_k=\left\|w_k\right\|_{L^2({  \Omega})}^2,
$$ we conclude that 
 % for $k=0$,  $w_0=v_0=\hat{u}\left(\right.$ and 
$\left\|\hat{u}\right\|_{L^2(\Omega)}^2=\rho\left\|u\right\|_{L^2(\Omega)}^2=\rho$(since $\left\|u\right\|_{L^2(\Omega)}=1$) and $v_k \geq v_{k+1}$, $w_k \geq w_{k+1}$(since $d_k<d_{k+1}$).
If $u \in \mathcal{X}^{1,2}_{\mathcal{D}}(U)$, by the definition of  $\mathcal{X}^{1,2}_{\mathcal{D}} (U), u\in H^1\left(\mathbb{R}^n\right)$.  Moreover, since $\hat{u}=\sqrt{\rho} u \equiv 0$ a.e. in $U^c$, one also has
$$
v_k=\hat{u}-d_k=-d_k<0 ~ \text{ on } U^c ~\text{and}~  w_k \in \mathcal{X}^{1,2}_{\mathcal{D}}(U).
$$
 We use  $w_k$ as a test function in \eqref{dd2.2} to obtain
\begin{equation}\label{eq3.0.4}
\int_{\Omega}\nabla \hat{u}\cdot \nabla w_k d x  +\int_{Q} \frac{\left(\hat{u}(x)-\hat{u}(y)\right)\left(w_k(x)-w_k(y)\right)}{|x-y|^{n+2 s}} d x d y 
=\lambda \int_{\Omega} \hat{u} w_k d x.
    \end{equation}
% To proceed further we notice that, for a.e. $x, y \in \mathbb{R}^n$, one has
% $$
% \left|w_k(x)-w_k(y)\right|^2=\left|\left(v_k\right)_{+}(x)-\left(v_k\right)_{+}(y)\right|^2
% $$
% $$
% \begin{aligned}
% & \leq\left(\left(v_k\right)_{+}(x)-\left(v_k\right)_{+}(y)\right)\left(v_k(x)-v_k(y)\right) \\
% & =\left(w_k(x)-w_k(y)\right)\left(\hat{u}(x)-\hat{u}(y)\right) .
% \end{aligned}
% $$
Moreover, by the definition of $w_k$, we have
\begin{equation}\label{eq3.0.5}
\int_{\Omega}\nabla \hat{u}\cdot\nabla w_k d x=\int_{\Omega \cap\left\{\hat{u}>d_k\right\}}\nabla \hat{u}\cdot \nabla v_k \,d x=\int_{\Omega}\left|\nabla w_k(x)\right|^2 d x.
    \end{equation}
 Hence, by non negativity of second integral in \eqref{eq3.0.4} and from \eqref{eq3.0.5}, we deduce the following
\begin{equation}\label{eqqw}
\int_{\Omega}\left|\nabla w_k(x)\right|^2 d x \leq \lambda \int_{\Omega} \hat{u} w_k d x.
    \end{equation}
So,  using the Sobolev inequality (see  Theorem 2.4.1 in \cite{Kesavan}), we have
\begin{equation}\label{eq3.0.6}
\left(\int_{{  \Omega}}|w_k(x)|^{2^*}\,dx\right)^\frac{2}{2^*}\leq \mathcal{C} \int_{{  \Omega}}|\nabla w_k(x)|^2\,dx,~\text{for some}~ \mathcal{C}>0.
\end{equation}
 Suppose $x\in \left\{w_k>0\right\}$ then  we obtain
\begin{equation}\label{u_o}
\hat{u}<2^k w_{k-1}, ~\forall~k\geq 1
    \end{equation}
    and also as a result (see Theorem 3.2 in \cite{franzina2013fractional}),
\begin{equation}\label{eq3.0.9}
\left\{w_k>0\right\}=\left\{\hat{u}>d_k\right\} \subseteq\left\{\frac{1}{2^k}< w_{k-1}\right\}, ~\forall~k \geq 1.
\end{equation}
Now, from \eqref{eqqw} and using  \eqref{u_o}, $w_{k-1}\geq w_k$, we have
\begin{equation}\label{a.e.}
\begin{aligned}
\int_{\Omega}\left|\nabla w_k(x)\right|^2 d x & \leq \lambda \int_{\left\{w_k>0\right\}} \hat{u} w_k d x  \leq \lambda 2^k  \int_{\left\{w_k>0\right\}} w_{k-1} w_k d x \\
& \leq \lambda 2^k   \int_{{  \Omega}} w_{k-1}^2 d x= \lambda 2^k  \left\|w_{k-1}\right\|_{L^2({  \Omega})}^2  = \lambda 2^k   \mathcal{U}_{k-1} .
\end{aligned}
    \end{equation}
As a consequence, using \eqref{eq3.0.9} in \eqref{a.e.} and by the Chebyshev inequality, we obtain
\begin{equation}\label{new-tm1}
\begin{aligned}
\mathcal{U}_{k-1} & =\int_{{  \Omega}} w_{k-1}^2 d x \geq \int_{\left\{w_{k-1}>\frac{1}{2^{k}}\right\}} w_{k-1}^2 d x \geq \frac{1}{2^{2k}}\left|\left\{w_{k-1}>\frac{1}{2^k}\right\}\right| \geq \frac{1}{2^{2k}}\left|\left\{w_k>0\right\}\right| .
\end{aligned}
    \end{equation}
Now, we use the Hölder inequality, \eqref{eq3.0.6}, \eqref{a.e.} and \eqref{new-tm1} to obtain the following estimate
\begin{equation}\label{eqh}
\begin{aligned}
\mathcal{U}_k  =\left\|w_k\right\|_{L^2({  \Omega})}^2 \leq \left(\int_{{  \Omega}} |w_k|^{2^*} d x\right)^{\frac{2}{2^*}}\left|\left\{w_k>0\right\}\right|^{\frac{2}{n}}  
&\leq \mathcal{C}  \left(\int_{{  \Omega}}\left|\nabla w_k\right|^2 dx\right) \left|\left\{w_k>0\right\}\right|^{\frac{2}{n}} \\
& \leq \mathcal{C}\left(\lambda 2^k  \mathcal{U}_{k-1}\right)\left(2^{2 k} \mathcal{U}_{k-1}\right)^{\frac{2}{n}}\\ &=\mathbf{c}^{\prime}\left(2^{1+\frac{4}{n}}\right)^{k-1} \mathcal{U}_{k-1}^{1+\frac{2}{n}},
\end{aligned}
    \end{equation}
where $\mathcal{C}>0$ is a Sobolev constant and $\mathbf{c}^{\prime}= \lambda 2^{1+\frac{4}{n}} \mathcal{C} $.
We can see  $\frac{2}{n}>0$,
$
r=2^{1+\frac{4}{n}}>1,
$
using \cite[Lemma 7.1, p.220]{MR1962933} %\textcolor{red}{[J.G. This reference is not correct: no lemma 7.1 in the book of Giusti]} [see page 220 in this book] 
that $\mathcal{U}_k \rightarrow 0$ as $k \rightarrow \infty$, provided that
$$
\mathcal{U}_0=\left\|\hat{u}\right\|_{L^2({  \Omega})}^2=\rho<\left(\mathbf{c}^{\prime}\right)^{\frac{-n}{2} } r^{\frac{-n^2}{4}} .
$$
As a consequence, if $\rho>0$ is small enough, we can use Dominated Convergence Theorem to conclude
$$
0=\lim _{k \rightarrow \infty} \mathcal{U}_k=\lim _{k \rightarrow \infty} \int_{{  \Omega}}\left(\hat{u}-d_k\right)_{+}^2 d x=\int_{{  \Omega}}\left(\hat{u}-1\right)_{+}^2 d x .
$$
Recalling $\hat{u}=\sqrt{\rho} u$ and $u \geq 0$, using above we obtain
$0 \leq u \leq \frac{1}{\sqrt{\rho}} \text{ a.e. in } {  \Omega}$,
which implies $u \in L^{\infty}({  \Omega})$.
{  Now,  if $x\in \mathcal{N}$, then using the definition of $\mathcal{N}_s$ (see  \eqref{normal}) and $\mathcal{N}_s u(x)=0$, we have
$$ u(x)\int_{\Omega} \frac{dy}{|x-y|^{n+2s}}= \int_{\Omega} \frac{u(y) dy}{|x-y|^{n+2s}},$$
which implies
$$
u(x)=\frac{\int_{\Omega} \frac{u(y) dy}{|x-y|^{n+2s}}}{\int_{\Omega} \frac{dy}{|x-y|^{n+2s}}}.$$
Since $u\in L^{\infty}(\Omega)$ then we get $|u(x)|\leq \|u\|_{L^{\infty}(\Omega)}$, for each $x\in\mathcal{N}.$ Thus, we conclude that $u\in L^{\infty}(U).$
}

Assertion 2. Here, we only give some ideas towards the proof of this Proposition, for details one can refer to [Proposition 5.1 in \cite{biagi2021global}].
%Consider the energy functional $\mathcal{I}: \mathcal{X}^{1,2}_{\mathcal{D}}(U) \rightarrow \mathbb{R}$  defined as
%$$
%\mathcal{I}(u)=\int_{\R^n}|\nabla u|^2 d x+\int_{\mathbb{R}^{2n}} \frac{|u(x)-u(y)|^2}{|x-y|^{n+2 s}} d x d y
%$$
%for all $u \in \mathcal{X}^{1,2}_{\mathcal{D}}(U)$, which is 
First we note that $\eta$ is $C^1(\mathcal{X}^{1,2}_{\mathcal{D}}(U), \R)$ and 
%let it be constrained on the $C^1$ - Banach manifold
$$
\mathcal{M}:=\left\{u \in \mathcal{X}^{1,2}_{\mathcal{D}}(U): J(u)=\int_{\Omega}|u|^2 d x=1\right\}
$$
is a $C^1$ - Banach manifold.
From above, we have that
\begin{equation}\label{e}
\lambda_1\left(\mathcal{D}\right):=\inf \{\mathcal{I}(u): u \in \mathcal{M}\} 
    \end{equation}
    and is achieved on some $u$. Furthermore, since $\eta(|u|)\leq \eta(u)$, $|u|$ is also a minimizer.
%Let $\left\{u_k\right\}_{k \geq 1} \subseteq \mathcal{M}$ be a minimizing sequence for $\lambda_1(\mathcal D)$ corresponding to \eqref{e}. It is easy to see that $\left\{u_k\right\}_{k \geq 1} \subseteq \mathcal{X}^{1,2}_{\mathcal{D}}(U)$ is bounded, so we assume that there exists a $\varphi_1 \in \mathcal{M}$ such that
%$$
%u_k \rightharpoonup \varphi_1 ~\text {weakly in }~ \mathcal{X}^{1,2}_{\mathcal{D}}(U) \quad \text { as } k \rightarrow+\infty .
%$$
%In particular, by using compact embedding \eqref{r2.5}, we have
%$$
%u_n \rightarrow \varphi_1 \text { strongly in } L^2(\R^n),
%\;\;
%u_n \rightarrow \varphi_1 \text { pointwise a.e. in } \R^n .
%$$
%Hence $\varphi_1$ is the first eigenfunction corresponding to $\lambda_1(\mathcal D)$.
%By standard derivation, one can verify that 
%$$\mathcal{I}(|u|) \leq \mathcal{I}(u) \text { for all } u\in \mathcal{X}^{1,2}_{\mathcal{D}}(U),$$
%so we can assume that $\varphi_1 \geq 0$ in $\mathbb{R}^n$. Since $\left\|\varphi_1\right\|_{L^2(\R^n)}=1$ by construction, 
Applying {   Lemma \ref{Ns} and Lemma \ref{strmx}} 
%and Proposition \ref{strb}  
 to $|u|$, we get that $|u|>0$ in $U$ which implies assertion 2.\\
%$\varphi_1(x)>0 ~  \text { for a.e. } x \in \Omega\cup \mathcal{N}$. \\

Assertion 3. Now, we prove that  $\lambda_1(\mathcal{D})$ is simple, i.e. if $\varphi_1,\;\varphi_2$ are two eigenfunctions corresponding to $\lambda_1(\mathcal{D})$ 
then $\varphi_1=\alpha \varphi_2$, with $\alpha\in \mathbb{R}.$
W.l.o.g we assume that eigenfunction $\varphi_1\in \mathcal{X}^{1,2}_{\mathcal{D}}(U)$ associated to $\lambda_1\left(\mathcal{D}\right)$ is non negative and normalized. Let us suppose that $\varphi_2 \in \mathcal{X}^{1,2}_{\mathcal{D}}(U)$ is  eigenfunction satisfying $\varphi_2 \not \equiv \varphi_1$, associated to $\lambda_1\left(\mathcal{D}\right)$.  We may suppose that $\varphi_2 \not \equiv 0$, otherwise we are done.
From Assertion 2, we know that either $\varphi_2 > 0$ or $\varphi_2 <0$  in $U$. Let us consider the case
\begin{equation}\label{ei2}
\varphi_2 < 0 \text { in } U,
    \end{equation}
the other being analogous. We define
$$
\tilde{\varphi_2}=\frac{\varphi_2}{\left\|{\varphi}_2\right\|_{L^2(U)}} \text { and } h_1=\varphi_1- \tilde{\varphi}_2.
$$
So we aim to show that
\begin{equation}\label{g10}
h_1(x)=0 \text { a.e. } x \in \mathbb{R}^n .
    \end{equation}
It is easy to observe that $h_1$ is also an eigenfunction relative to $\lambda_1(\mathcal{D})$ and by assertion 2 again, $h_1 \geq 0$ or $h_1 \leq 0$ a.e. in $U$. Thus either $\varphi_1 \geq \tilde{\varphi}_2$ or $\varphi_1 \leq \tilde{\varphi}_2$ in $U$ which implies using \eqref{ei2} and the non-negativity of $\varphi_1$,
\begin{equation}\label{uv}
\text { either } \varphi_1^2 \geq \tilde{\varphi}_2^2 \text { or } \varphi_1^2 \leq \tilde{\varphi}_2^2 \text { a.e. in } U \text {. }
    \end{equation}
On the other hand,
\begin{equation}\label{int12}
\int_{U}\left(\varphi_1^2(x)-\tilde{\varphi}_2^2(x)\right) d x=\left\|\varphi_1\right\|_{L^2(U)}^2-\left\|\tilde{\varphi}_2\right\|_{L^2(U)}^2=1-1=0,
    \end{equation}
since($\left\|\varphi_1\right\|_{L^2(U)}^2=1,$ from the proof of assertion 2). 
Thus, \eqref{int12} and \eqref{uv} gives that $\varphi_1^2-\tilde{\varphi}_2^2=0$ and hence $\varphi_1=\tilde{\varphi}_2$, so $h_1=0$ a.e. in $U$. Since $h_1$ vanishes outside $U$, we find $h_1=0$ a.e. in $\mathbb{R}^n$, that is our claim \eqref{g10}.
Then, as a consequence of \eqref{g10}, we conclude that
$h_1=\varphi_1-\tilde{\varphi_2}=0$, which implies
$$\varphi_1= \frac{\varphi_2}{\left\|{\varphi}_2\right\|_{L^2(U)}}.$$
Hence, $\varphi_2$ is proportional to $\varphi_1$, and this proves assertion 3.

Assertion 4. Now, if $\lambda>\lambda_1\left(\mathcal{D}\right)$ is an eigenvalue of $\mathcal{L}$ with the mixed boundary conditions as in \eqref{1} and its corresponding eigenfunction is $u_{\lambda} \in \mathcal{X}^{1,2}_{\mathcal{D}}(U)$ such that $\|u_{\lambda}\|_{L^2{(\R^n)}}=1$. We claim that $u_{\lambda}$ is sign-changing. By contrast, we may assume that $u_{\lambda}$ has a constant sign, say $u_{\lambda} \geq 0  \text{ a.e. in}~ \R^n$. By {   Lemma \ref{Ns} and Lemma \ref{strmx}}, we have $u_{\lambda}>0$ in $U$. 
 
Suppose $\varphi_2 \in \mathcal{X}^{1,2}_{\mathcal{D}}(U)$ is another positive eigenfunction associated to $\lambda_1\left(\mathcal{D}\right)$.   For $\varepsilon>0$, we define $\varphi_1=|u|$, 
$$
 \quad u_{\varepsilon}=\frac{\varphi_2^2}{\left(\varphi_1+\varepsilon\right)}.
$$
 $\varphi_{2}$, $u_{\varepsilon} \in \mathcal{X}^{1,2}_{\mathcal{D}}(U)$.
 Suppose $\varphi_2 \in \mathcal{X}^{1,2}_{\mathcal{D}}(U)$ is another positive eigenfunction associated to $\lambda_1\left(\mathcal{D}\right)$.   For $\varepsilon>0$, we define $\varphi_1=|u|$  and 
$$
\varphi_{2, \varepsilon}=\min \left\{\varphi_2, \frac{1}{\varepsilon}\right\} \quad \text { and } \quad u_{\varepsilon}=\frac{\varphi_{2, \varepsilon}^2}{\varphi_1+\varepsilon} .
$$ From Assertion 1, $\varphi_{2,\varepsilon}$, $u_{\varepsilon} \in \mathcal{X}^{1,2}_{\mathcal{D}}(U)$. 
 We  use $u_{\varepsilon}$ as test function in \eqref{dd2.2} solved by $\varphi_1$ then 

\begin{equation}\label{3.0.13}
\int_{\Omega}\nabla \varphi_1\cdot \nabla u_{\varepsilon} d x 
+\int_{Q} \frac{\left(\left(\varphi_1+\varepsilon\right)(x)-\left(\varphi_1+\varepsilon\right)(y)\right)\left(u_{\varepsilon}(x)-u_{\varepsilon}(y)\right)}{|x-y|^{n+2 s}} d x d y 
=\lambda_1(\mathcal{D}) \int_{\Omega} \varphi_1 u_{\varepsilon} d x.
\end{equation}

By the discrete Picone inequality (see Theorem 18 of \cite{MR3393266}), we have
$$
\left(\left(\varphi_1+\varepsilon\right)(x)-\left(\varphi_1+\varepsilon\right)(y)\right)\left(u_{\varepsilon}(x)-u_{\varepsilon}(y)\right) \leq\left|\varphi_{2, \varepsilon}(x)-\varphi_{2, \varepsilon}(y)\right|^2.
$$
Additionally, knowing that the map  $h \mapsto \min \{|h|, 1 / \varepsilon\}$ is Lipschitz then we obtain that
$$
\left(\left(\varphi_1+\varepsilon\right)(x)-\left(\varphi_1+\varepsilon\right)(y)\right)\left(u_{\varepsilon}(x)-u_{\varepsilon}(y)\right) \leq\left|\varphi_2(x)-\varphi_2(y)\right|^2.
$$
Now, consider the function
$$
\mathcal{G}\left(\varphi_{2, \varepsilon}, \varphi_1+\varepsilon\right)=\left|\nabla \varphi_{2, \varepsilon}\right|^2-\nabla \varphi_1\cdot \nabla u_{\varepsilon}.
$$
As a consequence of the  Picone identity, see [Proposition 9.61 in \cite{MR3136201}] and also see \cite{MR1618334}, we have that $\mathcal{G}\left(\varphi_{2, \varepsilon}, \varphi_1+\varepsilon\right) \geq 0$, that is 
\begin{equation}\label{picon}
\begin{aligned}
&\left|\nabla \varphi_{2, \varepsilon}\right|^2-\nabla \varphi_1. \nabla u_{\varepsilon}\geq 0~~\text{or},\\
&\nabla \varphi_1 \cdot\nabla u_{\varepsilon} \leq\left|\nabla \varphi_{2, \varepsilon}\right|^2 \leq\left|\nabla \varphi_2\right|^2.
    \end{aligned}
    \end{equation}
Based on the information gathered, the limit can be determined as $\varepsilon \rightarrow 0$ in \eqref{3.0.13}: by employing the Dominated Convergence theorem on the left-hand side of \eqref{3.0.13} and {the Fatou's lemma on the right-hand side}, we obtain that
\begin{equation}\label{domi}
\begin{aligned}
& \int_{\Omega}\nabla \varphi_1\cdot \nabla\left(\frac{\varphi_2^2}{\varphi_1}\right) d x +\int_{Q} \frac{\left(\varphi_1(x)-\varphi_1(y)\right)}{|x-y|^{n+2 s}}\left(\frac{\varphi_2^2(x)}{\varphi_1(x)}-\frac{\varphi_2^2(y)}{\varphi_1(y)}\right) d x d y \\
&\geq \lambda_1(\mathcal{D}) \int_{\Omega} \varphi_2^2 d x =\int_{\Omega}\left|\nabla \varphi_2\right|^2 d x+\int_{Q} \frac{\left|\varphi_2(x)-\varphi_2(y)\right|^2}{|x-y|^{n+2 s}} d x d y.
\end{aligned}
    \end{equation}
On the other hand, recalling \eqref{picon} then 
we have the estimate
\begin{equation}\label{3.0.15}
\begin{aligned}
\int_{\Omega}\nabla \varphi_1\cdot \nabla\left(\frac{\varphi_2^2}{\varphi_1}\right) d x 
+\int_{Q} \frac{\left(\varphi_1(x)-\varphi_1(y)\right)}{|x-y|^{n+2 s}}\left(\frac{\varphi_2^2(x)}{\varphi_1(x)}-\frac{\varphi_2^2(y)}{\varphi_1(y)}\right) d x d y \\
\quad \leq \int_{\Omega}\left|\nabla \varphi_2\right|^2 d x+\int_{Q} \frac{\left|\varphi_2(x)-\varphi_2(y)\right|^2}{|x-y|^{n+2 s}} d x d y .
\end{aligned}
    \end{equation}
% Hence, all the inequalities in \eqref{domi} and \eqref{3.0.15} are identities. In particular, the discrete Picone inequality implies that
% $$
% \frac{\varphi_1(x)}{\varphi_1(y)}=\frac{\varphi_2(x)}{\varphi_2(y)} \text { in } \mathbb{R}^{2 n},
% $$
% and so we can conclude that there exists $\alpha \in \mathbb{R}$ such that
% $$
% \varphi_1=\alpha \varphi_2 \text { in } \mathbb{R}^n \text {. }
% $$
Recalling  $u_{\lambda}>0$ in $U$,
and  equation solved by $u_{\lambda}$, using
$w_\epsilon=\frac{\varphi_{1}^2}{u_{\lambda}+\varepsilon}$
 in $\mathcal{X}^{1,2}_{\mathcal{D}}(U)$ as a test function, and from the above arguments as equations \eqref{domi} and \eqref{3.0.15}, we conclude that   
 $$ \int_{\Omega} |\nabla \varphi_1(x)|^2 \,dx +\int_{Q} \frac{|\varphi_1(x)-\varphi_1(y))|^2}{|x-y|^{n+2s}} dxdy  = \lambda.$$
 Since $\varphi_1$ is a solution to \eqref{1} corresponding eigenvalue $\lambda_1(\mathcal{D})$, then we conclude $\lambda=\lambda_1({\mathcal{D)}}$  that gives a contradiction with our assumption $\lambda>\lambda_1({\mathcal{D)}}$.
Thus, $u_{\lambda}$ is  sign-changing in $U$. 
\end{proof}

\begin{Lemma}\label{reg-1}
    Suppose $u$ is an eigenfunction of $\mathcal L$ then $u\in C^{0,\beta}(\mathbb{R}^n),$ for some $\beta\in (0,1).$
\end{Lemma}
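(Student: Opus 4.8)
The plan is to combine the boundedness of eigenfunctions, already proved in Proposition \ref{prop-1}(1), with known interior and boundary regularity theory for the mixed operator $\mathcal L = -\Delta + (-\Delta)^s$, and then treat the points of $\mathcal N$ and of $\partial\Omega\cap\overline{\mathcal N}$ separately using the boundary conditions. First I would recall that an eigenfunction $u$ solves $\mathcal L u = \lambda_1(\mathcal D)\, u$ in $\Omega$ weakly, with $u\equiv 0$ on $U^c$, and that by Proposition \ref{prop-1}(1) we have $u\in L^\infty(U)$, hence $u\in L^\infty(\mathbb R^n)$ since $u=0$ on $U^c$. The right-hand side $\lambda_1(\mathcal D)u$ then lies in $L^\infty(\Omega)\subset L^q(\Omega)$ for every $q$, so the equation $-\Delta u = \lambda_1(\mathcal D)u - (-\Delta)^s u$ in $\Omega$ has a bounded source term (the nonlocal term $(-\Delta)^s u$ is controlled because $u$ is bounded on all of $\mathbb R^n$ and decays/vanishes at infinity; one uses the tail estimate for $(-\Delta)^s$ acting on bounded functions). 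Interior Hölder continuity in $\Omega$ then follows from the regularity theory for $\mathcal L$ in \cite{biagi2022mixed} (see also \cite{antonini2023global, su2022regularity}); since $\Omega\cup\mathcal N$ has smooth (indeed $C^{1,1}$, as $\Omega$ is admissible) boundary and the Dirichlet datum $u=0$ on $U^c$ is compatible, the same references give Hölder regularity up to the part of the boundary where the Dirichlet condition is imposed.

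Next I would handle the Neumann region $\mathcal N$. For $x\in\mathcal N$, the identity already derived twice in the excerpt,
\[
u(x)=\frac{\int_{\Omega}\dfrac{u(y)}{|x-y|^{n+2s}}\,dy}{\int_{\Omega}\dfrac{dy}{|x-y|^{n+2s}}},
\]
expresses $u$ on $\mathcal N$ as an explicit average of its values on $\Omega$ against the kernel $|x-y|^{-n-2s}$. Since $\overline\Omega$ is compact and $\mathcal N$ is bounded and disjoint from $\overline\Omega$ except along $\partial\Omega\cap\overline{\mathcal N}$, the map $x\mapsto |x-y|^{-n-2s}$ is, for each fixed $y\in\Omega$, smooth in $x$ away from $\overline\Omega$; the only delicate behaviour is as $x$ approaches $\partial\Omega\cap\overline{\mathcal N}$. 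Using that $u\in C^{0,\beta}(\overline\Omega)$ (from the previous step) and matching the value of $u$ across $\partial\Omega$, one checks that the quotient above extends $\beta$-Hölder continuously up to $\partial\Omega\cap\overline{\mathcal N}$ and is Hölder (in fact locally Lipschitz) in the interior of $\mathcal N$; this gives $u\in C^{0,\beta}$ on $\overline{\mathcal N}$ with matching trace, and together with the interior/Dirichlet estimate yields $u\in C^{0,\beta}(U)$. Finally, extending by $0$ to $U^c$ preserves Hölder continuity because $u$ vanishes continuously on $\partial U\setminus(\partial\Omega\cap\overline{\mathcal N})$ (the Dirichlet portion), so $u\in C^{0,\beta}(\mathbb R^n)$; one may need to decrease $\beta$ to account for the worst exponent among the regions.

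The main obstacle I expect is the regularity of $u$ near the interface $\partial\Omega\cap\overline{\mathcal N}$, where three behaviours meet: the local PDE in $\Omega$ with a Neumann-type condition $\partial u/\partial\nu = 0$ on one side, the nonlocal Neumann condition $\mathcal N_s u = 0$ on $\mathcal N$, and the continuity of $u$ across $\partial\Omega$. The quotient formula handles $\mathcal N$ cleanly, but reconciling it with the Hopf/boundary behaviour on the $\Omega$ side — and in particular verifying that the two one-sided Hölder estimates glue to a global one with a common exponent — is where the technical work lies; this is precisely why the paper invokes the global $C^{1,\beta}$ regularity result proved in the Appendix (Theorem \ref{thm5.2}), which can be cited here to upgrade and streamline the argument. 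For the present lemma it suffices to extract the $C^{0,\beta}$ conclusion from that theorem applied to the eigenfunction $u$ with bounded right-hand side $\lambda_1(\mathcal D)u$.
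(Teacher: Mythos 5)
Your proposal follows essentially the same route as the paper: boundedness from Proposition \ref{prop-1}(1), regularity up to $\overline\Omega$ via Theorem \ref{thm5.2}, the averaging identity on $\mathcal N$ coming from $\mathcal N_s u=0$ (which is precisely the content of Lemma \ref{Ns}), and the vanishing of $u$ on $\mathcal D$ to conclude $u\in C^{0,\beta}(\mathbb R^n)$. The paper's proof is just the two-line citation of these same ingredients, so your plan is a correct (and more detailed) version of the intended argument.
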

\begin{proof}
   It is clear that one can repeat the proof of Proposition \ref{prop-1}(1) to obtain that every eigenfunction is bounded, in particular, $u\in L^{\infty}(U)$. Now, using Theorem \ref{thm5.2} and Lemma \ref{Ns} (see Appendix), we can conclude that  $u\in C^{0,\beta}(\mathbb{R}^n).$
\end{proof}

The following reveals the orthogonality of eigenfunctions corresponding to distinct eigenvalues.
\begin{Lemma}
    Let  $\phi_1$, $\phi_2\in \mathcal{X}^{1,2}_{\mathcal{D}}(U)$ be eigenfunctions corresponding to two different eigenvalues $\mu_1 \neq {\mu_2}$ respectively w.r.t. \eqref{1}, then
$$
\langle \phi_1, {\phi_2}\rangle_{\mathcal{X}^{1,2}_{\mathcal{D}}(U)}=0=\int_{\Omega} \phi_1(x) {\phi_2}(x) d x .
$$
\end{Lemma}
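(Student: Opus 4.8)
The statement is the standard orthogonality of eigenfunctions for a self-adjoint eigenvalue problem, so the plan is to exploit the symmetry of the bilinear form $\langle\cdot,\cdot\rangle_{\mathcal{X}^{1,2}_{\mathcal{D}}(U)}$ built from $\eta(\cdot)$. First I would record that, since $\phi_1$ is an eigenfunction for $\mu_1$, testing its weak formulation \eqref{dd2.2} against $\varphi=\phi_2\in\mathcal{X}^{1,2}_{\mathcal{D}}(U)$ gives
\[
\langle \phi_1,\phi_2\rangle_{\mathcal{X}^{1,2}_{\mathcal{D}}(U)}=\mu_1\int_\Omega \phi_1\phi_2\,dx.
\]
Symmetrically, using that $\phi_2$ is an eigenfunction for $\mu_2$ and testing against $\varphi=\phi_1$ yields
\[
\langle \phi_2,\phi_1\rangle_{\mathcal{X}^{1,2}_{\mathcal{D}}(U)}=\mu_2\int_\Omega \phi_2\phi_1\,dx.
\]

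The second step is simply to observe that the inner product is symmetric — this is immediate from the explicit formula for $\langle u,v\rangle$ given after Proposition \ref{Poin}, since both the gradient term and the Gagliardo double-integral term are symmetric in $u$ and $v$. Hence the two left-hand sides above coincide, and subtracting the two identities gives $(\mu_1-\mu_2)\int_\Omega \phi_1\phi_2\,dx=0$. Since $\mu_1\neq\mu_2$ by hypothesis, we conclude $\int_\Omega \phi_1\phi_2\,dx=0$, and then plugging this back into either identity gives $\langle\phi_1,\phi_2\rangle_{\mathcal{X}^{1,2}_{\mathcal{D}}(U)}=0$ as well.

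There is essentially no obstacle here: the only thing to be a little careful about is that $\phi_2$ (resp.\ $\phi_1$) is a legitimate test function, which is guaranteed because eigenfunctions belong to $\mathcal{X}^{1,2}_{\mathcal{D}}(U)$ by definition, and Definition \ref{d1} allows arbitrary test functions from that space. One might also note in passing that the $L^2$-orthogonality is over $\Omega$ (not $U$), consistent with the fact that the zeroth-order term in \eqref{dd2.2} is integrated over $\Omega$; no boundary or nonlocal-boundary terms appear because they are already encoded in the weak formulation. So the proof is a two-line computation once the symmetry of $\eta$'s bilinear form is invoked.
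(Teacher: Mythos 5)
Your proposal is correct and follows essentially the same route as the paper: test each eigenfunction's weak formulation against the other, invoke the symmetry of the bilinear form, subtract to get $(\mu_1-\mu_2)\int_\Omega\phi_1\phi_2\,dx=0$, and substitute back. The paper's only cosmetic difference is that it first normalizes the eigenfunctions in $L^2(\Omega)$, which changes nothing in the argument.
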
 
\begin{proof}
Suppose  $\phi_1 \not \equiv 0$ and ${\phi_2} \not \equiv 0$ a.e. in $\Omega$ and we set $f:=\phi_1 /\|\phi_1\|_{L^2(\Omega)}$ and ${g}:={\phi_2} /\|{\phi_2}\|_{L^2(\Omega)}$, which are eigenfunctions to eigenvalues $\mu_1$ and $\mu_2$ respectively. Testing $(P_{\mu_1})$ with  ${g}$ as test function and $(P_{\mu_2})$ with $f$ as test function, we obtain
\begin{equation}\label{eq2.16}
 \begin{split}
 \int_{\Omega}\nabla f(x)\cdot\nabla g(x)\,dx+\int_{Q} \frac{(f(x)-f(y))(g(x)-{g}(y))}{|x-y|^{n+2s}} \,dx \,dy 
&=\mu_1 \int_{\Omega} f(x) {g}(x) d x\\
&={\mu_2} \int_{\Omega} f(x) {g}(x) d x,
 \end{split}   
\end{equation}
\
that is
$$
(\mu_1-{\mu_2}) \int_{\Omega} f(x) {g}(x) d x=0 .
$$
 Since $\mu_1 \neq {\mu_2}$, then
\begin{equation}\label{eq2.17}
\int_{\Omega} f(x) {g}(x) d x=0
\end{equation}
By plugging \eqref{eq2.17} into \eqref{eq2.16}, we deduce that
$$
\langle f, {g}\rangle_{\mathcal{X}^{1,2}_{\mathcal{D}}(U)}=\int_{\Omega}\nabla f(x)\cdot\nabla g(x)\,dx+\int_{Q}\frac{(f(x)-f(y))({g}(x)-{g}(y))} {|x-y|^{N+2s}} d x d y=0
$$
which completes the proof.
\end{proof}

\section{Complementary behavior of first eigenvalues}\label{3}

Let us have a look at the following sequence of eigenvalue problems
\begin{equation}\label{sequence} 
      \left\{\begin{split}    
     \mathcal{L} u_{1,k}& =  \lambda_{1,k}  u_{1,k}, \quad  u_{1,k}>0 \quad 
 \text{in }\Omega,\\ 
   u_{1,k}&=0~~\text{in} ~~\Omega_k^c,\\
 \mathcal{N}_s u_{1,k}&=0 ~~\text{in} ~~{{\mathcal{N}_k}}, \\
 \frac{\partial u_{1,k}}{\partial \nu}&  =0 ~~\text{in}~~ \partial \Omega \cap \overline{{\mathcal{N}_k}},
  \end{split} \right.
 \end{equation}
where $\mathcal{D}_k,\,\mathcal{N}_k\subset\R^n\setminus\bar{\Omega}$ satisfies \eqref{nkdk}
and  $\lambda_{1,k} = \lambda_1 (\mathcal{D}_k) $ with $u_{1,k}$ representing  the  corresponding positive and normalized (in $L^2(\R^n)$  that is $\int_{\R^n} |u_{1,k}|^2\,dx=1$)  eigenfunction. The next result  deals with $\lambda_1(\mathcal D)$, when $\mathcal D =\emptyset$.
\begin{theorem}\label{phi}
   The first eigenvalue for $\mathcal L$ under the Neumann boundary condition only is zero, i.e.  $\lambda_1(\emptyset)=0$.
\end{theorem}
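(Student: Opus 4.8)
The plan is to use that, in the purely Neumann case $\mathcal{D}=\emptyset$, the admissible space carries no Dirichlet constraint: since $U$ then coincides with $\mathbb{R}^n$ (up to a null set) and $H^1_0(\mathbb{R}^n)=H^1(\mathbb{R}^n)$, one has $\mathcal{X}^{1,2}_{\emptyset}(U)=H^1(\mathbb{R}^n)$, and in particular the Poincar\'e inequality of Proposition \ref{Poin} is no longer valid. Morally, constant functions become admissible competitors for the Rayleigh quotient
$$
\lambda_1(\emptyset)=\inf_{u\in\mathcal{X}^{1,2}_{\emptyset}(U)\setminus\{0\}}\frac{\eta(u)^2}{\|u\|^2_{L^2(\Omega)}},\qquad \eta(u)^2=\int_{\Omega}|\nabla u|^2\,dx+[u]_s^2,
$$
and a constant would make the numerator vanish. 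As a nonzero constant is not in $H^1(\mathbb{R}^n)$, I would instead build a sequence of truncated constants whose Rayleigh quotients tend to $0$; since the quotient is always nonnegative, this forces $\lambda_1(\emptyset)=0$.

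Concretely, fix $R_0>0$ with $\overline{\Omega}\subset B_{R_0}$ and, for $k\geq 2R_0$, set $u_k(x)=\max\{0,\min\{1,2-|x|/k\}\}$. Then $u_k\equiv 1$ on $B_k$, $u_k$ is compactly supported in $\overline{B_{2k}}$, Lipschitz with $|\nabla u_k|\leq 1/k$ a.e., hence $u_k\in H^1(\mathbb{R}^n)=\mathcal{X}^{1,2}_{\emptyset}(U)$. Since $u_k\equiv 1$ on a neighbourhood of $\overline{\Omega}$, we get $\int_{\Omega}|\nabla u_k|^2\,dx=0$ and $\|u_k\|^2_{L^2(\Omega)}=|\Omega|$, so it only remains to estimate $[u_k]_s^2=\int_{Q}\frac{|u_k(x)-u_k(y)|^2}{|x-y|^{n+2s}}\,dx\,dy$ with $Q=\mathbb{R}^{2n}\setminus(\Omega^c\times\Omega^c)=\{(x,y):x\in\Omega\text{ or }y\in\Omega\}$. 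Covering $Q\subset(\Omega\times\mathbb{R}^n)\cup(\mathbb{R}^n\times\Omega)$ and using the symmetry of the integrand,
$$
[u_k]_s^2\leq 2\int_{\Omega}\int_{\mathbb{R}^n}\frac{|u_k(x)-u_k(y)|^2}{|x-y|^{n+2s}}\,dy\,dx .
$$
For $x\in\Omega$ one has $u_k(x)=1$ and $u_k(y)=1$ for every $y\in B_k$, so the inner integrand is supported in $\{y\notin B_k\}$; there $|x-y|\geq|y|-|x|\geq|y|-R_0\geq|y|/2$ because $|y|\geq k\geq 2R_0$. Consequently
$$
\int_{\mathbb{R}^n}\frac{|u_k(x)-u_k(y)|^2}{|x-y|^{n+2s}}\,dy\leq\int_{\mathbb{R}^n\setminus B_k}\frac{2^{n+2s}}{|y|^{n+2s}}\,dy=C(n,s)\,k^{-2s},
$$
whence $[u_k]_s^2\leq 2\,C(n,s)\,|\Omega|\,k^{-2s}\to 0$ as $k\to\infty$.

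Putting the pieces together, $\eta(u_k)^2/\|u_k\|^2_{L^2(\Omega)}=[u_k]_s^2/|\Omega|\to 0$, so $\lambda_1(\emptyset)\leq 0$; combined with $\eta(u)^2\geq 0$ this yields $\lambda_1(\emptyset)=0$. The only point requiring care is the bound on the Gagliardo seminorm: even though $u_k$ is globally constant only on the large ball $B_k$, the set $Q$ pairs points with at least one coordinate in the fixed bounded set $\Omega$, and since $u_k\equiv1$ on all of $B_k\supset\Omega$ the only surviving contribution comes from the far field $\{|y|\geq k\}$, which lies at distance $\gtrsim k$ from $\Omega$ and therefore contributes the vanishing factor $k^{-2s}$. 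It is worth noting that the infimum defining $\lambda_1(\emptyset)$ is not attained, in accordance with the breakdown of the Poincar\'e inequality when the Dirichlet region is empty.
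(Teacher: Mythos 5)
Your argument is correct, but it takes a genuinely different route from the paper: the paper disposes of Theorem \ref{phi} in one line by citing \cite{mugnai2022mixed}, whereas you give a self-contained variational proof by exhibiting an explicit minimizing sequence. Your construction is sound: the cutoffs $u_k$ are Lipschitz with compact support, hence in $H^1(\mathbb{R}^n)=\mathcal{X}^{1,2}_{\emptyset}(U)$; the local gradient term and the $L^2(\Omega)$ norm are computed correctly; and the key estimate $[u_k]_s^2\lesssim k^{-2s}$ is right, since on $Q$ at least one variable lies in the fixed bounded set $\Omega$ where $u_k\equiv 1$, so the integrand survives only for $|y|\ge k$, where $|x-y|\ge |y|/2$. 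Combined with the trivial lower bound $\lambda_1(\emptyset)\ge 0$, this gives the claim. What your approach buys is transparency and independence from the external reference; what it loses is the additional information contained in \cite{mugnai2022mukai}'s framework, namely that in the natural pure-Neumann function space (which admits constants) the value $0$ is an attained eigenvalue with constant eigenfunction -- in the paper's space $\mathcal{X}^{1,2}_{\emptyset}(U)\subset H^1(\mathbb{R}^n)$ the infimum is, as you correctly observe, not attained. Since the paper only ever uses the numerical value $\lambda_1(\emptyset)=0$ of the Rayleigh-quotient infimum (e.g.\ in Proposition 4.2), your proof fully serves the purpose. One small caveat: the citation key in the previous sentence should of course read \cite{mugnai2022mixed}; and you should note explicitly that the case $\mathcal{D}=\emptyset$ sits slightly outside the paper's standing assumption that $\Omega\cup\mathcal{N}$ is bounded, which is precisely why the identification $\mathcal{X}^{1,2}_{\emptyset}(U)=H^1(\mathbb{R}^n)$ and the failure of Proposition \ref{Poin} occur.
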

\begin{proof}
  From \cite{mugnai2022mixed}, we have $\lambda_1(\emptyset)=0$.   
\end{proof}

Let us define $\lambda_1(\R^n \setminus \Omega)$ as the first eigenvalue of $\mathcal L$ with Dirichlet boundary condition in $\mathbb R^N\setminus\Omega$, (see (4.8)  of  \cite{biagi2022breziss}). 
First, we establish qualitative properties of solutions to \eqref{sequence}, given by $u_{1,k}$.
\begin{Proposition} \label{u1k}
Let $\Omega$ be an admissible domain and  pairs of sets $\mathcal{D}_k$ and $\mathcal{N}_k$ satisfy \eqref{nkdk} with $\Omega_k$ bounded. Then,
there exists a $u_{1,k} \in \mathcal{X}^{1,2}_{\mathcal{D}_k}(\Omega_k)$ satisfying \eqref{sequence}.
Moreover:
\begin{itemize}
\item[(1)] $\lambda_1 (\emptyset) = 0 <\lambda_1 (\mathcal{D}_k)=\eta\left(\frac{u_{1,k}}{\|u_{1,k}\|_{L^2(\Omega)}}\right)\leq \lambda_1 (\R^n\setminus \Omega)$;
\item[(2)] $u_{1,k}\geq 0$ in $\mathbb{R}^n$ and  $u_{1,k}> 0$ in $\Omega_k $;
\item[(3)]  $\{u_{1,k}\}_{k\geq 1}$ is uniformly bounded in $L^\infty(\Omega_k ).$ 
\end{itemize}
\end{Proposition}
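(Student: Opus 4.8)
The plan is to read off existence together with items (1)--(2) from the apparatus of Section~3, now applied with $\mathcal{D}=\mathcal{D}_k$, $\mathcal{N}=\mathcal{N}_k$, $U=\Omega_k$, and to obtain the uniform $L^{\infty}$ bound in (3) by rerunning the De~Giorgi--Moser iteration of Proposition~\ref{prop-1}(1) while tracking all constants. First I would observe that, since $\Omega_k$ is bounded with smooth boundary and $\mathcal{D}_k,\mathcal{N}_k$ satisfy \eqref{nkdk}, the functional setting of Section~2 and the proofs of Section~3 transfer verbatim to $\mathcal{X}^{1,2}_{\mathcal{D}_k}(\Omega_k)$ (note that the Gagliardo integration set $Q$ depends only on the fixed $\Omega$). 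In particular Proposition~\ref{prop3.1} gives that the infimum in \eqref{lambda} defining $\lambda_1(\mathcal{D}_k)$ is attained at some $u_{1,k}$, which is then a weak eigenfunction of \eqref{sequence}; normalising so that $\|u_{1,k}\|_{L^2(\R^n)}=1$ and evaluating the Rayleigh quotient at the minimiser yields the identity $\lambda_1(\mathcal{D}_k)=\eta\big(u_{1,k}/\|u_{1,k}\|_{L^2(\Omega)}\big)$ of (1). Positivity $\lambda_1(\mathcal{D}_k)>0$ is Lemma~\ref{ll2.9}, and $\lambda_1(\emptyset)=0$ is Theorem~\ref{phi}; this settles the left half of (1).

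For the remaining inequality $\lambda_1(\mathcal{D}_k)\le\lambda_1(\R^n\setminus\Omega)$ I would argue by monotonicity with respect to the Dirichlet set. Let $\mathbb{X}_0=\{u\in H^1(\R^n):u\equiv0\text{ a.e. in }\R^n\setminus\Omega\}$ be the admissible class of the purely Dirichlet problem whose first eigenvalue is $\lambda_1(\R^n\setminus\Omega)$, so that $\lambda_1(\R^n\setminus\Omega)$ is the infimum of the quotient in \eqref{lambda} over $\mathbb{X}_0\setminus\{0\}$. Because $\Omega\subseteq\Omega_k$ one has $\Omega_k^c\subseteq\R^n\setminus\Omega$, so every $u\in\mathbb{X}_0$ vanishes a.e. in $\Omega_k^c$; moreover a function in $H^1(\R^n)$ vanishing a.e. outside the $C^{1,1}$ set $\Omega$ lies in $H^1_0(\Omega)\hookrightarrow H^1_0(\Omega_k)$, hence $\mathbb{X}_0\subseteq\mathcal{X}^{1,2}_{\mathcal{D}_k}(\Omega_k)$ and the quotient in \eqref{lambda} is literally the same functional on both sets. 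Taking its infimum over the larger set can only decrease it, which gives the bound.

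Item (2): since $\eta(|v|)\le\eta(v)$ (as used in Assertion~2 of Proposition~\ref{prop-1}), $|u_{1,k}|$ is again a minimiser, so I may take $u_{1,k}\ge0$ in $\R^n$. By Lemma~\ref{reg-1} this eigenfunction belongs to $C^{0,\beta}(\R^n)$ for some $\beta\in(0,1)$, and $\mathcal{L}u_{1,k}=\lambda_{1,k}u_{1,k}\ge0$ in $\Omega$ together with the Neumann conditions of \eqref{sequence}; the strong maximum principle Lemma~\ref{strmx} (where admissibility of $\Omega$ enters through the Hopf part) then forces $u_{1,k}\equiv0$ or $u_{1,k}>0$ in $\Omega_k$, and since $u_{1,k}$ is nontrivial the latter holds.

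For (3) I would fix $k$ and repeat the iteration of Proposition~\ref{prop-1}(1) with $\hat u=\sqrt{\rho}\,u_{1,k}$, $d_j=1-2^{-j}$, $w_j=(\hat u-d_j)_+$ and $\mathcal{U}_j=\|w_j\|_{L^2(\Omega)}^2$: testing the weak formulation of \eqref{sequence} with $w_j$ and dropping the nonnegative nonlocal term gives $\int_\Omega|\nabla w_j|^2\le\lambda_{1,k}\int_\Omega\hat u\,w_j$, and the Sobolev inequality is then applied with a constant $\mathcal{C}$ depending only on $n$ and on the \emph{fixed} set $\Omega$. This produces the recursion $\mathcal{U}_j\le\mathbf{c}'\big(2^{1+4/n}\big)^{j-1}\mathcal{U}_{j-1}^{1+2/n}$ with $\mathbf{c}'=\lambda_{1,k}\,2^{1+4/n}\mathcal{C}$; by (1), $\lambda_{1,k}\le\lambda_1(\R^n\setminus\Omega)$, so $\mathbf{c}'$ is bounded by a $k$-independent constant, and hence the smallness threshold $\rho_0$ below which $\mathcal{U}_j\to0$ (by \cite[Lemma~7.1]{MR1962933}) can be chosen independent of $k$. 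Since $u_{1,k}$ is supported in $\overline{\Omega_k}$ we have $\mathcal{U}_0=\rho\|u_{1,k}\|_{L^2(\Omega)}^2\le\rho\|u_{1,k}\|_{L^2(\R^n)}^2=\rho$, so fixing any $\rho<\rho_0$ gives $\|u_{1,k}\|_{L^\infty(\Omega)}\le\rho^{-1/2}$ for all $k$; finally, for $x\in\mathcal{N}_k$ the condition $\mathcal{N}_s u_{1,k}(x)=0$ yields the weighted-average identity
$$u_{1,k}(x)=\frac{\int_{\Omega}u_{1,k}(y)\,|x-y|^{-n-2s}\,dy}{\int_{\Omega}|x-y|^{-n-2s}\,dy}\le\|u_{1,k}\|_{L^{\infty}(\Omega)},$$
so $\|u_{1,k}\|_{L^\infty(\Omega_k)}\le\rho^{-1/2}$ uniformly in $k$. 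The main obstacle is precisely this uniformity in (3): one must ensure that the Sobolev constant (automatic here, $\Omega$ being fixed) and, above all, the upper bound on $\lambda_{1,k}$ do not degenerate as the Neumann sets vary, so that a single $\rho_0$ serves the whole sequence — and it is the inequality $\lambda_{1,k}\le\lambda_1(\R^n\setminus\Omega)$ from (1) that provides this.
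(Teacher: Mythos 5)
Your proposal is correct and follows essentially the same route as the paper: existence and positivity of $\lambda_1(\mathcal{D}_k)$ from Proposition \ref{prop3.1} and Lemma \ref{ll2.9}, positivity of $u_{1,k}$ from $\eta(|u|)\le\eta(u)$ together with Lemmas \ref{Ns} and \ref{strmx}, and the uniform $L^\infty$ bound by rerunning the De Giorgi iteration with the $k$-independent constant $\mathbf{c}'$ controlled through $\lambda_{1,k}\le\lambda_1(\R^n\setminus\Omega)$ and the fixed Sobolev constant of $\Omega$, then propagating the bound to $\mathcal{N}_k$ via the averaging identity. The only (welcome) difference is that you make the comparison $\lambda_{1,k}\le\lambda_1(\R^n\setminus\Omega)$ explicit via the inclusion of the purely Dirichlet admissible class into $\mathcal{X}^{1,2}_{\mathcal{D}_k}(\Omega_k)$, which is the correct direction of monotonicity and is only implicit in the paper's appeal to \eqref{inclus}.
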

\begin{proof}
(1) From Theorem \ref{phi}, one has $\lambda_1(\emptyset)$. The existence of the pair of eigenvalues and eigenfunctions i.e. $( \lambda_{1,_k}, u_{1,k} ) \in \R^+ \times\mathcal{X}^{1,2}_{\mathcal{D}_k}(\Omega_k) $ follows from Lemma \ref{ll2.9}.
Moreover,  assertion (1) is a consequence of the following
\begin{equation}\label{R}
\lambda_{1,k}=\inf_{0\not\equiv u\in \mathcal{X}^{1,2}_{\mathcal{D}_k}(\Omega_k), \, \|u\|^2_{L^2(\Omega)}=1}  \bigg(\int_{\Omega}|\nabla{u}|^2\,dx+ \int_{Q} \frac{|u(x)-u(y)|^{2}}{|x-y|^{n+2s}} \, dx dy\bigg),
\end{equation}
and Proposition \ref{Poin}  with{
\begin{equation}\label{inclus}
 \mathcal{X}^{1,2}_{\mathcal{D}_k}(\Omega_k)\subset H^1(\R^n).
\end{equation}}
(2) It can be followed by the attainability of the first eigenvalue.   There exists $u_{1,k}\in \mathcal{X}^{1,2}_{\mathcal{D}_k}(\Omega_k)$ for each $k$ which is minimizer of the Rayleigh quotient $\lambda_{1,k}$ in \eqref{R}. But  $|u_{1,k}|$ also minimizes the quotient, so we can assume $u_{1,k}\geq 0$.
By  {   Lemma \ref{Ns} and Lemma \ref{strmx}},
we have $u_{1,k}>0$ in $\Omega_k$. 
% If $x\in {\mathcal{N}_k}$, 
%     then by the definition of $\mathcal{N}_su$ and boundary condition satisfied by $u$, we get
%     $$\mathcal{N}_su(x)= C_{n,s}\int_{\Omega} \frac{u(x)-u(y)}{|x-y|^{n+2s}}dy =0,~ x\in {{\mathcal{N}_k}}.$$
%    This gives 
%     $$u(x)\int_{\Omega}\frac{dy}{|x-y|^{n+2s}}= \int_{\Omega}\frac{u(y)}{|x-y|^{n+2s}} dy>0~~,~x\in {{\mathcal{N}_k}}.$$

% since $u(y)>0$ in $(\Omega \cup\mathcal{N})$ a.e..
(3)
  Let us suppose for any $m\in \mathbb{N}$, $u_{1,m}$ as the first eigenfunction such that $\|u_{1,m}\|_{L^2{(  \Omega})}=1$.
For $\rho>0$, we set
$
\hat{u}_{1,m}=\sqrt{\rho} u_{1,m}.
$
Now defining $d_k=1-\frac{1}{2^k}$ and
$$
v_{k,m}=\hat{u}_{1,m}-d_k, \quad w_{k,m}=\left(v_{k,m}\right)_{+}=\max \left\{v_{k,m}, 0\right\}, \quad \mathcal{U}_{k,m}=\left\|w_{k,m}\right\|_{L^2({  \Omega})}^2, ~\forall~k,m \in \mathbb{N}
$$ 
we conclude that 
 % for $k=0$,  $w_0=v_0=\hat{u}\left(\right.$ and 
$\left\|\hat{u}_{1,m}\right\|_{L^2({  \Omega})}^2=\rho\left\|u_{1,m}\right\|_{L^2({  \Omega})}^2=\rho$ (since $\left\|u_{1,m}\right\|_{L^2({  \Omega})}=1$) and $v_{k,m} \geq v_{k+1,m}$, $w_{k,m} \geq w_{k+1,m}$(since $d_{k}<d_{k+1}$).
If $u_{1,m} \in \mathcal{X}^{1,2}_{\mathcal{D}_m}(\Omega_m)$, by the definition of  $\mathcal{X}^{1,2}_{\mathcal{D}_m} (\Omega_m), u_{1,m}\in H^1\left(\mathbb{R}^n\right)$ (where $\Omega_m= (\Omega \cup {\mathcal{N}_m} \cup (\partial\Omega\cap\overline{\mathcal{N}_m}))$.  Moreover, since $\hat{u}_{1,m}=\sqrt{\rho} u_{1,m} \equiv 0$ a.e. in $\Omega_m^c$, one also has
$$
v_{k,m}=\hat{u}_{1,m}-d_{k}=-d_{k}<0 \text{ on } \Omega_m^c ~\text{and}~  w_{k,m} \in \mathcal{X}^{1,2}_{\mathcal{D}_m}(\Omega_m).
$$
 We use  $w_{k,m}$ as a test function in \eqref{dd2.2} to obtain
\begin{equation}\label{eq3.0.4t}
\int_{\Omega}\nabla \hat{u}_{1,m}\cdot \nabla w_{k,m} d x  +\int_{Q} \frac{\left(\hat{u}_{1,m}(x)-\hat{u}_{1,m}(y)\right)\left(w_{k,m}(x)-w_{k,m}(y)\right)}{|x-y|^{n+2 s}} d x d y 
=\lambda_{1,k} \int_{\Omega} \hat{u}_{1,m} w_{k,m} d x .
    \end{equation}
Moreover, by the definition of $w_{k,m}$, we have
\begin{equation}\label{eq3.0.5t}
\int_{\Omega}\nabla \hat{u}_{1,m}\cdot \nabla w_{k,m} d x=\int_{\Omega \cap\left\{\hat{u}_{1,m}>d_{k}\right\}}\nabla \hat{u}_{1,m}\cdot \nabla v_{k,m} \,d x=\int_{\Omega}\left|\nabla w_{k,m}(x)\right|^2 d x .
    \end{equation}
 Hence, by non negativity of second integral in \eqref{eq3.0.4t} and from \eqref{eq3.0.5t}, we deduce the following
\begin{equation}\label{eqqwt}
\int_{\Omega}\left|\nabla w_{k,m}(x)\right|^2 d x \leq \lambda_1 \int_{\Omega} \hat{u}_{1,m} w_{k,m} d x, ~~(\text{since $\lambda_{1,k}\leq \lambda_1$}).
    \end{equation}
So,  using the Sobolev inequality,
%(from S. Kesavan book, Theorem 2.4.1),
 we have
\begin{equation}\label{eq3.0.6t}
\left(\int_{{  \Omega}}|w_{k,m}(x)|^{2^*}\,dx\right)^\frac{2}{2^*}\leq \mathcal{C} \int_{{  \Omega}}|\nabla w_{k,m}(x)|^2\,dx,~\text{for some}~ \mathcal{C}>0.
\end{equation}
 Suppose $x\in \left\{w_{k,m}>0\right\}$ then  we obtain
\begin{equation}\label{u_ot}
\hat{u}_{1,m}<2^k w_{k-1,m}, ~\forall~k\geq 1
    \end{equation}
    and also as a result (see Theorem 3.2 in \cite{franzina2013fractional}),
\begin{equation}\label{eq3.0.9t}
\left\{w_{k,m}>0\right\}=\left\{\hat{u}_{1,m}>d_k\right\} \subseteq\left\{\frac{1}{2^k}< w_{k-1,m}\right\}, ~\forall~k \geq 1.
\end{equation}
Now, from \eqref{eqqwt} and using  \eqref{u_ot}, $w_{k-1,m}\geq w_{k,m}$, we have
\begin{equation}\label{a.e.t}
\begin{aligned}
\int_{\Omega}\left|\nabla w_{k,m}(x)\right|^2 d x & \leq \lambda_1 \int_{\left\{w_{k,m}>0\right\}} \hat{u}_{1,m} w_{k,m} d x  \leq \lambda_1 2^k  \int_{\left\{w_{k,m}>0\right\}} w_{k-1,m} w_{k,m} d x \\
& \leq \lambda_1 2^k   \int_{{  \Omega}} w_{k-1,m}^2 d x= \lambda_1 2^k  \left\|w_{k-1,m}\right\|_{L^2({  \Omega})}^2  = \lambda_1 2^k   \mathcal{U}_{k-1,m} .
\end{aligned}
    \end{equation}
As a consequence, using \eqref{eq3.0.9t} in \eqref{a.e.t} and by the Chebyshev inequality, we obtain
\begin{equation}\label{new-tm1t}
\begin{aligned}
\mathcal{U}_{k-1,m} & =\int_{{  \Omega}} w_{k-1,m}^2 d x \geq \int_{\left\{w_{k-1,m}>\frac{1}{2^{k}}\right\}} w_{k-1,m}^2 d x  \geq \frac{1}{2^{2k}}\left|\left\{w_{k-1,m}>\frac{1}{2^k}\right\}\right| \geq \frac{1}{2^{2k}}\left|\left\{w_{k,m}>0\right\}\right| .
\end{aligned}
    \end{equation}
Now, we use the Hölder inequality, \eqref{eq3.0.6t}, \eqref{a.e.t} and \eqref{new-tm1t} to obtain the following estimate
\begin{equation}\label{eqht}
\begin{aligned}
\mathcal{U}_{k,m} =\left\|w_{k,m}\right\|_{L^2({  \Omega})}^2 \leq \left(\int_{{  \Omega}} |w_{k,m}|^{2^*} d x\right)^{\frac{2}{2^*}}\left|\left\{w_{k,m}>0\right\}\right|^{\frac{2}{n}}
&\leq \mathcal{C}  \left(\int_{{  \Omega}}\left|\nabla w_{k,m}\right|^2 dx\right) \left|\left\{w_{k,m}>0\right\}\right|^{\frac{2}{n}} \\
&\leq \mathcal{C}\left(\lambda_1 2^k  \mathcal{U}_{k-1,m}\right)\left(2^{2 k} \mathcal{U}_{k-1,m}\right)^{\frac{2}{n}}\\ 
&=\mathbf{c}^{\prime}\left(2^{1+\frac{4}{n}}\right)^{k-1} \mathcal{U}_{k-1,m}^{1+\frac{2}{n}},
\end{aligned}
    \end{equation}
where $\mathcal{C}>0$ is a Sobolev constant and $\mathbf{c}^{\prime}= \lambda_1 2^{1+\frac{4}{n}} \mathcal{C} $.
We can see  $\frac{2}{n}>0$,
$
r=2^{1+\frac{4}{n}}>1,
$
using the Lemma $7.1$ of \cite{MR1962933} that $\mathcal{U}_{k,m} \rightarrow 0$ as $k \rightarrow \infty$, provided that
$$
\mathcal{U}_0=\left\|\hat{u}_{1,m}\right\|_{L^2({  \Omega})}^2=\rho<\left(\mathbf{c}^{\prime}\right)^{\frac{-n}{2} } r^{\frac{-n^2}{4}} .
$$
As a consequence, if $\rho>0$ is small enough, we can use Dominated Convergence Theorem to conclude
$$
0=\lim _{k \rightarrow \infty} \mathcal{U}_{k,m}=\lim _{k \rightarrow \infty} \int_{{  \Omega}}\left(\hat{u}_{1,m}-d_k\right)_{+}^2 d x=\int_{{  \Omega}}\left(\hat{u}_{1,m}-1\right)_{+}^2 d x .
$$
Recalling $\hat{u}_{1,m}=\sqrt{\rho} u_{1,m}$ and $u_{1,m} \geq 0$, using above we obtain
$0 \leq u_{1,m} \leq \frac{1}{\sqrt{\rho}} \text{ a.e. in } {  \Omega}$,
which implies $u_{1,m} \in L^{\infty}({  \Omega})$.
{  Now,  if $x\in \mathcal{N}_m$, then using the definition of $\mathcal{N}_s$ (see  \eqref{normal}) and $\mathcal{N}_s u_{1,m}(x)=0$, we have
$$ u_{1,m}(x)\int_{\Omega} \frac{dy}{|x-y|^{n+2s}}= \int_{\Omega} \frac{u_{1,m}(y) dy}{|x-y|^{n+2s}},$$
which implies
$$
u_{1,m}(x)=\frac{\int_{\Omega} \frac{u_{1,m}(y) dy}{|x-y|^{n+2s}}}{\int_{\Omega} \frac{dy}{|x-y|^{n+2s}}}.$$
Since $u\in L^{\infty}(\Omega)$ then we get $|u_{1,m}(x)|\leq \|u_{1,m}\|_{L^{\infty}(\Omega)}$, for each $x\in\mathcal{N}_m$ and $m\in \N$. Thus, we conclude that $u_{1,m}\in L^{\infty}(\Omega_m).$
}

\end{proof}

% (4)  It follows by the contrary argument with the fact that $u_{1,k}\geq 0$ and $\|u_{1,k}\|_{L^2(\R^n)}=1$, $\forall$ $k\in \N$. \textcolor{red}{[I don't understand : this is not a consequence of $\|u_{1,k}\|_{L^2(\mathbb R^n)}=1$, you need a different normalization]}

% \textcolor{Green}{By the contrary argument, we suppose that $\forall c>0$ such that $\frac{1}{|\Omega\cup\mathcal{N}_k|}\int_{\Omega\cup\mathcal{N}_k} u_{1,k}(y)\,dy<c$. But it is not posible since $u_{1,k}\geq 0$ in $\R^n$ and $\|u_{1,k}\|_{L^2(\mathbb{R}^n)}=1$. Hence, the proof is complete.}\textcolor{red}{[J.G. I don't see the contradiction if $|\Omega\cup\mathcal{N}_k|\to\infty$ as $k\to\infty$...]}{\color{Green} I agree with it, may be we can change the statement here, i.e. $\frac{1}{|\Omega\cup\mathcal{N}_k|}\int_{\Omega\cup\mathcal{N}_k} u_{1,k}(y)\,dy\geq 0$. }
% \\

The behaviour of the sequence $\{u_{1,k}\}$, when $k$ is large, is the subject of our next proposition.
\medskip
\begin{Proposition}\label{weakly}
Suppose that $\Omega$ is admissible domain and $\mathcal{N}_k$ and $\mathcal{D}_k$ are as in \eqref{nkdk} with $\Omega_k$ bounded for any $k$.  Consider the solutions $\{u_{1,k}\}$ of \eqref{sequence}, then there exists  $u^*$ in $\mathcal{X}^{1,2}_{\mathcal{D}}(\Omega_k)$  such that, up to a subsequence, as $k \to \infty$\\
\begin{align}\label{eq3.4}
\begin{cases}
u_{1,k} \rightharpoonup u^* \mbox{weakly in } \mathcal{X}^{1,2}_{\mathcal{D}}(\Omega_k)\\
u_{1,k} \to u^* \mbox{strongly in } \ {L^2_{\mathrm{loc}} (\R^n)},\\ 
u_{1,k} \to u^* \mbox{a.e in }  \R^n.
\end{cases}
\end{align}
\end{Proposition}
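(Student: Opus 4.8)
The plan is the usual compactness argument: obtain a uniform bound on $\{u_{1,k}\}$, extract a weakly convergent subsequence by reflexivity, and upgrade to strong $L^2_{\loc}$ and a.e.\ convergence by a Rellich--Kondrachov type embedding. First I would derive the energy bound. Taking $u_{1,k}$ itself as a test function in the weak formulation \eqref{dd2.2} of \eqref{sequence} gives $\eta(u_{1,k})^2=\lambda_{1,k}\int_{\Omega}u_{1,k}^2\,dx$. By Proposition \ref{u1k}(1) one has $\lambda_{1,k}\le\lambda_1(\R^n\setminus\Omega)$, and since $\int_{\R^n}u_{1,k}^2\,dx=1$ (so in particular $\int_{\Omega}u_{1,k}^2\,dx\le 1$), this yields $\eta(u_{1,k})^2\le\lambda_1(\R^n\setminus\Omega)$ for all $k$. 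Together with $\|u_{1,k}\|_{L^2(\R^n)}=1$, and a separate estimate for $\nabla u_{1,k}$ over $\mathcal{N}_k$ (see below), one concludes that $\{u_{1,k}\}$ is bounded in $H^1(\R^n)$.

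Since $H^1(\R^n)$ is a reflexive Hilbert space, along a subsequence $u_{1,k}\weakly u^*$ weakly in $H^1(\R^n)$ for some $u^*$; in particular $\nabla u_{1,k}\weakly\nabla u^*$ in $L^2(\Omega)$ and the difference quotients $\big(u_{1,k}(x)-u_{1,k}(y)\big)|x-y|^{-n/2-s}$ converge weakly in $L^2(Q)$, which is precisely the asserted weak convergence in the $\mathcal{X}^{1,2}$--topology. By the Rellich--Kondrachov theorem on a fixed ball $B_R$ (compare Remark \ref{r2.5}) we get $u_{1,k}\to u^*$ strongly in $L^2(B_R)$ for every $R>0$, hence in $L^2_{\loc}(\R^n)$, and a diagonal subsequence converges a.e.\ in $\R^n$. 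To see that $u^*$ sits in the right space, note that $u_{1,k}\equiv 0$ a.e.\ on $\R^n\setminus(\Omega\cup\mathcal{N}_k)$ (since $\mathcal{D}_k\subset\Omega_k^c$ and $|\R^n\setminus(\Omega\cup\mathcal{D}_k\cup\mathcal{N}_k)|=0$), so on $B_R\setminus\overline{\Omega}$ the function $u_{1,k}$ is supported on $\mathcal{N}_k\cap B_R$, a set of measure tending to $0$; with the strong $L^2(B_R)$ convergence this forces $u^*=0$ a.e.\ in $\R^n\setminus\Omega$, i.e.\ $u^*$ belongs to the limiting full--Dirichlet space that ``$\mathcal{X}^{1,2}_{\mathcal{D}}(\Omega_k)$'' stands for.

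The crux of the argument is the uniform $H^1(\R^n)$ bound, and the trouble is that the ambient spaces $\mathcal{X}^{1,2}_{\mathcal{D}_k}(\Omega_k)$ vary with $k$: the functional $\eta(\cdot)$ controls $\nabla u_{1,k}$ only over the \emph{fixed} set $\Omega$ and the partial Gagliardo seminorm over $Q$, and the equivalence of $\eta$ with the $H^1(\R^n)$--norm degenerates as $\Omega_k$ moves. One must therefore control $\nabla u_{1,k}$ on the shrinking Neumann sets $\mathcal{N}_k$ directly: the nonlocal Neumann condition gives the representation $u_{1,k}(x)=\big(\int_{\Omega}u_{1,k}(y)|x-y|^{-n-2s}\,dy\big)\big(\int_{\Omega}|x-y|^{-n-2s}\,dy\big)^{-1}$ for $x\in\mathcal{N}_k$, and combining it with the uniform $L^\infty$ bound of Proposition \ref{u1k}(3), the global $C^{1,\beta}$ regularity (Theorem \ref{thm5.2}), and the hypotheses $|\mathcal{N}_k\cap B_R|\to0$ and $|\partial\Omega\cap\overline{\mathcal{N}_k}|\to0$, one should be able to show that $\int_{\mathcal{N}_k}|\nabla u_{1,k}|^2\,dx$ remains bounded (in fact tends to $0$), which closes the estimate and also makes transparent why the weak limit loses the Neumann part.
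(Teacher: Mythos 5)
Your first two paragraphs reproduce the paper's proof: test \eqref{sequence} with $u_{1,k}$ itself, use $\lambda_{1,k}\le\lambda_1(\R^n\setminus\Omega)$ from Proposition \ref{u1k}(1) together with the normalization to get $\eta(u_{1,k})^2\le\lambda_1(\R^n\setminus\Omega)$, extract a weakly convergent subsequence by reflexivity, and invoke the compact embedding of Remark \ref{r2.5} to upgrade to strong $L^2_{\loc}(\R^n)$ and a.e.\ convergence; the paper stops exactly there. Two caveats about the extra material you add. First, your conclusion that $u^*=0$ a.e.\ in $\R^n\setminus\Omega$ rests on $|\mathcal{N}_k\cap B_R|\to 0$, which is a hypothesis of Theorem \ref{theoNeu} but \emph{not} of this proposition: under the stated assumptions (only \eqref{nkdk} and $\Omega_k$ bounded) the limit need not vanish outside $\Omega$, and the proposition does not claim it does. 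Second, the uniform bound on $\int_{\mathcal{N}_k}|\nabla u_{1,k}|^2$ that you describe as the ``crux'' is left as ``one should be able to show,'' and the tools you cite do not obviously deliver it: Theorem \ref{thm5.2} gives $C^{1,\beta}$ regularity only up to $\bar\Omega$, and differentiating the Neumann representation formula degenerates as $x$ approaches $\partial\Omega\cap\overline{\mathcal{N}_k}$. Fortunately this bound is not needed for the stated conclusions: the weak convergence asserted in \eqref{eq3.4} is with respect to the $\eta$-inner product, which only involves $\nabla u$ on the fixed set $\Omega$ and the Gagliardo seminorm on $Q$, and the $L^2_{\loc}$ compactness is supplied by Remark \ref{r2.5}; so the argument closes without any control of the gradient on the Neumann sets, as in the paper.
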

\begin{proof}
 Testing  \eqref{sequence} with $u_{1,k}$ itself and using Proposition \ref{u1k}, we find that
\begin{equation}\label{universal}
\int_{\Omega}|\nabla u_{1,k}|^2\,dx+ \int_{Q} \dfrac{(u_{1,k}(x)-u_{1,k}(y))^2}{|x-y|^{N+2s}}dxdy
=\lambda_{1,k}\dint_{\Omega}| u_{1,k}|^2\, dx\le \lambda_1(\R^n\setminus \Omega).
\end{equation}
As a consequence of Proposition \ref{u1k}, the  sequence $ \{\eta(u_{1,k})\}_{k\geq1}$  is uniformly bounded and thus, up to a subsequence, there exists $u^*\in  \mathcal{X}^{1,2}_{\mathcal{D}}(\Omega_k)$ such that
$$u_{1,k}\rightharpoonup u^*\hbox{   weakly in }  \mathcal{X}^{1,2}_{\mathcal{D}}(\Omega_k).$$
By the compact embedding $ \mathcal{X}^{1,2}_{\mathcal{D}}(\Omega_k)\hookrightarrow \hookrightarrow L^2_{\mathrm loc}(\R^n)$, we can infer that \eqref{eq3.4} holds true.
    
\end{proof}
Let us now describe an interesting property of functions satisfying the Neumann condition on a set whose measure approaches infinity. For the proof of the following lemma, we refer to [Lemma 3.3 in \cite{MR3784437}].
\begin{Lemma}
If %$u\in {   C^{0,\alpha}(\bar{\Omega}), \alpha\in(0,1)?}$ 
$u$ satisfies the Neumann condition as
$$
\mathcal{N}_s u(x)=0,  \qquad \forall~ x\in \mathcal{N}\,,
$$
where $\mathcal{N}$ satisfies 
$$
 \mathcal{N}\cap B_R^c \neq \emptyset,\, \qquad\forall~ R >0, 
$$
where $B^c_{R}=\{x\in \R^n\setminus\Omega : ~|x|>R\}$.
Then,  for all sequences $\{x_j\}_{j}\subset \mathcal{N}$
 such that $ |x_j|\to \infty$  as $j \to + \infty$,
we have that $\{u(x_j)\}_j $ converges to its average on $\Omega$, that is
$$
\lim_{j \to \infty } u(x_j) = \frac{1}{|\Omega|}\int_{\Omega} u(x) dx.
$$
%%%%%%%%%%%%%%%%%%%%%%%%%%%%%%%%%%%%%%%%%%%%%
\end{Lemma}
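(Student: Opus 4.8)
The plan is to turn the nonlocal Neumann condition into an explicit averaging formula for $u$ on $\mathcal{N}$ and then let the point run off to infinity. First I would fix any $x\in\mathcal{N}$: since $\mathcal{N}\subset\R^n\setminus\bar\Omega$ we have $\operatorname{dist}(x,\Omega)>0$, so the kernel integral $\int_\Omega|x-y|^{-(n+2s)}\,dy$ is a finite positive number, and expanding the identity $\mathcal{N}_s u(x)=0$ via \eqref{normal} and dividing gives
$$u(x)=\frac{\displaystyle\int_\Omega\frac{u(y)}{|x-y|^{n+2s}}\,dy}{\displaystyle\int_\Omega\frac{dy}{|x-y|^{n+2s}}}\,.$$
The right-hand side is well defined because the functions to which the lemma is applied lie in $\mathcal{X}^{1,2}_{\mathcal{D}}(U)\hookrightarrow L^2(\Omega)\subset L^1(\Omega)$ (recall $|\Omega|<\infty$), and for eigenfunctions one may even invoke the $L^\infty$ bound of Lemma \ref{reg-1}. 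Thus the whole statement reduces to computing the limit of the quotient above along a sequence $\{x_j\}\subset\mathcal{N}$ with $|x_j|\to\infty$.

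Next I would renormalize by $|x_j|^{n+2s}$: introducing the weights $w_j(y):=|x_j|^{n+2s}\,|x_j-y|^{-(n+2s)}$ for $y\in\Omega$, the formula becomes $u(x_j)=\dfrac{\int_\Omega u(y)\,w_j(y)\,dy}{\int_\Omega w_j(y)\,dy}$. Since $\Omega$ is bounded, for every $y\in\Omega$ one has $\bigl|\,|x_j-y|-|x_j|\,\bigr|\le|y|\le R_0:=\sup_{z\in\Omega}|z|$, hence $\bigl|\,|x_j-y|/|x_j|-1\,\bigr|\le R_0/|x_j|\to 0$ uniformly in $y$; consequently $w_j\to 1$ uniformly on $\Omega$, and in particular $\tfrac12\le w_j\le 2$ on $\Omega$ once $j$ is large. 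Dominated convergence (dominating by $2|u|\in L^1(\Omega)$ and by $2\in L^1(\Omega)$, respectively) then gives $\int_\Omega u\,w_j\,dy\to\int_\Omega u\,dy$ and $\int_\Omega w_j\,dy\to|\Omega|>0$, so that $u(x_j)\to|\Omega|^{-1}\int_\Omega u(x)\,dx$, which is the claim. The hypothesis $\mathcal{N}\cap B^c_R\neq\emptyset$ for all $R>0$ is used only to guarantee that such sequences $\{x_j\}$ exist, i.e. that the statement is not vacuous.

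The only genuinely delicate point — and the one I expect to be the main obstacle — is the uniform control of the kernel ratios $w_j$ on $\Omega$ and the ensuing passage to the limit under the integral sign; the rest is just the algebraic rewriting of $\mathcal{N}_s u(x)=0$ and a routine dominated-convergence argument. One should also be slightly careful that $u$ is integrable up to $\partial\Omega$, which, as noted, is immediate in the cases of interest (boundedness of eigenfunctions, or $\mathcal{X}^{1,2}_{\mathcal{D}}(U)\hookrightarrow L^2(\Omega)$ together with $|\Omega|<\infty$). This is exactly the scheme carried out in [Lemma 3.3, \cite{MR3784437}], to which we refer for the full details.
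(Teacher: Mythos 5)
Your argument is correct and coincides with the paper's treatment: the paper itself gives no proof but defers to [Lemma 3.3, \cite{MR3784437}], and your rewriting of $\mathcal{N}_s u(x)=0$ as a weighted average followed by the uniform convergence of the normalized kernels $w_j\to 1$ on the bounded set $\Omega$ and dominated convergence is exactly that argument. No gaps.
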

%\textcolor{red}{[J.G. We need to suppose some regularity for $u$ in previous lemma as continuity to give sense to $u(x_j)$ if not $u$ is only defined a.e. ]}
\medskip 
Motivated by \cite{MR4151104} and \cite{MR1971262}  in the local context, our objective will now be to investigate what happens to the sequence $\{\lambda_{1,k}\}_{k\in\N}$ when the sets $\mathcal{D}_k$ and $\mathcal{N}_
k$ change with $k$. As we have already stated the fact that the boundary of the nonlocal framework is the entire $\R^n\setminus \Omega$ makes the situation different because the manner in which the sets can move or disappear may be much varied and complicated. 
% Considering the decay of the kernel of the operator, there are actually two methods to diffuse sets: making their measure tend towards zero (similar to the local case) or sending them to $\infty$.
 Before providing rigorous convergence results, we motivate readers to examine one example of these scenarios by referring to \cite{MR3784437}.
 
%______________________Dissipating Neumann sets______________________________________________________________________________________________________
\vspace{0.5em}

% {\color{blue}To prove our main theorem, we recall the following important theorems, which are related to the Trace theory argument, from \cite{lamberti2019trace}.
% \begin{Theorem}\label{Tr1}
% Let $\Omega$ be a bounded domain in $\mathbb{R}^n$ of class $C^{0,1}$. Then the map $\Gamma: W^{k, p}(\Omega) \rightarrow\left(L^p(\partial \Omega)\right)^k$ is a bounded linear operator such that
% $$\Gamma(u)=\left(u_{\left.\right|_{\partial \Omega}},\left.\frac{\partial u}{\partial \nu}\right|_{\partial \Omega}, \ldots,{\frac{\partial^{k-1} u}{\partial \nu^{k-1}}}_{\left.\right|_{\partial \Omega}}\right),~ \text{for all}~ u\in C^k(\bar{\Omega}). $$ 
%     \end{Theorem}

% \begin{Theorem}\label{Tr2}
% Let $\Omega$ be a bounded domain in $\mathbb{R}^n$ of class $C^{k, 1}, 1 \leq p<\infty$ and $u \in W^{k, p}(\Omega)$. Then $\Gamma(u)=0$ if and only if $u \in W_0^{k, p}(\Omega)$.
%     \end{Theorem}
% }

\subsection{Dissipating Neumann sets}\label{secNeu}
\setcounter{equation}{0}

We consider $\{\mathcal{D}_{k}\}_{k\geq 1}$ and $\{\mathcal{N}_k\}_{k\geq 1}$ as sequences  of  open sets in $ \R^n\setminus \bar{\Omega}$ that  satisfy \eqref{nkdk} and $\Omega_k$ bounded. For each $k$, the pair $(\lambda_{1,k}, u_{1,k})$ denotes $L^2$ normalized solutions associated with \eqref{sequence}, precisely the first eigenvalue-positive normalized eigenfunction pair.
Now, we consider $\varphi_1\in\mathcal{X}^{1,2}_0(\Omega):= {\overline{C^{\infty}_0(\Omega)}}^{{{\eta(u)}}}$, the   first positive eigenfunction which solves the following Dirichlet problem
\begin{equation}\label{ev-diribdry}
\left\{\begin{array}{ll}
\mathcal{L} \varphi_1 &= \lambda_{1}\varphi_1\qquad \inn \Omega,\\
\varphi_1&= 0 ~~\inn  \R^n\setminus \Omega,
\end{array}\right.
\end{equation}
with $\|\varphi_1\|_{L^{2}(\Omega)}=1$.
We recall the following result at first.
\begin{Lemma}\label{integrable1}
Let $\Omega$ be admissible domain then $\psi\in L^{1}(\mathbb{R}^{n}\setminus\Omega)$, where for $x\in\R^n\setminus \Omega,$ 
$$
\psi(x)= \dint_{\Omega}\dfrac{ \varphi_1(y) }{|x-y|^{N+2s}}\,dy.
$$
\end{Lemma}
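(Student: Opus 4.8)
\textbf{Proof plan for Lemma \ref{integrable1}.}

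The plan is to split the integral defining $\|\psi\|_{L^1(\R^n\setminus\Omega)}$ according to whether the point $x$ is near $\Omega$ or far from it, and to use Tonelli's theorem to swap the order of integration throughout. Precisely, I would write
\[
\int_{\R^n\setminus\Omega}\psi(x)\,dx=\int_{\R^n\setminus\Omega}\int_{\Omega}\frac{\varphi_1(y)}{|x-y|^{n+2s}}\,dy\,dx=\int_{\Omega}\varphi_1(y)\left(\int_{\R^n\setminus\Omega}\frac{dx}{|x-y|^{n+2s}}\right)dy,
\]
the interchange being justified since the integrand is nonnegative ($\varphi_1\ge 0$ by Proposition \ref{prop-1}(2)). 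So the statement reduces to showing that the inner integral $K(y):=\int_{\R^n\setminus\Omega}|x-y|^{-n-2s}\,dx$ is finite for a.e. $y\in\Omega$ and, more importantly, that $y\mapsto \varphi_1(y)K(y)$ is integrable over $\Omega$. Since $\varphi_1\in L^\infty(\Omega)$ (Proposition \ref{prop-1}(1) applied to the Dirichlet eigenfunction, or Lemma \ref{reg-1}), it suffices to prove $K\in L^1(\Omega)$, i.e.
\[
\int_{\Omega}\int_{\R^n\setminus\Omega}\frac{dx\,dy}{|x-y|^{n+2s}}<\infty.
\]

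The main point, and the only place the hypothesis $s\in(0,1)$ and the regularity of $\partial\Omega$ enter, is to control the contribution near the boundary: when $y\in\Omega$ is at distance $\delta(y):=\mathrm{dist}(y,\partial\Omega)$ from $\partial\Omega$, the inner integral over $x\notin\Omega$ behaves like a constant times $\delta(y)^{-2s}$. Indeed, for the far part $\{x:|x-y|\ge 1\}$ one has $\int_{|x-y|\ge 1}|x-y|^{-n-2s}dx=c_n/(2s)<\infty$ uniformly in $y$ — here one also uses that, although $\R^n\setminus\Omega$ may have infinite measure, the kernel $|x-y|^{-n-2s}$ is integrable at infinity precisely because $n+2s>n$. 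For the near part $\{x\notin\Omega:|x-y|<1\}$, since $\{x:|x-y|<\delta(y)\}\subset\Omega$, one integrates over the annular region $\delta(y)\le|x-y|<1$ to get $\int_{\delta(y)\le|x-y|<1}|x-y|^{-n-2s}dx\le c_n\int_{\delta(y)}^{\infty}\rho^{-1-2s}d\rho=\frac{c_n}{2s}\delta(y)^{-2s}$. Hence $K(y)\le C(1+\delta(y)^{-2s})$.

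It then remains to check that $\int_{\Omega}\delta(y)^{-2s}\,dy<\infty$. Because $\Omega$ is an admissible ($C^{1,1}$, hence Lipschitz with bounded boundary since $\Omega\cup\mathcal N$ is bounded) domain, the distance function satisfies $|\{y\in\Omega:\delta(y)<t\}|\le C\,t$ for small $t$, so by the layer-cake formula $\int_\Omega \delta(y)^{-2s}dy = 2s\int_0^\infty t^{-2s-1}|\{\delta<t\}\cap\Omega|\,dt$ converges near $t=0$ exactly because $2s<1$ makes $t\cdot t^{-2s-1}=t^{-2s}$ integrable at the origin; the large-$t$ part is harmless since $\Omega$ is bounded. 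Wait — I should be careful: this uses $2s<1$, i.e. $s<1/2$. If one wants the lemma for all $s\in(0,1)$ one cannot merely use $\varphi_1\in L^\infty$; instead one invokes the boundary decay of the Dirichlet eigenfunction, $\varphi_1(y)\le C\,\delta(y)$ (a consequence of the Hopf-type bound / $C^{1,\beta}$ regularity up to $\partial\Omega$, cf. Lemma \ref{reg-1} and the Appendix), so that $\varphi_1(y)K(y)\le C\,\delta(y)\cdot\delta(y)^{-2s}=C\delta(y)^{1-2s}$, and then $\int_\Omega\delta(y)^{1-2s}dy<\infty$ holds for every $s\in(0,1)$ since $1-2s>-1$. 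Either way, combining the two regimes gives $\int_{\R^n\setminus\Omega}\psi(x)\,dx\le C\|\varphi_1\|_{L^\infty(\Omega)}\big(|\Omega|+\int_\Omega\delta(y)^{-2s}dy\big)<\infty$ (or the analogous bound with $\delta^{1-2s}$), which is the assertion. The main obstacle is thus purely the boundary estimate, i.e. quantifying how $K(y)$ blows up as $y\to\partial\Omega$ and matching that blow-up against the measure-theoretic decay $|\{\delta<t\}|\lesssim t$ afforded by the $C^{1,1}$ regularity of $\Omega$.
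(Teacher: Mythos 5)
Your argument is correct. The paper itself gives no proof here --- it only defers to Lemma 4.1 of \cite{MR3784437} --- and your Tonelli-plus-splitting argument is exactly the argument used in that reference, with the right key point singled out: the crude bound $\varphi_1\in L^\infty$ only yields $\int_\Omega \delta(y)^{-2s}\,dy<\infty$ when $s<1/2$, whereas the boundary decay $\varphi_1(y)\le C\,\delta(y)$ (which follows from Lemma \ref{lam}, since $\varphi_1\in C^{1,\alpha}(\bar\Omega)$ and $\varphi_1=0$ on $\partial\Omega$) reduces the singularity to $\delta(y)^{1-2s}$ and gives the conclusion for every $s\in(0,1)$, as needed.
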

\begin{proof}
    We follow {Lemma 4.1} of \cite{MR3784437} for a proof. 
\end{proof}
% \begin{proof} Take $R>0$ such that $\Omega \subset B_R$. Since $\varphi_1$ is bounded, we easily see that $\psi$ is integrable in $\mathbb R^N\setminus B_{2R}$.

% Now, thanks to the Corollary from our paper of ( mixed operator with concave and convex type nonlinearity), we have that  $\varphi_1\in\mathcal{C}^s(\R^n)$ and thus
% $$\int_{B_{2R}\setminus\Omega}\dint_{\Omega}\dfrac{ \varphi_1(y) }{|x-y|^{n+2s}}\,dydx=\int_{B_{2R}\setminus\Omega}\dint_{\Omega}\dfrac{ \varphi_1(y)-\varphi_1(x) }{|x-y|^{n+2s}}\,dydx\leq C \int_{B_{2R}\setminus\Omega}\dint_{\Omega}\dfrac{ 1}{|x-y|^{n+2s}}\,dydx.$$
% Consequently,  by using [ Lemma 2.3 in \cite{MR3784437},
% $$\int_{B_{2R}\setminus\Omega}\dint_{\Omega}\dfrac{ \varphi_1(y) }{|x-y|^{n+2s}}\,dydx\leq C \int_{B_{2R}\setminus\Omega}\frac{1}{dist(x,\partial\Omega)^s}\,dx.$$
% We conclude by applying [Lemma 2.6 in \cite{MR3784437}].
% \end{proof}
\begin{Lemma}\label{lam}
       If $\varphi_1$ solves \eqref{ev-diribdry}, then
$\varphi_1\in C^{1,\alpha}(\bar\Omega)$, for some $ \alpha\in(0,1).$
 \end{Lemma}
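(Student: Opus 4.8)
The plan is to establish the $C^{1,\alpha}(\bar\Omega)$ regularity of the first Dirichlet eigenfunction $\varphi_1$ of the mixed operator $\mathcal{L}=-\Delta+(-\Delta)^s$ by a bootstrap argument that transfers the problem to a purely local equation with a regular right-hand side. First I would recall that $\varphi_1 \in \mathcal{X}^{1,2}_0(\Omega)$ solves \eqref{ev-diribdry}, and that by the $L^\infty$ estimate (which is exactly the De Giorgi--Moser iteration carried out in Proposition \ref{prop-1}(1), applied here to the Dirichlet problem) we have $\varphi_1 \in L^\infty(\mathbb{R}^n)$, and in fact $\varphi_1 \in C^{0,\beta}(\mathbb{R}^n)$ for some $\beta\in(0,1)$ by Lemma \ref{reg-1} (or its Appendix counterpart Theorem \ref{thm5.2}). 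The key observation is that $\varphi_1$ then satisfies, in the weak/distributional sense in $\Omega$,
\begin{equation*}
-\Delta \varphi_1 = \lambda_1 \varphi_1 - (-\Delta)^s\varphi_1 =: g \quad \text{in } \Omega,
\end{equation*}
where the nonlocal term $(-\Delta)^s\varphi_1$ is well-defined pointwise in $\Omega$ because $\varphi_1$ is bounded on all of $\mathbb{R}^n$ (controlling the tail integral) and is $C^{0,\beta}$ near each interior point (controlling the principal value); moreover standard fractional estimates give $(-\Delta)^s\varphi_1 \in C^{0,\gamma}_{\mathrm{loc}}(\Omega)$ if $2s<\beta$, and more generally one upgrades $\beta$ by iterating.

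The next step is the bootstrap. Treating $-\Delta\varphi_1 = g$ with $g \in L^\infty(\Omega)$ (which we have immediately from $\varphi_1 \in L^\infty$ and the boundedness of $(-\Delta)^s$ acting on bounded $C^{0,\beta}$ functions, provided $\beta>0$ is taken large enough after one round of elliptic regularity), interior and boundary Calderón--Zygmund estimates for the Laplacian on the $C^{1,1}$ (hence admissible, in fact we only need $C^{1,\alpha}$) domain $\Omega$ with zero Dirichlet data on $\partial\Omega$ give $\varphi_1 \in W^{2,p}(\Omega)$ for all $p<\infty$, hence $\varphi_1 \in C^{1,\alpha}(\bar\Omega)$ for every $\alpha\in(0,1)$ by Sobolev embedding. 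Once $\varphi_1 \in C^{1,\alpha}(\bar\Omega)$, one can revisit the nonlocal term: $(-\Delta)^s\varphi_1 \in C^{0,\alpha}(\bar\Omega)$ (using $\varphi_1 \equiv 0$ outside $\Omega$, $\Omega$ bounded with smooth boundary, and the $C^{1,\alpha}$ regularity up to the boundary), so $g = \lambda_1\varphi_1 - (-\Delta)^s\varphi_1 \in C^{0,\alpha}(\bar\Omega)$, and then Schauder estimates for $-\Delta$ on the $C^{1,\alpha}$ domain with homogeneous Dirichlet condition confirm $\varphi_1 \in C^{2,\alpha}(\bar\Omega) \subset C^{1,\alpha}(\bar\Omega)$. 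Alternatively, and more directly in line with the paper's framework, I would simply invoke the global $C^{1,\beta}$ regularity result for mixed operators with mixed boundary conditions, Theorem \ref{thm5.2} in the Appendix, specialized to the pure Dirichlet case $\mathcal{N}=\emptyset$; since $\varphi_1$ is a bounded eigenfunction, that theorem directly yields $\varphi_1 \in C^{1,\beta}(\bar\Omega)$.

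I expect the main obstacle to be the careful handling of the nonlocal term up to the boundary: showing that $(-\Delta)^s\varphi_1$ has the claimed Hölder regularity on $\bar\Omega$ requires controlling the interaction between the interior principal-value singularity and the boundary behaviour of $\varphi_1$, and one must be careful that the exterior zero-extension of $\varphi_1$ is compatible with the needed estimates (this is where the smoothness of $\partial\Omega$ and the fact that $\Omega$ is bounded enter crucially). The cleanest route, which I would adopt, is to first secure $\varphi_1 \in L^\infty(\mathbb{R}^n) \cap C^{0,\beta}(\mathbb{R}^n)$ from the already-established results (Proposition \ref{prop-1}(1), Lemma \ref{reg-1}), then quote the local Schauder/$L^p$ theory for $-\Delta$ together with the fractional regularity estimates as packaged in Theorem \ref{thm5.2}, so that the argument reduces to verifying the hypotheses of results already available in the paper rather than redoing the estimates by hand. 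This is exactly what the stated proof does by referring to the Appendix regularity theory.
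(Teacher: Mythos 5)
Your proposal is correct, but it is considerably more detailed than what the paper actually does: the paper's entire proof of Lemma \ref{lam} is a citation to [Corollary 3.1 in \cite{pezzo2019eigenvalues}] or [Theorem 2.7 in \cite{Faber}], which are precisely the references where the bootstrap you describe is carried out for the Dirichlet problem for $-\Delta+(-\Delta)^s$. So the underlying mathematics is the same --- write $-\Delta\varphi_1=\lambda_1\varphi_1-(-\Delta)^s\varphi_1$, observe that the nonlocal term is of lower order, and run Calder\'on--Zygmund/Schauder theory up to the boundary --- the difference is only that you reconstruct the argument while the paper outsources it. Two remarks on your reconstruction. First, the one genuinely delicate step is the pointwise evaluation of $(-\Delta)^s\varphi_1$: boundedness of $\varphi_1$ plus $C^{0,\beta}$ with small $\beta$ is not enough when $s\geq 1/2$ (you need $\beta>2s$, or $C^{1,\gamma}_{\mathrm{loc}}$ regularity); you flag this and the standard fix is exactly the $W^{2,p}$ theory for the mixed operator (as in \cite{su2022regularity} or \cite{garroni2002second}, which the paper itself uses in Lemma \ref{cpto} and Theorem \ref{thm5.2}), so this is a gap in exposition rather than in substance. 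Second, your ``cleanest route'' of invoking Theorem \ref{thm5.2} of the Appendix with $w=\lambda_1\varphi_1\in L^\infty$ is legitimate and arguably more in the spirit of a self-contained paper than the external citation the authors chose, though one should note that Theorem \ref{thm5.2} is itself proved by quoting the $W^{2,p}$ theory of \cite{garroni2002second}, so in the end all roads lead to the same Calder\'on--Zygmund estimate.
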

\begin{proof}
The proof can be found in [Corollary 3.1 in  \cite{pezzo2019eigenvalues} or in Theorem 2.7 in \cite{Faber}].
\end{proof}

Now, we can establish the proof of our main result.\\

\textbf{Proof of  Theorem \ref{theoNeu}:}
  Taking $\varphi_1$   as  a test function in \eqref{sequence} and using Proposition \ref{P} with $v=u_{1,k}$ and $u=\varphi_1$ 
  %a test function in \eqref{ev-diribdry} (using integrating by parts formula) and also using Proposition \ref{P} respectively, 
  we get
  \begin{equation} \label{eq4.2}
   \int_{\Omega} \nabla u_{1,k} \cdot\nabla \varphi_1\,dx+ \int_{Q} \dfrac{(u_{1,k}(x)-u_{1,k}(y))(\varphi_1(x)-\varphi_1(y))}{|x-y|^{N+2s}}dxdy
 =\lambda_{1,k}\dint_{\Omega} u_{1,k}(x) \varphi_1(x) \,dx
 \end{equation}
       and  
 \begin{equation}\label{eq4.3}
 \begin{split}
&\int_{\Omega} \nabla u_{1,k} \cdot\nabla \varphi_1\,dx+ \int_{Q} \dfrac{(u_{1,k}(x)-u_{1,k}(y))(\varphi_1(x)-\varphi_1(y))}{|x-y|^{N+2s}}dxdy\\
&\quad=\lambda_1\dint_{\Omega} u_{1,k}(x) \varphi_1(x) \,dx+\int_{\overline {{\mathcal{N}_k}}\cap \partial\Omega}u_{1,k}\frac{\partial \varphi_1 }{\partial \nu} \,d\sigma+\int_{{\mathcal{N}_k}}u_{1,k}(x) \mathcal{N}_s \varphi_1(x) \, dx.
\end{split}
       \end{equation}       
Using boundary conditions and subtracting  \eqref{eq4.3} from \eqref{eq4.2}),  we deduce that  
\begin{equation}\label{sub}
% \begin{array}{rcl}
\begin{split}
(\lambda_{1}-\lambda_{1,k}) \dint_{\Omega}\varphi_1(x) u_{1,k}(x)\,dx=& -\int_{\overline {{\mathcal{N}_k}}\cap \partial\Omega}u_{1,k}\frac{\partial \varphi_1}{\partial \nu} \,d\sigma-\dint_{\mathcal{N}_k} u_{1,k}(x)\mathcal{N}_{s}\varphi_1\, dx\\
=&-\int_{\overline {{\mathcal{N}_k}}\cap \partial\Omega}u_{1,k}\frac{\partial \varphi_1}{\partial \nu} \,d\sigma+\dint_{\mathcal{N}_{k}} \dint_{\Omega} \dfrac{\varphi_{1}(y) u_{1,k}(x) }{|x-y|^{N+2s}}\ dy dx.
    \end{split}
% \end{array}
\end{equation}
 Now passing $k\to \infty$ in \eqref{sub} and suppose, 
$
\lim_{k \to \infty} \lambda_1(\mathcal{D}_k)=  \lambda_1(\R^n\setminus \Omega) $
then this implies
\begin{equation}\label{eq4.6}
-\lim_{k\to \infty}\int_{\overline {{\mathcal{N}_k}}\cap \partial\Omega}u_{1,k}\frac{\partial \varphi_1}{\partial \nu} \,d\sigma+\lim\limits_{k\rightarrow\infty}\dint_{\mathcal{N}_{k}} \dint_{\Omega} \dfrac{\varphi_{1}(y) u_{1,k}(x) }{|x-y|^{n+2s}}\ dy dx=0.~
\end{equation}
%Thus $\eqref{eq4.5}$ implies $\eqref{eq4.6}$. 
Since $0\le \lambda_{1,k}\leq\lambda_{1}$ from Proposition \ref{u1k}(1), we obtain that
$$
0\le \liminf_{k\to\infty}  \lambda_{1,k} \le \limsup_{k\to\infty}  \lambda_{1,k} \le \lambda_1.
$$
{So our goal is to show that \eqref{eq4.6} is equivalent to $\displaystyle\lim\limits_{k\rightarrow \infty}\lambda_{1} (\mathcal{D}_k) =\lambda_{1} (\R^n\setminus \Omega)$}. 
To obtain this, we construct a subsequence $\{\lambda_{1,k_j}\}_{j\geq 1}$ {converging to the $\liminf\limits_{k\to\infty}\lambda_{1,k}$}  and from  Proposition \ref{weakly}, we get the following for the corresponding subsequence $\{u_{1,k_j}\}_{j\geq 1}$
\begin{align}
\begin{cases}
u_{1,k_j} \rightharpoonup u^* \mbox{ weakly in } \mathcal{X}^{1,2}_{{\mathcal{D}}}(\Omega_k)\\
u_{1,k_j} \to u^* \mbox{ strongly in } \ L^2 _{\mathrm{loc}}(\R^n),\\ 
u_{1,k_j} \to u^* \mbox{ a.e in }  \R^n,
\end{cases}
\end{align}
with $u^*\in \mathcal{X}^{1,2}_{\mathcal{D}}(\Omega_k)$  such that $u^*\gneq 0$, as mentioned in Proposition \ref{weakly}. Hence, $$\lim\limits_{j\rightarrow\infty} \dint_{\Omega} \varphi_{1}(x)u_{1,k_j}(x)\,dx= \dint_{\Omega} \varphi_{1}(x)u^{*}(x)\,dx >0. $$
If $\eqref{eq4.6}$ is true, then using above equality in (\ref{sub}), we can see that  $\lim\limits_{j\to\infty}  \lambda_{1,k_j} = \lambda_1$. Consequently, the initial subsequence also converges to $\lambda_1$. 
%It still remains to demonstrate that assertions \eqref{eq4.6} and Theorem \ref{theoNeu}(2) are equivalent in order to complete the proof of Theorem \ref{theoNeu}.
% We claim that statement Theorem \ref{theoNeu}$(2)$ implies $\eqref{eq4.6}$ which will finish the proof. 
%We first prove that  Theorem \ref{theoNeu} $(2)$  implies \eqref{eq4.6}. 
For complete the proof, take $R$ large enough so that $\Omega \subset B_{\frac{R}{2}}(0)$, then
$$
\begin{aligned}
I_k= \int_{\mathcal{N}_k} \int_{\Omega} \frac{\varphi_1(y) u_{1,k}(x)}{|x-y|^{n+2 s}} d y d x &\leq  \int_{\mathbb{R}^n \backslash B_R} \int_{\Omega} \frac{\varphi_1(y) u_{1,k}(x)}{|x-y|^{n+2 s}} d y d x+\int_{\mathcal{N}_k \cap B_R} \int_{\Omega} \frac{\varphi_1(y) u_{1,k}(x)}{|x-y|^{n+2 s}} d y d x \\
& = I_{1,k}+I_{2,k}.
\end{aligned}
$$
If $y \in \Omega$ and $|x|>R$, then $|x-y|>\frac{|x|}{2}$. Since $\varphi_1 \in C^{1,\alpha}(\bar{\Omega}),$ for some $ \alpha\in(0,1)$ from Lemma \ref{lam}, we have that
$$
I_{1,k} \leqslant C \int_{\mathbb{R}^n \backslash B_R} \int_{\Omega}\left(\frac{2}{|x| }\right)^{n+2 s} d y d x=C 2^{n+2 s}|\Omega| \int_{|x|>R} \frac{d x}{|x|^{n+2 s}}\leq C_{n, s}|\Omega| \frac{1}{R^{n+2 s}} .
$$
Hence for any $\varepsilon>0$, if we choose $R$  large enough then $I_{1,k} \leq \frac{\varepsilon}{2}, \forall~ k$.
Now using the Proposition \ref{u1k}(3) and Lemma \ref{integrable1}, we obtain
$$
I_{2,k} \leq C \int_{\mathcal{N}_k \cap B_R} \int_{\Omega} \frac{\psi_1(y)}{|x-y|^{n+2 s}} d y d x=C \int_{\mathcal{N}_k \cap B_R} \Phi(x) d x .
$$
Since the measure $d \mu=\Phi(x) d x$ is absolutely continuous with respect to the Lebesgue measure, there exists $\delta>0$, such that if $\mathcal{N}$ is a measurable subset of $\mathbb{R}^N \backslash \Omega$, with $|\mathcal{N}|<\delta$, then
$$
\int_{\mathcal{N}} \Phi(x) d x<\frac{\varepsilon}{2 C},
$$
for $\varepsilon$ given above. Hence, by assumptions of Theorem \ref{theoNeu}  implies the existence of $k_0>0$ such that, $\forall~ k \geqslant k_0$, $\left|\mathcal{N}_k \cap B_R\right|<\delta$. We conclude that $\left|I_{2,k}\right|<\frac{\varepsilon}{2}$ and therefore $I_k\leq \frac{\varepsilon}{2}+\frac{\varepsilon}{2}=\varepsilon, \forall~ k \geqslant k_0$.
Thus, \eqref{eq4.6} holds, then we conclude that 
$\displaystyle\lim\limits_{k\rightarrow \infty}\lambda_{1} (\mathcal{D}_k) =\lambda_{1} (\R^n\setminus \Omega)$.

\subsection{Dissipating Dirichlet sets}\label{secDir}
The main goal of this section is to repeat the analysis done in Section \ref{secNeu} when the Dirichlet sets dissipate and then to prove Theorem \ref{introd}.
\begin{Theorem}\label{N1}
If $\Omega$ is an admissible domain then 
%\begin{enumerate}[(1)]
    %\item $$\lim\limits_{k\rightarrow \infty}\lambda_{1,k}=0\qquad \iff 
 %\qquad  \lim_{k\rightarrow\infty}\left( -{\color{blue}\int_{\overline {{\mathcal{D}_k}}\cap \partial\Omega}u_{1,k}\frac{\partial \psi_1}{\partial \nu} \,d\sigma}+\int_{\mathcal{D}_k}\int_\Omega\frac{\psi_1(x)u_{1,k}(y)}{|x-y|^{n+2s}}\,dydx=0\right).$$
$\lim\limits_{k\rightarrow \infty}\lambda_{1,k}=0$ if and only if 
$$\lim_{k\rightarrow\infty}\left( -\int_{\overline {{\mathcal{D}_k}}\cap \partial\Omega}u_{1,k}\frac{\partial \psi_1}{\partial \nu} \,d\sigma+\int_{\mathcal{D}_k}\int_\Omega\frac{\psi_1(x)u_{1,k}(y)}{|x-y|^{n+2s}}\,dydx\right)=0.$$
%along with $\lim_{k\to \infty} |\overline {{\mathcal{D}_k}}\cap \partial\Omega|=0$ implies $ \lim_{k\to \infty}|\mathcal{D}_{k}\cap B_{R}|=0$, for any $R>0$.
%\end{enumerate}
\end{Theorem}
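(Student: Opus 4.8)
The plan is to mirror the proof of Theorem~\ref{theoNeu}, interchanging the roles of the Dirichlet and Neumann data. Recall that $\psi_1$ is the first positive eigenfunction of $\mathcal{L}$ under the purely Neumann conditions, normalized by $\|\psi_1\|_{L^2(\Omega)}=1$; thus $\mathcal{N}_s\psi_1=0$ in $\R^n\setminus\overline{\Omega}$, $\partial_\nu\psi_1=0$ on $\partial\Omega$, and $\mathcal{L}\psi_1=\lambda_1(\emptyset)\psi_1=0$ in $\Omega$ by Theorem~\ref{phi}, and arguing as in Lemma~\ref{lam} we may take $\psi_1\in C^{1,\alpha}(\overline{\Omega})$. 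One also records the analogue of Lemma~\ref{integrable1}: the map $x\mapsto\psi_1(x)\int_\Omega u_{1,k}(y)\,|x-y|^{-n-2s}\,dy$ is integrable on $\R^n\setminus\Omega$, so that the right-hand quantity in the statement is well defined.

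The first step is the Green-type identity behind the equivalence. Since $\psi_1$ does not vanish on $U_k^c=\mathcal{D}_k\cup(\partial\Omega\cap\overline{\mathcal{D}_k})$, it is not an admissible test function in $\mathcal{X}^{1,2}_{\mathcal{D}_k}(\Omega_k)$; instead I would apply the full boundary-term form of Proposition~\ref{P} (the unrestricted Green formula of \cite{dipierro2017nonlocal}) to the pair $(u_{1,k},\psi_1)$ and subtract the symmetric identity for $(\psi_1,u_{1,k})$, which is licit because $u_{1,k}\in C^{1,\beta}(\overline{\Omega})$ by Lemma~\ref{reg-1} and Theorem~\ref{thm5.2}. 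Using $\mathcal{L}u_{1,k}=\lambda_{1,k}u_{1,k}$, $\mathcal{L}\psi_1=0$ in $\Omega$, the Neumann conditions on $\psi_1$, the mixed conditions on $u_{1,k}$ (which kill the contributions over $\partial\Omega\cap\overline{\mathcal{N}_k}$ and over $\mathcal{N}_k$), and $u_{1,k}\equiv0$ on $\mathcal{D}_k$ (whence $\mathcal{N}_su_{1,k}(x)=-C_{n,s}\int_\Omega u_{1,k}(y)\,|x-y|^{-n-2s}\,dy$ there), one arrives, exactly as at \eqref{sub}, at
\[
\lambda_{1,k}\int_\Omega\psi_1\,u_{1,k}\,dx
= -\int_{\overline{\mathcal{D}_k}\cap\partial\Omega} u_{1,k}\,\frac{\partial\psi_1}{\partial\nu}\,d\sigma
+\int_{\mathcal{D}_k}\int_\Omega\frac{\psi_1(x)\,u_{1,k}(y)}{|x-y|^{n+2s}}\,dy\,dx .
\]

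The second step is to show that $t_k:=\int_\Omega\psi_1\,u_{1,k}\,dx$ stays in a fixed compact subinterval of $(0,\infty)$ for $k$ large; the equivalence then follows by dividing the previous identity by $t_k$. The bound $t_k\le C'$ is immediate from $\psi_1\in L^\infty(\Omega)$ and $\|u_{1,k}\|_{L^2(\Omega)}=1$. For the lower bound, Proposition~\ref{weakly} (valid here since $\mathcal{D}_k,\mathcal{N}_k$ satisfy \eqref{nkdk}) shows that from any subsequence of $\{u_{1,k}\}$ one extracts a further subsequence converging strongly in $L^2(\Omega)$ and a.e.\ to some $u^*\ge0$ with $\|u^*\|_{L^2(\Omega)}=1$; then $1=\int_\Omega (u^*)^2\le\|u^*\|_{L^1(\Omega)}\,\|u^*\|_{L^\infty(\Omega)}$ together with the uniform bound $\|u^*\|_{L^\infty(\Omega)}\le C$ from Proposition~\ref{u1k}(3) forces $\int_\Omega u^*\ge 1/C$, hence $t_k\to\int_\Omega\psi_1 u^*\ge(\min_{\overline{\Omega}}\psi_1)/C>0$ along that subsequence. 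As this holds for every subsequence, $\liminf_k t_k>0$, so for $k$ large $t_k$ lies between two positive constants and $\lambda_{1,k}\to0$ if and only if the right-hand quantity in the statement tends to $0$.

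The delicate point is the first step, just as in Theorem~\ref{theoNeu}: because $\psi_1$ cannot be inserted directly into the weak formulation of \eqref{sequence}, the identity must be derived from the unrestricted Green formula, which requires genuine up-to-the-boundary regularity of $u_{1,k}$ (Theorem~\ref{thm5.2}) and the integrability statement above to control the nonlocal term over the possibly unbounded set $\mathcal{D}_k$. Once that identity is secured, the remainder is the soft compactness argument of the second step.
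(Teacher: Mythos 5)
Your proposal is correct and follows essentially the same route as the paper: derive the Green-type identity $\lambda_{1,k}\int_\Omega \psi_1 u_{1,k}\,dx = -\int_{\overline{\mathcal{D}_k}\cap\partial\Omega}u_{1,k}\,\partial_\nu\psi_1\,d\sigma + \int_{\mathcal{D}_k}\int_\Omega \psi_1(x)u_{1,k}(y)|x-y|^{-n-2s}\,dy\,dx$ by testing the two eigenvalue problems against each other, and then control the factor $\int_\Omega\psi_1 u_{1,k}\,dx$ from above and below. Your quantitative lower bound on $t_k$ (via the uniform $L^\infty$ bound and the $L^2$ normalization applied to every subsequential limit $u^*$) is in fact a more complete justification of the converse implication than the paper's one-line reference back to the argument of Theorem~\ref{theoNeu}; note also that the paper simplifies matters by using explicitly that $\psi_1\equiv|\Omega|^{-1/2}$ is constant, so the surface term and $\mathcal{N}_s\psi_1$ vanish identically.
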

\begin{proof}
Let $\psi_1$ denotes the first eigenfunction associated with $\lambda_1(\emptyset)$ i.e. it satisfies,
\begin{equation}\label{pneu}
\begin{cases}
\mathcal{L} \psi_1& = 0 \quad \inn \Omega,\\
\mathcal{N}_{s} \psi_1&= 0 ~~\text{in}~~ \R^n\setminus \bar\Omega\\
\frac{\partial \psi_1}{\partial\nu}&=0\qquad  \text{in}~~ \partial\Omega,
\end{cases}
\end{equation} 
and for more details, we refer \cite{mugnai2022mixed}.
Consequently,  we know that {$\psi_1= \dfrac{1}{|\Omega|^{1/2}}$}. Taking $u_{1,k}$ as  test function in \eqref{pneu} and  $\psi_1$ as test function in \eqref{sequence}, we have
\begin{equation}\label{4.2.2}
\begin{split}
\lambda_{1,k} \dint_{\Omega}\psi_1(x) u_{1,k}(x)\,dx=& -\int_{\overline {{\mathcal{D}_k}}\cap \partial\Omega}u_{1,k}\frac{\partial \psi_1}{\partial \nu} \,d\sigma-\dint_{\mathcal{D}_k} u_{1,k}(x)\mathcal{N}_{s}\psi_1\, dx\\
=&-\int_{\overline {{\mathcal{D}_k}}\cap \partial\Omega}u_{1,k}\frac{\partial \psi_1}{\partial \nu} \,d\sigma+\dint_{\mathcal{D}_{k}} \dint_{\Omega} \dfrac{\psi_{1}(x) u_{1,k}(y) }{|x-y|^{N+2s}}\ dy dx,
    \end{split}
% \end{array}
\end{equation}
We deduce that
\begin{equation}\label{equivPsi}
\lim\limits_{k\rightarrow \infty}\lambda_{1,k}=0\quad \implies \quad  \lim_{k\rightarrow\infty}\left( -\int_{\overline {{\mathcal{D}_k}}\cap \partial\Omega}u_{1,k}\frac{\partial \psi_1}{\partial \nu} \,d\sigma+\int_{\mathcal{D}_k}\int_\Omega\frac{\psi_1(x)u_{1,k}(y)}{|x-y|^{n+2s}}\,dydx\right)=0,
\end{equation}
{The converse statement of \eqref{equivPsi} is also true by similar approach as section \eqref{secNeu} with the help of proof of Theorem \ref{theoNeu}. }
% To complete the proof using \eqref{equivPsi}, 
% we lead to prove
% \begin{equation}\label{jk}
% \lim _{k \rightarrow \infty} J_k=0
%     \end{equation}
% where,
% $$
% J_k=\int_{\mathcal{D}_k \cap B_R} \int_{\Omega} \frac{\psi_1(x) u_{1,k}(y)}{|x-y|^{n+2 s}} d y d x. $$
% For $R$ sufficiently large, so that $\Omega \subset B_R$. Since, $x,y\in B_{R}$ that implies $|x-y|\leq |x|+|y|\leq 2R$ and using  $u_{1,k}>0$, we have that
% $$
% J_k \geqslant \frac{1}{|\Omega|^{1 / 2}(2 R)^{n+2 s}}\left|D_k \cap B_R\right| \int_{\Omega} u_{1,k} d y .
% $$
% Now, {[\color{Green} check ? using Proposition \ref{u1k}(4)}], we have
% $$
% J_k \geqslant \frac{a|\Omega|^{1 / 2}}{(2 R)^{n+2 s}}\left|\mathcal{D}_k \cap B_R\right|
% $$
% uniformly in $k$. Hence, if \eqref{jk} is true then we obtain that 
% $$
% \lim _{k \rightarrow \infty}\left|\mathcal{D}_k \cap B_R\right|=0.
% $$
\end{proof}

\textbf{Proof of Theorem \ref{introd}:}
% \begin{Theorem}\label{th5.2}
% If $\Omega$ is an admissible domain and $0<s<\frac{1}{2}$ then $\lim_{k\to \infty} |\overline {{\mathcal{D}_k}}\cap \partial\Omega|=0$ and 
%  $\lim _{k \rightarrow \infty}\left|\mathcal{D}_k \cap B_R\right|=0,$~$\forall ~R>0$  
% implies $\lim _{k \rightarrow \infty} \lambda_{1,k}=0$.
% \end{Theorem}
% \begin{proof}
    
 By using the converse statement of \eqref{equivPsi} and assumption $\lim_{k\to \infty} |\overline {{\mathcal{D}_k}}\cap \partial\Omega|=0$, we have  
 \begin{equation}
  \lim_{k\rightarrow\infty}\int_{\mathcal{D}_k}\int_\Omega\frac{\psi_1(x)u_{1,k}(y)}{|x-y|^{n+2s}}\,dydx=0 \qquad \implies \qquad \lim\limits_{k\rightarrow \infty}\lambda_{1,k}=0.
\end{equation}
 
So,  our goal is to show  that
$$
\lim _{k \rightarrow \infty} \int_{\mathcal{D}_k} \int_{\Omega} \frac{\psi_1(x) u_{1,k}(y)}{|x-y|^{n+2 s}} d y d x=0,
$$
where $\psi_1=\frac{1}{|\Omega|^{1 / 2}}$ and solves Neumann problem \eqref{pneu}. Suppose  $\Omega \subset B_{R / 2}$, $R>0$ is large enough. We note that, from Proposition \ref{u1k}, and (using fact if $y\in \Omega$ and $|x|>R$ then $|x-y|>\frac{|x|}{2}$), there exists constant $C=C(n, \Omega, s)$ such that
\begin{align*}
J_k^1=\int_{\mathbb{R}^n \backslash B_R} \int_{\Omega} \frac{\psi_1(x) u_{1,k}(y)}{|x-y|^{n+2 s}} d y d x
&\leqslant C \int_{\mathbb{R}^n \backslash B_R} \int_{\Omega}\left(\frac{1}{|x| / 2}\right)^{n+2 s} d y d x = C 2^{n+2s}|\Omega|\int_{|x|>R} \frac{dx}{|x|^{n+2s}}\leqslant \frac{\tilde{C}}{R^{n+2 s}} .
\end{align*}
Thus, for given any $\varepsilon>0$, we may choose $R$ large enough such that
$$
J_k^1 \leqslant \frac{\varepsilon}{2} .
$$
Moreover, by (3) statement of the Proposition\ref{u1k}, we find 
\begin{equation}\label{eq5.4}J_k^2=\int_{\mathcal{D}_k \cap B_R} \int_{\Omega} \frac{\psi_1(x) u_{1,k}(y)}{|x-y|^{n+2 s}} d y d x \leqslant C \int_{\mathcal{D}_k \cap B_R} \int_{\Omega} \frac{1}{|x-y|^{n+2 s}} d y d x .
    \end{equation}
Since $0< s<\frac{1}{2}$. We utilize the [Lemma 2.3 and Lemma 2.6 in \cite{MR3784437}] and conclude that $J_k^2<\frac{\varepsilon}{2}$ for some $k$ large enough. Thus, for each $\varepsilon>0$, there exists $k_0>0$ such that
$$
\int_{\mathcal{D}_k} \int_{\Omega} \frac{\psi_1(x) u_{1,k}(y)}{|x-y|^{n+2 s}} d y d x \leqslant J_k^1+J_k^2=J_k<\varepsilon ~\forall~ k \geqslant k_0.
$$
Our goal is complete now.
% \end{proof}
\begin{remark}
    It should be noted that the restriction on $s$ occurs in order to estimate the term in \eqref{eq5.4}. In this case, we require $2s<1$ in order to apply the Integrability of [Lemma 2.6 in \cite{MR3784437}], because we can utilize the regularity of $\psi_1$, which is an eigenfunction of the Dirichlet problem, to reduce the singularity of the kernel, this restriction on $s$ does not apply to the case which we have been discussed in Section 4.1. We can provide a partial solution for the problem $\frac{1}{2}\leq s<1$. Particularly, when the Dirichlet sets do not collapse to the boundary of $\Omega$, we can demonstrate the following useful results.
\end{remark}
\begin{Proposition}\label{p5.4}
 Suppose $s \in(0,1)$ and
$
\lim _{k \rightarrow \infty}\left|\mathcal{D}_k \cap B_R\right|=0,\quad \forall~ R>0,
$
and
$$
\exists~ \delta, k_0>0 \text { such that } \operatorname{dist}\left(\mathcal{D}_k, \Omega\right)>\delta, \quad~ \forall~ k \geqslant k_0,
$$
then $\lim _{k \rightarrow \infty} \lambda_{1,k}=0$ up to a subsequence.
\end{Proposition}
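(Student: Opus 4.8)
The plan is to run the argument of Theorem~\ref{N1} (its ``$\Longleftarrow$'' direction), taking advantage of two simplifications produced by the hypotheses. First, the first Neumann eigenfunction solving \eqref{pneu} is the constant $\psi_1\equiv|\Omega|^{-1/2}$, so $\partial\psi_1/\partial\nu\equiv 0$; moreover the assumption $\operatorname{dist}(\mathcal{D}_k,\Omega)>\delta$ for $k\ge k_0$ forces $\overline{\mathcal{D}_k}\cap\partial\Omega=\emptyset$, so the boundary integral appearing in the criterion of Theorem~\ref{N1} and in \eqref{4.2.2} is identically zero for $k\ge k_0$. Hence \eqref{4.2.2} reduces, for $k\ge k_0$, to
\[
\lambda_{1,k}\int_{\Omega}\psi_1(x)u_{1,k}(x)\,dx=\int_{\mathcal{D}_k}\int_{\Omega}\frac{\psi_1(x)u_{1,k}(y)}{|x-y|^{n+2s}}\,dy\,dx,
\]
and it suffices to prove that the right-hand side tends to $0$ as $k\to\infty$ and then to handle the factor $\int_\Omega\psi_1 u_{1,k}$.

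Second, I would estimate this double integral by the usual far/near splitting, now uniform in $k$ and valid for every $s\in(0,1)$. Fix $\varepsilon>0$ and choose $R$ so large that $\Omega\subset B_{R/2}$. On the far region $\{|x|>R\}$ one has $|x-y|>|x|/2$ for $y\in\Omega$, and using the uniform bound $\|u_{1,k}\|_{L^\infty(\Omega)}\le C$ from Proposition~\ref{u1k}(3) together with $|\Omega|<\infty$ and $\psi_1$ bounded,
\[
\int_{\R^n\setminus B_R}\int_{\Omega}\frac{\psi_1(x)u_{1,k}(y)}{|x-y|^{n+2s}}\,dy\,dx\le C\,2^{n+2s}|\Omega|\int_{|x|>R}\frac{dx}{|x|^{n+2s}}\le\frac{\tilde C}{R^{2s}}<\frac{\varepsilon}{2},
\]
once $R$ is large, uniformly in $k$. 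On the near region $\mathcal{D}_k\cap B_R$ I use the distance hypothesis: $|x-y|>\delta$ for $x\in\mathcal{D}_k$, $y\in\Omega$, whence
\[
\int_{\mathcal{D}_k\cap B_R}\int_{\Omega}\frac{\psi_1(x)u_{1,k}(y)}{|x-y|^{n+2s}}\,dy\,dx\le\frac{C|\Omega|}{\delta^{n+2s}}\,|\mathcal{D}_k\cap B_R|\xrightarrow[k\to\infty]{}0
\]
by the assumption $|\mathcal{D}_k\cap B_R|\to0$. Hence for $k$ large this term is $<\varepsilon/2$, and combining the two bounds gives $\int_{\mathcal{D}_k}\int_{\Omega}\psi_1(x)u_{1,k}(y)|x-y|^{-n-2s}\,dy\,dx\to0$.

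Finally I would feed this back into the displayed identity: $\lambda_{1,k}\int_\Omega u_{1,k}\,dx=|\Omega|^{1/2}\,o(1)$. Passing to a subsequence along which the normalised eigenfunctions satisfy $u_{1,k}\rightharpoonup u^*$ weakly in $\mathcal{X}^{1,2}_{\mathcal{D}}(\Omega_k)$ and $u_{1,k}\to u^*$ in $L^2_{\mathrm{loc}}(\R^n)$ with $u^*\gneq 0$ (Proposition~\ref{weakly}), one has $\int_\Omega u_{1,k}\,dx\to\int_\Omega u^*\,dx>0$, and therefore $\lambda_{1,k}\to0$ along that subsequence; equivalently, if $\lambda_{1,k}\ge c>0$ along some subsequence, passing to a further weakly convergent sub-subsequence in the identity yields $0=\lim\lambda_{1,k}\int_\Omega\psi_1 u_{1,k}\ge c\int_\Omega\psi_1 u^*>0$, a contradiction. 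This is precisely where the ``up to a subsequence'' in the statement comes from. I expect the main obstacle to be not the kernel estimate---the distance hypothesis makes it elementary and, crucially, removes the restriction $s<1/2$ needed in Theorem~\ref{introd}---but ensuring the non-degeneracy of the weak limit $u^*$ of the normalised eigenfunctions, so that one may legitimately divide by $\int_\Omega\psi_1 u_{1,k}$; this is the content one must borrow from Proposition~\ref{weakly}.
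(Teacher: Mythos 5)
Your proposal is correct and follows essentially the same route as the paper: the paper's proof simply observes that the distance hypothesis lets one replace the near-region estimate \eqref{eq5.4} by $J_k^2\leq C\delta^{-(n+2s)}|\mathcal{D}_k\cap B_R|$ and then reruns the argument of Theorem \ref{introd} (far/near splitting, criterion of Theorem \ref{N1}, non-degeneracy of $u^*$ from Proposition \ref{weakly}), which is exactly what you do, only with the details (vanishing of the boundary term, division by $\int_\Omega\psi_1 u_{1,k}$) written out explicitly. As a minor aside, your far-field bound $\tilde C/R^{2s}$ is the correct evaluation of $\int_{|x|>R}|x|^{-(n+2s)}dx$ (the paper's $R^{-(n+2s)}$ is a typo), which does not affect the argument.
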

\begin{proof}
It suffices to observe that, for sufficiently large $k$, equation \eqref{eq5.4} can be replaced by
$$
J_k^2 \leqslant C \delta^{-(n+2 s)}\left|\mathcal{D}_k \cap B_R\right|,
$$
since $|x-y| \geqslant \delta$ whenever $x \in \mathcal{D}_k \cap B_R$ and $y \in \Omega$. Thus, we can conclude its proof is the same as the Theorem \ref{introd}.
\end{proof}
To examine the case of Dirichlet sets that are arbitrarily close to $\Omega$, we define the following condition:
\begin{equation}\label{eq5.5}
\lim _{k \rightarrow \infty} \int_{\mathcal{D}_k} \int_{\Omega} \frac{1}{|x-y|^{n+2 s}} d y d x=0.
\end{equation}
\begin{Proposition}\label{p5.5}
 If $s \in(0,1)$ and $\Omega, \mathcal{D}_k, \mathcal{N}_k$ are defined as in equation \eqref{nkdk} and $\left\{\lambda_{1,k}\right\},\left\{u_{1,k}\right\}$ the corresponding the same sequences of eigenvalues and eigenfunctions as previously. Then, if $\eqref{eq5.5}$ holds for $s$, then
$$
\lim _{k \rightarrow \infty} \lambda_{1,k}=0
$$
\end{Proposition}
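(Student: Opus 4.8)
The plan is to mimic the proof of Theorem \ref{introd}, using \eqref{eq5.5} to control the Dirichlet contribution directly, without needing the separate splitting into a ``far'' part $J_k^1$ over $\R^n\setminus B_R$ and a ``near'' part $J_k^2$. Indeed, the key identity is \eqref{4.2.2}, which after dividing by $\dint_{\Omega}\psi_1 u_{1,k}\,dx$ expresses $\lambda_{1,k}$ in terms of the boundary term $-\int_{\overline{\mathcal{D}_k}\cap\partial\Omega}u_{1,k}\frac{\partial\psi_1}{\partial\nu}\,d\sigma$ and the volume term $\dint_{\mathcal{D}_k}\dint_{\Omega}\frac{\psi_1(x)u_{1,k}(y)}{|x-y|^{n+2s}}\,dy\,dx$. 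Since $\psi_1=|\Omega|^{-1/2}$ is constant, $\frac{\partial\psi_1}{\partial\nu}\equiv 0$ on $\partial\Omega$, so the boundary term vanishes identically and one is left only with the volume term.

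First I would recall from Proposition \ref{weakly} (applied to the sequences of the current setting) that, up to a subsequence, $u_{1,k}\rightharpoonup u^*$ weakly in $\mathcal{X}^{1,2}_{\mathcal{D}}(\Omega_k)$ with $u_{1,k}\to u^*$ strongly in $L^2_{\loc}(\R^n)$ and a.e., and that $u^*\gneq 0$; in particular $\dint_{\Omega}\psi_1(x)u_{1,k}(x)\,dx \to \dint_{\Omega}\psi_1(x)u^*(x)\,dx > 0$, so this denominator stays bounded away from zero along the subsequence. Next I would estimate the numerator: using Proposition \ref{u1k}(3), $\|u_{1,k}\|_{L^\infty(\Omega)}\le M$ uniformly in $k$, and $\psi_1=|\Omega|^{-1/2}$, we get
$$
0\le \dint_{\mathcal{D}_k}\dint_{\Omega}\frac{\psi_1(x)u_{1,k}(y)}{|x-y|^{n+2s}}\,dy\,dx \le \frac{M}{|\Omega|^{1/2}}\dint_{\mathcal{D}_k}\dint_{\Omega}\frac{1}{|x-y|^{n+2s}}\,dy\,dx,
$$
and the right-hand side tends to $0$ by hypothesis \eqref{eq5.5}. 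Dividing \eqref{4.2.2} by $\dint_{\Omega}\psi_1 u_{1,k}\,dx$ then yields $\lambda_{1,k}\to 0$ along the subsequence. Finally, since $0\le\lambda_{1,k}\le\lambda_1(\R^n\setminus\Omega)$ is a bounded real sequence and every subsequence has a further subsequence converging to $0$ (the argument above applies verbatim to any subsequence), the whole sequence converges: $\lim_{k\to\infty}\lambda_{1,k}=0$.

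The only delicate point is making sure that hypothesis \eqref{eq5.5} is exactly what is needed to kill the volume term — and it is, because the constancy of $\psi_1$ removes both the boundary term and any need to exploit the $C^{1,\alpha}$ regularity of $\psi_1$ near $\partial\Omega$ (which was the source of the restriction $s<1/2$ in Theorem \ref{introd}). Thus no restriction on $s\in(0,1)$ is required here. One should also double-check that Proposition \ref{weakly} applies, i.e. that $\{\eta(u_{1,k})\}$ is uniformly bounded; this follows from \eqref{universal} together with $\lambda_{1,k}\le\lambda_1(\R^n\setminus\Omega)$ as in Proposition \ref{u1k}(1), so there is no circularity. I do not anticipate a substantive obstacle beyond bookkeeping the subsequence extraction carefully so that the conclusion is for the full sequence and not merely ``up to a subsequence''.
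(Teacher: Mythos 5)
Your argument hinges on the claim that, because $\psi_1$ is constant, the boundary term in \eqref{4.2.2} vanishes identically. This is the gap. The term written in \eqref{4.2.2} as $-\int_{\overline{\mathcal{D}_k}\cap\partial\Omega}u_{1,k}\frac{\partial\psi_1}{\partial\nu}\,d\sigma$ has the roles of the two functions swapped (as written it is trivially zero for two reasons at once: $\partial_\nu\psi_1\equiv 0$ \emph{and} $u_{1,k}=0$ on the Dirichlet part of $\partial\Omega$). Deriving the identity correctly from Proposition \ref{P} — i.e.\ integrating $\psi_1\,\mathcal{L}u_{1,k}=\lambda_{1,k}\psi_1 u_{1,k}$ over $\Omega$ and using $\langle u_{1,k},\psi_1\rangle=0$ from the Neumann problem \eqref{pneu} — one gets
\begin{equation*}
\lambda_{1,k}\int_{\Omega}\psi_1 u_{1,k}\,dx \;=\; -\int_{\partial\Omega\cap\overline{\mathcal{D}_k}}\psi_1\,\frac{\partial u_{1,k}}{\partial\nu}\,d\sigma \;+\; C_{n,s}\int_{\mathcal{D}_k}\int_{\Omega}\frac{\psi_1(x)\,u_{1,k}(y)}{|x-y|^{n+2s}}\,dy\,dx,
\end{equation*}
which is also the only reading consistent with the second line of \eqref{4.2.2} (the conversion of the nonlocal term uses $u_{1,k}=0$ on $\mathcal{D}_k$, so it must be $\mathcal{N}_s u_{1,k}$, not $\mathcal{N}_s\psi_1$, under the integral). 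The surface term involves $\partial_\nu u_{1,k}$ on the part of $\partial\Omega$ where $u_{1,k}$ satisfies a Dirichlet condition; there $u_{1,k}>0$ inside and $u_{1,k}=0$ on the boundary, so $\partial_\nu u_{1,k}\le 0$ and this term is a genuinely nonnegative contribution that cannot be dropped when bounding $\lambda_{1,k}$ from above. Hypothesis \eqref{eq5.5} gives no control over it: e.g.\ for $s<1/2$ a thin collar $\mathcal{D}_k=\{0<\operatorname{dist}(x,\partial\Omega)<\varepsilon_k\}\setminus\bar\Omega$ satisfies \eqref{eq5.5} while $\overline{\mathcal{D}_k}\cap\partial\Omega=\partial\Omega$. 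This is precisely why Theorem \ref{introd} carries the \emph{extra} hypothesis $\lim_k|\overline{\mathcal{D}_k}\cap\partial\Omega|=0$; Proposition \ref{p5.5} does not assume it, so your route cannot close without either adding that hypothesis (plus uniform boundary-gradient bounds on $u_{1,k}$) or handling the surface term by other means. The rest of your argument — the bound on the volume term via Proposition \ref{u1k}(3), the positivity of the denominator via Proposition \ref{weakly}, and the subsequence bookkeeping — is fine.

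For contrast, the paper proves Proposition \ref{p5.5} by a different mechanism: it extracts a subsequence with $\lambda_{1,k_j}\to\lambda'=\limsup_k\lambda_{1,k}$ and $u_{1,k_j}\rightharpoonup u^*$, passes to the limit in the weak formulation \eqref{eq5.6} using \eqref{eq5.5} only to kill the contribution $\int_{\mathcal{D}_{k_j}}\phi\,\mathcal{N}_s u_{1,k_j}\,dx$, and identifies $u^*$ as a nontrivial nonnegative eigenfunction of the pure Neumann problem with eigenvalue $\lambda'$, forcing $\lambda'=0$ by Theorem \ref{phi}. That strategy replaces your quantitative duality estimate by a qualitative identification of the limiting problem, which is what lets \eqref{eq5.5} alone suffice for all $s\in(0,1)$.
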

\begin{proof}

Since $\lambda_{1,k's}\geq 0$ and bounded. Our goal is to prove 
$
\limsup _{k \rightarrow \infty} \lambda_{1,k}=0 \text {. }$
Take a subsequence $\left\{\lambda_{1,k_j}\right\}_j$ that converging to the $\lambda '=\lim \sup _{k \rightarrow \infty} \lambda_{1,k}$. We can suppose that the associated sequence of eigenfunctions $\left\{u_{1,k_j}\right\}_j$ converges weakly in $\mathcal{X}^{1,2}_{\mathcal{D}}(\Omega_k) $ to the function $u^*$ obtained in Proposition \ref{weakly}, by taking a sub subsequence if needed.
 Let  $\phi$ be a bounded test function then we have
\begin{equation}\label{eq5.6}
\begin{split}
\int_{\Omega} \nabla(u_{1,k_j}(x))\cdot\nabla\phi(x)\,dx+ \int_{Q} \frac{\left(u_{1,k_j}(x)-u_{1,k_j}(y)\right)(\phi(x)-\phi(y))}{|x-y|^{n+2 s}} d x d y
&=\lambda_{1,{k_j}} \int_{\Omega} \phi u_{1,k_j} d x\\
&\quad-\int_{\mathcal{D}_{k_j}} \phi \mathcal{N}_s (u_{1,k_j}) d x .
\end{split}
\end{equation}
By using equation \eqref{eq5.5}, we obtain
$$
\begin{aligned}
\lim _{j \rightarrow \infty}\left|\int_{\mathcal{D}_{k_j}} \phi \mathcal{N}_s( u_{1,k_j} )d x\right| & \leqslant \lim _{j \rightarrow \infty} \int_{\mathcal{D}_{k_j}} \int_{\Omega} \frac{|\phi(x)| |u_{1,k_j}(y)|}{|x-y|^{n+2 s} } d y d x \leqslant C \lim _{j \rightarrow \infty} \int_{D_{k_j}} \int_{\Omega} \frac{1}{|x-y|^{n+2 s}} d y d x=0 .
\end{aligned}
$$
So, taking limits on both sides  in \eqref{eq5.6}, as $j \to \infty$, we have
$$
\int_{\Omega} \nabla(u^*(x))\cdot\nabla\phi(x)\,dx+ \int_{Q} \frac{\left(u^*(x)-u^*(y)\right)(\phi(x)-\phi(y))}{|x-y|^{n+2 s}} d x d y=\lambda ' \int_{\Omega} \phi u^* d x.
$$
Since we also have
$$
\mathcal{N}_s u^*(x)=0, \quad \text { a.e. on } \R^n\setminus\bar{\Omega},
$$
we find that $u^*$ is a solution to the problem
\begin{equation}
\begin{cases}
\mathcal{L} v ~~= \lambda' v \quad &\inn \Omega,\\
\mathcal{N}_{s} v = 0 &\text{in}~ \R^n\setminus\bar\Omega\\
\frac{\partial v}{\partial\nu}~~=0\qquad & \text{in}~~ \partial\Omega.
\end{cases}
\end{equation} 
Hence, either $u^* \equiv 0$ that is a contradiction with the fact that $\|u^*\|_{L^2(\Omega)}=1$, or $\lambda '=0$ which is the limit of
$\lim \sup _{k \rightarrow \infty} \lambda_{1,k}$ i.e.
$$\lim \sup _{k \rightarrow \infty} \lambda_{1,k}=\lambda '=0.$$
    
\end{proof}
\begin{remark}
We can see that equation $\eqref{eq5.5}$ is significantly stronger than the condition of the Theorem \ref{theoNeu}, due to assuming $\Omega \subset B_R$, we have
$$
\int_{\mathcal{D}_k} \int_{\Omega} \frac{1}{|x-y|^{n+2 s}} d y d x \geqslant \int_{\mathcal{D}_k \cap B_R} \int_{\Omega} \frac{1}{|x-y|^{n+2 s}} d y d x \geqslant \frac{|\Omega|}{(2 R)^{n+2 s}}\left|\mathcal{D}_k \cap B_R\right| .
$$
% On the other hand, observe that in the situation of Theorem \ref{th5.2}, $0<s<1 / 2$ and Proposition \ref{p5.4} ( $\mathcal{D}_k$ away from $\Omega$ ), condition $(2)$ of Theorem \ref{theoNeu} and equation $\eqref{eq5.5}$ are equivalent.
\end{remark}
\section{ Application to bifurcation results}
In this section, we shall study the bifurcation properties of the  following problem
 \begin{equation*}\label{ql} 
\left\{\begin{split} \mathcal{L}u\: &= \lambda h(u),~~u>0~ \text{in} ~\Omega, \\
      u&=0~~\text{in} ~~{U^c},\\
 \mathcal{N}_s(u)&=0 ~~\text{in} ~~{\mathcal{N}}, \\
 \frac{\partial u}{\partial \nu}&=0 ~~\text{in}~~ \partial \Omega \cap \overline{\mathcal{N}},
    \end{split} \right.\tag{$Q_\lambda$}
         \end{equation*}
where $\lambda>0$ and  $h$ is an asymptotically linear function that satisfies $(f1), \; (f2)$ and $(f3)$ conditions (see Section 1).
{We extend the continuous function $h$ to whole $\mathbb{R}$ in such a way that 
 $ h(t)= 0$ for $t\leq 0.$ The symbol used to represent this extension will remain unchanged.}

Consider the following Banach space
$$X= \{u\in C(\mathbb R^n):~u \equiv 0~ a.e. ~\text{in}~ {U^c} \}$$
equipped with the norm
$\|u\|=\sup_{x\in \mathbb R^n}|u(x)|<+\infty$, which satisfies the continuous embedding $X\hookrightarrow L^2(U)$. So, we consider $L_0$ to be the linear operator induced by $\mathcal{L}$ in $L^2(U)$ such that its domain is
\[D(L_0)= \mathcal{X}^{1,2}_{\mathcal{D}}(U)\cap H^2(U).\]
Next, we consider $L$ to be a restriction of $L_0$ on $X$ and its domain is defined as
\[D(L)= \{u\in X: ~ u\in D(L_0), ~L_0u\in X\}.\]
By $L(u)\geq 0$ in $\Omega$, we mean that
\begin{equation}\label{dd5.1}
    \int_{\Omega} \nabla u.\nabla \varphi \,dx +\int_{Q} \frac{(u(x)-u(y))(\varphi(x)-\varphi(y))}{|x-y|^{n+2s}} dxdy \geq 0,
\end{equation}
for every non negative $\varphi \in X$.
With this and the continuous extension of $h$ over whole $\mathbb R^n$, we establish the following weak maximum principle.
\begin{Lemma}\label{weak-max}
    Let $u\in D(L)$ such that $L(u)\geq 0$ in $\Omega$ then $u\geq 0$ in $U$.
\end{Lemma}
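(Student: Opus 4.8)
The plan is to test the distributional inequality defining $L(u)\ge 0$ against the negative part $u^{-}=\max\{-u,0\}$ and show that its energy norm must vanish.

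First I would verify that $u^{-}$ is an admissible competitor in \eqref{dd5.1}. Since $u\in D(L)\subset X\cap\mathcal{X}^{1,2}_{\mathcal{D}}(U)$, the function $u$ is continuous on $\mathbb{R}^n$, vanishes a.e.\ in $U^{c}$, and belongs to the energy space. The truncation $u\mapsto u^{-}$ preserves all three of these properties: $u^{-}$ is continuous and vanishes a.e.\ in $U^{c}$, while $u^{-}|_{U}\in H^1_0(U)$ because $H^1_0(U)$ is stable under taking negative parts (and $u^{-}$ still vanishes a.e.\ in $U^{c}$ because $u$ does). Hence $u^{-}\ge 0$ lies in $X\cap\mathcal{X}^{1,2}_{\mathcal{D}}(U)$ and can be inserted as $\varphi$ in \eqref{dd5.1}, with all the integrals there finite.

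Next I would evaluate the two resulting terms. For the local term one has $\nabla u\cdot\nabla u^{-}=-|\nabla u^{-}|^{2}$ a.e.\ in $\Omega$, since the integrand is supported on $\{u<0\}$, where $\nabla u^{-}=-\nabla u$. For the nonlocal term the key point is the elementary pointwise inequality
$$
(a-b)\big(a^{-}-b^{-}\big)\le-\big(a^{-}-b^{-}\big)^{2}\qquad\text{for all }a,b\in\mathbb{R},
$$
which follows by writing $a=a^{+}-a^{-}$, $b=b^{+}-b^{-}$ and checking that $(a^{+}-b^{+})(a^{-}-b^{-})\le 0$ in each of the four sign configurations of $(a,b)$. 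Applying it with $a=u(x)$, $b=u(y)$ and integrating against the kernel gives
$$
\int_{Q}\frac{(u(x)-u(y))\big(u^{-}(x)-u^{-}(y)\big)}{|x-y|^{n+2s}}\,dx\,dy\le-[u^{-}]_{s}^{2}.
$$

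Substituting $\varphi=u^{-}$ into \eqref{dd5.1} and combining the two estimates yields
$$
0\le-\|\nabla u^{-}\|_{L^2(\Omega)}^{2}-[u^{-}]_{s}^{2}=-\eta(u^{-})^{2}\le 0,
$$
so $\eta(u^{-})=0$. Since $\eta(\cdot)$ is a norm on $\mathcal{X}^{1,2}_{\mathcal{D}}(U)$ by Proposition \ref{Poin}, this forces $u^{-}\equiv 0$, i.e.\ $u\ge 0$ a.e.\ in $U$; the continuity of $u$ then gives $u\ge 0$ everywhere in $U$. The only point requiring care is the admissibility of the truncation $u^{-}$ as a test function (membership in both $X$ and $\mathcal{X}^{1,2}_{\mathcal{D}}(U)$ and finiteness of the energy integrals), which is routine given $u\in D(L)$; the rest is the elementary algebraic computation above.
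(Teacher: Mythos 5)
Your argument is correct and is essentially the paper's own proof: test the inequality with $u^{-}$, note $\nabla u\cdot\nabla u^{-}=-|\nabla u^{-}|^{2}$, and use the sign of $(u^{+}(x)-u^{+}(y))(u^{-}(x)-u^{-}(y))$ to bound the nonlocal term by $-[u^{-}]_{s}^{2}$ (your pointwise inequality is exactly the decomposition the paper uses). The only cosmetic difference is that you conclude directly from $\eta(u^{-})=0$ rather than by contradiction from a nonzero support of $u^{-}$.
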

\begin{proof}
    On contrary, suppose $u<0$ in some non zero-measure subset of $U$, then the support of $u_{-}=\max\{-u,0\}\geq 0$ that {belongs to $X$} has non zero measure. So taking $\varphi =u_{-}$ in \eqref{dd5.1}, we get 
    \[\int_{\Omega} \nabla u.\nabla u_{-} \,dx +\int_{Q} \frac{(u(x)-u(y))(u_{-}(x)-u_{-}(y))}{|x-y|^{n+2s}} dxdy \geq 0,\]
    where $\int_{\Omega} \nabla u.\nabla u_{-} \,dx=-\int_{\Omega}|\nabla u_{-}|^2~dx\leq 0$ and setting $u_{+}=\max\{u,0\}$ we have
    \begin{align*}
        \int_{Q} \frac{(u(x)-u(y))(u_{-}(x)-u_{-}(y))}{|x-y|^{n+2s}} dxdy & = \int_{Q} \frac{(u_{+}(x)-u_{+}(y))(u_{-}(x)-u_{-}(y))}{|x-y|^{n+2s}} dxdy-\int_{Q} \frac{(u_{-}(x)-u_{-}(y))^2}{|x-y|^{n+2s}} dxdy \\
        & < \int_{Q} \frac{(u_{+}(x)-u_{+}(y))(u_{-}(x)-u_{-}(y))}{|x-y|^{n+2s}} dxdy\leq 0,
    \end{align*}
    since for each $x,y\in \mathbb R^n$, one can easily verify that
    \[(u_{+}(x)-u_{+}(y))(u_{-}(x)-u_{-}(y)) \leq 0. \]
This is a contradiction and hence we conclude that $u\geq 0$ in $U$.
\end{proof}
 
\begin{remark}\label{uniques}
  We can follow the proof of  Theorem 1.1 in \cite{biagi2022mixed}, to say that
 for any nonnegative  and nontrivial $w \in L^2(U)$,  problem
  \begin{equation}\label{w} 
\left\{\begin{split} Lu\: &= w,~~u>0~ \text{in} ~\Omega, \\
      u&=0~~\text{in} ~~{U^c},\\
 \mathcal{N}_s(u)&=0 ~~\text{in} ~~{\mathcal{N}}, \\
 \frac{\partial u}{\partial \nu}&=0 ~~\text{in}~~ \partial \Omega \cap \overline{\mathcal{N}},
    \end{split} \right.
         \end{equation}
has a unique weak solution $u\in \mathcal{X}^{1,2}_\mathcal{D}{(U)}$ satisfying $\eta(u)\leq c \|w\|_{L^2(U)}$,
 where  $c> 0$ is an independent constant
of $w$.  
\end{remark}
For the compactness property of the operator $L^{-1}=K$, we are following the main ideas of the proof of Lemma 4.12 in \cite{Bal}. 
\begin{Lemma}\label{cpto}
    The operator ${L}^{-1}=K: X \rightarrow X$ is compact.
\end{Lemma}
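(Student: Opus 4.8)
The plan is to prove sequential compactness directly. Let $\{w_k\}_{k\ge 1}\subset X$ be bounded, say $\sup_{\mathbb{R}^n}|w_k|\le M$, and set $u_k:=Kw_k$. Since $\Omega\cup\mathcal N$, hence $U$, is bounded, the continuous embedding $X\hookrightarrow L^2(U)$ gives $\|w_k\|_{L^2(U)}\le C_1M$. Writing $w_k=(w_k)_+-(w_k)_-$ and using the linearity of $L$ together with the a priori estimate of Remark \ref{uniques}, each $u_k$ is the weak solution of \eqref{w} with datum $w_k$ and satisfies $\eta(u_k)\le c\,\|w_k\|_{L^2(U)}\le cC_1M$; in particular $Kw_k\in\mathcal{X}^{1,2}_{\mathcal D}(U)$ is well defined and $\{u_k\}$ is bounded in the Hilbert space $\mathcal{X}^{1,2}_{\mathcal D}(U)$. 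By reflexivity and Remark \ref{r2.5}, along a subsequence (not relabelled) one has $u_k\rightharpoonup u^*$ weakly in $\mathcal{X}^{1,2}_{\mathcal D}(U)$, $u_k\to u^*$ strongly in $L^2_{\mathrm{loc}}(\mathbb R^n)$, and $u_k\to u^*$ a.e. in $\mathbb R^n$.

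Next I would upgrade weak convergence to uniform convergence via the regularity theory. Running the De Giorgi--Stampacchia iteration of \cite[Lemma 7.1]{MR1962933} exactly as in the proof of Proposition \ref{prop-1}(1), but with right-hand side $w_k$ in place of $\lambda u$ and using $\|w_k\|_{L^\infty(U)}\le M$, yields $\|u_k\|_{L^\infty(\Omega)}\le C_2$ with $C_2=C_2(M,\Omega,n,s)$ independent of $k$; the pointwise representation of $u_k$ on $\mathcal N$ through $\mathcal N_s u_k=0$ together with Lemma \ref{Ns} then promotes this to $\|u_k\|_{L^\infty(U)}\le C_2$ (recall $u_k\equiv0$ on $U^c$). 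Feeding this $k$-uniform bound into the global Hölder regularity estimate of Theorem \ref{thm5.2} and Lemma \ref{Ns} — exactly as in Lemma \ref{reg-1} — produces $\beta\in(0,1)$ and $C_3>0$, both independent of $k$, with $\|u_k\|_{C^{0,\beta}(\mathbb R^n)}\le C_3$.

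Finally, since $U$ is bounded, $\{u_k\}$ is a family of functions that are uniformly bounded and equicontinuous on the compact set $\overline U$ and vanish identically on $U^c$. By Arzelà--Ascoli, a further subsequence converges uniformly on $\overline U$; because every $u_k$ vanishes on $U^c$, this convergence is in fact uniform on all of $\mathbb R^n$, the limit is continuous and vanishes on $U^c$, and it must coincide with the a.e.\ limit $u^*$, so $u^*\in X$ and $u_k\to u^*$ in the norm of $X$. Hence $K$ sends bounded subsets of $X$ to relatively compact subsets, i.e.\ $K$ is compact.

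The main obstacle is the $k$-independence of the $L^\infty$ and $C^{0,\beta}$ bounds: one must verify that the constants coming out of the iteration argument and out of the global regularity results of the Appendix depend on the data only through $\|w_k\|_{L^\infty(U)}$ and the fixed geometry $\Omega,\mathcal D,\mathcal N$, and never on the individual solution. The second delicate point is to carry the Hölder estimate across the nonlocal Neumann region $\mathcal N$ (via Lemma \ref{Ns}) rather than merely inside $\Omega$, so that equicontinuity holds on all of $\overline U$ as required for Arzelà--Ascoli.
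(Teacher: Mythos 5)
Your proposal is correct and follows essentially the same route as the paper: a uniform $L^{\infty}$ bound via the Stampacchia/De Giorgi iteration, then global H\"older regularity carried across $\mathcal{N}$ via the representation formula and the Appendix results, then compactness in the sup-norm topology. The only cosmetic difference is that you invoke Arzel\`a--Ascoli on a uniform $C^{0,\beta}(\mathbb{R}^n)$ bound, whereas the paper extracts the convergent subsequence from the uniform $W^{2,p}(\Omega)$ estimate through the compact embedding $W^{2,p}(\Omega)\hookrightarrow\hookrightarrow C^{1,\beta}(\bar\Omega)$ before extending to $\mathbb{R}^n$ --- two equivalent ways of exploiting the same equicontinuity, and the uniformity issue on $\mathcal{N}$ that you flag is present (and handled no more explicitly) in the paper's own argument.
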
 
\begin{proof}
We shall prove this lemma in the following steps.

 Step 1: Consider \eqref{w} and first we show   
  $$\|K(w)\|_{L^{\infty}(\Omega)}=\|u\|_{L^{\infty}(\Omega)} \leq c\|w\|_{L^{\infty}(\Omega)}, ~\text{for any}~ w \in L^{\infty}(\Omega).$$
  We define $\mathcal{A}(k)=\{x \in \Omega: ~|u(x)| \geq k\}$, for any $k>0$.
  Choosing
  \begin{equation}
\varphi_k(x)=(\operatorname{sgn} u)\max {(|u|-k,0)}=
\left\{\begin{matrix} 
{   u-k, ~~~\text{if}~ u\geq k},\\[2mm]  
0, ~~~~~~~~~~~~\text{if}~~  |u|\leq k,\\[2mm]
u+k, ~~~~~~\text{if}~ ~u\leq -k,
\end{matrix}\right. \end{equation}
  as a test function in \eqref{w}, we have
\begin{equation}\label{bieq}
\int_{\Omega} \nabla u \cdot \nabla \varphi_k d x+\int_{Q}  \frac{(u(x)-u(y))\left(\varphi_k(x)-\varphi_k(y)\right)}{|x-y|^{n+2 s}} d x d y=\lambda\int_{\Omega} w \varphi_k d x.
    \end{equation}
Hence, by non-negativity of the second integral in \eqref{bieq} 
 and  using the Sobolev embedding and H\'older inequality, we obtain
$$
\begin{aligned}
\int_{\Omega}\left|\nabla \varphi_k\right|^2 d x=\int_{\Omega} \nabla u \cdot \nabla \varphi_k d x & \leq \lambda\int_{\Omega} w \varphi_k d x \leq\lambda\|w\|_{L^{\infty}(\Omega )} \int_{\mathcal{A}(k)} |\varphi_k| d x \\
& \leq\lambda C_0\|w\|_{L^{\infty}(\Omega)}|\mathcal{A}(k)|^{\frac{r-1}{r}}\left(\int_{\Omega}\left|\nabla \varphi_k\right|^2 d x\right)^{\frac{1}{2}}
\end{aligned}
$$
where $C_0$ is the Sobolev constant and $2\leq r\leq 2^*$. Hence, we have
\begin{equation}\label{515eq}
    \int_{\Omega}\left|\nabla \varphi_k\right|^2 d x \leq C_1{\lambda^2}\|w\|_{L^{\infty}(\Omega)}^2|A(k)|^{\frac{2(r-1)}{r}}.
\end{equation}
It is easy to check that if $h>k$ then $\mathcal{A}(h) \subset \mathcal{A}(k)$. Using this fact and \eqref{515eq}, we find
$$
\begin{aligned}
(h-k)^2|\mathcal{A}(h)|^{\frac{2}{r}} &\leq\left(\int_{\mathcal{A}(h)}(|u(x)|-k)^r d x\right)^{\frac{2}{r}}  \leq\left(\int_{\mathcal{A}(k)}(|u(x)|-k)^r d x\right)^{\frac{2}{r}} \\
& \leq C_2\int_{\Omega}\left|\nabla \varphi_k\right|^2 d x  \leq C{\lambda^{2}}\|w\|_{L^{\infty}(\Omega)}^2|\mathcal{A}(k)|^{\frac{2(r-1)}{r}}, \text{where}~ C_1C_2= C.
\end{aligned}
$$
Therefore, we have $|\mathcal{A}(h)| \leq C{\lambda^{r}} \frac{\|w\|_{L^{\infty}(\Omega)}^r}{(h-k)^r}|\mathcal{A}(k)|^{r-1}$, $\forall$ $h>k>0$. Thus using [Lemma 14 in \cite{MR3393266}] or [Lemma B.1 in \cite{Stampacchia}] we obtain $|\mathcal{A}(d)|=0$, where $d^r=c \lambda^r\|w\|_{L^{\infty}(\Omega)}^r 2^{\frac{r^2}{r-1}}$, and $0<c=c(r,\Omega, C_1,C_2)$. 

Hence, \begin{equation}\label{stp1}
\|u\|_{L^{\infty}(\Omega)} \leq c\|w\|_{L^{\infty}(\Omega)}.\
    \end{equation}

Step 2: Suppose $\left\{w_k\right\}$ be a bounded sequence in $X$. {From the proof of Theorem \ref{thm5.2} (see Appendix)} and using [Theorem 1.4 in \cite{su2022regularity}], we have for each $u_k=K(w_k)\in X$
\begin{equation}\label{w2p}
{\left\|u_k\right\|_{W^{2, p}(\Omega)} \leq C(n, s, p)\left(\left\|u_k\right\|_{L^p(\Omega)}+\left\|w_k\right\|_{L^p(\Omega)}\right)  \leq C'\left\|w_k\right\|_{L^{\infty}(\Omega)} \leq C'},
    \end{equation}
{  where we used \eqref{stp1}. Thus $\left\{u_k\right\}$ is a bounded sequence in ${W^{2, p}(\Omega)}$. From the compact embedding of $W^{2,p}(\Omega)$ in $C^{1,\beta}(\bar\Omega)$ for any $p>n$, we get that $\left\{u_k\right\}$ has a convergent subsequence in $C^{1,\beta}(\bar\Omega)$. Hence $\{u_k\}$ has a convergent subsequence in $C^{0,\beta}(\mathbb R^n)$. Now using the compact embedding of $C^{0,\beta}(\mathbb{R}^n)\cap X$ in $X$, we conclude the proof.}
    \end{proof}

Let us recall that the following eigenvalue problem 
\begin{equation} \label{3eq}
    \left\{\begin{split} L \varphi\: &= \lambda \varphi, \quad \varphi>0 ~~ \text{in} ~\Omega, \\
      \varphi&=0~~\text{in} ~~U^c,\\
 \mathcal{N}_s(\varphi)&=0 ~~\text{in} ~~{{\mathcal{N}}}, \\
 \frac{\partial \varphi}{\partial \nu}&=0 ~~\text{in}~~ \partial \Omega \cap \overline{{\mathcal{N}}}.
    \end{split} \right.
\end{equation}
 % for $u\in \mathcal{X}^s_D{(\Omega\cup\mathcal{N})}$ with $v\in X.$
has a principle eigenvalue $\lambda_1(\mathcal{D})>0$ with associated eigenfunction $0<\varphi_1$ belonging to $\mathcal{X}^{1,2}_\mathcal{D}{(U)}\cap C^{0,\beta}(\mathbb R^n)$, by Proposition \eqref{prop-1} and Lemma \ref{reg-1}. 
% \textcolor{red}{For every function $u\in X$, we define $v=S(u) \in \mathcal{X}^s_D(\Omega\cup\mathcal{N})\subset L^2(\Omega)$ as the solution in $\mathcal{X}^s_D{(\Omega\cup\mathcal{N})}$ of  $\mathcal{L}v=u$. Clearly, the operator $S:  \mathcal{X}^s_D(\Omega\cup\mathcal{N})\subset L^2(\Omega)\to \mathcal{X}^s_D(\Omega\cup\mathcal{N})\subset L^2(\Omega)$ is linear, bounded and self adjoint operator. Moreover, we can see  $S$ is compact operator with the help of    $\mathcal{X}^s_{D}(\Omega\cup\mathcal{N})\hookrightarrow L^2(\Omega)$ is compact embedding. We observe that problem \eqref{3eq} can be rewritten as $S(u)=\frac{1}{\lambda}u.$}
Let us consider a map $I: [0,\infty)\times X \to X$ by
\begin{equation}
I_{\lambda}(u)=I(\lambda, u)=u-\Psi_\lambda(u),   
\end{equation}
where
 $\Psi_\lambda(u)=K\left[\lambda \theta u+\lambda f( u)\right]=\lambda K[h(u)]$ and $Kh: X\to X$ is a compact operator, since $h: X\to X$ is a continuous function and ${L}^{-1}=K : X\to X$ is  compact due to Lemma \ref{cpto}.
 
We define the set $\Gamma$ as closure of the solution set $\{(\lambda,u)\in \mathbb{R}^+\times (X\setminus\{0\}):~I(\lambda,u)=0\}$ i.e. it is closure of the set of all nontrivial solutions of $I(\lambda, u)=0$, for each $\lambda>0$. Since $I(\lambda,0)=0,~\forall~ \lambda\in\mathbb{R}$, $u=0$ will be a trivial solution. In order to study the behaviour of $\Gamma$, it is convenient to define the following.

\begin{definition}\label{rem5.1}
We define for fixed $\lambda\in \mathbb{R}^+$, the Leray Schauder index (denoted by 'ind') as ind$(I_\lambda, 0)= \lim\limits_{\epsilon\to 0} deg(I_{\lambda}, B_{\epsilon}(0), 0)$, where $ B_{\varepsilon}(0)=\{u\in X: \|u\|<\varepsilon\}$. For details on degree theory, we refer to \cite{Ambrosetti}.
        \end{definition}
The following lemma is an important observation about the range of $\Gamma$ with respect to $\lambda$.
        \begin{Lemma}\label{5.1lem}
 Suppose \eqref{ql} has a solution then $\lambda< \theta\lambda_1(\mathcal{D})$.
    \end{Lemma}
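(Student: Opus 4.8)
The plan is to test the weak formulation of \eqref{ql} against the principal eigenfunction $\varphi_1$ of the linear problem \eqref{3eq}, and to compare the identity so obtained — via the symmetry of the form $\langle\cdot,\cdot\rangle$ on $\mathcal{X}^{1,2}_{\mathcal{D}}(U)$ — with the variational identity satisfied by $\varphi_1$ itself; the structural hypothesis $(f2)$ furnishes the one-sided bound $h(t)\ge\theta t$ for $t\ge0$ that closes the estimate.

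First I would fix a positive solution $(\lambda,u)\in\mathbb{R}^+\times\mathcal{X}^{1,2}_{\mathcal{D}}(U)$ of \eqref{ql}, so that $u>0$ in $U$, and recall (Proposition \ref{prop-1}, Lemma \ref{reg-1}) that the principal eigenfunction $\varphi_1$ of \eqref{3eq} belongs to $\mathcal{X}^{1,2}_{\mathcal{D}}(U)\cap C^{0,\beta}(\mathbb{R}^n)$, is strictly positive in $U$, and satisfies $\langle\varphi_1,\psi\rangle=\lambda_1(\mathcal{D})\int_\Omega\varphi_1\psi\,dx$ for all $\psi\in\mathcal{X}^{1,2}_{\mathcal{D}}(U)$. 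Since both $\varphi_1$ and $u$ lie in $\mathcal{X}^{1,2}_{\mathcal{D}}(U)$, I take $\psi=u$ in this identity and $\varphi=\varphi_1$ in the weak formulation of \eqref{ql}; using $\langle u,\varphi_1\rangle=\langle\varphi_1,u\rangle$ this yields
\begin{equation*}
\lambda_1(\mathcal{D})\int_\Omega u\,\varphi_1\,dx=\langle\varphi_1,u\rangle=\langle u,\varphi_1\rangle=\lambda\int_\Omega h(u)\,\varphi_1\,dx.
\end{equation*}

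Next, writing $h(u)=\theta u+f(u)$ with $f\ge0$ by $(f2)$ and using $u,\varphi_1>0$ on $\Omega$, I estimate
\begin{equation*}
\lambda\int_\Omega h(u)\,\varphi_1\,dx=\lambda\theta\int_\Omega u\,\varphi_1\,dx+\lambda\int_\Omega f(u)\,\varphi_1\,dx\ \ge\ \lambda\theta\int_\Omega u\,\varphi_1\,dx .
\end{equation*}
Combining with the previous display and cancelling the number $\int_\Omega u\,\varphi_1\,dx$, which is strictly positive since $u,\varphi_1>0$ on $\Omega$, gives $\lambda\theta\le\lambda_1(\mathcal{D})$, that is $\lambda\le\lambda_\infty$. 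To obtain the strict inequality it then suffices to show $\int_\Omega f(u)\,\varphi_1\,dx>0$; I would argue this from $(f3)$: since $f\ge0$ the limit $\lim_{t\to0^+}f(t)/t=a-\theta$ is $\ge0$, and whenever $a>\theta$ we have $f(t)>0$ for all small $t>0$, so as soon as $u$ attains small positive values on a set of positive measure in $\Omega$ — e.g. in any configuration where $\partial\Omega\cap\overline{\mathcal{D}}\neq\emptyset$, by continuity of $u$ and $u|_{U^c}=0$ — the integral is positive and the inequality is strict.

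The technical points are merely the admissibility of $\varphi_1$ and $u$ as test functions (immediate from Definition \ref{d1}) and the integrability of $h(u)\varphi_1$ on the bounded set $\Omega$ (clear, as $u\in L^2(\Omega)$ and $f$ is bounded). The genuinely delicate step is the upgrade from the non-strict bound $\lambda\theta\le\lambda_1(\mathcal{D})$ to the strict one, which depends on the sign of $f$ along the range of $u$ and is where $(f3)$ enters; as an alternative, the non-strict bound also follows, with no test function at all, by applying the Picone-type inequality of Theorem \ref{T2.12} to $u$ (its mixed boundary data annihilate the boundary terms) with $v=\varphi_1$, which gives $\lambda\theta\,\|\varphi_1\|_{L^2(\Omega)}^2\le\eta(\varphi_1)^2=\lambda_1(\mathcal{D})\,\|\varphi_1\|_{L^2(\Omega)}^2$.
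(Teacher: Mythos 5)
Your main computation is exactly the paper's: test the weak formulation of \eqref{ql} with $\varphi_1$, use the symmetry of the form to identify $\langle u,\varphi_1\rangle$ with $\lambda_1(\mathcal{D})\int_\Omega u\varphi_1\,dx$, and split $h(u)=\theta u+f(u)$ to compare with $\lambda\theta\int_\Omega u\varphi_1\,dx$. That part, together with the positivity of $\int_\Omega u\varphi_1\,dx$, is correct and gives $\lambda\theta\le\lambda_1(\mathcal{D})$ (note the lemma's displayed inequality $\lambda<\theta\lambda_1(\mathcal{D})$ is a typo in the paper; the proof, like yours, establishes $\lambda<\lambda_1(\mathcal{D})/\theta=\lambda_\infty$). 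Your Picone-based alternative for the non-strict bound is also sound.

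The gap is in the upgrade to the strict inequality. The paper closes it in one line by invoking $f(t)>0$ for all $t$ — this is how the authors read $(f2)$ ($f$ maps into $\mathbb{R}^+$, used again with the same strict sign in Lemma \ref{l58}) — so that $\int_\Omega f(u)\varphi_1\,dx>0$ immediately, since $u,\varphi_1>0$ in $\Omega$. You instead take $f\ge0$ and try to extract strictness from $(f3)$, but that argument does not go through in general: it needs $a>\theta$ (the hypotheses only give $a\ge\theta$ under your reading, and $a=\theta$ is not excluded), and it needs $u$ to take arbitrarily small positive values on a set of positive measure, which you only justify "e.g." under an extra geometric assumption on $\partial\Omega\cap\overline{\mathcal{D}}$ that is not part of the lemma. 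As written, your proof therefore only establishes $\lambda\le\lambda_\infty$ unconditionally; to get the strict inequality you should use the pointwise strict positivity of $f$ as the paper does, rather than route through $(f3)$.
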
 
\begin{proof}
 Let $u>0$ in $\Omega$ be a solution to problem \eqref{ql}. Taking $\varphi_1$ as a test function in \eqref{ql} and using $f(t)>0,~ \forall~ t\in \mathbb{R}$, it follows that
$$
\lambda_1(\mathcal{D}) \int_{\Omega} u \varphi_1 d x=\int_{\Omega} \mathcal{L} u \varphi_1 d x=\lambda \int_{\Omega}  h(u) \varphi_1 d x=\lambda \int_{\Omega} (\theta u+f(u))\varphi_1\,dx.
$$
This implies
$$
\lambda_1(\mathcal{D}) \int_{\Omega} u \varphi_1 d x>\lambda \theta\int_{\Omega} u \varphi_1 dx 
$$
that is, $\lambda_1(\mathcal{D})>\theta\lambda$, which ends the proof.
    \end{proof}
  %  \begin{Definition}\label{5.4}
    
%{\color{blue}  We say that $u\in \mathcal{X}^{1,2}_{\mathcal{D}}(U)$ is a weak solution to the problem \eqref{ql} if 
%\begin{equation}
   % \int_{\mathbb{R}^n} \nabla u.\nabla \varphi \,dx +\int_{\mathbb{R}^{2n}} \frac{(u(x)-u(y))(\varphi(x)-\varphi(y))}{|x-y|^{n+2s}} dxdy  = \lambda\int_{\Omega} h(u)\varphi\,dx,\\
%\end{equation}}
%for all $\varphi \in \mathcal{X}^{1,2}_{\mathcal{D}}(U).$
%\end{Definition}  
   %{\color{blue} The following result is a version of the strong maximum principle that can be proved using arguments given in [Theorem 3.1 in \cite{biagi2021global}]. 
%\begin{Proposition}\label{maxp}
%Let $u\in \mathcal{X}^{1,2}_{\mathcal{D}}(U)$ and satisfies Definition 
%then either $u\equiv 0$ or $u>0$ a.e. in $\Omega\cup\mathcal{N}.$
%\end{Proposition}

% From now on, we say that $\lambda$ is a bifurcation point of $F(t, u)=0$ whenever
% $$
% (\lambda, 0) \in \Sigma=\overline{\{(t, u) \in \mathbb{R} \times E: F(t, u)=0, u \neq 0\}} .
% $$

% The reader is invited to see that $\Sigma \subset \mathbb{R}^{+} \times E$ is the closure of the set formed by nontrivial solutions of $(Q)$.
% For each $\lambda \in \mathbb{R}$ fixed, the Leray-Schauder index of the function $F_\lambda(u)=F(\lambda, u)$ at 0 , denoted by ind $\left(F_\lambda, 0\right)$, is defined by
% $$
% \operatorname{ind}\left(F_\lambda, 0\right)=\lim _{\varepsilon \rightarrow 0} \operatorname{deg}\left(F_\lambda, B_{\varepsilon}(0), 0\right)
% $$
%Let $\varphi_1$ is an eigenfunction corresponding to $\lambda_1(\mathcal{D})$ of $\mathcal{L}$ and solves \eqref{1}.

\subsection{Bifurcation from zero}
 For the study of bifurcation from the line of trivial solutions, let us suppose, throughout this section, that conditions $(f_1)$, $(f_2)$, $(f_3)$ and $h(0)=0$ hold.
We begin with the following two lemmas which are crucial parts for establishing that $\lambda_0$(defined in \eqref{lem0}) is a bifurcation point from the line of trivial solutions.

\begin{Lemma}\label{lem5.3}
\begin{enumerate}[(a)]
Suppose $0<\lambda<\lambda_0.$ Then
\item  $\exists~\delta>0$ such that
$
I_{t \lambda}(u) \neq 0, ~\forall~u\in X,~ t \in[0,1],~  \text{whenever}~  0<\|u\|\leq\delta.
$
\item $\lambda_{0}$ is the only possible bifurcation point from line of trivial solutions for \eqref{ql}.
\item  $\operatorname{ind}\left(I_\lambda, 0\right)=1$, for all $\lambda<\lambda_0$.
\end{enumerate}
  \end{Lemma}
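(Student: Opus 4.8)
The plan is to exploit that $\Psi_\lambda$ is differentiable at $u=0$ with derivative the compact linear map $\lambda a\,K$ (since $h(t)=at+o(t)$ as $t\to 0^+$ and $h(0)=0$), so that $1$ lies in the spectrum of $\lambda a\,K$ exactly when $\lambda a$ is an eigenvalue of $L$; by Proposition \ref{prop-1} and Lemma \ref{ll2.9} the smallest such eigenvalue is $\lambda_1(\mathcal{D})$, which places the threshold at $\lambda_0=\lambda_1(\mathcal{D})/a$ (see \eqref{lem0}). All three assertions will follow from a single rescaling argument, so I would prove (a) first and then read off (b) and (c).

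For (a), I would argue by contradiction: suppose there exist $t_n\in[0,1]$ and $u_n\in X\setminus\{0\}$ with $u_n\to 0$ in $X$ and $I_{t_n\lambda}(u_n)=0$, i.e. $u_n=t_n\lambda K[h(u_n)]$, equivalently $Lu_n=t_n\lambda\,h(u_n)$ in $\Omega$ together with the boundary conditions of \eqref{ql}. Since $h\ge 0$ we get $Lu_n\ge 0$, so Lemma \ref{weak-max} yields $u_n\ge 0$ in $U$, and $t_n>0$ (otherwise $u_n=0$). Normalizing $v_n=u_n/\|u_n\|\ge 0$, $\|v_n\|=1$, I would write $h(u_n)=a\,u_n+g(u_n)$ with $\|g(u_n)\|_{L^\infty(\Omega)}\le\varepsilon_n\|u_n\|$ and $\varepsilon_n\to 0$ — this is precisely where the one-sidedness $u_n\ge 0$ matters, since $|h(t)-at|=o(t)$ only as $t\to 0^+$. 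Then $v_n=t_n\lambda\,K\!\left(a v_n+g(u_n)/\|u_n\|\right)$, the argument of $K$ is bounded in $X$, and compactness of $K$ (Lemma \ref{cpto}) lets me extract $t_n\to t_0$ and $v_n\to v$ in $X$ with $\|v\|=1$, $v\ge 0$; in particular $t_0>0$. Passing to the limit and using continuity of $K$, $v=t_0\lambda a\,Kv$, so $v$ is a nonnegative nontrivial eigenfunction of $L$ (with the mixed boundary conditions) for the eigenvalue $t_0\lambda a$. By Proposition \ref{prop-1} the only eigenvalue admitting a one-signed eigenfunction is $\lambda_1(\mathcal{D})$, so $t_0\lambda a=\lambda_1(\mathcal{D})$, i.e. $t_0\lambda=\lambda_0$, contradicting $t_0\lambda\le\lambda<\lambda_0$.

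Assertion (b) is the same computation with $t_n\equiv 1$: if $(\lambda^*,0)$ is a bifurcation point, a normalized sequence of nontrivial solutions $(\lambda_n,u_n)\to(\lambda^*,0)$ produces, after rescaling, a nonnegative nontrivial eigenfunction of $L$ with eigenvalue $\lambda^* a$ (and $\lambda^*>0$, since $I_0=\mathrm{Id}$ has only the trivial zero), whence $\lambda^* a=\lambda_1(\mathcal{D})$ and $\lambda^*=\lambda_0$. For (c) I would fix $\lambda<\lambda_0$, take $\delta>0$ from (a), and observe that $H(t,u)=I_{t\lambda}(u)=u-t\lambda K[h(u)]$ is an admissible compact homotopy on $\overline{B_\delta(0)}$ with no zero on $\partial B_\delta(0)$ for $t\in[0,1]$; homotopy invariance of the Leray--Schauder degree then gives $\deg(I_\lambda,B_\delta(0),0)=\deg(I_0,B_\delta(0),0)=\deg(\mathrm{Id},B_\delta(0),0)=1$, and since $I_\lambda$ has no zero in $\overline{B_\delta(0)}\setminus\{0\}$, excision gives $\deg(I_\lambda,B_\varepsilon(0),0)=1$ for all $0<\varepsilon\le\delta$, so $\operatorname{ind}(I_\lambda,0)=1$ by Definition \ref{rem5.1}.

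The hard part will be the uniform estimate $h(u_n)/\|u_n\|\to a\,v$ in the sup norm: it forces one to restrict to nonnegative solutions via the weak maximum principle (Lemma \ref{weak-max}), because the zero-extension of $h$ fails to be $C^1$ at the origin, and to use that convergence in $X$ (carrying the sup norm) is uniform, so that $|h(u_n)-a u_n|\le\varepsilon_n u_n$ pointwise with $\varepsilon_n\to 0$. Once this estimate and the spectral identification through Proposition \ref{prop-1} are in place, the remaining ingredients — compactness of $K$, extraction of convergent subsequences, and the degree computation — are routine.
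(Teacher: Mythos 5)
Your proposal is correct and follows essentially the same route as the paper: contradiction, nonnegativity via the weak maximum principle, normalization, the $(f3)$-based linearization $h(u_n)=au_n+o(\|u_n\|)$, compactness of $K$ to pass to the limit, identification of the limit as a one-signed eigenfunction forcing $t_0\lambda a=\lambda_1(\mathcal{D})$ by Proposition \ref{prop-1}, and then (b) as the $t\equiv 1$ case and (c) by homotopy invariance of the degree. The only cosmetic difference is that the paper additionally invokes the strong maximum principle to upgrade $w\geq 0$ to $w>0$ before citing the principality of $\lambda_1(\mathcal{D})$, whereas you use the sign-changing property of higher eigenfunctions directly; both are valid.
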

\begin{proof} $(a)$ % Firstly, we shall prove that there is $\delta>0$ such that
%$$
%I_{t \lambda}(u) \neq 0,~ \forall~ t \in[0,1] \text { and } 0<\|u\|\leq\delta.
%$$
 Supposing the contrary, there exists a sequence $\left\{u_k\right\} \subset X \backslash\{0\}$, $t_k \in[0,1]$ such that
$$
u_k \rightarrow 0 \text { in } X \quad \text { and } \quad I_{t_k \lambda}\left(u_k\right)=0,
$$
that is
$$ u_k- t_k\lambda K\left[h\left(u_k\right)\right]=0 \text{ or }
u_k=a_k K\left[h\left(u_k\right)\right] \text {, where } a_k=t_k \lambda .
$$
By Lemma \ref{weak-max}, $(f1)$ and $(f1)_0$, we get $u_k\geq 0$ in $U$. Setting
$$
w_k=\frac{u_k}{\left\|u_k\right\|}, ~\text{where} \left\|u_k\right\| \neq 0,
$$
we realise that
\begin{equation}\label{a_k}
w_k=a_k K\left[\frac{h\left(u_k\right)}{\left\|u_k\right\|}\right].
\end{equation}
From condition $(f2)$, we have 
$$
\left|\frac{h\left(u_k\right)}{\left\|u_k\right\|}\right| = \frac{\left|\theta u_k+ f(u_k)\right|}{\left\|u_k\right\|} \leq C'\left\|w_k\right\|=C', ~\text{where}~C'=C'(\theta,C)~ \text{and}~ C,C'\geq 0.
$$
Thus, the sequence $\left\{\frac{h\left(u_k\right)}{\left\|u_k\right\|}\right\} \subset X$ is bounded. Since $K$ is compact, there are $\left\{u_{k_j}\right\} \subset\left\{u_k\right\}$ and $v \in X$ such that
$$
K\left(\frac{h\left(u_{k_j}\right)}{\left\|u_{k_j}\right\|}\right) \rightarrow v \text { in } X.
$$
Consequently, there is a non negative $w \in X \backslash\{0\}$ such that
$$
w_{k_j} \rightarrow w \text { in } X.
$$
Then due to $(f3)$, we get 
$$
\frac{h\left(u_{k_j}\right)}{\left\|u_{k_j}\right\|}=\frac{h\left(u_{k_j}\right)}{u_{k_j}} \frac{u_{k_j}}{\left\|u_{k_j}\right\|}=\frac{h\left(u_{k_j}\right)}{u_{k_j}} w_{k_j}  \to aw \text { in } X.
$$
%{[\color{blue} by our assumption $\lim_{t\to 0^+} \frac{h(t)}{t}=a>0$]}
Suppose that
$$
t_{k_j} \rightarrow t_0 \quad \text { and } \quad t_{k_j}\lambda\rightarrow t_0 \lambda=\bar{\lambda}(say).
$$
Now, if we pass the limit $k_j \rightarrow+\infty$ in \eqref{a_k} the we get
$$
w=\bar{\lambda}a K( w),\text { where } \|w\|=1 \text { and }~ w \geq 0~   \text{in}~ U.
$$
Therefore, $w$ is an eigenfunction of $L$ associated with the eigenvalue $\lambda^*=\bar{\lambda}a$, that is
$$
{L w=\lambda^*  w ~\text { in }~ \Omega \text {. }}
$$
From  Lemma \ref{Ns} and Lemma \ref{strmx}, it follows that $w>0$ in $U$.
So, $w>0$ is the eigenfunction associated with the eigenvalue $\lambda^*$ of the operator $L.$ But the only eigenvalue with a positive eigenfunction is $\lambda_1(\mathcal{D})$, according to Proposition \ref{prop-1}. Hence, eigenfunction $w=\varphi_1$ and $\lambda^*=\bar{\lambda}a=\lambda_1(\mathcal{D}),$ i.e. $\bar{\lambda}=\lambda_0$(recall \eqref{lem0}) which is a contradiction, since
$
\bar{\lambda}=t_0 \lambda \leq \lambda<\lambda_0.  
$
Statement $(b)$ follows immediately from $(a).$

To prove statement $(c)$, 
we consider the homotopy $H(t,u)= I_{t\lambda}(u)$. Using  the homotopy invariance of the Leray-Schauder degree, $\forall ~\varepsilon \in\left(0, \delta\right]$, we get
$$
\operatorname{deg}\left(H(1,.), B_{\varepsilon}(0), 0\right)=\operatorname{deg}\left(H(0,.), B_{\varepsilon}(0), 0\right)=1,~ \text{namely}
$$
$$
\operatorname{deg}\left(I_\lambda, B_{\varepsilon}(0), 0\right)=\operatorname{deg}\left(I, B_{\varepsilon}(0), 0\right)=1, ~\text{where}~ I= \text{identity operator}.
$$
Hence, by the definition of index (see Remark \eqref{rem5.1}), we obtain $\operatorname{ind}\left(\Psi_\lambda, 0\right)=1.$
\end{proof} 
\begin{Lemma}\label{lem5.4}
Suppose $\lambda>\lambda_0$. Then
\begin{enumerate}[(a)]
\item  $\exists~ \delta>0$ such that  $I_\lambda(u) \neq b \varphi_1,~\forall ~ b \geq 0,
~ u \in X\text { satisfying } 0<\|u\| \leq \delta$. 
    \item $\operatorname{ind}\left(I_\lambda, 0\right)=0$ for all $\lambda>\lambda_0$. 
\end{enumerate}
     \end{Lemma}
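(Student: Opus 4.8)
The plan is to run the standard Rabinowitz-type argument comparing the nonlinear map with a forcing term along the first eigenfunction $\varphi_1$, and then to deduce the index statement from part $(a)$ via a homotopy that pushes the degree to zero. For part $(a)$, I would argue by contradiction: suppose there are sequences $u_k \in X$ with $0 < \|u_k\| \to 0$ and $b_k \ge 0$ such that $I_\lambda(u_k) = b_k\varphi_1$, i.e. $u_k = \lambda K[h(u_k)] + b_k\varphi_1$. By Lemma \ref{weak-max}, $(f1)$ and $(f1)_0$ we get $u_k \ge 0$ in $U$. First I would check that $\{b_k\}$ is bounded: testing the equation against $\varphi_1$ (using $L\varphi_1 = \lambda_1(\mathcal{D})\varphi_1$ and the self-adjointness encoded in Proposition \ref{P}), one gets $\lambda_1(\mathcal{D})\int_\Omega u_k\varphi_1 = \lambda\int_\Omega h(u_k)\varphi_1 + b_k\lambda_1(\mathcal{D})\int_\Omega \varphi_1^2$, and since $h(u_k)\ge 0$, $b_k$ is controlled by $\|u_k\|$ up to a constant, hence $b_k \to 0$. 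Then normalize: set $w_k = u_k/\|u_k\|$, so that
$$
w_k = \lambda K\!\left[\frac{h(u_k)}{\|u_k\|}\right] + \frac{b_k}{\|u_k\|}\varphi_1 .
$$
By $(f2)$ the sequence $h(u_k)/\|u_k\|$ is bounded in $X$; by compactness of $K$ (Lemma \ref{cpto}) and, if necessary, by passing to the subsequence along which $b_k/\|u_k\|$ converges, $w_k \to w$ in $X$ with $\|w\|=1$, $w\ge 0$. Using $(f3)$ exactly as in the proof of Lemma \ref{lem5.3}$(a)$, $h(u_k)/\|u_k\| = \frac{h(u_k)}{u_k}w_k \to a w$ in $X$, and the forcing term converges to $c\varphi_1$ for some $c\ge 0$. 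Passing to the limit,
$$
w = \lambda a\, K(w) + c\,\varphi_1 .
$$
Pairing this identity with $\varphi_1$ in $L^2(\Omega)$ and using that $\varphi_1$ is the eigenfunction for $\lambda_1(\mathcal{D})$, I would get $\int_\Omega w\varphi_1 = \frac{\lambda a}{\lambda_1(\mathcal{D})}\int_\Omega w\varphi_1 + \frac{c}{\lambda_1(\mathcal{D})}\int_\Omega\varphi_1^2$; since $w,\varphi_1 > 0$ the left side is positive, and because $\lambda > \lambda_0 = \lambda_1(\mathcal{D})/a$ we have $\lambda a/\lambda_1(\mathcal{D}) > 1$, forcing $c < 0$, a contradiction. (Alternatively, if $c = 0$ then $w$ is a positive eigenfunction with eigenvalue $\lambda a \ne \lambda_1(\mathcal{D})$, contradicting Proposition \ref{prop-1}; the case $c>0$ gives the sign contradiction just described.)

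For part $(b)$, I would use part $(a)$ together with the standard degree-theoretic device. Fix $\lambda > \lambda_0$ and let $\delta > 0$ be as in $(a)$. Consider, for $t \ge 0$, the compact homotopy $H(t,u) = u - \lambda K[h(u)] - t\varphi_1 = I_\lambda(u) - t\varphi_1$ on $\overline{B_\varepsilon(0)}$ for $\varepsilon \in (0,\delta]$. By $(a)$, $H(t,u) \ne 0$ for all $u \in \partial B_\varepsilon(0)$ and all $t \ge 0$, so the degree $\deg(H(t,\cdot), B_\varepsilon(0), 0)$ is independent of $t$ as long as it is defined; in particular one needs an a priori bound showing that for $t$ large the equation $H(t,u)=0$ has no solution in $\overline{B_\varepsilon(0)}$ at all — this follows because any solution would give $u = \lambda K[h(u)] + t\varphi_1 \ge t\varphi_1$, whose norm exceeds $\varepsilon$ once $t$ is large (here $h \ge 0$ and $K$ is positivity preserving by Lemma \ref{weak-max}/Remark \ref{uniques}). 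Hence for such large $t$, $\deg(H(t,\cdot), B_\varepsilon(0),0) = 0$, and by homotopy invariance $\deg(I_\lambda, B_\varepsilon(0), 0) = \deg(H(0,\cdot), B_\varepsilon(0), 0) = 0$. By the definition of the index (Definition \ref{rem5.1}), $\operatorname{ind}(I_\lambda, 0) = 0$.

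The main obstacle I anticipate is the bookkeeping in part $(a)$: ensuring that $b_k$ (and then $b_k/\|u_k\|$) stays bounded so that the limiting identity genuinely contains a nonnegative multiple $c\varphi_1$ of the eigenfunction, and handling cleanly the two cases $c = 0$ and $c > 0$. The positivity/compactness inputs — $K$ compact (Lemma \ref{cpto}), $K$ order preserving (Lemma \ref{weak-max}, Remark \ref{uniques}), strong maximum principle (Lemma \ref{strmx}), and simplicity of $\lambda_1(\mathcal{D})$ with a unique positive eigenfunction (Proposition \ref{prop-1}) — are all already available, so beyond this the argument is routine and parallels Lemma \ref{lem5.3}.
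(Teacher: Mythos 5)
Your proposal is correct, and part (b) is essentially the paper's argument: the paper runs the same homotopy $H(t,u)=I_\lambda(u)-tb\varphi_1$, justifying that the degree vanishes at the endpoint by choosing one large $b$ outside the bounded set $I_\lambda(\overline{B_\varepsilon(0)})$, whereas you use the positivity estimate $u\ge t\varphi_1$; both work. Part (a), however, is proved by a genuinely different route. You run the normalization--compactness scheme of Lemma \ref{lem5.3}(a): set $w_k=u_k/\|u_k\|$, check that $b_k/\|u_k\|$ stays bounded, pass (via compactness of $K$) to the limit identity $w=\lambda a\,K(w)+c\varphi_1$ with $c\ge 0$, and derive a sign contradiction by pairing with $\varphi_1$. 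The paper avoids compactness entirely: it tests $Lu_k=\lambda h(u_k)+\lambda_1(\mathcal{D})b_k\varphi_1$ directly with $\varphi_1$, drops the nonnegative $b_k$-term, and uses that $(f3)$ together with $\lambda>\lambda_0$ gives $h(t)>\frac{\lambda_0}{\lambda}\,a\,t$ for all small $t>0$, so that for $k$ large $\lambda_1(\mathcal{D})\int_\Omega u_k\varphi_1>\lambda_0 a\int_\Omega u_k\varphi_1$, i.e. $\lambda_1(\mathcal{D})>\lambda_1(\mathcal{D})$. The paper's argument is shorter and needs neither Lemma \ref{cpto} nor identification of a limit equation; yours is more robust (it would tolerate weaker pointwise control of $h$ near $0$) but carries the extra bookkeeping you yourself flag — in particular, before asserting $\int_\Omega w\varphi_1>0$ you should record explicitly that $w\not\equiv 0$ on $\Omega$ (e.g. from the limit equation together with Remark \ref{uniques}, or from Lemma \ref{strmx}), since a priori you only know $w\ge 0$ with $\|w\|=1$.
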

\begin{proof}
Recall that $\varphi_1$ solves $L\varphi_1 =  \lambda_1(\mathcal{D}) \varphi_1, ~\varphi_1>0 ~
 \text{in }\Omega,$
 %\begin{equation}\label{Eq} 
 %     \begin{cases}    
 %    \mathcal{L}\varphi_1 =  \lambda_1(\mathcal{D}) \varphi_1, \quad \varphi_1>0 \quad 
% \text{in }\Omega,\\ 
 % \varphi_1=0~~\text{in} ~~\mathcal{D},\\
 %\mathcal{N}_s(\varphi_1)=0 ~~\text{in} ~~{{\mathcal{N}}}, \\
% \frac{\partial \varphi_1}{\partial \nu}=0 ~~\text{in}~~ \partial \Omega %\cap \overline{{\mathcal{N}}},
 % \end{cases} 
% \end{equation}
that is, equivalently
\begin{equation}\label{eqt}
\varphi_1=\lambda_1(\mathcal{D}) K \varphi_1.
    \end{equation}
Let us first prove $(a)$. We suppose to the contrary that  there exists a sequence $\left\{u_k\right\} \subset X \backslash\{0\}$ and $\left\{b_k\right\} \subset\mathbb{R}^+$ with
$$
u_k \rightarrow 0 \text { in } X \quad \text { and } \quad I_\lambda(u_k)=b_k \varphi_1, \text { for all } k \in \mathbb{N}.
$$
From  \eqref{eqt}, we obtain $u_k=\lambda K[h(u_k)]+b_k\lambda_1(\mathcal{D})K\varphi_1$  which implies
 \[ Lu_k =  \lambda  h\left(u_k\right)+\lambda_1(\mathcal{D}) b_k  \varphi_1\]
%\begin{equation} 
 %     \begin{cases}    
 %    \mathcal{L}u_k =  \lambda  h\left(u_k\right)+\lambda_1(\mathcal{D}) b_k  \varphi_1, \quad u_k>0 \quad 
 %\text{in }\Omega,\\ 
%  u_k=0~~\text{in} ~~\mathcal{D},\\
 %\mathcal{N}_s(u_k)=0 ~~\text{in} ~~{{\mathcal{N}}}, \\
% \frac{\partial u_k}{\partial \nu}=0 ~~\text{in}~~ \partial \Omega \cap %\overline{{\mathcal{N}}},
 % \end{cases} 
% \end{equation}
% we can see  $\forall~ \phi \in \mathcal{X}^s_{D}(U),$
%$$
%\int_{\Omega}\mathcal{L} u_k \phi d x=\lambda \int_{\Omega}  h\left(u_k\right) \phi d x+\lambda_1(\mathcal{D}) b_k \int_{\Omega}  \varphi_1 \phi d x, 
%$$
and {$u_k\geq 0~
 \text{in }\Omega$, by Lemma \ref{weak-max}}. Taking $\varphi_1$ as a test function above and integrating both sides over $\Omega$, we get
\begin{equation}\label{505}
\int_{\Omega}(L u_k) \varphi_1 d x=\lambda \int_{\Omega}  h\left(u_k\right) \varphi_1 d x+\lambda_1(\mathcal{D}) b_k \int_{\Omega}  \varphi_1^2 d x.
    \end{equation}
%$$
%\int_{\Omega}\mathcal{L} \varphi_1 \psi d x=\lambda_1(\mathcal{D}) \int_{\Omega}  \varphi_1 \psi d x, \quad \forall~ \psi \in \mathcal{X}^s_{D}(U).
%$$
%If $\psi=u_k$, we get
%\begin{equation}\label{506}
%\int_{\Omega} \mathcal{L} u_k  \varphi_1 d x=\lambda_1(\mathcal{D}) \int_{\Omega}  \varphi_1 u_k d x.
 %   \end{equation}
Imposing $L\varphi_1 =  \lambda_1(\mathcal{D}) \varphi_1 ~
 \text{in }\Omega,$ on the left-hand side of above, we obtain
\begin{equation}\label{507}
\lambda_1(\mathcal{D}) \int_{\Omega}  \varphi_1 u_k d x=\lambda \int_{\Omega}  h\left(u_k\right) \varphi_1 d x+\lambda_1(\mathcal{D}) b_k \int_{\Omega} \varphi_1^2 d x \geq \lambda \int_{\Omega}  h\left(u_k\right) \varphi_1 d x.
    \end{equation}
{Due to $(f3)$, there exists a $\varepsilon_0>0$ such that
$$
h(t)>\frac{\lambda_0}{\lambda}a t, 
\text { for all } t \in(0, \varepsilon_0),
$$}
 therefore, for $k$ large enough, we can write 
\begin{equation}\label{508}
h\left(u_k\right)>\frac{\lambda_0}{\lambda}a u_k.
    \end{equation}
Using \eqref{507} and \eqref{508}, we find that 
$$
\lambda_1(\mathcal{D}) \int_{\Omega}  \varphi_1 u_k d x>\lambda \int_{\Omega}  \frac{\lambda_0}{\lambda}a u_k \varphi_1 d x=\lambda_0a \int_{\Omega}  u_k \varphi_1 d x.
$$
Hence
$
\lambda_1(\mathcal{D})>\lambda_0a=\lambda_1(\mathcal{D})
$
which is a contradiction, establishing the claim.

To prove statement $(b)$, 
 let us suppose $0<\varepsilon \leq \delta$, where $\delta>0$ is same as in statement $(a)$. As $I_\lambda\left(\overline{B_{\varepsilon}(0)}\right) \subset X$ is a bounded set, due to statement $(a)$, there exists a $b>0$ which is large enough such that
$$
I_\lambda(u) \neq b \varphi_1, \quad \forall~ u \in \overline{B_{\varepsilon}(0)}.
$$
By $(a)$, we also have
$$
I_\lambda(u) \neq t b \varphi_1, \quad \text { for } 0<\|u\| \leq \varepsilon \text { and } t \in[0,1].
$$
Using the homotopy $H(t,v)=I_{\lambda}(u)-tb\varphi_1$ \text{on the ball}~$B_{\varepsilon}(0)$, we get
$$
\operatorname{deg}\left(H(1,.), B_{\varepsilon}(0), 0\right)=\operatorname{deg}\left(H(0,.), B_{\varepsilon}(0), 0\right)=0,~ \text{namely}
$$
$$
\operatorname{deg}\left(I_\lambda, B_{\varepsilon}(0), 0\right)=\operatorname{deg}\left(I_\lambda-b\varphi_1, B_{\varepsilon}(0), 0\right)=0.
$$
Hence, by the definition of index (see Remark \eqref{rem5.1}), we get $\operatorname{ind}\left(I_\lambda, 0\right)=0$. This finishes the proof.

It is now possible to establish the proof of Theorem \ref{thm210} presented below.\\

 \textbf{Proof of Theorem \ref{thm210}:} The proof of the global bifurcation Theorem given by Rabinowitz \cite{Rabinowitz} and in the specified form [\cite{MR1956130}, Theorem 9.1.1], can be repeated with the help of Lemmas \ref{lem5.3} and \ref{lem5.4}, which ensure the existence of $\Gamma_0$.
 To begin, we need to demonstrate that $\left(\lambda_0, 0\right)$  is a bifurcation point of $I_{\lambda}(u)=0$ i.e. $u=\lambda K[h(u)]$ in $X$. 
Otherwise,
$$
I_\lambda(u) \neq 0, \forall~ \lambda \in\left[\lambda_0-\varepsilon, \lambda_0+\varepsilon\right] \text { and } 0<\|u\|\leq\varepsilon, \text{for some}~\varepsilon>0.
$$
 Thus, there exist $\mu_1$ and $\mu_2$ such that
$$
\lambda_0-\varepsilon<\mu_1<\lambda_0<\mu_2<\lambda_0+\varepsilon
$$
and
$$
\operatorname{deg}\left(I_{\mu_1}, B_{\varepsilon}(0), 0\right)=\operatorname{deg}\left(I_{\mu_2}, B_{\varepsilon}(0), 0\right),
$$
therefore,
$$
\operatorname{ind}\left(I_{\mu_1}, 0\right)=\operatorname{ind}\left(I_{\mu_2}, 0\right)
$$
 that contradicts the Lemmas \ref{lem5.3} and \ref{lem5.4}. Hence, $\lambda_0$ is a bifurcation point of $u=\lambda K[h(u)]$, and the existence of $\Gamma_0$ is showed. Furthermore, the reasoning employed in Lemma \ref{lem5.3} guarantee that $\lambda_0$ is the only bifurcation point for $u=\lambda K[h(u)]$.
 
Now we shall show that $\Gamma_0$ is unbounded. We define the solution operator to \eqref{ql} as $I:  [0,\infty)\times X  \longrightarrow X $ defined by
$$
\begin{aligned}
(\lambda, u)  ~\longmapsto u-\lambda K(h(u)).
\end{aligned}
$$
Then,  one can show that $I$ is analytic, see [\cite{MR3311916}, Proposition 2.3]. For all $(\lambda, u) \in [0,\infty) \times X$, the linearised operator $\partial_u I(\lambda, u): X \longrightarrow X$ is defined as
$$
\begin{aligned}
\varphi  \longmapsto ~\varphi-\lambda K\left(h^{\prime}(u) \varphi\right),
\end{aligned}
$$
where $\partial_u I(\lambda, u)=I d+\partial_u G(\lambda,u)$ and $\partial_u G(\lambda,u): X  \longrightarrow X$ is defined as
$$
\begin{aligned}
\varphi  \longmapsto-\lambda K\left(h^{\prime}(u) \varphi\right)
\end{aligned}
$$
which is compact using $K$ is compact, see Lemma \ref{cpto}. So, using [\cite{MR1956130}, Theorem 2.7.5], we deduce that $\partial_u I(\lambda, u)$ is a Fredholm operator of index 0.
Next, following [Lemma 2.18 in \cite{MR3311916}], we can demonstrate that the bounded closed sets of solutions to \eqref{ql} are compact in $[0,\infty) \times X$.

Let us define
$$
\Lambda=\sup \left\{\lambda \in [0,\infty): \eqref{ql} \text { has a solution}\right\}.
$$
We claim that $\Lambda<\infty$. Recalling assumption $(f2)$, we know that $h(t)\geq \theta t,~ \theta>0, ~\forall~ t\geq 0.$ Now multiplying \eqref{ql} by  $\varphi_1$, we get
$$
\lambda_1(\mathcal{D}) \int_{\Omega} u \varphi_1 \mathrm{dx}=\lambda \int_{\Omega} h(u) \varphi_1 \mathrm{dx} \geq \theta \lambda \int_{\Omega} u \varphi_1 \mathrm{dx}.
$$
Therefore, $\lambda \leq \frac{\lambda_1(\mathcal{D})}{\theta}$ and hence the claim.
Without loss of generality, we may assume that $h^{\prime}(0)=1$ when $f^{\prime}(0) = (1-\theta)$. Then, we can see  $T=\partial_u I\left(\lambda_1(\mathcal{D}), 0\right)=I d-\lambda_1(\mathcal{D}) K$ satisfies
$\operatorname{ker} T=\operatorname{span}\left\{\varphi_1\right\}$ and $\operatorname{dim} \operatorname{ker} T=\operatorname{codimR}(T)$,
where $\mathrm{R}(T)$ denotes the range space of the operator $T$, using [\cite{MR1956130}, Theorem 2.7.5]. Also, we can see that the transversality condition is satisfied, i.e. $\partial_{\lambda\,u}^2 I\left(\lambda_1(\mathcal{D}), 0\right) \psi \frac{\psi}{\lambda_1(\mathcal{D})} \notin R(T)$. So, with the help of well-known results of 
Crandall-Rabinowitz on the bifurcation from simple eigenvalue, see [\cite{MR1956130}, Theorem 8.3.1] in the analytic case, we obtain the existence of unique local and nontrivial analytic branch $\Gamma_0^{+}$ which emanates from $\left(\frac{\lambda_1(\mathcal{D})}{a}, 0\right)$. Hence, we can now apply the global bifurcation result[\cite{MR1956130}, Theorem 9.1.1] to obtain the existence of the branch $\Gamma_0$ extending the local branch $\Gamma_0^{+}$ where
$$
\Gamma_0=\left\{(\lambda(t), u(t)), t \in[0, \infty),(\lambda(0), u(0))=\left(\frac{\lambda_1(\mathcal{D})}{a}, 0\right)\right\}
$$
and
$$
(\lambda, u):[0, \infty) \longrightarrow [0,\infty) \times X \text { is continuous. }
$$
We show now that $\Gamma_0$ is unbounded. According to the global bifurcation theorem, if $\Gamma_0$ is bounded then either it converges to a boundary point, say $\left(0, u_0\right) \in [0,\infty) \times X$ or $\Gamma_0$ is a closed loop. This latter case is impossible since the bifurcating branch from $\left(\frac{\lambda_1(\mathcal{D})}{a}, 0\right)$ is unique. So only the first case can happen. Hence, there exists a sequence in $\Gamma$ i.e. $\left(\lambda_n, u_n\right)=\left(\lambda\left(t_n\right), u\left(t_n\right)\right) \in [0,\infty) \times X$ such that
$$
\lambda_n \rightarrow 0 \text { and } u_n \rightarrow u_0 \text { in } X \text { as } t_n \rightarrow \infty.
$$
We know $u_n\geq 0$ in $U$, due to Lemma \ref{weak-max}.
%\textcolor{blue}{Then we have $\mathcal{L} u_n=\lambda_n h\left(u_n\right) \rightarrow 0$ in $C(E)$ as $t_n \rightarrow \infty$ for any compact set $E \subset \Omega$. Hence we infer that $\mathcal{L} u_0=0$ (with mixed boundary conditions) and then $u_0=0$ in $\R^n$.} We have to show now that this is a contradiction.
Let $n$ be large enough such that $\lambda_n<\frac{\lambda_1(\mathcal{D})}{c}$ and $s_0$ be such that $\left\|u_n\right\|_{L^{\infty}(\Omega)} \leq s_0$. By the assumptions $h(0)=0$ and $(f1)$,
$
\exists~ c>0$ such that $ \text{for all } |s|\leq s_0$, it holds $h(s) \leq cs.
$ %(Since $h$ is continuous by $(f1)$, there exists a $K>0$ such that $\|h(u_n)\|_{L^\infty(\Omega)}\leq K$, for such large $n$.)
Testing  \eqref{ql} by $\varphi_1$, we find that for large $n$
$$\lambda_1(\mathcal{D}) \int_{\Omega} u_n \varphi_1 \mathrm{dx}=\lambda_n \int_{\Omega} h\left(u_n\right) \varphi_1 \mathrm{dx} \leq c \lambda_n \int_{\Omega} u_n \varphi_1 \mathrm{dx}$$
 which contradicts $\lambda_n \rightarrow 0 \text {. Hence the branch } \Gamma_0 \text { is unbounded}.$
\end{proof}

\subsection{Bifurcation from infinity}
\begin{Definition}
    Suppose there exists $(\lambda_k,u_k)\in \Gamma$ 
    %of ($ I_{\lambda}(u)=0~\text {i.e.}~u=\lambda K[h(u)]$)
such that $\lambda_k\to \lambda_{\infty}$ and $\|u_k\|\to \infty$, as $k\to\infty$ then $\lambda_{\infty}$  is said to be bifurcation from infinity for $I(\lambda,u)=0.$
\end{Definition}

 To identify bifurcation points from infinity, we can utilize a change of variable given by
$$
v=\frac{u}{\|u\|^2}, \text { for } u \in X \text { with } u \neq 0 \text {. }
$$
This easily implies
$
v=\frac{u}{\|u\|^2} ~\Leftrightarrow ~ u=\frac{v}{\|v\|^2}.
$
With the help of the new variable $v$, we can see that
$$
I(\lambda, u)=0 \Leftrightarrow u=\lambda K(h(u)) \Leftrightarrow \frac{v}{\|v\|^2}=\lambda K\left(h\left(\frac{v}{\|v\|^2}\right)\right) \Leftrightarrow v=\lambda\|v\|^2 K\left(h\left(\frac{v}{\|v\|^2}\right)\right).
$$
Thus, we consider the mapping $\tilde{\Psi}:[0,+\infty)  \times X \rightarrow X$ defined by 
$$
\tilde{\Psi}(\lambda, v)=\left\{\begin{array}{l}
v-\lambda\|v\|^2 K\left[h\left(\frac{v}{\|v\|^{2}} \right)\right], \text { if } v \neq 0, \\
0 ~~~~~~~~~\quad\qquad\qquad\qquad,\text { if } v=0.
\end{array}\right.
$$
Clearly, $\tilde{\Psi}$ is continuous at $v=0$ and $\tilde{\Psi}$ is a compact perturbation of identity. Furthermore, setting
$$ \Gamma_1=\{(\lambda,v): v\neq 0, \tilde{\Psi}(\lambda,v)=0\},$$ one can verify that there holds the following
\begin{equation}\label{511}
    (\lambda,u)\in\Gamma \iff (\lambda,v)\in \Gamma_1.
\end{equation}
Additionally, $\|u_k\| \to \infty \iff \|v_k\|= \|u_k\|^{-1}\to 0.$
%We define 
%$$\lambda_{\infty}=\frac{\lambda_1(\mathcal{D})}{\theta}.$$
Thus \eqref{511} implies the following remark.

\begin{remark}(See Lemma 4.13 in \cite{Ambrosetti})
    $\lambda_{\infty}$ is a bifurcation point from infinity for $I(\lambda, u)=0 \iff \lambda_{\infty}$ is a bifurcation point from trivial solutions for $\tilde{\Psi}(\lambda, v)=0$. In such a case we will say that $\Gamma$ bifurcates from $(\lambda_{\infty},\infty).$
\end{remark}

We next begin our investigation by proving that $\lambda_{\infty}$ is a bifurcation point from infinity for the positive solutions. 
\begin{Lemma}\label{l57}
\begin{enumerate}[(a)]
Suppose $\lambda<\lambda_{\infty}$. Then
\item $\exists~ R>0$,  $\forall$ $u \in X$ and $\forall~t \in[0,1] \text{such that}~ I_{t \lambda}(u) \neq 0$ whenever $\|u\| \geq R.$
\item $\lambda_{\infty}$ is the only possible bifurcation from infinity.
    \item Suppose $\lambda<\lambda_{\infty}$, then ind $(\tilde{\Psi}_\lambda, 0)=1$.
 \end{enumerate}
     \end{Lemma}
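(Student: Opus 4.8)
The plan is to run the bifurcation-from-infinity argument exactly as the bifurcation-from-zero one in Lemma~\ref{lem5.3}, but reading everything through the inversion $v=u/\|u\|^2$ (so that $\|u\|\to\infty$ becomes $\|v\|\to 0$ and $(\lambda,u)\in\Gamma\iff(\lambda,v)\in\Gamma_1$), and replacing the role of $(f3)$ at the origin by that of $(f2)$ at infinity.

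\emph{Part (a).} Argue by contradiction: suppose there are $\{u_k\}\subset X\setminus\{0\}$ and $t_k\in[0,1]$ with $\|u_k\|\to\infty$ and $I_{t_k\lambda}(u_k)=0$, i.e. $u_k=t_k\lambda\,K[h(u_k)]$. Since $h\ge 0$ and $K$ preserves positivity by Lemma~\ref{weak-max}, $u_k\ge 0$ in $U$, and necessarily $t_k>0$ for $k$ large. Put $w_k=u_k/\|u_k\|$, so that $w_k=t_k\lambda\,K\!\left[h(u_k)/\|u_k\|\right]$. By $(f2)$ one has $h(u_k)/\|u_k\|=\theta w_k+f(u_k)/\|u_k\|$ with $\|f(u_k)/\|u_k\|\|_{L^\infty}\le C/\|u_k\|\to 0$, so $\{h(u_k)/\|u_k\|\}$ is bounded in $X$. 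Compactness of $K$ (Lemma~\ref{cpto}) then yields, along a subsequence, $w_k\to w$ in $X$ with $\|w\|=1$ and $w\ge 0$, $t_k\to t_0$, and $\bar\lambda:=t_0\lambda>0$ (if $t_0=0$ then $w_k\to 0$, contradicting $\|w\|=1$). Passing to the limit gives $h(u_k)/\|u_k\|\to\theta w$ in $X$, hence $w=\bar\lambda\,\theta\,Kw$, i.e. $Lw=\bar\lambda\theta\,w$ in $\Omega$ with the mixed boundary conditions. By Lemma~\ref{reg-1} $w$ is H\"older continuous, and then Lemma~\ref{Ns} and Lemma~\ref{strmx} give $w>0$ in $U$. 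Since the only eigenvalue of $L$ admitting a positive eigenfunction is $\lambda_1(\mathcal{D})$ (Proposition~\ref{prop-1}), we must have $\bar\lambda\theta=\lambda_1(\mathcal{D})$, i.e. $\bar\lambda=\lambda_\infty$; but $\bar\lambda=t_0\lambda\le\lambda<\lambda_\infty$, a contradiction.

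\emph{Part (b).} Translating (a) through $v=u/\|u\|^2$: for $\lambda<\lambda_\infty$ there is $\delta=1/R>0$ with $\tilde\Psi_{t\lambda}(v)\neq 0$ whenever $0<\|v\|\le\delta$ and $t\in[0,1]$. In particular $\tilde\Psi$ has no bifurcation from the trivial line at $\lambda$, hence $I$ has no bifurcation from infinity at $\lambda$. As this holds for every $\lambda<\lambda_\infty$, no value below $\lambda_\infty$ can be a bifurcation point from infinity, so $\lambda_\infty$ is the only candidate.

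\emph{Part (c).} Fix $\lambda<\lambda_\infty$ and $\varepsilon\in(0,\delta]$ with $\delta$ from (a). The map $(t,v)\mapsto H(t,v):=\tilde\Psi_{t\lambda}(v)$ is a homotopy of compact perturbations of the identity, continuous including at $v=0$, and by (a) $H(t,v)\neq 0$ on $\partial B_\varepsilon(0)$ for all $t\in[0,1]$; moreover $H(0,\cdot)=\mathrm{Id}$. Homotopy invariance of the Leray--Schauder degree gives
$$\operatorname{deg}\!\left(\tilde\Psi_\lambda,B_\varepsilon(0),0\right)=\operatorname{deg}\!\left(H(0,\cdot),B_\varepsilon(0),0\right)=\operatorname{deg}\!\left(\mathrm{Id},B_\varepsilon(0),0\right)=1,$$
whence $\operatorname{ind}(\tilde\Psi_\lambda,0)=1$ by Definition~\ref{rem5.1}. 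The main obstacle is the passage to the limit in the nonlinear term of part (a): one needs $h(u_k)/\|u_k\|\to\theta w$ strongly in $X$ even though $u_k$ need not tend to infinity uniformly. This is handled cleanly by the decomposition $h=\theta\,\mathrm{id}+f$ with $f$ bounded, so that the problematic quotient $f(u_k)/\|u_k\|$ vanishes in the sup norm; combined with compactness of $K$, this forces $w_k$ to converge to a genuine positive eigenfunction of $L$, which then clashes with the simplicity and positivity properties of $\lambda_1(\mathcal{D})$.
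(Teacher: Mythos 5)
Your proposal is correct and follows essentially the same route as the paper: the same contradiction argument for (a) using $w_k=u_k/\|u_k\|$, the decomposition $h=\theta\,\mathrm{id}+f$ with $f$ bounded to kill the remainder term, compactness of $K$, and the positivity/simplicity of $\lambda_1(\mathcal{D})$ to force $\bar\lambda=\lambda_\infty$; parts (b) and (c) are likewise handled as in the paper via the inversion $v=u/\|u\|^2$ and homotopy invariance of the Leray--Schauder degree. Your explicit remarks that $t_0>0$ (else $\|w\|=1$ fails) and that $\|f(u_k)\|_\infty/\|u_k\|\to 0$ are small clarifications the paper leaves implicit, but the argument is the same.
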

\begin{proof}
(a) Firstly, we claim that there is $R>0$ such that $\forall$ $u \in X$ and all $t \in[0,1], I_{t \lambda}(u) \neq 0$ whenever $\|u\| \geq R$. Supposing  the contrary,  there are sequences $\left\{u_k\right\} \subset X$ with $\left\|u_k\right\| \rightarrow+\infty$ and $\left\{t_k\right\} \subset[0,1]$, such that $I_{t_k \lambda}\left(u_k\right)=0$ $\forall~ k\in\mathbb{N}$. Then,
$
u_k=r_k K\left[h\left(u_k\right)\right], ~\forall~ k \in \mathbb{N},
$
where $r_k=t_k \lambda$.
By Lemma \ref{weak-max} and $(f1)$, $u_k\geq 0$ in $U$.
We set for large $k$
$$
w_k=\frac{u_k}{\left\|u_k\right\|}, ~\text{where} \left\|u_k\right\| \neq 0,
$$
 and find that it satisfies
\begin{equation}
w_k=r_k K\left[\frac{h\left(u_k\right)}{\left\|u_k\right\|}\right].
\end{equation}
We know that, the sequence $\left\{\frac{h\left(u_k\right)}{\left\|u_k\right\|}\right\} \subset X$ is bounded and by the compactness of operator $K$, 
we may suppose that $w_k \rightarrow w$ in $X$. Then, clearly $\|w\|=1$ and $w \geq 0$. Supposing that as $k\to \infty$
$$
t_{k} \rightarrow t_0 \quad \text { so that } \quad t_{k}\lambda=r_{k} \rightarrow t_0 \lambda =\bar{\lambda}\text{(say)}.
$$ 
Now, we are combining the above facts to obtain the following
\begin{equation}\label{513}
w_k=r_k K\left[\frac{\theta u_k+f(u_k)}{\left\|u_k\right\|}\right]
=r_k \theta K\left(w_k\right)+r_k K\left[\frac{f\left(u_k\right)}{\left\|u_k\right\|}\right], \forall~k \in \mathbb{N}.
    \end{equation}
Due to the boundedness of $f$, see $(f2)$, passing limit as $k \rightarrow \infty$ in \eqref{513}, we get
$
w=\bar{\lambda} \theta K(w)  \text { in } X.
$
As derived in the previous section, $w>0$ in $U$. So, $w>0$ is  eigenfunction associated with the eigenvalue $\bar{\lambda}\theta$ of operator $L$. But the only eigenvalue with a positive eigenfunction is $\lambda_1(\mathcal{D}).$  Hence, eigenfunctions $w=\varphi_1$ and $\lambda\geq \lambda_{\infty}$ which is a contradiction, since $\lambda<\lambda_{\infty}$. 

Part $(b)$ follows immediately from part $(a)$. Thus, $\lambda_{\infty}$ is the only possible bifurcation from infinity. 
Regarding part $(c)$, by using statement $(a)$ then
 for all $u \in X$ with $\|u\| \geq R$, $\forall~ t \in[0,1]$
$$I_{t\lambda}(u)\neq 0, \quad \text { for }\|u\| \geq R~~ \text{i.e.}~ u-t \lambda K[h(u)] \neq 0.$$
This implies for $v=\frac{u}{\|u\|^2}$  with $u\in X\setminus \{0\}$
$$\tilde{\Psi}_{t\lambda}(v)\neq 0,~ \forall~0< \|v\|\leq R^{-1},$$
since
$$
v-t \lambda\|v\|^2 K\left[h\left( \frac{v}{\|v\|^{2}}\right)\right] \neq 0
$$
for all $v \in X$ with $0<\|v\| \leq R^{-1}$ and $t \in[0,1]$. We consider the homotopy $H(t,u)= \tilde{\Psi}_{t\lambda}(u)$. Using  the homotopy invariance of the LeraySchauder degree, $\forall ~\varepsilon \in\left(0, R^{-1}\right]$, we get
$$
\operatorname{deg}\left(H(1,.), B_{\varepsilon}(0), 0\right)=\operatorname{deg}\left(H(0,.), B_{\varepsilon}(0), 0\right)=1,~ \text{namely}
$$
$$
\operatorname{deg}\left(\tilde{\Psi}_\lambda, B_{\varepsilon}(0), 0\right)=\operatorname{deg}\left(I, B_{\varepsilon}(0), 0\right)=1, ~\text{where}~ I= \text{identity operator}.
$$
Hence, by the definition of index(see Remark \eqref{rem5.1}), we obtain $\operatorname{ind}\left(\tilde{\Psi}_\lambda, 0\right)=1.$
\end{proof}
\begin{Lemma}\label{l58}
\begin{enumerate}[(a)]
Suppose $\lambda>\lambda_{\infty}$. Then 
    \item $\exists ~ R>0$ such that $I_\lambda(u) \neq b \varphi_1$, $\forall~b\geq 0$ whenever $\|u\|\geq R$.
    \item ind $\left(\tilde{\Psi}_\lambda, 0\right)=0$ for all $\lambda>\lambda_{\infty}.$
\end{enumerate}   
\end{Lemma}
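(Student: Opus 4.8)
The plan is to mirror the argument of Lemma \ref{lem5.4}, with the near-zero condition $(f3)$ replaced by the asymptotic linearity at infinity encoded in $(f2)$, and then to pass from infinity back to zero through the inversion $v=u/\|u\|^2$ exactly as in the passage leading to \eqref{511}. Throughout I would use that $\varphi_1=\lambda_1(\mathcal{D})K\varphi_1$ (see \eqref{eqt}), that $K=L^{-1}$ is compact on $X$ (Lemma \ref{cpto}), and that weak solutions are nonnegative (Lemma \ref{weak-max}).

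For part $(a)$ I would argue by contradiction: suppose there are $u_k\in X\setminus\{0\}$ with $\|u_k\|\to\infty$ and $b_k\geq 0$ such that $I_\lambda(u_k)=b_k\varphi_1$. Using $\varphi_1=\lambda_1(\mathcal{D})K\varphi_1$ this reads $u_k=\lambda K[h(u_k)]+b_k\varphi_1$, equivalently $Lu_k=\lambda h(u_k)+\lambda_1(\mathcal{D})b_k\varphi_1$, and by Lemma \ref{weak-max} each $u_k\geq 0$ in $U$. Normalise $w_k=u_k/\|u_k\|$ and set $c_k=b_k/\|u_k\|\geq 0$, so that
\[
w_k=\lambda K\!\left[\frac{h(u_k)}{\|u_k\|}\right]+c_k\varphi_1,\qquad \frac{h(u_k)}{\|u_k\|}=\theta w_k+\frac{f(u_k)}{\|u_k\|}.
\]
Since $u_k\geq 0$, $(f2)$ gives $\bigl\|f(u_k)/\|u_k\|\bigr\|_X\leq C/\|u_k\|\to 0$, so $\{h(u_k)/\|u_k\|\}$ is bounded in $X$; compactness of $K$ makes $\{K[h(u_k)/\|u_k\|]\}$ precompact, hence $\{w_k-c_k\varphi_1\}$ is bounded, which forces $\{c_k\}$ to be bounded. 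Passing to a subsequence, $c_k\to c_0\geq 0$ and $w_k\to w$ in $X$ with $\|w\|=1$ and $w\geq 0$; then $h(u_k)/\|u_k\|\to\theta w$, and continuity of $K$ yields the limiting identity $w=\lambda\theta Kw+c_0\varphi_1$, so $w\in D(L)$, $w$ satisfies the mixed Neumann conditions, and $Lw=\lambda\theta w+c_0\lambda_1(\mathcal{D})\varphi_1\geq 0$ in $\Omega$. By the strong maximum principle (Lemma \ref{strmx} together with Lemma \ref{Ns}), $w>0$ in $U$. Testing $Lw=\lambda\theta w+c_0\lambda_1(\mathcal{D})\varphi_1$ with $\varphi_1$ and using $L\varphi_1=\lambda_1(\mathcal{D})\varphi_1$ gives
\[
(\lambda_1(\mathcal{D})-\lambda\theta)\int_\Omega w\varphi_1\,dx=c_0\lambda_1(\mathcal{D})\int_\Omega\varphi_1^2\,dx\geq 0,
\]
which is impossible since $\lambda>\lambda_\infty=\lambda_1(\mathcal{D})/\theta$ makes the left-hand side strictly negative (as $\int_\Omega w\varphi_1\,dx>0$). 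This contradiction proves $(a)$.

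For part $(b)$ I would transfer $(a)$ through $v=u/\|u\|^2$: since $\tilde\Psi_\lambda(v)=\|v\|^2\,I_\lambda(v/\|v\|^2)$, the equation $\tilde\Psi_\lambda(v)=b\varphi_1$ with $b\geq 0$ is equivalent to $I_\lambda(u)=(b/\|v\|^2)\varphi_1$ with a nonnegative coefficient, so $(a)$ provides $R>0$ with $\tilde\Psi_\lambda(v)\neq b\varphi_1$ for all $b\geq 0$ whenever $0<\|v\|\leq R^{-1}$; in particular $\tilde\Psi_\lambda(v)\neq 0$ there. Fix $\varepsilon\in(0,R^{-1}]$. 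Because $\tilde\Psi_\lambda$ is a compact perturbation of the identity, $\tilde\Psi_\lambda(\overline{B_\varepsilon(0)})$ is bounded, so for $b>0$ large enough $b\varphi_1\notin\tilde\Psi_\lambda(\overline{B_\varepsilon(0)})$, while on $\partial B_\varepsilon(0)$ the homotopy $H(t,v)=\tilde\Psi_\lambda(v)-tb\varphi_1$, $t\in[0,1]$, never vanishes by the previous step and remains a compact perturbation of the identity. Homotopy invariance of the Leray--Schauder degree then gives
\[
\operatorname{deg}(\tilde\Psi_\lambda,B_\varepsilon(0),0)=\operatorname{deg}(\tilde\Psi_\lambda-b\varphi_1,B_\varepsilon(0),0)=0,
\]
and letting $\varepsilon\to 0$ yields $\operatorname{ind}(\tilde\Psi_\lambda,0)=0$ in the sense of Definition \ref{rem5.1}.

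The main obstacle is the convergence analysis in $(a)$: one must check that after the normalisation $w_k=u_k/\|u_k\|$ the $f$-contribution genuinely disappears --- this is exactly where positivity $u_k\geq 0$ (Lemma \ref{weak-max}) is essential, so that $|f(u_k)|\leq C$ pointwise --- and that the coefficients $c_k=b_k/\|u_k\|$ stay bounded, so that a limit $w$ solving an eigenvalue-type inequality is produced; one must also verify that $w$ retains enough regularity and the mixed boundary conditions for Lemma \ref{strmx} to apply, after which the test-function comparison against $\varphi_1$ closes the argument. Part $(b)$ is then essentially bookkeeping with the inversion $v=u/\|u\|^2$ and standard degree-theoretic homotopies, entirely parallel to Lemma \ref{lem5.4}$(b)$ and Lemma \ref{l57}$(c)$.
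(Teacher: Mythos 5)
Your proposal is correct, but for part $(a)$ it follows a genuinely different route from the paper. The paper does not use any normalisation or compactness argument at infinity: it shows directly that for $\lambda>\lambda_\infty$ the equation $I_\lambda(u)=b\varphi_1$ has \emph{no} solution $u\in X\setminus\{0\}$ for any $b\geq 0$ (so any $R>0$ works). Namely, if $u$ solved it, then $u\geq 0$ by Lemma \ref{weak-max}, hence $u>0$ by Lemmas \ref{Ns} and \ref{strmx}, $Lu=\lambda h(u)+b\lambda_1(\mathcal{D})\varphi_1$, and testing with $\varphi_1$ together with $h(u)=\theta u+f(u)$ gives $(\lambda_1(\mathcal{D})-\lambda\theta)\int_\Omega u\varphi_1\,dx=\lambda\int_\Omega f(u)\varphi_1\,dx+b\lambda_1(\mathcal{D})\int_\Omega\varphi_1^2\,dx\geq 0$, which is incompatible with $\lambda\theta>\lambda_1(\mathcal{D})$ and $\int_\Omega u\varphi_1\,dx>0$. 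Your blow-up argument ($w_k=u_k/\|u_k\|$, boundedness of $c_k=b_k/\|u_k\|$, compactness of $K$, passage to the limit equation $Lw=\lambda\theta w+c_0\lambda_1(\mathcal{D})\varphi_1$, then the same test against $\varphi_1$) is also valid and is the exact analogue of Lemma \ref{l57}$(a)$; it is more robust in that it only uses $|f|\leq C$ on $[0,\infty)$ rather than the sign of $f$, but it is longer and yields only the asymptotic statement, whereas the paper's direct computation buys a stronger, non-asymptotic conclusion in one step. Your part $(b)$ coincides with the paper's: both transfer $(a)$ through $v=u/\|u\|^2$ (the paper takes $b=t\|u\|^2$ and $R=1$ to get $\tilde\Psi_\lambda(v)\neq t\varphi_1$ for $0<\|v\|\leq 1$) and conclude $\operatorname{ind}(\tilde\Psi_\lambda,0)=0$ via the homotopy $\tilde\Psi_\lambda(v)-t\,b\varphi_1$ and the unsolvability of $\tilde\Psi_\lambda(v)=b\varphi_1$ on the small ball.
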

\begin{proof}
(a) We shall initially claim that $I_\lambda(u) \neq b \varphi_1$, where $\varphi_1$ is an eigenfunction corresponding to eigenvalue  $\lambda_1(\mathcal{D})$, for any $b\geq 0$ and $u \in X \backslash\{0\}$.
Supposing the contrary, if $u \in X$ with $\|u\| \neq 0$  satisfies $I_{\lambda}(u)=b\varphi_1$ i.e. $u=\lambda K[h(u)]+b\varphi_1$, $b \geq 0$ then Lemma \ref{weak-max} says that $u\geq 0$ in $U$. Using {   Lemma \ref{Ns} and Lemma \ref{strmx}}, it follows that $u>0$ in $U$. Hence,
$$
u=\lambda  K[h(u)]+b \lambda_{1}(D) K(\varphi_1).
$$
Using \eqref{eqt}, we get that
$u$ satisfies
    $$Lu =  \lambda h(u)+b \lambda_{1}(D)  \varphi_1.$$
Using the same kind of reasoning as in the proof of 
 Lemma \ref{lem5.4}, we obtain
$$
\left(\lambda_{1}(D)-\lambda \theta\right) \int_{\Omega}  u \varphi_1 d x=\lambda \int_{\Omega}  f(u) \varphi_1 d x+b \lambda_{1}(D) \int_{\Omega}  \varphi_1^2 d x~~~~ \forall~ \varphi_1 \in \mathcal{X}^s_{D}(U).
$$
Since $f(t)>0$ for all $t>0$, it follows that
$$
\left(\lambda_{1}(D)-\lambda \theta\right) \int_{\Omega}  u \varphi_1 d x>0, ~\text{since}~ u,\varphi_1>0,
$$
then $\lambda_{1}(D)-\lambda\theta>0$ which implies that $\lambda_{1}(D)>\lambda \theta$ or $\lambda_{\infty}>\lambda$, while in our assumption $\lambda>\lambda_{\infty}$, that is a contradiction.

Now, we shall prove part $(b)$ by using part $(a)$. 
 If $\forall~t \in[0,1]$, $b=t\|u\|^2$ and all $u \in X$ with $\|u\| \geq 1$ (choosing $R=1$), such that $I_{\lambda}(u)\neq t\|u\|^2\varphi_1,$ then
\begin{equation}\label{514}
 u-\lambda K[h(u)]-t\|u\|^2 \varphi_1 \neq 0 .
   \end{equation}
   This implies \begin{equation}\label{525}
   \tilde{\Psi}_{\lambda}(v)\neq t\varphi_1, \forall~ 0<\|v\|\leq 1, ~\forall~ t \in[0,1],    
   \end{equation}
since, using the change of variable such that $u=\frac{v}{\|v\|^2}$ in \eqref{514}, we have
$$
v-\lambda\|v\|^2 K\left[h\left(\frac{v}{\|v\|^{2}} \right)\right]-t \varphi_1 \neq 0.
$$
Using the homotopy $H(t,v)=\tilde{\Psi}_{\lambda}(v)-t\varphi_1$, \text{on the ball}~$B_{\varepsilon}(0)$, we get
$$
\operatorname{deg}\left(H(1,.), B_{\varepsilon}(0), 0\right)=\operatorname{deg}\left(H(0,.), B_{\varepsilon}(0), 0\right)=0,~ \text{namely}
$$
$$
\operatorname{deg}\left(\tilde{\Psi}_\lambda, B_{\varepsilon}(0), 0\right)=\operatorname{deg}\left(\tilde{\Psi}_\lambda-\varphi_1, B_{\varepsilon}(0), 0\right)=0,
$$
for all $\varepsilon \in(0,1]$.
The latter degree is zero because \eqref{525}, with $t=1$, implies that $\tilde{\Psi}_{\lambda}(v)=\varphi_1$ has no solution on the ball $B_{\varepsilon}.$ Hence, by the definition of index ( see Remark \eqref{rem5.1}), we obtain $\operatorname{ind}\left(\tilde{\Psi}_\lambda, 0\right)=0$. So, our proof is done.
    \end{proof}

\textbf{Proof of Theorem \ref{thm52}:}
Firstly, arguing as in the proof of Theorem \ref{thm210} and we will show $\lambda_{\infty}$ is a unique bifurcation point from the trivial solutions for the equation $\tilde{\Psi}_\lambda(v)=0$ which ensure the existence of $\Gamma_{\infty}$. To begin, we need to demonstrate that $\lambda_{\infty}$  is a unique bifurcation point from the trivial solutions for the equation $\tilde{\Psi}_{\lambda}(v)=0$.
Otherwise, using Lemma \ref{l57} (a), we have
$$
\tilde{\Psi}_\lambda(v) \neq 0, \forall~ \lambda \in\left[\lambda_0-\frac{1}{R}, \lambda_0+\frac{1}{R}\right] \text { and } 0<\|u\|\leq \frac{1}{R} , \text{for some}~R>0.
$$
 Thus, there exist $\mu_1$ and $\mu_2$ such that
$$
\lambda_0-\frac{1}{R}<\mu_1<\lambda_0<\mu_2<\lambda_0+\frac{1}{R}
$$
and
$$
\operatorname{deg}\left(\tilde{\Psi}_{\mu_1}, B_{\frac{1}{R}}(0), 0\right)=\operatorname{deg}\left(\tilde{\Psi}_{\mu_2}, B_{\frac{1}{R}}(0), 0\right),
$$
therefore,
$$
\operatorname{ind}\left(\tilde{\Psi}_{\mu_1}, 0\right)=\operatorname{ind}\left(\tilde{\Psi}_{\mu_2}, 0\right)
$$
 that contradicts the Lemmas \ref{l57} and \ref{l58}. Hence, $\lambda_{\infty}$ is a bifurcation point of $\tilde{\Psi}_{\lambda}(v)=0$. Furthermore, the reasoning employed in Lemma \ref{l57} guarantee that $\lambda_{\infty}$ is the only bifurcation point from the trivial solutions for $\tilde{\Psi}_{\lambda}(v)=0$,
  and from $(\lambda_{\infty},0)$ emanates an unbounded continuum of solutions. Using the change of  variables $u=\frac{v}{\|v\|^{2}}$,  $v \neq 0$, the existence of $\Gamma_{\infty}$ is proved.

% Arguing as in the proof of Theorem \ref{thm210} and from Lemmas \ref{l57}, \ref{l58},  we have $\lambda_{\infty}$ is a unique bifurcation point from the trivial solutions for the equation $\tilde{\Psi}_\lambda(v)=0$.   
% Using the transformation $u=\frac{v}{\|v\|^{2}}$,  $v \neq 0$, we then deduce the continuum $\Gamma_{\infty}$. In our case, the index has been evaluated in Lemmas \ref{l57} and \ref{l58} such as $ind\left(\tilde{\Psi}_\lambda, 0\right)=1$ for $\lambda<\lambda_{\infty}$, while $ind\left(\tilde{\Psi}_\lambda, 0\right)=0$ for $\lambda>\lambda_{\infty}$. As we know both indices are different, one can repeat the arguments carried out in the proof of [Theorem 4.8, \cite{MR2292344}] or also using the same reasoning as the proof of Theorem \ref{thm210}, yielding the existence of a connected component $\Gamma_{\infty} \subset \Gamma$, bifurcating from $\left(\lambda_{\infty}, 0\right)$ and thus $\Gamma_{\infty}$ is unbounded.

% {\color{blue}\textbf{Note:} We remark that the theory developed above is applicable to the following functions :
% \[
%h(t) = t + \frac{(\sin t)^2}{1 + t^2}, ~ ~h(t) = t + \frac{(\cos t)^2}{1 + t^2}, ~\text{and}~ ~h(t) = t + t^2e^{-t}.
%\]}
\begin{remark}
We remark that the theory developed above is applicable to the following functions as $h$ and the bifurcation curve shall look as below-
\begin{enumerate}
    \item  Suppose $h(s) = s + s^2e^{-s}$ such that $h'(0) = 1$ and $h(s)>s$ for $s>0$.
$$\begin{tikzpicture}[scale=0.7]
\draw[->] [thick] (0,0) node[below left]{$O$} -- (5,0) node[right]{$\lambda$};
\draw[->] [thick] (0,0)--(0,4) node[left]{$\|u_\lambda\|_\infty$};
\node (P) at (3,0) [below]{$\lambda_1(\mathcal{D})$};
\draw [thick, color=blue] plot [smooth] coordinates {(0.1,3.8) (0.4,2) (2.5,0.9) (3,0) };
\end{tikzpicture}
 $$
 \item Suppose $h(s) = s - s^p$ ($1<p<\infty$, i.e. logistic type nonlinearity). Then $h$ is a concave function such that $h'(0) = 1,$ and $h(s)<s$ for $s>0$ small.
$$\begin{tikzpicture}[scale=0.7]
\draw[->] [thick] (0,0) node[below left]{$O$} -- (6,0) node[right]{$\lambda$};
\draw[->] [thick] (0,0)--(0,3.5) node[left]{$\|u_\lambda\|_\infty$};
\node (P) at (1.2,0) [below]{$\lambda_1(\mathcal{D})$};
\draw [thick, color=blue] plot [smooth] coordinates {(1.2,0) (1.3,0.4) (1.8,1)  (3,1.5) (5,2) };
\end{tikzpicture}
 $$
\end{enumerate}
The curves above represent $\|u_{\lambda}\|_{\infty}$ with respect to $\lambda$, whenever $(\lambda,u) $ is solution of \eqref{2} with above $h$.
     \end{remark}

% As a byproduct of Theorems 1.1 and 1.2, we have the following corollary.

% \begin{Corollary}
% The connected components $\Sigma_0$ and $\Sigma_{\infty}$ are equal.    
% \end{Corollary} 
% \begin{proof}
%  The equality between $\Sigma_0$ and $\Sigma_{\infty}$ follows from Lemma 5.1. Indeed, from Lemma 5.1, $\Sigma_0$ and $\Sigma_{\infty}$ lie in $\left[0, \lambda_1\right] \times X$. The form as $\Sigma_{\infty}$ was obtained guarantees that it has to meet the trivial branch. But, as $\lambda_0$ is the unique bifurcation point from the trivial solutions, we conclude that $\Sigma_{\infty}$ must meet $\left(\lambda_0, 0\right)$. However, $\Sigma_0$ is the maximal closed connected subset de $\Sigma$ containing $\left(\lambda_0, 0\right)$, hence $\Sigma_0=\Sigma_{\infty}$.
%      \end{proof}
   
\section{Appendix}\label{app}

The primary objective of this section is to establish regularity results, which are based on the $W^{2, p}$ theory for $\mathcal{L}$ as formulated by  Lions et al. in \cite{Bensoussan}. The following result is important to finish the proof of Lemma \ref{cpto}.

\begin{theorem}\label{thm5.2}
Suppose $w \in L^{\infty}(U)$ and  $\partial U$ is of class $C^{1,1}$.  Also let  $u \in \mathcal{X}^{1,2}_{\mathcal{D}}(U)$ be weak solution of \eqref{w} then
$ u \in C^{1, \beta}(\bar\Omega)$, for some $\beta \in(0,1)$.
    \end{theorem}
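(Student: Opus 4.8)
The plan is to treat the nonlocal term as a lower-order datum: rewrite \eqref{w} as the second order equation $-\Delta u = w - (-\Delta)^s u$ in $\Omega$, upgrade the integrability of its right-hand side, and then run the $W^{2,p}$-theory up to $\partial\Omega$, concluding with the Sobolev embedding $W^{2,p}(\Omega) \hookrightarrow C^{1,\beta}(\overline{\Omega})$, $\beta = 1-n/p$. First I would establish $u \in L^\infty(U)$, hence $u \in L^\infty(\mathbb{R}^n)$ since $u \equiv 0$ on $U^c$; this is exactly the Stampacchia truncation argument already carried out in Step 1 of Lemma \ref{cpto} (test \eqref{w} with $\varphi_k = (\operatorname{sgn} u)\max(|u|-k,0)$, discard the nonnegative Gagliardo contribution, and combine the Sobolev inequality with [Lemma B.1 in \cite{Stampacchia}] to get $\|u\|_{L^\infty(\Omega)} \le c\|w\|_{L^\infty(\Omega)}$), while the identity $u(x) = \big(\int_\Omega u(y)|x-y|^{-n-2s}\,dy\big)\big/\big(\int_\Omega |x-y|^{-n-2s}\,dy\big)$ produced by $\mathcal{N}_s u = 0$ (Lemma \ref{Ns}, cf. the computation in Proposition \ref{prop-1}(1)) extends the bound to $\mathcal{N}$.

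With $u$ bounded, I would invoke the interior $W^{2,p}$-regularity for the mixed operator [Theorem 1.4 in \cite{su2022regularity}], whose datum is merely $w \in L^\infty(U) \subset L^p(\Omega)$, to deduce $u \in W^{2,p}_{\loc}(\Omega)$ for every $p < \infty$, and in parallel the De Giorgi--Nash--Moser theory for such operators [\cite{biagi2022mixed}] — together with Lemma \ref{Ns} on $\mathcal{N}$ and a boundary barrier exploiting the $C^{1,1}$-regularity of $\partial U$ — to obtain $u \in C^{0,\gamma}(\mathbb{R}^n)$ for some $\gamma \in (0,1)$. Splitting $(-\Delta)^s u(x)$ into the principal-value integral over a small ball centred at $x$, controlled by the interior regularity just obtained, and its complement, controlled by $\|u\|_{L^\infty(\mathbb{R}^n)}$ (and using the Riesz-potential/Hardy--Littlewood--Sobolev mapping properties of $(-\Delta)^s$ to iterate the gain), I would then conclude that $g := w - (-\Delta)^s u \in L^p(\Omega)$ for every $p < \infty$. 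It remains to cross $\partial\Omega$: $u$ solves $-\Delta u = g$ in $\Omega$ with $g \in \bigcap_{p<\infty} L^p(\Omega)$, $\partial\Omega$ of class $C^{1,1}$, the homogeneous Neumann condition on the relatively open piece $\partial\Omega \cap \overline{\mathcal{N}}$ and the homogeneous Dirichlet condition inherited from $u \equiv 0$ on $U^c$ on the complementary piece. By the $W^{2,p}$-theory for second order elliptic operators under such mixed boundary conditions, available for $\mathcal{L}$ through \cite{Bensoussan} combined with [Theorem 1.4 in \cite{su2022regularity}], and using the bound \eqref{stp1}, one gets $\|u\|_{W^{2,p}(\Omega)} \le C\big(\|u\|_{L^p(\Omega)} + \|g\|_{L^p(\Omega)}\big)$ for every $p \in (1,\infty)$; choosing $p > n$ and applying $W^{2,p}(\Omega) \hookrightarrow C^{1,\beta}(\overline\Omega)$ with $\beta = 1 - n/p$ yields $u \in C^{1,\beta}(\overline\Omega)$.

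The step I expect to be the main obstacle is this last passage up to $\partial\Omega$: one must check that the $W^{2,p}$-estimate survives localization near boundary points of both types, and in particular near the interface $\partial\Omega \cap \overline{\mathcal{D}} \cap \overline{\mathcal{N}}$, in the simultaneous presence of the mixed classical Dirichlet--Neumann conditions and of the nonlocal condition $\mathcal{N}_s u = 0$ on $\mathcal{N}$. Here the $C^{1,1}$-regularity of $\partial U$ and the explicit representation of Lemma \ref{Ns} — which makes $u$ smooth on $\mathcal{N}$ away from $\overline\Omega$ and thereby reduces the equation near $\partial\Omega$ to a purely local one with a controlled lower-order perturbation — are the decisive ingredients, and the structural assumptions on $(\mathcal{D},\mathcal{N})$ are what keep the interface harmless. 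A secondary, more routine point is to respect the global character of $(-\Delta)^s$ during the $L^p$-bootstrap of the right-hand side, which is ensured by retaining $u \in H^1(\mathbb{R}^n) \cap L^\infty(\mathbb{R}^n)$ throughout.
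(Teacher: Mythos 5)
Your route is genuinely different from the paper's: the paper disposes of the theorem in one line by quoting the global $W^{2,p}$ theory for second order integro-differential operators [Theorem 3.1.22 in \cite{garroni2002second}] and then the embedding $W^{2,p}(\Omega)\hookrightarrow C^{1,\beta}(\bar\Omega)$ for $p>n$, whereas you freeze the nonlocal term, writing $-\Delta u = g := w-(-\Delta)^s u$, upgrade the integrability of $g$, and appeal to purely local $W^{2,p}$ theory. Your preliminary steps are fine (the Stampacchia truncation for $\|u\|_{L^\infty}$, the representation formula from $\mathcal N_s u=0$ extending the bound to $\mathcal N$, interior $W^{2,p}$ from \cite{su2022regularity}), but the argument does not close.

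Two gaps remain, the second of which is fatal in the stated generality. First, the claim $g\in\bigcap_{p<\infty}L^p(\Omega)$ requires controlling $(-\Delta)^s u$ \emph{up to} $\partial\Omega$; interior estimates degenerate there, and from global information you only have $u\in H^1(\mathbb{R}^n)\cap L^\infty(\mathbb{R}^n)$ together with some $C^{0,\gamma}(\mathbb{R}^n)$, which bounds the singular integral pointwise only when $2s<\gamma$ (otherwise you need the boundary $C^{1,\gamma}$ regularity you are trying to prove). The standard fix is not a bootstrap of the datum but an absorption estimate of the form $\|(-\Delta)^s u\|_{L^p}\le \varepsilon\|u\|_{W^{2,p}}+C_\varepsilon\|u\|_{L^p}$ used inside an already established global a priori estimate for the full operator --- which is precisely the Garroni--Menaldi route the paper takes. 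Second, and more seriously, the ``last passage up to $\partial\Omega$'' that you yourself flag as the main obstacle is not merely delicate: global $W^{2,p}$ (for $p\ge 2$) and $C^{1,\beta}(\bar\Omega)$ regularity for the Laplacian under mixed classical Dirichlet--Neumann conditions are \emph{false} whenever the interface $\partial\Omega\cap\overline{\mathcal D}\cap\overline{\mathcal N}$ is nonempty (Zaremba phenomenon: $\operatorname{Im}\sqrt{z}$ is harmonic in a half-plane, vanishes on one half of the boundary line, has zero normal derivative on the other, and is only $C^{0,1/2}$ at the junction). Citing \cite{Bensoussan} together with [Theorem 1.4 in \cite{su2022regularity}] does not supply a mixed-boundary $W^{2,p}$ theory, so this step is an assertion rather than a proof. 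To be transparent, the paper's own one-line proof invokes a $W^{2,p}$ theorem whose boundary hypotheses likewise do not obviously cover this configuration, so the difficulty is intrinsic to the statement; but as written your argument does not repair it, and the conclusion $u\in C^{1,\beta}(\bar\Omega)$ can only be expected under an additional hypothesis separating $\overline{\mathcal D}$ and $\overline{\mathcal N}$ on $\partial\Omega$.
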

\begin{proof}
It is easy to see that $w\in L^{\infty}(U)$ implies $w \in L^p(U)$, $\forall$~ $p \geq 2$, since $U$ is bounded. We utilize  [Theorem 3.1.22 in \cite{garroni2002second}] to achieve $W^{2, p}$ regularity for some $p > 1$ i.e. $\|u\|_{W^{2,p}(\Omega)}\leq C_1\|w\|_{L^p(\Omega)}$ and combining this with  compact embedding of $W^{2, p}(\Omega)$ in $C^{1,\beta}(\bar \Omega)$ for $p > n$, we get $ u \in C^{1, \beta}(\bar\Omega)$. 
\end{proof}

\begin{Lemma}\label{Ns}
 Let $u\in  \mathcal{X}^{1,2}_{\mathcal{D}}(U) $.   Suppose $\mathcal{N}_su=0$ in $ \mathcal{N}$ then $u\in C^{0,\beta}(\mathbb{R}^n)$ for some $\beta\in (0,1)$.
\end{Lemma}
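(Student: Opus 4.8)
The plan is to use the nonlocal Neumann condition to represent $u$ on $\mathcal{N}$ explicitly as a Poisson-type average of its values on $\Omega$, where regularity is already available. By the boundary regularity available for $u$ on $\bar\Omega$ (see Theorem \ref{thm5.2}) we may assume $u\in C^{1,\beta_0}(\bar\Omega)$ for some $\beta_0\in(0,1)$; the content of the lemma is to upgrade this to a \emph{global} H\"older bound. Rewriting $\mathcal{N}_su(x)=0$ by means of \eqref{normal} gives
\[
u(x)=\Big(\int_\Omega\frac{dy}{|x-y|^{n+2s}}\Big)^{-1}\int_\Omega\frac{u(y)}{|x-y|^{n+2s}}\,dy=:v(x),
\]
which is meaningful for every $x\in\mathbb{R}^n\setminus\bar\Omega$. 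Since $|v(x)|\le\|u\|_{L^\infty(\Omega)}$, combining with $u\equiv0$ on $U^c$ and $\operatorname{supp}u\subseteq\bar U$ bounded we get $\|u\|_{L^\infty(\mathbb{R}^n)}=\|u\|_{L^\infty(\Omega)}$, and it remains to show that $v$ is H\"older continuous on $\mathbb{R}^n\setminus\Omega$ up to $\partial\Omega$ with $v=u$ on $\partial\Omega$, and then to glue with the $C^{1,\beta_0}(\bar\Omega)$ estimate.

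First I would record two quantitative facts about $v$, writing $d(x):=\operatorname{dist}(x,\Omega)$ and $\kappa_x(y):=|x-y|^{-n-2s}$. \textbf{(i) Interior Lipschitz bound:} on $\{x:\,d(x)\ge\rho\}$ one may differentiate under the integral sign, and since $|\nabla_x\kappa_x(y)|\le(n+2s)\rho^{-1}\kappa_x(y)$ there, one obtains $|\nabla v(x)|\le C(n,s)\rho^{-1}\|u\|_{L^\infty(\Omega)}$. \textbf{(ii) Boundary H\"older bound:} fix $\beta\in(0,\min\{\beta_0,2s\})$; as $\Omega$ is bounded, $|u(y)-u(\bar x)|\le C|y-\bar x|^{\beta}$ for all $y\in\Omega$, $\bar x\in\partial\Omega$, so for $x\notin\bar\Omega$ with nearest point $\bar x\in\partial\Omega$,
\[
|v(x)-u(\bar x)|\le\Big(\int_\Omega\kappa_x(y)\,dy\Big)^{-1}\int_\Omega\kappa_x(y)\,|u(y)-u(\bar x)|\,dy\le C\,\frac{\int_\Omega\kappa_x(y)\,|y-\bar x|^{\beta}\,dy}{\int_\Omega\kappa_x(y)\,dy}.
\]
Using that $\Omega$ is a $C^{1,1}$ (admissible) domain --- hence satisfies a uniform two-sided cone condition at $\bar x$ --- one has $\int_\Omega\kappa_x(y)\,dy$ comparable to $d(x)^{-2s}$, while a dyadic decomposition over the annuli $\{2^{j}d(x)\le|x-y|<2^{j+1}d(x)\}$, $j\ge0$, together with $\beta<2s$ yields $\int_\Omega\kappa_x(y)\,|y-\bar x|^{\beta}\,dy\le C\,d(x)^{\beta-2s}$; hence $|v(x)-u(\bar x)|\le C\,d(x)^{\beta}$, and in particular $v$ extends continuously to $\partial\Omega$ by $u$.

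I would then assemble the global estimate by a case split on a pair $x,x'\in\mathbb{R}^n\setminus\Omega$ with $|x-x'|$ small (for $x,x'$ far apart the oscillation bound $2\|u\|_{L^\infty}$ on the bounded set $\bar U$ already suffices). If $|x-x'|\ge\tfrac12\max\{d(x),d(x')\}$, then $d(x),d(x')\le2|x-x'|$ and $|\bar x-\bar x'|\le C|x-x'|$, so triangulating through $\bar x,\bar x'$ and using (ii) gives $|v(x)-v(x')|\le C|x-x'|^{\beta}$. If $|x-x'|<\tfrac12 d(x)$, then $d(x')$ is comparable to $d(x)$, $|\bar x-\bar x'|\le3d(x)$, and the segment $[x,x']$ stays in $\{d\ge\tfrac12 d(x)\}$; here (i) with $\rho=\tfrac12 d(x)$ gives $|v(x)-v(x')|\le C\,d(x)^{-1}|x-x'|$, while triangulating through $\bar x,\bar x'$ and using (ii) gives $|v(x)-v(x')|\le C\,d(x)^{\beta}$, so taking the smaller of the two bounds yields $|v(x)-v(x')|\le C|x-x'|^{\beta/(1+\beta)}$. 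Hence $v\in C^{0,\beta'}(\overline{\mathbb{R}^n\setminus\Omega})$ with $\beta'=\beta/(1+\beta)$ and $v=u$ on $\partial\Omega$. Gluing this with $u\in C^{1,\beta_0}(\bar\Omega)$ across $\partial\Omega$ (where the two values agree), with $u\equiv0$ on $U^c$, and with continuity up to the smooth Dirichlet portion of $\partial U$ (from $u\in H^1_0(U)$ and the interior regularity just obtained), we conclude $u\in C^{0,\beta}(\mathbb{R}^n)$ for a suitable $\beta\in(0,1)$.

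The delicate point is estimate (ii) and, above all, its reconciliation with (i): the Lipschitz constant in (i) blows up like $1/d(x)$ as $x\to\partial\Omega$, so the two bounds must be balanced on the right scale, which is exactly what forces the passage from exponent $\beta$ to $\beta/(1+\beta)$. This is where the $C^{1,1}$ hypothesis on $\Omega$ is essential: it provides, uniformly over $\bar x\in\partial\Omega$, both the two-sided comparison $\int_\Omega|x-y|^{-n-2s}\,dy\sim d(x)^{-2s}$ and enough control on the shape of $\Omega$ near $\bar x$ to run the dyadic estimate in (ii).
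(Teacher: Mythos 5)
Your proof starts from the same identity as the paper's — the nonlocal Neumann condition turns $u$ on $\mathcal{N}$ into the kernel average $v(x)=\bigl(\int_\Omega|x-y|^{-n-2s}dy\bigr)^{-1}\int_\Omega|x-y|^{-n-2s}u(y)\,dy$ — but you carry the argument much further than the paper does. The paper's proof only observes that, for each fixed $x\in\mathcal{N}$, $\operatorname{dist}(x,\Omega)>0$ so both parametrized integrals are differentiable, concludes that $u$ is differentiable in $\mathcal{N}$, and then asserts the global $C^{0,\beta}$ bound by combining with Theorem \ref{thm5.2} and $u=0$ in $\mathcal{D}$. That step is not quantitative: as you correctly point out, the gradient of $v$ blows up like $1/d(x)$ as $x\to\partial\Omega$, so pointwise differentiability in $\mathcal{N}$ gives no uniform modulus of continuity up to $\partial\Omega$. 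Your estimates (i) and (ii), the requirement $\beta<2s$ for the dyadic sum, the use of the $C^{1,1}$ cone condition to get $\int_\Omega|x-y|^{-n-2s}dy\sim d(x)^{-2s}$, and the interpolation between the interior Lipschitz bound and the boundary decay $|v(x)-u(\bar x)|\le Cd(x)^\beta$ (yielding the exponent $\beta/(1+\beta)$) supply exactly the missing quantitative content; they check out. The only caveat on the $\bar\Omega$ side is that the stated hypotheses of the lemma give no equation for $u$ in $\Omega$, so invoking Theorem \ref{thm5.2} tacitly assumes $u$ solves \eqref{w} with bounded right-hand side — but the paper's own proof makes the identical tacit assumption, and the lemma is only ever applied to eigenfunctions, where it holds.

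One interface is left untreated in your gluing step, and the paper's proof is silent about it too: the set $\partial\mathcal{N}\cap\partial\mathcal{D}$ away from $\partial\Omega$ (e.g.\ the outer boundary of an annular $\mathcal{N}$). There $u=v$ from the $\mathcal{N}$ side while $u\equiv0$ from the $\mathcal{D}$ side, and $v$ — being a strictly positive kernel average of $u|_\Omega$ when $u\gneq0$ — does not vanish there, so continuity across that interface does not follow from anything you (or the paper) have written. This is a defect of the lemma's formulation rather than of your argument, but if you want a complete proof you should either add a hypothesis ruling out such an interface or explain why the membership $u|_U\in H^1_0(U)$ is compatible with $u=v$ on $\mathcal{N}$ there.
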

\begin{proof}
    From $\mathcal{N}_s u=0$ in $\mathcal{N}$,  we have
 %   $$
%\mathcal{N}_{s}u(x)=\dint_{\Omega} \dfrac{u(x)-u(y)}{|x-y|^{n+2s}}\,dy=0 \qquad  ~\forall~x\in \mathcal{N},
%$$
%\text{that implies}
$$ u(x)\int_{\Omega} \frac{dy}{|x-y|^{n+2s}}= \int_{\Omega} \frac{u(y) dy}{|x-y|^{n+2s}}.
$$
 In the above equality, we observe that both parametrized integrals with respect to $x$ are differentiable in $\mathcal{N}$, since for any $x\in\mathcal{N}$, $\text{dist} (x,\Omega)>0$. This implies $u$ is differentiable in  $\mathcal{N}.$ Next, from this, Theorem \ref{thm5.2}  and $u=0$ in $\mathcal{D}$, we obtain that $u \in C^{0,\beta}(\mathbb{R}^n).$
 %[also we can use Lemma 6.37 in \cite{gilbarg1977elliptic},
\end{proof}

We present a few more interesting properties in the same direction.
First, we recall the following continuity result for functions satisfying the nonlocal Neumann condition, see [Proposition 5.2 in \cite{dipierro2017nonlocal}].
\begin{Proposition}\label{pp5.2}
     Let $\Omega\subset \mathbb{R}^n$ be a domain with $C^1$ boundary. Let $u$ be continuous
in $\Omega$, with $\mathcal{N}_s(u)= 0 ~\text{in}~ \mathbb{R}^n\setminus \bar \Omega$. Then $u$ is continuous in the whole of $\mathbb{R}^n.$
\end{Proposition}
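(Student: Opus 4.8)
The plan is to exploit the explicit representation of $u$ outside $\bar\Omega$ furnished by the vanishing of the nonlocal normal derivative. Unwinding Definition \eqref{normal}, the identity $\mathcal N_s u(x)=0$ for $x\in\mathbb R^n\setminus\bar\Omega$ is equivalent to
$$u(x)=\frac{\displaystyle\int_\Omega \frac{u(y)}{|x-y|^{n+2s}}\,dy}{\displaystyle\int_\Omega \frac{dy}{|x-y|^{n+2s}}},\qquad x\in\mathbb R^n\setminus\bar\Omega,$$
so that $u(x)$ is the average of the data $u|_\Omega$ against the probability measure $d\mu_x(y)=Z(x)^{-1}|x-y|^{-n-2s}\,dy$ supported on $\Omega$, where $Z(x)=\int_\Omega|x-y|^{-n-2s}\,dy\in(0,\infty)$ since $\mathrm{dist}(x,\Omega)>0$. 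As $x$ ranges in the open set $\mathbb R^n\setminus\bar\Omega$, the kernel $x\mapsto|x-y|^{-n-2s}$ is smooth locally uniformly in $y\in\Omega$, so differentiating under the integral sign (exactly as in the first part of the proof of Lemma \ref{Ns}) gives $u\in C^\infty(\mathbb R^n\setminus\bar\Omega)$; in particular $u$ is continuous there. Since $u$ is continuous on $\overline\Omega$ by hypothesis, the only point left is continuity of $u$ at each $x_0\in\partial\Omega$ when $x_0$ is approached from the exterior.

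Fix $x_0\in\partial\Omega$. The heart of the argument is that the averaging measures $\mu_x$ concentrate at $x_0$ as $x\to x_0$ with $x\notin\bar\Omega$, and this is where the $C^1$ regularity of $\partial\Omega$ enters: it yields a uniform interior cone condition, hence a truncated open cone $\mathcal C\subset\Omega$ with vertex $x_0$, some fixed aperture, and height $h>0$. Writing $\epsilon=|x-x_0|$ and using $|x-y|\le \epsilon+|y-x_0|$ for $y\in\mathcal C$, polar coordinates around $x_0$ give
$$Z(x)\ \ge\ \int_{\mathcal C}\frac{dy}{|x-y|^{n+2s}}\ \ge\ c\int_0^{h}\frac{t^{n-1}}{(\epsilon+t)^{n+2s}}\,dt\ \longrightarrow\ c\int_0^{h}\frac{dt}{t^{1+2s}}=+\infty\qquad(\epsilon\to0^+),$$
the divergence being due to $2s>0$ (again the same mechanism as in Lemma \ref{Ns}). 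On the other hand, for every fixed $\delta>0$ one has $|x-y|\ge\tfrac12\max\{\delta,|y-x_0|\}$ as soon as $|x-x_0|<\delta/2$ and $y\notin B_\delta(x_0)$, so $\int_{\Omega\setminus B_\delta(x_0)}|x-y|^{-n-2s}\,dy$ stays bounded as $x\to x_0$, whence $\mu_x\big(\Omega\setminus B_\delta(x_0)\big)\to0$.

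With this concentration in hand, split, for $\delta>0$,
$$|u(x)-u(x_0)|\ \le\ \int_{\Omega\cap B_\delta(x_0)}|u(y)-u(x_0)|\,d\mu_x(y)\ +\ \int_{\Omega\setminus B_\delta(x_0)}|u(y)-u(x_0)|\,d\mu_x(y).$$
The first term is at most $\omega(\delta):=\sup_{y\in\overline\Omega\cap B_\delta(x_0)}|u(y)-u(x_0)|$, which tends to $0$ as $\delta\to0$ by continuity of $u$ on $\overline\Omega$; the second term is at most $2\|u\|_{L^\infty(\Omega)}\,\mu_x(\Omega\setminus B_\delta(x_0))$ (if $\Omega$ is unbounded this crude bound is replaced by a dominated–convergence argument, still forcing the term to $0$). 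Letting $x\to x_0$ for fixed $\delta$, then $\delta\to0$, gives $\lim_{x\to x_0,\,x\notin\bar\Omega}u(x)=u(x_0)$. Combining the three facts — $u$ continuous on $\overline\Omega$, continuous on $\mathbb R^n\setminus\bar\Omega$, and interior and exterior limits matching at every boundary point — we conclude $u\in C(\mathbb R^n)$. The main obstacle is precisely the boundary step: turning $C^1$ regularity of $\partial\Omega$ into the concentration of $\mu_x$ at $x_0$, i.e. showing that the normalising integral $Z(x)$ blows up as $x\to x_0$ so that the far part of $\Omega$ carries vanishing mass; once that divergence (which uses $s>0$) is secured, the remaining estimate is a routine $\epsilon/\delta$ argument.
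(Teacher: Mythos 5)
Your proof is correct. The paper does not actually reprove this proposition --- it simply cites [Proposition 5.2 in \cite{dipierro2017nonlocal}] --- and your argument (the averaged representation of $u$ outside $\bar\Omega$, smoothness there by differentiation under the integral, divergence of the normalising integral $Z(x)$ via an interior cone at each $C^1$ boundary point, and the resulting concentration of the measures $\mu_x$ at $x_0$) is essentially the proof given in that reference.
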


\begin{Corollary}\label{cor6.2}
 Let $\Omega \subset \mathbb{R}^n$ be a domain with $C^1$ boundary and  $w \in C\left(\mathbb{R}^n\right)\cap \mathcal{X}^{1,2}_{\mathcal{D}}(U)$. Suppose
$$
u(x)=\left\{\begin{array}{cl}
w(x) & \text { if } x \in \bar{\Omega},\vspace{0.5em} \\
\frac{\int_{\Omega} \frac{w(y)}{|x-y|^{n+2 s}} d y}{\int_{\Omega} \frac{d y}{|x-y|^{n+2 s}}} & \text { if } x \in \mathcal{N}\subset \mathbb{R}^n \backslash \bar{\Omega}.
\end{array}\right.
$$
Then $u \in C\left(\mathbb{R}^n\right)$.
\end{Corollary}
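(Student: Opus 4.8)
The plan is to glue together two pieces of continuity information: the continuity of $w$ on $\overline{\Omega}$ (which is given), and the continuity on $\mathcal{N}$ of the function defined by the quotient of the two parametrized integrals. First I would observe that on the open set $\mathcal{N}\subset\mathbb{R}^n\setminus\overline{\Omega}$ we have $\operatorname{dist}(x,\Omega)>0$ for every $x\in\mathcal{N}$, so both $x\mapsto\int_\Omega \frac{w(y)}{|x-y|^{n+2s}}\,dy$ and $x\mapsto\int_\Omega \frac{dy}{|x-y|^{n+2s}}$ are continuous (indeed smooth) on $\mathcal{N}$, the denominator being strictly positive there; hence $u$ is continuous in the interior of $\mathcal{N}$ and, by hypothesis, continuous on $\overline{\Omega}$. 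It remains to check continuity at points of $\partial\Omega\cap\overline{\mathcal{N}}$, i.e. to show that the quotient formula matches up continuously with the boundary values of $w$.

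The key step is to verify that $u$, as defined piecewise, satisfies the nonlocal Neumann condition $\mathcal{N}_s u=0$ in $\mathcal{N}$ (or more precisely on $\mathbb{R}^n\setminus\overline{\Omega}$, after noting that outside $\overline{\Omega\cup\mathcal{N}}$ one argues analogously or uses that $w\in\mathcal{X}^{1,2}_{\mathcal{D}}(U)$ forces the relevant structure) and then invoke Proposition \ref{pp5.2}. Indeed, for $x\in\mathcal{N}$, plugging the definition of $u(x)$ into $\mathcal{N}_s u(x) = C_{n,s}\int_\Omega \frac{u(x)-u(y)}{|x-y|^{n+2s}}\,dy = C_{n,s}\int_\Omega \frac{u(x)-w(y)}{|x-y|^{n+2s}}\,dy$ (using $u=w$ on $\overline{\Omega}$) gives
\begin{equation*}
\mathcal{N}_s u(x) = C_{n,s}\left( u(x)\int_\Omega \frac{dy}{|x-y|^{n+2s}} - \int_\Omega \frac{w(y)\,dy}{|x-y|^{n+2s}}\right) = 0
\end{equation*}
by the very definition of $u(x)$ on $\mathcal{N}$. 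Since $\Omega$ has $C^1$ boundary and $u$ restricted to $\Omega$ equals $w$, which is continuous in $\Omega$, Proposition \ref{pp5.2} applies and yields $u\in C(\mathbb{R}^n)$.

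The main obstacle I anticipate is a bookkeeping one rather than a deep one: Proposition \ref{pp5.2} is stated for $\mathcal{N}_s u=0$ on all of $\mathbb{R}^n\setminus\overline{\Omega}$, whereas here the quotient formula is only imposed on $\mathcal{N}$, and $\mathbb{R}^n\setminus\overline{\Omega}$ may strictly contain $\mathcal{N}$ (there is also the Dirichlet set $\mathcal{D}$, where $w\equiv 0$). So I would need to either restrict attention to the relevant portion of the boundary $\partial\Omega\cap\overline{\mathcal{N}}$ and run a local version of the argument in Proposition \ref{pp5.2} there, or note that on $\mathcal{D}$ one has $u=w=0$ with $w\in\mathcal{X}^{1,2}_{\mathcal{D}}(U)$ so continuity across $\partial\Omega\cap\overline{\mathcal{D}}$ is already built in, and combine the two. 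Either way the conclusion $u\in C(\mathbb{R}^n)$ follows; the subtlety is purely in matching the hypotheses of the cited proposition to the mixed-boundary setting, and no new estimate is required beyond the smoothness of the kernel integrals away from $\Omega$.
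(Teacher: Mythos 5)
Your proposal is correct and follows essentially the same route as the paper: observe that the piecewise definition of $u$ on $\mathcal{N}$ is exactly the statement $\mathcal{N}_s u=0$ there, then invoke Proposition \ref{pp5.2} together with the fact that $u=w=0$ on $\mathcal{D}$. The hypothesis-matching issue you flag (the Proposition assumes $\mathcal{N}_s u = 0$ on all of $\mathbb{R}^n\setminus\bar\Omega$, while here it holds only on $\mathcal{N}$) is real but is resolved exactly as you suggest, which is also how the paper handles it.
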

\begin{proof}
 By assumption, we have $u=w$ in $\bar{\Omega}$ and $\mathcal{N}_s u=0$ in $\mathcal{N}\subset\mathbb{R}^n \backslash \bar{\Omega}$. Thanks to Proposition \ref{pp5.2} and recalling $u=0$ in $\mathcal{D}$, since $u\in \mathcal{X}^{1,2}_{\mathcal{D}}(U)$, we easily deduce that  $u \in C\left(\mathbb{R}^n\right)$.
    \end{proof}
    Our focus is on the boundary behaviour of the nonlocal Neumann function $\tilde{\mathcal{N}_s} v$ defined as
    $$\tilde{\mathcal{N}_s} v(x)=\frac{\mathcal{N}_s v(x)}{\int_{\Omega} \frac{d y}{|x-y|^{n+2 s}}}.$$
 
\begin{Lemma}
   Let $\Omega \subset \mathbb{R}^n$ be a domain with $C^1$ boundary and $v \in C\left(\mathbb{R}^n\right)\cap\mathcal{X}^{1,2}_{\mathcal{D}}(U)$. Then
$$
\lim _{\substack{x \rightarrow \partial \Omega \\ x \in \mathcal{N}\subset\mathbb{R}^n\setminus \bar{\Omega}}} \tilde{\mathcal{N}}_s v(x)=0, ~\text{for every}~ s\in (0,1).
$$
\end{Lemma}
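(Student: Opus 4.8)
The plan is to read $\tilde{\mathcal N}_s v$ as ``the value of $v$ at $x$ minus a weighted average of $v$ over $\Omega$'': with the probability measure
$$ d\mu_x(y)=\frac{|x-y|^{-n-2s}\,dy}{\int_\Omega |x-y|^{-n-2s}\,dy}\quad\text{on }\Omega,\qquad \tilde{\mathcal N}_s v(x)=C_{n,s}\,\frac{\int_\Omega \frac{v(x)-v(y)}{|x-y|^{n+2s}}\,dy}{\int_\Omega \frac{dy}{|x-y|^{n+2s}}}=C_{n,s}\Big(v(x)-\int_\Omega v\,d\mu_x\Big). $$
Since $\Omega\cup\mathcal N$ is bounded, $\bar\Omega$ is compact, so $\partial\Omega$ is compact and $v$ is bounded on $\bar\Omega$. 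It therefore suffices to prove the pointwise statement: for each fixed $x_0\in\partial\Omega$, $\tilde{\mathcal N}_s v(x)\to 0$ as $x\to x_0$ with $x\in\mathbb R^n\setminus\bar\Omega$; a routine subsequence argument (any counterexample sequence $x_k\in\mathcal N$ with $\operatorname{dist}(x_k,\partial\Omega)\to 0$ has a subsequence converging to some $x_0\in\partial\Omega$, to which the pointwise result applies) then yields the boundary limit in the statement. Fixing $x_0$, since $v(x)\to v(x_0)$ and $v$ is bounded near $x_0$, the task reduces to showing that $\mu_x$ concentrates at $x_0$, i.e. $\int_\Omega v\,d\mu_x\to v(x_0)$.

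The one quantitative ingredient is a lower bound on the denominator. Write $\delta(x):=\operatorname{dist}(x,\partial\Omega)=\operatorname{dist}(x,\bar\Omega)$ for $x\notin\bar\Omega$, which tends to $0$ as $x\to x_0$. The $C^1$ regularity of $\partial\Omega$ gives a uniform measure–density (corkscrew) property: there are $\kappa\in(0,1)$ and $r_1>0$ with $|\Omega\cap B_r(z)|\ge\kappa\,|B_r|$ for every $z\in\partial\Omega$ and every $r\le r_1$. Choosing a nearest boundary point $\bar y\in\partial\Omega$ to $x$ and restricting the integral to $\Omega\cap B_{\delta(x)}(\bar y)$, on which $|x-y|<2\delta(x)$, one gets, whenever $\delta(x)\le r_1$,
$$ \int_\Omega \frac{dy}{|x-y|^{n+2s}}\ \ge\ \frac{|\Omega\cap B_{\delta(x)}(\bar y)|}{\big(2\delta(x)\big)^{n+2s}}\ \ge\ c_1\,\delta(x)^{-2s}, $$
where $c_1>0$ depends only on $n,s,\Omega$. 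In particular the denominator blows up as $x\to x_0$.

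With this bound I would estimate the $\mu_x$–mass of the far region: for $|x-x_0|<\rho/2$ one has $|x-y|\ge\rho/2$ on $\Omega\setminus B_\rho(x_0)$, hence
$$ \mu_x\big(\Omega\setminus B_\rho(x_0)\big)=\frac{\int_{\Omega\setminus B_\rho(x_0)}|x-y|^{-n-2s}\,dy}{\int_\Omega |x-y|^{-n-2s}\,dy}\ \le\ \frac{(2/\rho)^{n+2s}\,|\Omega|}{c_1}\,\delta(x)^{2s}\ \longrightarrow\ 0\quad(x\to x_0). $$
Now, given $\varepsilon>0$, pick $\rho\in(0,r_1]$ with $|v(y)-v(x_0)|<\varepsilon$ for $y\in B_\rho(x_0)$, and set $M:=\|v\|_{L^\infty(\bar\Omega)}+|v(x_0)|$. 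Splitting $\int_\Omega|v(y)-v(x_0)|\,d\mu_x(y)$ over $B_\rho(x_0)$ and its complement gives $\int_\Omega|v-v(x_0)|\,d\mu_x\le\varepsilon+M\,\mu_x(\Omega\setminus B_\rho(x_0))$, whence
$$ |\tilde{\mathcal N}_s v(x)|\le C_{n,s}\Big(|v(x)-v(x_0)|+\int_\Omega|v(y)-v(x_0)|\,d\mu_x(y)\Big)\le C_{n,s}\Big(|v(x)-v(x_0)|+\varepsilon+M\,\mu_x(\Omega\setminus B_\rho(x_0))\Big). $$
Letting $x\to x_0$, the first and last terms vanish, so $\limsup_{x\to x_0}|\tilde{\mathcal N}_s v(x)|\le C_{n,s}\varepsilon$; letting $\varepsilon\downarrow 0$ completes the pointwise claim and hence the lemma.

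The only genuinely geometric step is the denominator lower bound, so that is where I expect the (mild) main obstacle: one must invoke the measure–density property of $\partial\Omega$, which holds for $C^1$ — indeed Lipschitz — domains, uniformly in $z\in\partial\Omega$ by compactness, and is immediate near $x_0$ from the local $C^1$ subgraph representation of $\Omega$. Everything else is the standard ``approximate identity concentrating at a point'' computation and uses only the continuity and local boundedness of $v$ (the membership $v\in\mathcal X^{1,2}_{\mathcal D}(U)$ is not needed beyond what continuity on the compact set $\bar\Omega$ already provides).
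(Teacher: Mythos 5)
Your proof is correct, but it takes a genuinely different route from the paper's. The paper proves the lemma in two lines by leaning on Corollary \ref{cor6.2} (itself resting on Proposition 5.2 of \cite{dipierro2017nonlocal}): it takes the ``Neumann extension'' $u$ of $v|_{\bar\Omega}$, i.e.\ the continuous function with $u=v$ in $\bar\Omega$ and $\mathcal{N}_s u=0$ in $\mathcal N$, observes that since $u=v$ on $\Omega$ the $y$-dependent parts of the two numerators cancel, so $\tilde{\mathcal N}_s v(x_k)=\tilde{\mathcal N}_s v(x_k)-\tilde{\mathcal N}_s u(x_k)=C_{n,s}\bigl(v(x_k)-u(x_k)\bigr)$, and concludes from $v(x_k)-u(x_k)\to v(x_0)-u(x_0)=0$. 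You instead prove directly that the probability measures $\mu_x$ concentrate at $x_0$, with the quantitative input being the lower bound $\int_\Omega|x-y|^{-n-2s}\,dy\ge c_1\,\delta(x)^{-2s}$ coming from the measure--density property of $C^1$ (even Lipschitz) domains, followed by the standard approximate-identity splitting. In effect you are reproving the boundary-continuity statement that the paper imports as a black box: your argument is longer but self-contained, makes the mechanism explicit (the kernel mass near $x$ dominates the far-field mass at rate $\delta(x)^{2s}$), and shows the result needs only continuity and boundedness of $v$ on $\bar\Omega$; the paper's argument is shorter but only as strong as the cited extension/continuity results. Both correctly identify that the membership $v\in\mathcal X^{1,2}_{\mathcal D}(U)$ plays no real role. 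One cosmetic remark: keep the constant $C_{n,s}$ consistent --- the paper's own displayed computation silently drops it, as you implicitly note by carrying it through.
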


\begin{proof}
 We consider a sequence  $x_k \in \mathcal{N}$ such that $x_k \rightarrow x \in \partial \Omega$ as $k \rightarrow+\infty$.
Using the Corollary \ref{cor6.2} with the notation $w=v$, there exists $u \in C\left(\mathbb{R}^n\right)$ such that $u=v$ in $\bar{\Omega}$ and $\mathcal{N}_s u=0$ in $\mathcal{N}\subset\mathbb{R}^n \backslash \bar{\Omega}$. By the continuity of $u$ and $v$ we have that
\begin{equation}\label{e601}
\lim _{k \rightarrow+\infty} v\left(x_k\right)-u\left(x_k\right)=v\left(x\right)-u(x)=0 \text {. }
    \end{equation}

Moreover,
$$
\begin{aligned}
\tilde{\mathcal{N}}_s v\left(x_k\right) & =\tilde{\mathcal{N}}_s v\left(x_k\right)-\tilde{\mathcal{N}}_s u\left(x_k\right)=\frac{\int_{\Omega} \frac{v\left(x_k\right)-v(y)}{\left|x_k-y\right|^{n+2 s}} d y-\int_{\Omega} \frac{u\left(x_k\right)-u(y)}{\left|x_k-y\right|^{n+2 s}} d y}{\int_{\Omega} \frac{d y}{\left|x_k-y\right|^{n+2 s}}} \\
& =\frac{\int_{\Omega} \frac{v\left(x_k\right)-u\left(x_k\right)}{\left|x_k-y\right|^{n+2 s}} d y}{\int_{\Omega} \frac{d y}{\left|x_k-y\right|^{n+2 s}}}=\int_{\Omega}v\left(x_k\right)-u\left(x_k\right)\,dy .
\end{aligned}
$$
This and \eqref{e601} imply that $\lim_{k\to \infty} \tilde{\mathcal{N}_s}v(x_k)=0$. Thus, our proof is done.
    \end{proof}

\section*{Acknowledgements} 
Tuhina Mukherjee acknowledges the financial support provided by CSIR-HRDG with sanction No. 25/0324/23/EMR-II. Jacques Giacomoni was partially funded by IFCAM (Indo-French Centre for Applied Mathematics) IRL CNRS 3494.	Lovelesh Sharma received assistance from the UGC Grant with reference no. 191620169606 funded by the Government
of India.	
% \bibliography{lovelesh.bib}
% \bibliographystyle{abbrv}

\Addresses

\end{document}